\footnotesize\setstretch{0.75},
\newtheorem{thm}{Theorem}[section]
\newtheorem{lem}[thm]{Lemma}
\newtheorem{cor}[thm]{Corollary}
\newtheorem{prop}[thm]{Proposition}
\theoremstyle{definition}
\newtheorem{definition}[thm]{Definition}
\newtheorem{example}[thm]{Example}
\newtheorem{remark}[thm]{Remark}
\newtheorem{prob}[thm]{Problem}
\DeclareMathOperator{\lcm}{lcm}
\newcommand{\K}{\mathcal{K}} 
\renewcommand\L{\mathcal{L}} 
\newcommand{\F}{\mathcal{F}} 
\newcommand{\M}{\mathcal{M}} 
\newcommand{\C}{\mathcal{C}} 
\newcommand{\R}{\mathcal{R}} 
\newcommand{\sage}{{SageMath}\xspace}
\newcommand{\Findstat}{{FindStat}\xspace}
\newcommand{\sinv}{\sigma^{-1}}
\newcommand{\inv}{\textnormal{inv}}
\newcommand{\Inv}{\textnormal{Inv}}
\newcommand{\maj}{\textnormal{maj}}
\newcommand{\exc}{\textnormal{exc}}
\newcommand{\fp}{\textnormal{fp}(\sigma)}
\newcommand{\Des}{\textnormal{Des}}
\newcommand{\des}{\textnormal{des}}
\newcommand{\rank}{\textnormal{rank}}
\newcommand{\fix}{\textnormal{fix}}
\newcommand{\bb}{\textbf} 
\title[Homomesies on permutations]{Homomesies on permutations: an analysis of maps and statistics in the FindStat database}
\author[1]{Jennifer Elder$^1$}
\address[1]{Rockhurst University. \href{mailto:flattenedparkingfunctions@outlook.com}{flattenedparkingfunctions@outlook.com}}
\author[2]{Nadia Lafreni\`ere$^2$}
\address[2]{Corresponding author. Dartmouth College, 6188 Kemeny Hall, 27 N. Main Street, Hanover, NH, 03755. \href{mailto:nadia.lafreniere@dartmouth.edu}{nadia.lafreniere@dartmouth.edu}}
\author[3]{Erin McNicholas$^3$}
\address[3]{Willamette University. \href{mailto:emcnicho@willamette.edu}{emcnicho@willamette.edu}}
\author[4]{Jessica Striker$^4$}
\address[4]{North Dakota State University. \href{mailto:jessica.striker@ndsu.edu}{jessica.striker@ndsu.edu}}
\author[5]{Amanda Welch$^5$}
\address[5]{Eastern Illinois University. \href{mailto:arwelch@eiu.edu}{arwelch@eiu.edu}}
\begin{document}
\maketitle

\begin{abstract}
  In this paper, we perform a systematic study of permutation statistics and bijective maps on permutations in which we identify and prove 122 instances of the homomesy phenomenon. Homomesy occurs when the average value of a statistic is the same on each orbit of a given map. The maps we investigate include the Lehmer code rotation, the reverse, the complement, the Foata bijection, and the Kreweras complement. The statistics studied relate to familiar notions such as inversions, descents, and permutation patterns, and also more obscure constructs.  Beside the many new homomesy results, we discuss our research method, in which we used SageMath to search the FindStat combinatorial statistics database to identify potential homomesies.
\end{abstract}

\textbf{Keywords}: Homomesy, permutations, permutation patterns, dynamical algebraic combinatorics, FindStat, Lehmer code, Kreweras complement, Foata bijection


\section{Introduction}
Dynamical algebraic combinatorics is the study of objects important in algebra and combinatorics through the lens of dynamics.
In this paper, we focus on permutations, which are fundamental objects in algebra, combinatorics, representation theory, geometry, probability, and many other areas of mathematics. They are intrinsically dynamical, acting on sets by permuting their components. Here, we study bijections on permutations $f:S_n\rightarrow S_n$,
so the $n!$ elements being permuted are themselves permutations.  In particular, we find and prove many instances of \emph{homomesy}~\cite{PR2015}, an important phenomenon in dynamical algebraic combinatorics that occurs when the average value of some \emph{statistic} (a map $g:S_n\rightarrow \mathbb{Z}$) is the same over each orbit of the action. Homomesy occurs in many contexts, notably that of {rowmotion} on order ideals of certain families of posets and {promotion} on various sets of tableaux. See Subsection~\ref{sec:homomesy} for more specifics on homomesy and~\cite{Roby2016,Striker2017,SW2012} for further discussion.

A prototypical example of our homomesy results is as follows. Consider the Kreweras complement map $\K:S_n\rightarrow S_n$ (from Definition \ref{def:krew}). We show in Proposition \ref{Khom lastentry} that the last entry statistic of a permutation exhibits homomesy with respect to the Kreweras complement. See Figure~\ref{fig:ex} for an example of this result in the case $n=3$.

\begin{figure}[ht]
  \begin{tikzpicture}


    \node  [anchor=west] at (1,10) {\text{orbit of size $1$:}};

    \node[anchor=east] at (5,10) {$31{\color{red}2}$};

    \draw [->] (4.8,10.2) .. controls (5.5,10.7) and (5.5,9.3) .. (4.8,9.8);

    \node  [anchor=west] at (5.3,10) {$\mathcal{K}$};

    \node [anchor =west] at (10,10) {\text{average of last entry} $={\color{red}2}$};


    \node[anchor=west] at (1,8) {\text{orbit of size $2$:}};

    \node[anchor=east] at (5,8) {$12{\color{red}3}$};

    \draw [->] (5,8) -- (6,8);

    \node[anchor=south] at (5.5,8) {$\mathcal{K}$};

    \node [anchor=west] at (6,8) {$23{\color{red}1}$};

    \draw [<-] (4.8,7.8) .. controls (5.25,7.4) and (5.75,7.4) .. (6.2,7.8);

    \node[anchor=west] at (10,8) {\text{average of last entries} $=\frac{{\color{red}3+1}}{2}$};


    \node[anchor=west] at (1,6) {\text{orbit of size $3$:}};

    \node[anchor=east] at (5,6) {$13{\color{red}2}$};

    \draw [->] (5,6) -- (6,6);

    \node[anchor=center] at (6.5,6) {$21{\color{red} 3}$};

    \draw [->] (7,6) -- (8,6);

    \node[anchor=west] at (8,6) {$32{\color{red} 1}$};

    \draw [<-] (4.8,5.8) .. controls (6,5) and (7.2,5) .. (8.4,5.8);

    \node[anchor=south] at (5.5,6) {$\mathcal{K}$};

    \node[anchor=south] at (7.5,6) {$\mathcal{K}$};

    \node[anchor=north] at (6.5,5.2) {$\mathcal{K}$};

    \node[anchor=west] at (10,6) {\text{average of last entries} $=\frac{{\color{red} 2+3+1}}{3}$};

  \end{tikzpicture}
  \caption{Orbit decomposition of $S_3$ under the action of the Kreweras complement.  The last entry of each permutation is highlighted.  Calculating the averages of these last entries over each orbit, we observe an instance of homomesy.}\label{fig:ex}
\end{figure}
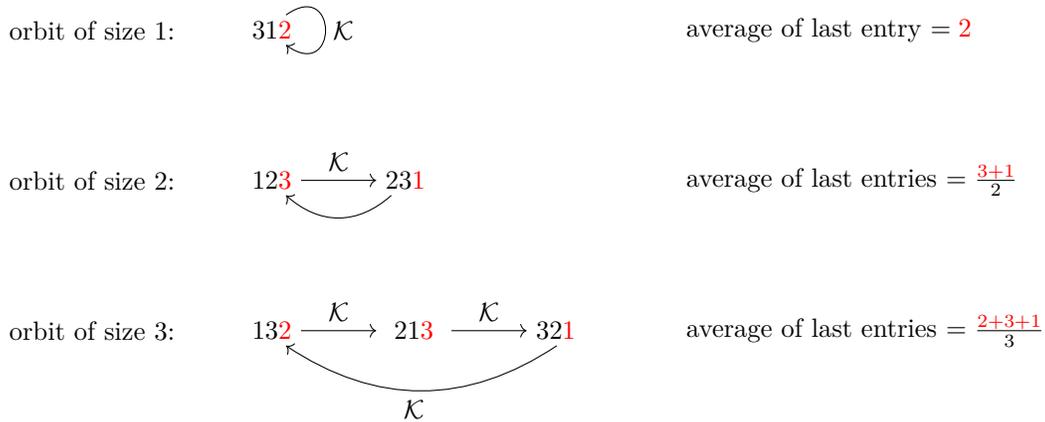

Rather than pick actions and statistics at random to test for homomesy, we used FindStat~\cite{FindStat}, the combinatorial statistics database, which (at the time of writing) included 387 permutation statistics and 19 bijective maps on permutations. Using the interface with SageMath~\cite{sage} computational software, we tested all combinations of these maps and statistics, finding 117 potential instances of homomesy, involving 68 statistics.

We highlight here some of the most interesting results.
One initial finding was that
homomesies occurred in only nine of the 19 examined maps. Among the maps that do not have any homomesic statistics, we find the well-known inverse map, as well as the first fundamental transform and cactus evacuation.  Of the nine maps exhibiting homomesy, four maps (all related to the \textbf{Foata bijection} map) have only one homomesic statistic.

Even more intriguing is the large number of homomesies found for the \textbf{Lehmer code rotation} map.
Despite its presence in FindStat, we could not find any occurrence of the Lehmer code rotation in the literature on combinatorial actions. The study in this paper suggests that this map is worthy of further investigation. Many of the homomesic statistics are related to inversions and descents, but other notable statistics
include several \emph{permutation patterns} as well as the \emph{rank} of a permutation.

As we worked through the proofs for the homomesic statistics for the \textbf{reverse} and \textbf{complement} maps, we found that the global averages are often the same. Using the relationship between the two maps (see Lemma \ref{lem:C&R_relation} ), we were able to prove many of the shared homomesies. Given this strong relationship, it is also of interest that there are several statistics that are only homomesic for one of the two maps.

In addition to exhibiting
homomesies, the action of the \textbf{Kreweras complement} map generates an interesting orbit structure on $S_n$.  Examining this orbit structure, we were able to characterize the distribution of all orbits.

Our main results are Theorems~\ref{thm:LC}, \ref{thmboth}, \ref{onlycomp}, \ref{onlyrev}, \ref{thm:foata} and \ref{Thm:Kreweras}, in which we prove all 117 of these homomesies.
In addition, we proved homomesy for 5 statistics not in the database, in Theorems \ref{thm:LC_inversions_at_entry} \ref{thm:descents_at_i_LC}, \ref{thm:inversion_positions_RC},\ref{Thm:Kreweras} and Proposition \ref{thm:ith_entry_comp}, for a grand total of 122 homomesic statistics. Furthermore, we found theorems on the orbit structure of the maps, chiefly  Theorems \ref{Thm: L-Orbit cardinality}, \ref{Prop:K even orbs} and \ref{thm:Orbit_generators_K}. We also give one open problem (Problem~\ref{prob:pp_LRC}).

This paper is organized as follows. In Section~\ref{sec:method}, we describe in detail our method of searching for potential homomesies. Section~\ref{sec:background} contains background material on homomesy and permutations. Sections~\ref{sec:lehmer} through \ref{sec:krew} contain our main results, namely, homomesies involving one or more related maps.
Each section begins by defining the map(s), followed by any additional results on properties of the maps. It then states as a theorem all the homomesic statistics, organized by theme. Finally, many propositions proving specific homomesies are given, which together, prove the main theorem(s) for the section. Below is a list of the map(s) for each section, along with the number of homomesic statistics from the FindStat database included in the corresponding theorem:

\begin{enumerate}
  \item[\ref{sec:lehmer}] Lehmer code rotation (45 homomesic statistics)
  \item[\ref{sec:comp_rev}] Complement and reverse (22 statistics homomesic for both maps, 5 statistics homomesic for reverse but not complement, and 13 statistics homomesic for complement but not reverse)
  \item[\ref{sec:foata}] Foata bijection and variations (4 maps all having the same single homomesic statistic)
  \item[\ref{sec:krew}] Kreweras complement and inverse Kreweras complement (3 homomesic statistics)
\end{enumerate}

\subsection*{Acknowledgements} The genesis for this project was the Research Community in Algebraic Combinatorics workshop, hosted by ICERM and funded by the NSF.  In addition to thanking ICERM and the organizers of this workshop, we wish to thank the developers of FindStat~\cite{FindStat}, especially moderators Christian Stump and Martin Rubey for their helpful and timely responses to our questions. We also thank the developers of SageMath~\cite{sage} software, which was useful in this research, and the CoCalc~\cite{SMC} collaboration platform. The anonymous referees suggested several improvements to this paper, for which we are grateful to them. We thank Joel Brewster Lewis for the description of the orbits of odd sizes under the Kreweras complement (Proposition~\ref{prop:orbits_of_odd_length}, Proposition~\ref{prop:orbits_of_size_d_odd}, and Theorem ~\ref{thm:orbit_count_odd_sizes}), and we thank Sergi Elizalde for suggesting examples of statistics that exhibit homomesy under the inverse map. JS was supported by a grant from the Simons Foundation/SFARI (527204, JS). 

\section{Summary of methods}
\label{sec:method}
\Findstat~\cite{FindStat} is an online database of combinatorial statistics developed by Chris Berg and Christian Stump in 2011 and   highlighted
as an example of a {fingerprint database} in the Notices of the American Mathematical Society~\cite{fingerprint}. \Findstat is
not only a searchable database which collects information, but also yields dynamic information about connections between combinatorial objects.  \Findstat takes statistics input by a user (via the website \cite{FindStat} or the \sage interface), uses \sage to apply combinatorial maps, and outputs corresponding statistics on other combinatorial objects.
\Findstat has grown expansively to a total of 1787 statistics on 23 combinatorial collections with 249 maps among them (as of April 27, 2022).

For this project, we analyzed all combinations of bijective maps and statistics on one combinatorial collection: permutations.
At the time of this investigation, there were 387 statistics and 19 bijective maps on permutations in \Findstat.  For each map/statistic pair, our empirical investigations either suggested a possible homomesy or provided a counterexample in the form of two orbits with differing averages. We then set about finding proofs for the experimentally identified potential homomesies.  These homomesy results are the main theorems of this paper: Theorems \ref{thm:LC}, \ref{thmboth}, \ref{onlycomp}, \ref{onlyrev}, \ref{thm:foata} and \ref{Thm:Kreweras}.

Thanks to the already existing interface between SageMath \cite{sage} and FindStat, we were able to automatically search for pairs of maps and statistics that exhibited homomesic behavior. For each value of $2 \leq n \leq 6$, we ran the following code to identify potential homomesies:

\begin{figure}[H]
\begin{python}
  sage: from sage.databases.findstat import FindStatMaps, FindStatStatistics # Access to the FindStat methods
  ....: findstat()._allow_execution = True  # To run all the code from Findstat
  ....: for map in FindStatMaps(domain="Cc0001", codomain= "Cc0001"):  # Cc0001 is the Permutation Collection
  ....:     if map.properties_raw().find('bijective') >= 0:  # The map is bijective
  ....:         F = DiscreteDynamicalSystem(Permutations(n), map)   # Fix n ahead of time
  ....:         for stat in FindStatStatistics("Permutations"):
  ....:             if F.is_homomesic(stat):
  ....:                 print(map.id(), stat.id())
\end{python}
\end{figure}

Note that the choice of running the verification of homomesies for permutations of $2$ to $6$ elements is not arbitrary: at $n=6$, the computational results stabilize. We did not find any false positives when using the data from $n=6$.
On the other hand, testing only smaller values of $n$ would have given us many false positives. For example, several statistics in the FindStat database involve the number of occurrences of some permutation patterns of length $5$. These statistics evaluate to $0$ for any permutation of fewer than $5$ elements, which misleadingly makes them appear $0$-mesic if not tested for values of $n$ at least $5$.

In the statements of our results, we refer to statistics and maps by their name as well as their FindStat identifier (ID). There is a webpage on FindStat associated to each statistic or map; for example, the URL for the inversion number, which has FindStat ID 18, is: \url{http://www.findstat.org/StatisticsDatabase/St000018}. We often refer to statistics as ``Statistic 18'' (or simply ``Stat 18'') and similarly for maps.
The FindStat database attributes IDs to statistics and maps sequentially; when we did the investigation, the maximum statistic ID in the FindStat database for a statistic on permutations was 1778, and the maximum map ID for a bijective map on permutations was 241. There were four statistics for which we could not disprove homomesy, because the database did not provide values for them on permutations of at least $5$ items, nor code to evaluate these statistics. Those are Statistics 1168, 1171, 1582 and 1583; they all correspond to the dimension of some vector spaces.

A visual summary of our results is given in Figure~\ref{fig:table}.

\begin{figure}[ht]
  \includegraphics[width=10cm]{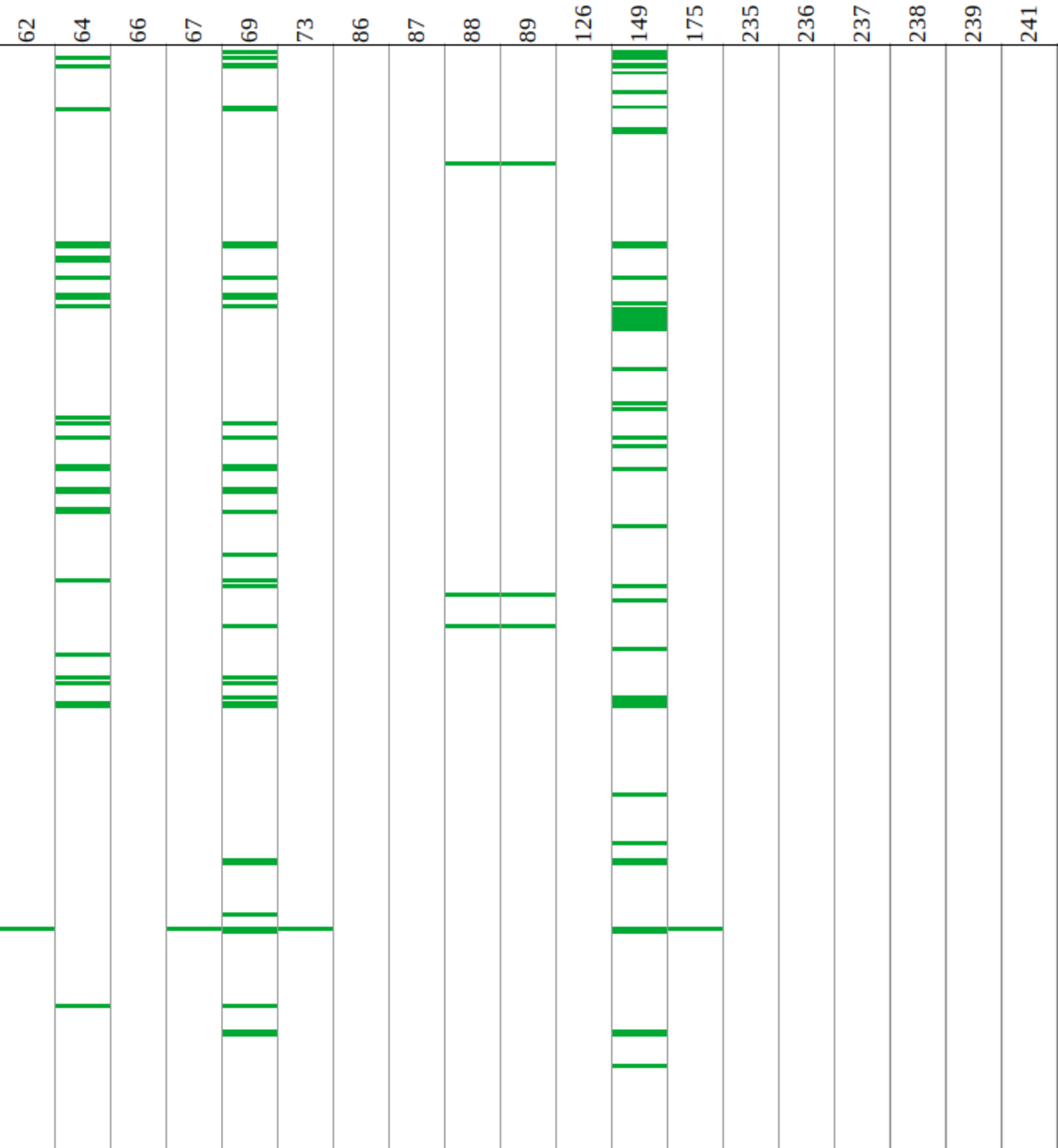}
  \caption{Each column corresponds to one of the 19 bijective maps on permutations stored in FindStat, and is labeled using the map's FindStat identifier.  The rows correspond to the 387 statistics on permutations. Green boxes correspond to the $117$ proven homomesies. The maps that have several homomesic statistics are the reverse map (64), the complement map (69), the Kreweras complement (88) and its inverse (89), and the Lehmer code rotation (149). The Lehmer-code to major-code bijection (62) and its inverse (73), and the Foata bijection (67) and its inverse (175) are also homomesic when paired with the statistic that is computed as the major index minus the number of inversions (1377). }\label{fig:table}
\end{figure}

\section{Background}
\label{sec:background}
This section gives background definitions and properties regarding the two main topics in our title: permutations (Subsection~\ref{sec:permutations}) and homomesy (Subsection~\ref{sec:homomesy}). It also discusses prior work on homomesy for maps on permutations in Subsection~\ref{sec:prior}.

\subsection{Permutations}
\label{sec:permutations}
Permutations are a central object in combinatorics, and many statistics on them are well-studied. We define here a few classical ones. Readers familiar with statistics on permutations may skip this subsection without loss of continuity.
\begin{definition}
  \label{def:basic_stats}
  Let  $[n] = \{1,2,\ldots, n\}$. A \textbf{permutation} $\sigma$ of $[n]$ is a bijection from $[n]$ to $[n]$
  in which the image of $i \in [n]$ is $\sigma_i$. We use the one-line notation, which means we write $\sigma = \sigma_1\sigma_2\ldots\sigma_n$. Permutations form a group called the \textbf{symmetric group}; we write $S_n$ for the set of all permutations of $[n]$.

  For a permutation $\sigma=\sigma_1\sigma_2\ldots\sigma_n$, we say that $(i,j)$ is an \textbf{inversion} of $\sigma$ if $i<j$ and $\sigma_j < \sigma_i$. We write $\Inv(\sigma)$ for the set of inversions in $\sigma$. We say that $(\sigma_i, \sigma_j)$ is an \textbf{inversion pair} of $\sigma$ if $(i, j)$ is an inversion of $\sigma$. This also corresponds to pairs $(\sigma_i, \sigma_j)$ with $\sigma_i > \sigma_j$ and $\sigma_i$ positioned to the left of $\sigma_j$ in $\sigma = \sigma_1\sigma_2 \ldots\sigma_n$. The \textbf{inversion number} of a permutation $\sigma$, denoted $\inv(\sigma)$, is the number of inversions.

  We say that $i$ is a \textbf{descent} exactly when $(i, i+1)$ is an inversion. At times we will instead say $\sigma$ has a descent at $i$. This also corresponds to the indices $i \in [n-1]$ such that $\sigma_i > \sigma_{i+1}$. We write $\Des(\sigma)$ for the set of descents in $\sigma$, and $\des(\sigma)=\#\Des(\sigma)$ for the number of descents.  If $i \in [n-1]$ is not a descent, we say that it is an \textbf{ascent}. We call a \textbf{peak} an ascent that is followed by a descent, and a \textbf{valley} a descent that is followed by an ascent.

  The \textbf{major index} of a permutation is the sum of its descents. We write $\maj(\sigma)$ to denote the major index of the permutation $\sigma$.

  A \textbf{run} in a permutation is a contiguous increasing sequence. Runs in permutations are separated by descents, so the number of runs is one more than the number of descents.
\end{definition}

\begin{example}
  For the permutation $\sigma =  216354$, the inversions are $\{(1,2), (3,4), (3,5), (3,6), (5,6)\}$, and the descents are $\{1,3,5\}$. There are two peaks ($2$ and $4$), and two valleys ($1$ and $3$), and the major index is $1+3+5=9$. The permutation has four runs, here separated with vertical bars: $2|16|35|4$.
\end{example}

\begin{definition}[Patterns in permutations]\label{def:patterns}
  We say that a permutation $\sigma$ of $[n]$ contains the \textbf{pattern} $abc$, with $\{a,b,c\} = \{1,2,3\}$, if there is a triplet $\{i_1 < i_2 < i_3\} \subseteq [n]$ such that $abc$ and $\sigma_{i_1}\sigma_{i_2}\sigma_{i_3}$ are in the same relative order. We call each such triplet an \textbf{occurrence} of the pattern $abc$ in $\sigma$.
\end{definition}

\begin{example}
  The permutation $\sigma=1324$ contains one occurrence of the pattern $213$, since $\sigma_2\sigma_3\sigma_4=324$ is in the same relative order as $213$.

  The permutation $415236$ contains four occurrences of the pattern $312$, because $412$, $413$, $423$ and $523$ all appear in the relative order $312$ in $415236$.
\end{example}

More generally, patterns of any length can be defined as subsequences of a permutation that appear in the same relative order as the pattern.

The definition above is for what we call \textit{classical} patterns. We can refine this notion by putting additional constraints on the triplets of positions that form the occurrences.
\begin{definition}\label{def:consecutive_patterns}
  The \textbf{consecutive pattern} (or \textbf{vincular pattern}) $a-bc$ (resp. $ab-c$) is a pattern in which $c$ occurs right after $b$ (resp. in which $b$ occurs right after $a$). The \textit{classical} pattern $abc$ corresponds to the pattern $a-b-c$.
\end{definition}
For example, the pattern $13-2$ means that we need to find three entries in the permutation such that
\begin{itemize}
  \item The smallest is immediately followed by the largest;
  \item The median entry comes after the smallest and the largest, but not necessarily immediately after.
\end{itemize}

\begin{example}
  The permutation $415236$ contains two occurrences of the pattern $3-12$, because $423$ and $523$ appear in the relative order $312$ in $415236$, with the last entries being adjacent in the permutation. On the other hand, $\{1,2,5\}$ is an occurrence of the pattern $312$  (since $413$ appears in the right order) that is not an occurrence of $3-12$.

  Inversions correspond to the classical pattern $21$ and to the consecutive pattern $2-1$, whereas descents correspond to the consecutive pattern $21$.
\end{example}

Unless otherwise specified, ``patterns'' refer to classical patterns.

\subsection{Homomesy}
\label{sec:homomesy}
First defined in 2015 by James~Propp and Tom~Roby~\cite{PR2015}, homomesy
relates the average of a given statistic over some set, to the averages over orbits formed by a bijective map. Note that in this paper, we use the word \textbf{map} instead of function or action, to match with the terminology in FindStat.

\begin{definition}
  Given a finite set $S$, an element $x\in S$, and an invertible map $\mathcal{X}:S \rightarrow S$, the \textbf{orbit} $\mathcal{O}(x)$ is the sequence consisting of $y_i\in S$ such that $y_i=\mathcal{X}^i(x) \textrm{ for some } i\in\mathbb{Z}$. That is, $\mathcal{O}(x)$ contains the elements of $S$ reachable from $x$ by applying $\mathcal{X}$ or $\mathcal{X}^{-1}$ any number of times. The \textbf{size} of an orbit is the number of unique elements in the sequence, denoted $|\mathcal{O}(x)|$. The \textbf{order} of $\mathcal{X}$ is the least common multiple of the sizes of the orbits.
\end{definition}

\begin{definition}[\cite{PR2015}]
  Given a finite set $S$, a bijective map $\mathcal{X}:S \rightarrow S$, and a statistic $f:S \rightarrow \mathbb{Z}$, we say that $(S, \mathcal{X}, f)$ exhibits \textbf{homomesy} if there exists $c \in \mathbb{Q}$  such that for every orbit $\mathcal{O}$
  \begin{center}
    $\displaystyle\frac{1}{|\mathcal{O}|} \sum_{x \in \mathcal{O}} f(x) = c$
  \end{center}
  where $|\mathcal{O}|$ denotes the size of $\mathcal{O}$. If such a $c$ exists, we say the triple is \textbf{$c$-mesic}.
\end{definition}

When the set $S$ is clear from context, we may say a statistic is \textbf{homomesic with respect to $\mathcal{X}$} rather than explicitly stating the triple. When the map $\mathcal{X}$ is also implicit, we may simply say a statistic is \textbf{homomesic}.
Homomesy may be generalized beyond the realms of bijective actions and integer statistics, but we will not address these generalizations in this paper.

\begin{remark}\label{global_avg} Note that whenever a statistic is homomesic, the orbit-average value is indeed the global average.
\end{remark}

We end this subsection with two general lemmas about homomesy that will be used later. In the interest of making this paper self-contained, we include proofs, though the results are well-known.

\begin{lem} \label{lem:inverse}
  If a triple $(S,\mathcal{X},f)$ is $c$-mesic, then so is $(S,\mathcal{X}^{-1},f). $
\end{lem}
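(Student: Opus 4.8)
The plan is to show that the map $\mathcal{X}$ and its inverse $\mathcal{X}^{-1}$ partition the finite set $S$ into exactly the same collection of orbits, and then invoke the definition of homomesy directly. The key observation is that the orbit of any element $x$ under $\mathcal{X}$ is defined in terms of \emph{both} $\mathcal{X}$ and $\mathcal{X}^{-1}$: indeed, $\mathcal{O}(x) = \{y \in S \mid y = \mathcal{X}^i(x) \text{ for some } i \in \mathbb{Z}\}$ already includes negative powers of $\mathcal{X}$, which are precisely the positive powers of $\mathcal{X}^{-1}$. So I would first note that the set $\{\mathcal{X}^i(x) \mid i \in \mathbb{Z}\}$ is literally identical to $\{(\mathcal{X}^{-1})^i(x) \mid i \in \mathbb{Z}\}$, since reindexing $i \mapsto -i$ is a bijection of $\mathbb{Z}$. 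Hence the orbit decomposition of $S$ under $\mathcal{X}^{-1}$ coincides with that under $\mathcal{X}$.

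Once the orbits are shown to be identical as sets, the homomesy conclusion is immediate. First I would fix an arbitrary orbit $\mathcal{O}$ of $\mathcal{X}^{-1}$, observe that it is also an orbit of $\mathcal{X}$, and then write
\[
\frac{1}{|\mathcal{O}|} \sum_{x \in \mathcal{O}} f(x) = c,
\]
which holds because $(S, \mathcal{X}, f)$ is assumed $c$-mesic and the sum ranges over exactly the same finite set of elements. Since $\mathcal{O}$ was an arbitrary orbit of $\mathcal{X}^{-1}$ and the constant $c$ does not depend on the choice of orbit, this establishes that $(S, \mathcal{X}^{-1}, f)$ is $c$-mesic as well.

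I do not anticipate a genuine obstacle here, as the result follows essentially from unwinding the definition of an orbit. The only subtle point worth stating carefully is that the definition of $\mathcal{O}(x)$ given earlier in the excerpt is already symmetric in $\mathcal{X}$ and $\mathcal{X}^{-1}$; one should make explicit that applying $\mathcal{X}^{-1}$ (and its inverse, namely $\mathcal{X}$) reaches the same set of elements, so that no orbit is split or merged when passing to the inverse map. The cleanest way to present this is to prove the single set equality of orbits and let the averaging identity follow with no additional computation.
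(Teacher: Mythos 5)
Your proof is correct and follows essentially the same route as the paper: both arguments rest on the observation that $\mathcal{X}$ and $\mathcal{X}^{-1}$ have identical orbits as sets (the paper states this in one line; you spell out the reindexing $i \mapsto -i$), after which the equality of orbit-averages is immediate.
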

\begin{proof}
  A bijective map and its inverse have exactly the same elements in their orbits, thus the orbit-averages for a given statistic are also equal.
\end{proof}

\begin{lem}
  \label{lem:sum_diff_homomesies}
  For a given action, linear combinations of homomesic statistics are also homomesic.
\end{lem}
\begin{proof}
  Suppose $f,g$ are homomesic statistics with respect to a bijective map $\mathcal{X}:S \rightarrow S$, where $S$ is a finite set. So $\displaystyle\frac{1}{|\mathcal{O}|} \sum_{x \in \mathcal{O}} f(x) = c$ and $\displaystyle\frac{1}{|\mathcal{O}|} \sum_{x \in \mathcal{O}} g(x) = d$ for some $c,d\in\mathbb{C}$. Let $a,b\in\mathbb{C}$. Then
  \[\displaystyle\frac{1}{|\mathcal{O}|} \sum_{x \in \mathcal{O}} (af+bg)(x) = a\displaystyle\frac{1}{|\mathcal{O}|} \sum_{x \in \mathcal{O}} f(x) + b\displaystyle\frac{1}{|\mathcal{O}|}  \sum_{x \in \mathcal{O}} g(x)=ac+bd.\]
  Thus, $af+bg$ is homomesic with respect to $\mathcal{X}$ with average value $ac+bd$.
\end{proof}

\subsection{Prior work on homomesy on permutations}
\label{sec:prior}
Since the homomesy phenomenon was defined, mathematicians have looked for it on natural combinatorial objects. Permutations indeed arose as such a structure, and some recent work initiated the study of homomesic statistics on permutations. Michael La\,Croix and Tom Roby \cite{LaCroixRoby} focused on the statistic counting the number of fixed points in a permutation, while they looked at compositions of the first fundamental transform of Foata with what they call ``dihedral actions'', a few maps that include the complement, the inverse and the reverse, all discussed in Section \ref{sec:comp_rev}. Simultaneously, Elizabeth Sheridan-Rossi considered a wider range of statistics for the same maps, as well as for the compositions of dihedral actions with the Foata bijection \cite{Sheridan-Rossi}.

Our approach differs from the previous studies by being more systematic. As previously mentioned, we proved or disproved homomesy for all the 7,345 combinations of a bijective map and a statistic on permutations that were in the FindStat database. It is worth noting that the interesting maps described in \cite{LaCroixRoby} and \cite{Sheridan-Rossi} are compositions of FindStat maps, but they are not listed as single maps in FindStat. We did not consider compositions of FindStat maps; this would be an interesting avenue for further study.

\section{Lehmer code rotation}
\label{sec:lehmer}
An important way to describe a permutation is through its inversions. The Lehmer code of a permutation (defined below) completely characterizes it, so we have a bijection between Lehmer codes and permutations. 

In this section, after describing the Lehmer code, we define the Lehmer code rotation map. We then state Theorem~\ref{thm:LC} which lists
the 45 statistics in FindStat that are homomesic for this map. Before proving this theorem, in Subsection~\ref{subsec:lehmer_orbit} we describe the orbits of the Lehmer code rotation and make connections with actions on other combinatorial objects in Remark~\ref{remark:lehmer_connections}. The homomesies are then proved, starting with statistics related to inversions (Subsection \ref{subsec:lehmer_inv}), then those related to descents (Subsection \ref{subsec:lehmer_des}), to permutation patterns (Subsection \ref{subsec:lehmer_pp}), and finishing with a few other statistics (Subsection \ref{subsec:lehmer_misc}). We also give one open problem related to homomesic permutation patterns for the Lehmer code rotation (Problem~\ref{prob:pp_LRC}).

\begin{definition}
  \label{def:lehmercode}
  The \textbf{Lehmer code} of a permutation $\sigma\in S_n$ is:
  \[L(\sigma )=(L(\sigma )_{1},\ldots, L(\sigma )_{n})\quad {\text{where}}\quad L(\sigma )_{i}=\#\{j>i\mid \sigma _{j}<\sigma _{i}\}.\]
\end{definition}
It is well known (see for example \cite[p.12]{Knuth_AOCP3}), that there is a bijection between tuples of length $n$ whose entries are integers between $0$ and $n-i$ at position $i$ and permutations. Hence, the Lehmer code uniquely defines a permutation.

\begin{example}
  The Lehmer code of the permutation 31452 is  $L(31452)= (2,0,1,1,0)$, whereas the Lehmer code of $42513$ is $L(42513) = (3,1,2,0,0)$.
\end{example}

Since the entries of the Lehmer code count the number of inversions that start at each entry of the permutation, the following observation follows:
\begin{prop}\label{prop:LC_num_inv_is_sum}
  The number of inversions in the permutation $\sigma$ is given by $\sum_{i=1}^n L(\sigma)_i$.
\end{prop}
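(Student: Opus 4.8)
The plan is to prove this by directly partitioning the set of inversions according to their first coordinate. Recall from Definition~\ref{def:basic_stats} that $(i,j)$ is an inversion of $\sigma$ precisely when $i < j$ and $\sigma_j < \sigma_i$, and that $\inv(\sigma) = \#\Inv(\sigma)$. The crucial observation, already foreshadowed in the text preceding the statement, is that the Lehmer code entry $L(\sigma)_i = \#\{j > i \mid \sigma_j < \sigma_i\}$ counts exactly those inversions whose first coordinate equals $i$.

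First I would note that every inversion $(i,j) \in \Inv(\sigma)$ has a uniquely determined first coordinate $i \in [n]$, so the value of $i$ sorts the inversions into disjoint classes. Writing $A_i = \{(i,j) : i < j \le n,\ \sigma_j < \sigma_i\}$ for the set of inversions with first coordinate $i$, we obtain the disjoint decomposition $\Inv(\sigma) = \bigsqcup_{i=1}^n A_i$. Then I would observe that the projection $(i,j) \mapsto j$ identifies $A_i$ with the set $\{j > i \mid \sigma_j < \sigma_i\}$, so that $\#A_i = L(\sigma)_i$.

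Summing cardinalities over the disjoint union then yields $\inv(\sigma) = \#\Inv(\sigma) = \sum_{i=1}^n \#A_i = \sum_{i=1}^n L(\sigma)_i$, as claimed; the index may run all the way to $n$ with no harm, since $A_n = \emptyset$ and $L(\sigma)_n = 0$. There is essentially no obstacle in this argument: the entire content is the bookkeeping identity that counting inversions by their left endpoint is the same as summing the Lehmer code, so the only point requiring care is confirming that the classes $A_i$ are genuinely pairwise disjoint and that they exhaust $\Inv(\sigma)$, both of which are immediate from the fact that each inversion has a single well-defined first coordinate.
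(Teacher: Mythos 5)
Your proof is correct and matches the paper's reasoning exactly: the paper treats this proposition as an immediate observation, noting in the preceding sentence that each Lehmer code entry $L(\sigma)_i$ counts the inversions starting at position $i$, which is precisely your partition of $\Inv(\sigma)$ by first coordinate. Your write-up simply makes the disjoint-union bookkeeping explicit; there is no gap and no difference in approach.
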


\begin{definition}
  The \textbf{Lehmer code rotation} (FindStat map 149) is a map that sends a permutation $\sigma$ to the unique permutation $\tau$ (of the same set) such that every entry in the Lehmer code of $\tau$ is cyclically (modulo $n+1-i$) one larger than the Lehmer code of $\sigma$. In symbols:
  \begin{align*}
    \L :  \sigma & \mapsto \tau                                     \\
    L(\sigma)_i  & \mapsto L(\tau)_i =  L(\sigma)_i+1 \mod (n-i+1).
  \end{align*}
\end{definition}
An example is illustrated in Figure \ref{fig:LCR}.

\begin{figure}
  \centering
  \includegraphics{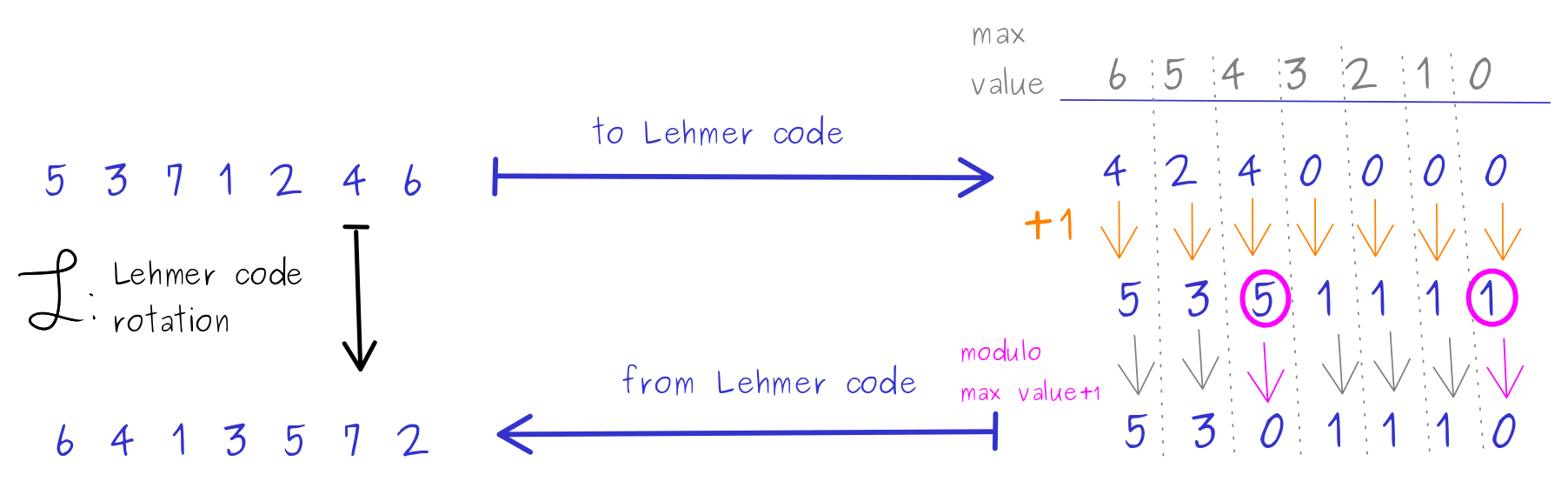}
  \caption{The Lehmer code rotation applied on the permutation $5371246$ yields the permutation $6413572$. The step-by-step process is illustrated by this picture.}\label{fig:LCR}
\end{figure}

\begin{example}
  The permutation $\sigma = 31452$ has Lehmer code $L(\sigma) = (2,0,1,1,0)$. Hence, \[L(\L(\sigma)) = (2+1 \mod 5,\ 0+1 \mod 4,\ 1+1\mod 3,\ 1+1\mod 2,\ 0+1 \mod 1) = (3,1,2,0,0).\] Because $(3,1,2,0,0)$ is the Lehmer code of the permutation $42513$, $\L(31452) = 42513$. \end{example}

\begin{remark}
\label{remark:lehmer_connections}
Despite its presence in FindStat, we could not find the Lehmer code rotation map in the literature. However, we did find six similar maps in a paper by Vincent Vajnovszki \cite{vajnovszki}. Although those were not in the FindStat database, we tested them and found they did not exhibit interesting homomesies.

Also, the Lehmer code rotation on permutations is equivalent to the toggle group action of \emph{rowmotion}~\cite{SW2012} on the poset constructed as the disjoint union of chains of $i$ elements for $1\leq i\leq n-1$. As noted by Martin Rubey (personal communication), the distributive lattice of order ideals of this poset forms a $\omega$-sorting order in the sense of \cite{Armstrong2009} with sorting word $\omega=[1,\ldots,n,1,\ldots,n-1,1,\ldots,n-2,\ldots,1,2,1]$.
\end{remark}

The main theorem of this section is the following.
\begin{thm}\label{thm:LC}
  The Lehmer code rotation map exhibits homomesy for the following $45$ statistics found in the FindStat database:
  \begin{itemize}
    \item Statistics related to inversions:
          \begin{itemize} \rm
            \item \hyperref[18_246_LC]{\Stat ~$18$}: The number of inversions of a permutation $(${\small average: $\frac{n(n-1)}{4}$}$)$
            \item \hyperref[18_246_LC]{\Stat ~$246$}: The number of non-inversions of a permutation $(${\small average: $\frac{n(n-1)}{4}$}$)$
            \item \hyperref[495_836_837_LC]{\Stat ~$495$}: The number of inversions of distance at most $2$ of a permutation $(${\small average: $\frac{2n-3}{2}$}$)$
            \item \hyperref[54_1556_1557_LC]{\Stat ~$1556$}: The number of inversions of the third entry of a permutation $(${\small average: $\frac{n-3}{2}$}$)$
            \item \hyperref[54_1556_1557_LC]{\Stat ~$1557$}: The number of inversions of the second entry of a permutation $(${\small average: $\frac{n-2}{2}$}$)$
          \end{itemize}
    \item Statistics related to descents:
          \begin{itemize} \rm
            \item \hyperref[4_21_245_833_LC]{\Stat ~$4$}: The major index of a permutation $(${\small average: $\frac{n(n-1)}{4}$}$)$
            \item \hyperref[4_21_245_833_LC]{\Stat ~$21$}: The number of descents of a permutation$(${\small average: $\frac{n-1}{2}$}$)$
            \item \hyperref[23_353_365_366_LC]{\Stat ~$23$}: The number of inner peaks of a permutation $(${\small average: $\frac{n-2}{3}$}$)$
            \item \hyperref[35_92_99_483_834_LC]{\Stat ~$35$}: The number of left outer peaks of a permutation $(${\small average: $\frac{2n-1}{6}$}$)$
            \item \hyperref[35_92_99_483_834_LC]{\Stat ~$92$}: The number of outer peaks of a permutation $(${\small average: $\frac{n+1}{3}$}$)$
            \item \hyperref[35_92_99_483_834_LC]{\Stat ~$99$}: The number of valleys of a permutation, including the boundary $(${\small average: $\frac{n+1}{3}$}$)$
            \item \hyperref[4_21_245_833_LC]{\Stat ~$245$}: The number of ascents of a permutation $(${\small average: $\frac{n-1}{2}$}$)$
            \item \hyperref[23_353_365_366_LC]{\Stat ~$353$}: The number of inner valleys of a permutation $(${\small average: $\frac{n-2}{3}$}$)$
            \item \hyperref[23_353_365_366_LC]{\Stat ~$365$}: The number of double ascents of a permutation $(${\small average: $\frac{n-2}{6}$}$)$
            \item \hyperref[23_353_365_366_LC]{\Stat ~$366$}: The number of double descents of a permutation$(${\small average: $\frac{n-2}{6}$}$)$
            \item \hyperref[325_470_LC]{\Stat ~$470$:} The number of runs in a permutation $(${\small average: $\frac{n+1}{2}$}$)$
            \item \hyperref[35_92_99_483_834_LC]{\Stat ~$483$}: The number of times a permutation switches from increasing to decreasing or decreasing to increasing $(${\small average: $\frac{2n-4}{3}$}$)$
            \item \hyperref[638_LC]{\Stat ~$638$}: The number of up-down runs of a permutation $(${\small average: $\frac{4n+1}{6}$}$)$
            \item \hyperref[4_21_245_833_LC]{\Stat ~$833$}: The comajor index of a permutation $(${\small average: $\frac{n(n-1)}{4}$}$)$
            \item \hyperref[35_92_99_483_834_LC]{\Stat ~$834$}: The number of right outer peaks of a permutation$(${\small average: $\frac{2n-1}{6}$}$)$
            \item \hyperref[495_836_837_LC]{\Stat ~$836$}: The number of descents of distance $2$ of a permutation $(${\small average: $\frac{n-2}{2}$}$)$
            \item \hyperref[495_836_837_LC]{\Stat ~$837$}: The number of ascents of distance $2$ of a permutation $(${\small average: $\frac{n-2}{2}$}$)$
            \item \hyperref[1114_1115_LC]{\Stat ~$1114$}: The number of odd descents of a permutation $(${\small average: $ \frac{1}{2}\lceil\frac{n-1}{2}\rceil$}$)$
            \item \hyperref[1114_1115_LC]{\Stat ~$1115$}: The number of even descents of a permutation $(${\small average: $\frac{1}{2}\lfloor\frac{n-1}{2} \rfloor$}$)$
          \end{itemize}

    \item Statistics related to permutation patterns:
          \begin{itemize} \rm
            \item \hyperref[355_to_360_LC]{\Stat ~$355$}: The number of occurrences of the pattern $21-3$  $(${\small average: $\frac{(n-1)(n-2)}{12}$ }$)$ 
            \item \hyperref[355_to_360_LC]{\Stat ~$356$}: The number of occurrences of the pattern $13-2$ $(${\small average: $\frac{(n-1)(n-2)}{12}$ }$)$ 
            \item \hyperref[355_to_360_LC]{\Stat ~$357$}: The number of occurrences of the pattern $12-3$ $(${\small average: $\frac{(n-1)(n-2)}{12}$ }$)$ 
            \item \hyperref[355_to_360_LC]{\Stat ~$358$}: The number of occurrences of the pattern $31-2$ $(${\small average: $\frac{(n-1)(n-2)}{12}$ }$)$ 
            \item \hyperref[355_to_360_LC]{\Stat ~$359$}: The number of occurrences of the pattern $23-1$ $(${\small average: $\frac{(n-1)(n-2)}{12}$ }$)$ 
            \item \hyperref[355_to_360_LC]{\Stat ~$360$}: The number of occurrences of the pattern $32-1$ $(${\small average: $\frac{(n-1)(n-2)}{12}$ }$)$ 
            \item \hyperref[423_435_437_LC]{\Stat ~$423$}: The number of occurrences of the pattern $123$ or of the pattern $132$ in a permutation  $(${\small average: $\frac{1}{3}\binom{n}{3}$ }$)$ 
            \item \hyperref[423_435_437_LC]{\Stat ~$435$}: The number of occurrences of the pattern $213$ or of the pattern $231$ in a permutation  $(${\small average: $\frac{1}{3}\binom{n}{3}$ }$)$ 
            \item \hyperref[423_435_437_LC]{\Stat ~$437$}: The number of occurrences of the pattern $312$ or of the pattern $321$ in a permutation  $(${\small average: $\frac{1}{3}\binom{n}{3}$ }$)$ 
            \item \hyperref[709_LC]{\Stat ~$709$}: The number of occurrences of $14-2-3$ or $14-3-2$  $(${\small average: $\frac{1}{12}\binom{n-1}{3}$ }$)$ 
            \item \hyperref[1084_LC]{\Stat ~$1084$}: The number of occurrences of the vincular pattern $|1-23$ in a permutation   $(${\small average: $\frac{n-2}{6}$ }$)$ 
          \end{itemize}
    \item Other statistics:
          \begin{itemize} \rm
            \item \hyperref[7_991_LC]{\Stat ~$7$}: The number of saliances (right-to-left maxima) of the permutation  $(${\small average: $H_n = \sum_{i=1}^n \frac{1}{i}$}~$)$ 
            \item \hyperref[20_LC]{\Stat ~$20$}: The rank of the permutation (among the permutations, in lexicographic order)  $(${\small average: $\frac{n!+1}{2}$ }$)$ 
            \item \hyperref[54_1556_1557_LC]{\Stat ~$54$}: The first entry of the permutation $(${\small average: $\frac{n+1}{2}$ }$)$ 
            \item \hyperref[325_470_LC]{\Stat ~$325$}: The width of a tree associated to a permutation $(${\small average: $\frac{n+1}{2}$ }$)$ 
            \item \hyperref[692_796_LC]{\Stat ~$692$}: Babson and Steingrímsson's statistic stat of a permutation  $(${\small average: $\frac{n(n-1)}{4}$ }$)$ 
            \item \hyperref[692_796_LC]{\Stat ~$796$}: Babson and Steingrímsson's statistic stat' of a permutation  $(${\small average: $\frac{n(n-1)}{4}$ }$)$ 
            \item \hyperref[7_991_LC]{\Stat ~$991$}: The number of right-to-left minima of a permutation $(${\small average: $H_n = \sum_{i=1}^n \frac{1}{i}$ }$)$ 
            \item \hyperref[1377_1379_LC]{\Stat ~$1377$}: The major index minus the number of inversions of a permutation $(${\small average: $0$ }$)$ 
            \item \hyperref[1377_1379_LC]{\Stat ~$1379$}: The number of inversions plus the major index of a permutation $(${\small average: $\frac{n(n-1)}{2}$ }$)$ 
            \item \hyperref[1640_LC]{\Stat ~$1640$}: The number of ascent tops in the permutation such that all smaller elements appear before $(${\small average: $1-\frac{1}{n}$ }$)$ 
          \end{itemize}
  \end{itemize}
\end{thm}

\subsection{Orbit structure}
\label{subsec:lehmer_orbit}
Before beginning the proof of Theorem~\ref{thm:LC}, we show a few results on the orbit structure of the Lehmer code rotation.

\begin{thm}[Orbit cardinality]\label{Thm: L-Orbit cardinality} All orbits of the Lehmer code rotation have size $\lcm(1,2,\ldots, n)$.
\end{thm}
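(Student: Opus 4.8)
The plan is to exploit the fact that the Lehmer code rotation acts \emph{coordinatewise} on the Lehmer code, so that the dynamics decouples into $n$ independent cyclic rotations and the orbit size is forced by an elementary least-common-multiple computation.

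First I would recall, from the bijection noted after Definition~\ref{def:lehmercode}, that a permutation $\sigma \in S_n$ is determined by its Lehmer code $(L(\sigma)_1,\ldots,L(\sigma)_n)$, where the $i$-th entry ranges over $\{0,1,\ldots,n-i\}$, a set of size $n-i+1$. Under this identification, the definition of $\L$ says precisely that applying $\L$ replaces each coordinate $L(\sigma)_i$ by $L(\sigma)_i+1 \bmod (n-i+1)$, independently of the other coordinates. By a straightforward induction on $k$, iterating the map $k$ times gives
\[
L(\L^k(\sigma))_i \;=\; L(\sigma)_i + k \bmod (n-i+1)
\qquad\text{for every } i \in [n].
\]

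Next I would determine the orbit size of an arbitrary $\sigma$ as the least positive integer $k$ with $\L^k(\sigma)=\sigma$. Since the Lehmer code is a bijection, this holds exactly when $L(\sigma)_i + k \equiv L(\sigma)_i \pmod{n-i+1}$ for all $i$, i.e.\ when $(n-i+1)\mid k$ for all $i\in[n]$. As $i$ ranges over $[n]$, the moduli $n-i+1$ run through $1,2,\ldots,n$, so this condition is equivalent to $\lcm(1,2,\ldots,n)\mid k$. The smallest such positive $k$ is $\lcm(1,2,\ldots,n)$. Crucially, the starting value $L(\sigma)_i$ cancels out entirely in the congruence above, so this period does not depend on $\sigma$; hence every orbit has size exactly $\lcm(1,2,\ldots,n)$.

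There is no genuine obstacle here beyond making the coordinatewise description precise. The only point requiring a moment's care is confirming that each single-coordinate rotation $c\mapsto c+1 \bmod m$ has period exactly $m = n-i+1$, rather than a proper divisor; this is immediate because adding $1$ repeatedly modulo $m$ is a full $m$-cycle, visiting all $m$ residues before repeating.
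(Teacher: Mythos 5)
Your proof is correct and follows essentially the same route as the paper's: identify permutations with their Lehmer codes, observe that $\L^k$ adds $k$ to the $i$-th coordinate modulo $n-i+1$, and conclude that the orbit size is $\lcm(1,2,\ldots,n)$ independently of the starting permutation. Your write-up is slightly more explicit than the paper's about why each coordinate has period exactly $n-i+1$ and why the period is independent of $\sigma$, but the argument is the same.
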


\begin{proof}
  Since there is a bijection between permutations and their Lehmer codes, one can look at the orbit of the map directly on the Lehmer code. We know that $L(\L(\sigma))_i = L(\sigma)_i + 1 \mod (n+1-i)$. Therefore, the minimum $k >0$ such that $L(\sigma)_i = L(\L^k(\sigma))_i$ is $k = n+1-i$. Looking at all values $1 \leq i \leq n$, we need $\lcm(1,2,3,\ldots, n)$ iterations of $\L$ to get back to the original Lehmer code.
\end{proof}

The following four lemmas about entries of the Lehmer code over an orbit will be useful to prove homomesies related to inversions and descents.
\begin{lem}\label{lem:equioccurrences_Lehmer_code}
  Over one orbit of the Lehmer code rotation, the numbers $\{0, 1, 2, \ldots, n-i\}$ all appear equally often as $L(\sigma)_i$, the $i$-th entry of the Lehmer code.
\end{lem}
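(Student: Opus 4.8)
The plan is to analyze the orbit directly on the Lehmer code, exactly as in the proof of Theorem~\ref{Thm: L-Orbit cardinality}. Fix a position $i \in [n]$. The only thing that matters for this lemma is the behavior of the single coordinate $L(\sigma)_i$ as we iterate $\L$. By the definition of the map, this coordinate evolves independently of the others: at each application of $\L$ it increases by $1$ modulo $n+1-i$. So I would set $m = n+1-i$ and track the sequence $L(\sigma)_i,\ L(\L(\sigma))_i,\ L(\L^2(\sigma))_i,\ \ldots$ inside $\mathbb{Z}/m\mathbb{Z}$.

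The key observation is that this coordinate sequence is simply the orbit of a single element under the ``add $1$ mod $m$'' map on $\mathbb{Z}/m\mathbb{Z}$, which is a full cyclic rotation of length $m = n+1-i$. First I would note that by Theorem~\ref{Thm: L-Orbit cardinality} the whole orbit has size $N = \lcm(1,2,\ldots,n)$, and that $m = n+1-i$ divides $N$. Writing $N = m \cdot q$, as we run through the $N$ permutations of the orbit, the $i$-th Lehmer coordinate cycles through the complete residue system $\{0,1,\ldots,m-1\} = \{0,1,\ldots,n-i\}$ exactly $q$ times, hitting each value the same number of times, namely $q = N/m = N/(n+1-i)$. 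Since $\{0,1,\ldots,n-i\}$ is precisely the set of allowed values for the $i$-th entry of a Lehmer code, this is exactly the claim: each of these values occurs equally often (with multiplicity $N/(n+1-i)$) as $L(\sigma)_i$ over the orbit.

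The one genuine point to be careful about is that the orbit as a set of permutations has size exactly $N = \lcm(1,2,\ldots,n)$, which may be a proper multiple of $m = n+1-i$; one must not conflate ``the period of the $i$-th coordinate'' ($m$) with ``the size of the orbit'' ($N$). The coordinate returns to its start after $m$ steps, but the full orbit only closes after $N$ steps, so the coordinate runs through its cycle $q = N/m$ full times. I would make this divisibility explicit, since it is what guarantees that no value is over- or under-counted and that the count $N/(n+1-i)$ is an integer. This is the main (and only) obstacle, and it is mild. No substantial computation is required beyond this counting argument.
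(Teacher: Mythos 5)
Your proposal is correct and follows essentially the same argument as the paper: track the $i$-th Lehmer coordinate via $L(\L^j(\sigma))_i = L(\sigma)_i + j \bmod (n+1-i)$, invoke Theorem~\ref{Thm: L-Orbit cardinality} for the orbit size $\lcm(1,\ldots,n)$, and conclude each value in $\{0,\ldots,n-i\}$ appears exactly $\lcm(1,\ldots,n)/(n+1-i)$ times. Your explicit attention to the divisibility of the orbit size by the coordinate's period $n+1-i$ is a point the paper leaves implicit, but it is the same proof.
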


\begin{proof}
  We know that $L(\L(\sigma))_i = L(\sigma)_i + 1 \mod (n+1-i)$, which also means that  $L(\L^j(\sigma))_i = L(\sigma)_i + j \mod (n+1-i)$. Using the fact that each orbit has size $\lcm(1, \ldots, n)$, the $i$-th entry of the Lehmer code has each value in $\{0, \ldots, n-i\}$ appearing exactly $\frac{\lcm(1,\ldots,n)}{n+1-i}$ times.
\end{proof}

\begin{lem}\label{lem:equioccurrences_of_pairs_Lehmer_code}
  Over one orbit of the Lehmer code rotation, the pairs $\{(a,b) \mid a \in \{0,\ldots, n-i\}, b \in \{0,\ldots, n-i-1\}\}$ all appear equally often as $(L(\sigma)_i, L(\sigma)_{i+1})$.  That is, pairs of possible adjacent entries are all equally likely over any given orbit.
\end{lem}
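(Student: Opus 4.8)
The plan is to reduce everything to the explicit coordinate-wise description of the map and then invoke the Chinese Remainder Theorem. As in the proof of Lemma~\ref{lem:equioccurrences_Lehmer_code}, I would start from the formula $L(\L^j(\sigma))_i = L(\sigma)_i + j \bmod (n+1-i)$, applied simultaneously at positions $i$ and $i+1$. Writing $a_0 = L(\sigma)_i$ and $b_0 = L(\sigma)_{i+1}$, the pair of adjacent Lehmer-code entries after $j$ steps is $\bigl(a_0 + j \bmod m_i,\ b_0 + j \bmod m_{i+1}\bigr)$, where $m_i = n+1-i$ and $m_{i+1} = n-i$. So the whole question is how the single index $j$ distributes over the product $\mathbb{Z}/m_i \times \mathbb{Z}/m_{i+1}$ as $j$ runs through a full orbit.

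The key observation, and the one extra ingredient beyond the previous lemma, is that the two moduli $m_i = n+1-i$ and $m_{i+1} = n-i$ are \emph{consecutive integers}, hence coprime: $\gcd(n+1-i, n-i) = 1$. By the Chinese Remainder Theorem, the map $j \mapsto (j \bmod m_i,\ j \bmod m_{i+1})$ is therefore a bijection from $\mathbb{Z}/(m_i m_{i+1})$ onto $\mathbb{Z}/m_i \times \mathbb{Z}/m_{i+1}$. In other words, the joint period of the two coordinates is exactly $m_i m_{i+1}$, and over one such period every residue pair is attained exactly once.

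I would then combine this with the orbit size. By Theorem~\ref{Thm: L-Orbit cardinality} the orbit length is $\ell = \lcm(1,2,\ldots,n)$. Since $m_i$ and $m_{i+1}$ are both at most $n$ and are coprime, their product $m_i m_{i+1} = \lcm(m_i, m_{i+1})$ divides $\ell$. Hence as $j$ ranges over $\{0,1,\ldots,\ell-1\}$, each residue pair in $\mathbb{Z}/m_i \times \mathbb{Z}/m_{i+1}$ occurs exactly $\ell/(m_i m_{i+1})$ times. Finally, translation by the fixed shift $(a_0,b_0)$ is a bijection of the product set onto itself, so the actual pairs $(L(\sigma)_i, L(\sigma)_{i+1})$ are equidistributed as well: every $(a,b)$ with $a \in \{0,\ldots,n-i\}$ and $b \in \{0,\ldots,n-i-1\}$ appears exactly $\ell/\bigl((n+1-i)(n-i)\bigr)$ times over the orbit.

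I do not expect a genuine obstacle here; the only point requiring care — and the heart of the argument — is recognizing the coprimality of the consecutive moduli, which is precisely what licenses the CRT step and distinguishes this lemma from the single-coordinate Lemma~\ref{lem:equioccurrences_Lehmer_code}. One should also double-check the divisibility claim $m_i m_{i+1} \mid \lcm(1,\ldots,n)$, but this is immediate from coprimality since each factor is at most $n$.
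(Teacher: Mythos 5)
Your proof is correct and follows essentially the same route as the paper's: both rest on the coprimality of the consecutive moduli $n-i+1$ and $n-i$, so that iterating $\L$ cycles through all $(n-i+1)(n-i)$ pairs of adjacent Lehmer-code entries exactly once per joint period. Your write-up merely makes explicit two points the paper leaves implicit — the Chinese Remainder Theorem behind the equidistribution and the divisibility $(n-i+1)(n-i) \mid \lcm(1,\ldots,n)$ — which is a welcome clarification but not a different argument.
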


\begin{proof}
  Let $(L(\sigma)_i,L(\sigma)_{i+1}) = (a,b)$. Then $\Big(L(\L(\sigma))_i, L(\L(\sigma))_{i+1}\Big) = \Big(a+1 \mod (n-i+1), b+1 \mod (n-i)\Big)$.  Since $n-i$ and $n-i+1$ are coprime, the successive application of $\L$  spans all the possibilities for $(a,b)$ exactly once before returning to $(a,b)$ in $(n-i)(n-i+1)$ steps.
\end{proof}

The latter statement can be expanded to qualify the independence of non-adjacent entries, as explained by the next two lemmas.
\begin{lem}\label{lem:equioccurrences_of_distant_pairs_Lehmer_code}
  Over one orbit of the Lehmer code rotation, the pairs $\{(a,b) \mid a \in \{0,\ldots, n-i\}, b \in \{0,\ldots, n-j\}\}$ all appear equally often as $(L(\sigma)_i, L(\sigma)_{j})$ if $n-i+1$ and $n-j+1 $ are coprime.
\end{lem}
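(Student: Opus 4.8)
The plan is to reduce the claim to an application of the Chinese Remainder Theorem, in the spirit of the proof of Lemma~\ref{lem:equioccurrences_of_pairs_Lehmer_code}, but now exploiting the coprimality hypothesis directly rather than relying on the fact that consecutive moduli are automatically coprime. First I would introduce the shorthand $m_i = n-i+1$ and $m_j = n-j+1$ for the two moduli governing positions $i$ and $j$; the hypothesis is precisely $\gcd(m_i, m_j) = 1$. Writing $(L(\sigma)_i, L(\sigma)_j) = (a_0, b_0)$, the formula $L(\L^k(\sigma))_i = L(\sigma)_i + k \pmod{m_i}$ (and likewise for $j$) shows that after $k$ applications of $\L$ the pair of entries at positions $i$ and $j$ equals $\bigl(a_0 + k \bmod m_i,\ b_0 + k \bmod m_j\bigr)$.

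Next I would analyze the map $k \mapsto (k \bmod m_i,\ k \bmod m_j)$. Because $\gcd(m_i, m_j) = 1$, the Chinese Remainder Theorem says this map is a bijection from $\mathbb{Z}/m_i m_j\mathbb{Z}$ onto $\mathbb{Z}/m_i\mathbb{Z} \times \mathbb{Z}/m_j\mathbb{Z}$. Composing with the fixed translation by $(a_0, b_0)$ preserves bijectivity, so as $k$ runs over any block of $m_i m_j$ consecutive integers, the pair $\bigl(a_0 + k \bmod m_i,\ b_0 + k \bmod m_j\bigr)$ takes each value in $\{0,\ldots,n-i\} \times \{0,\ldots,n-j\}$ exactly once.

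Finally I would match this period against the orbit length. By Theorem~\ref{Thm: L-Orbit cardinality} the orbit has size $N = \lcm(1,2,\ldots,n)$. Since $m_i$ and $m_j$ are coprime integers in $\{1,\ldots,n\}$, their product equals $\lcm(m_i, m_j)$, which divides $N$; hence $m_i m_j \mid N$. The orbit therefore decomposes into $N/(m_i m_j)$ consecutive blocks of length $m_i m_j$, and by the previous paragraph each admissible pair $(a,b)$ occurs exactly once per block. Consequently every such pair occurs exactly $N/(m_i m_j)$ times over the orbit, which is the desired equidistribution.

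I do not expect any serious obstacle here: the content is entirely the Chinese Remainder Theorem together with the divisibility $m_i m_j \mid \lcm(1,\ldots,n)$, both of which are routine. The one point deserving care is the verification that the two moduli, being coprime and each at most $n$, have product dividing the orbit length — without coprimality this step fails, and indeed the lemma's hypothesis is exactly what guarantees it.
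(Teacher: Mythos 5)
Your proof is correct and follows essentially the same route as the paper's: track the pair $(L(\sigma)_i, L(\sigma)_j)$ under iteration of $\L$, use coprimality of $n-i+1$ and $n-j+1$ to conclude the pair cycles through all possibilities exactly once every $(n-i+1)(n-j+1)$ steps, and divide the orbit into such blocks. The only difference is that you make explicit two points the paper leaves implicit — the appeal to the Chinese Remainder Theorem and the divisibility $(n-i+1)(n-j+1) \mid \lcm(1,\ldots,n)$ — which is a welcome addition of rigor, not a different argument.
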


\begin{proof}
  Let $(L(\sigma)_i,L(\sigma)_{j}) = (a,b)$. Then $\Big(L(\L(\sigma))_i, L(\L(\sigma))_{j}\Big) = \Big(a+1 \mod (n-i+1), b+1 \mod (n-j+1)\Big)$.  Since $n-i+1$ and $n-j+1$ are coprime, the successive application of $\L$  spans all the possibilities for $(a,b)$ exactly once before returning to $(a,b)$ in $(n-i+1)(n-j+1)$ steps.
\end{proof}

The lemma above applies when $n-i-1$ and $n-j-1$ are coprime. The case where they are not is considered in the following two lemmas.

\begin{lem}\label{lem:pairs_in_Lehmer_code_with_same_parities}
  Over one orbit of the Lehmer code rotation, the number $L(\sigma)_i-L(\sigma)_j \mod k$ has a constant value for all the values $i$ and $j$ for which $k$ is a divisor of both $n-i+1$ and $n-j+1$.
\end{lem}

\begin{proof}
  For any $m$, it suffices to show that $$L(\sigma)_i - L(\sigma)_j \mod k = L(\L^m(\sigma))_i - L(\L^m(\sigma))_j \mod k.$$ We know that $$L(\L^m(\sigma))_i - L(\L^m(\sigma))_j \mod k = \big(L(\sigma)_i +m \mod (n-i+1)\big)- \big(L(\sigma)_j+m \mod (n-j+1)\big) \mod k .$$ Since $k$ divides both $n-i+1$ and $n-j+1$, the latter is equal to $$L(\sigma)_i  +m - L(\sigma)_j-m  \mod k = L(\sigma)_i  - L(\sigma)_j \mod k.$$ Putting the pieces together, we conclude that $$L(\sigma)_i - L(\sigma)_j \mod k = L(\L^m(\sigma))_i - L(\L^m(\sigma))_j \mod k.$$
\end{proof}

\begin{lem}\label{lem:equioccurrences_pairs_distance_2}
  If $n-i$ is even, there exists  for each orbit of the Lehmer code rotation a value $r \in \{0,1\}$ such that the pairs $\{(a,b) \mid a \in \{0, \ldots, n-i+1\}, b \in \{0, \ldots, n-i-1\}, \text{ with }a - b = r \mod 2\}$ all appear equally often as $(L(\sigma)_{i-1}, L(\sigma)_{i+1})$.
\end{lem}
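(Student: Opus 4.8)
The plan is to analyze the joint trajectory of the two Lehmer-code entries at positions $i-1$ and $i+1$ directly, using the formula $L(\L^t(\sigma))_k = L(\sigma)_k + t \bmod (n-k+1)$ recorded in the proof of Lemma~\ref{lem:equioccurrences_Lehmer_code}. Writing $M_1 = n-i+2$ and $M_2 = n-i$ for the two relevant moduli, and setting $(a,b) = (L(\sigma)_{i-1}, L(\sigma)_{i+1})$, the pair seen after $t$ applications of $\L$ is $(a + t \bmod M_1,\ b + t \bmod M_2)$. The hypothesis that $n-i$ is even makes both $M_1$ and $M_2$ even, so $\gcd(M_1, M_2) = 2$ (their difference is $2$) and Lemma~\ref{lem:equioccurrences_of_distant_pairs_Lehmer_code} does not apply; this is precisely the degenerate case the present lemma handles.

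First I would determine the period of the single-parameter trajectory $t \mapsto (a + t \bmod M_1,\ b + t \bmod M_2)$: it returns to its starting value exactly when $t$ is a common multiple of $M_1$ and $M_2$, so the period is $\lcm(M_1, M_2) = M_1 M_2 / 2$, and the map is injective on $\{0, 1, \ldots, \lcm(M_1,M_2) - 1\}$. Next I would identify the set of pairs actually visited. A pair $(x,y)$ with $x \in \{0,\ldots,n-i+1\}$ and $y \in \{0,\ldots,n-i-1\}$ lies on the trajectory if and only if the congruences $t \equiv x - a \pmod{M_1}$ and $t \equiv y - b \pmod{M_2}$ have a common solution $t$; by the Chinese Remainder Theorem for non-coprime moduli this happens exactly when $x - a \equiv y - b \pmod{\gcd(M_1,M_2)}$, that is, when $x - y \equiv a - b \pmod 2$. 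Setting $r = (a - b) \bmod 2$ — which is the orbit-invariant quantity furnished by Lemma~\ref{lem:pairs_in_Lehmer_code_with_same_parities} applied with $k = 2$, since $2$ divides both $n-i+2$ and $n-i$ — the visited pairs are precisely those with $x - y \equiv r \pmod 2$.

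To finish, I would count: for each of the $M_1$ choices of $x$ there are exactly $M_2/2$ values of $y$ of the required parity, so there are $M_1 M_2 / 2 = \lcm(M_1, M_2)$ admissible pairs, matching the period length exactly. Combined with injectivity over one period, this shows each admissible pair is hit exactly once per period; and since the orbit length $\lcm(1, \ldots, n)$ from Theorem~\ref{Thm: L-Orbit cardinality} is an integer multiple of $\lcm(M_1, M_2)$, every pair with $x - y \equiv r \pmod 2$ occurs equally often over the orbit. The only subtle point, which I would be careful to state cleanly, is the solvability criterion for the system of congruences with non-coprime moduli — namely that it is consistent precisely when the two residues agree modulo the gcd — since this is what simultaneously pins down the reachable parity class and produces the exact count that forces the frequencies to coincide.
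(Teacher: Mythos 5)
Your proposal is correct and follows essentially the same route as the paper's proof: both invoke Lemma~\ref{lem:pairs_in_Lehmer_code_with_same_parities} to get the orbit-invariant parity $r$, track the joint evolution $\big(a+m \bmod (n-i+2),\ b+m \bmod (n-i)\big)$ under iteration of $\L$, and conclude that all parity-compatible pairs occur equally often over a period of length $\frac{(n-i+2)(n-i)}{2}$. The only difference is that you supply the details the paper merely asserts --- the non-coprime Chinese Remainder solvability criterion identifying exactly which pairs are reachable, and the count showing the number of admissible pairs equals the period length --- which rigorously justifies the paper's claim that the trajectory ``spans all possible combinations satisfying the parity condition.''
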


\begin{proof} When $n-i$ is even, both $n-i+2$ and $n-i$ are even, meaning that their greatest common divisor is $2$.
  Thanks to Lemma \ref{lem:pairs_in_Lehmer_code_with_same_parities}, we know that $L(\sigma)_{i-1}-L(\sigma)_{i+1}$ has a constant value modulo $2$ over each orbit. What is left to prove is that all pairs $(a,b)$ that satisfy this constraint are equally likely to happen as $(L(\sigma)_{i-1}, L(\sigma)_{i+1})$.

  Acting $m$ times with the Lehmer code rotation,  we look at the evolution of the pair $\big(L(\sigma)_{i-1},L(\sigma)_{i+1}\big)$:
  \[ \big(L(\L^m(\sigma))_{i-1}, L(\L^m(\sigma))_{i+1}\big) = \big(a + m \mod (n-i+2), b + m \mod (n-i) \big), \]
  and the orbit spans all possible combinations satisfying the parity condition before returning back to $(a,b)$ in $m = \frac{(n-i+2)(n-i)}{2}$ steps.
\end{proof}

\subsection{Statistics related to inversions}
\label{subsec:lehmer_inv}
In this subsection, we state and prove propositions giving the homomesies related to inversions of Theorem~\ref{thm:LC}, as well as homomesy for a family of statistics that do not appear in FindStat (see Theorem \ref{thm:LC_inversions_at_entry}).

Recall inversions of a permutation from Definition \ref{def:basic_stats}.

\begin{prop}[Statistics 18, 246]\label{18_246_LC}
  The number of inversions is $\frac{n(n-1)}{4}$-mesic with respect to the
  Lehmer code rotation for permutations of $[n]$.
  Similarly, the number of noninversions is also $\frac{n(n-1)}{4}$-mesic.
\end{prop}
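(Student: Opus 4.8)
The plan is to express the inversion number as a sum of Lehmer code entries and then average each entry independently over an orbit, using the equidistribution established earlier. By Proposition~\ref{prop:LC_num_inv_is_sum}, we have $\inv(\sigma) = \sum_{i=1}^n L(\sigma)_i$, so by linearity the orbit-average of $\inv$ equals the sum over $i$ of the orbit-averages of the individual entries $L(\sigma)_i$. This reduces the problem to computing a single per-position average.

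For a fixed position $i$, Lemma~\ref{lem:equioccurrences_Lehmer_code} tells us that $L(\sigma)_i$ takes each value in $\{0, 1, \ldots, n-i\}$ equally often as $\sigma$ ranges over a single orbit. Hence the orbit-average of $L(\sigma)_i$ is just the plain average of $\{0, 1, \ldots, n-i\}$, namely $\frac{n-i}{2}$. Summing over $i$ from $1$ to $n$ then gives
\[ \sum_{i=1}^n \frac{n-i}{2} = \frac{1}{2}\sum_{k=0}^{n-1} k = \frac{n(n-1)}{4}, \]
which establishes that the number of inversions is $\frac{n(n-1)}{4}$-mesic.

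For the number of noninversions, I would observe that every pair $(i,j)$ with $i<j$ is either an inversion or a noninversion, so the two statistics sum to the constant $\binom{n}{2} = \frac{n(n-1)}{2}$. A constant statistic is trivially homomesic with that constant as its orbit-average, and by Lemma~\ref{lem:sum_diff_homomesies} a linear combination of homomesic statistics is again homomesic; writing the number of noninversions as $\binom{n}{2} - \inv$ gives orbit-average $\frac{n(n-1)}{2} - \frac{n(n-1)}{4} = \frac{n(n-1)}{4}$, as claimed.

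Since the equidistribution in Lemma~\ref{lem:equioccurrences_Lehmer_code} does all the real work, there is no genuine obstacle here; the only point requiring a moment's care is that averaging commutes with the finite sum defining $\inv$, which is immediate because each orbit is finite and all summands are averaged over the same orbit. The argument is essentially a bookkeeping calculation once the per-entry equidistribution is in hand.
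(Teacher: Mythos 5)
Your proposal is correct and follows the same route as the paper's proof: both reduce $\inv$ to the sum of Lehmer code entries via Proposition~\ref{prop:LC_num_inv_is_sum}, average each entry using the equidistribution of Lemma~\ref{lem:equioccurrences_Lehmer_code}, and handle noninversions as the constant $\binom{n}{2}$ minus $\inv$. The only cosmetic difference is that you cite Lemma~\ref{lem:sum_diff_homomesies} explicitly for the final step, which the paper leaves implicit.
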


\begin{proof}
  Following Proposition \ref{prop:LC_num_inv_is_sum}, the number of inversions of $\sigma$ is the sum of the numbers of the Lehmer code $L(\sigma)$. It suffices to observe, as we did in Lemma \ref{lem:equioccurrences_Lehmer_code}, that  each number in $\{0, \ldots, n-i\}$ occurs equally often as $L(\sigma)_i$. Therefore, the average value of $L(\sigma)_i$ is $\frac{n-i}{2}$. Thus, the average number of inversions is the sum of the average values at each position of the Lehmer code, which is:
  \[\sum_{i=1}^{n} \frac{n-i}{2} = \sum_{k=0}^{n-1}\frac{k}{2} = \frac{n(n-1)}{4}. \]
  Since the noninversions are exactly the pairs $(i,j)$ that are not inversions, there are $\frac{n(n-1)}{2}-\inv(\sigma)$ noninversions in permutation $\sigma \in S_n$, so the number of non-inversions is also $\frac{n(n-1)}{4}$-mesic.
\end{proof}

\begin{thm}\label{thm:LC_inversions_at_entry}
  The number of inversions starting at the $i$-th entry of a permutation is $\frac{n-i}{2}$-mesic for the Lehmer Code rotation on permutations of $[n]$.
\end{thm}

\begin{proof}
  This follows from Lemma \ref{lem:equioccurrences_Lehmer_code}, since, for each $i$, the number of inversions starting at the $i$-th entry of a permutation $\sigma$ is exactly $L(\sigma)_i$. Hence, $L(\sigma)_i$ is $\frac{n-i}{2}$-mesic.
\end{proof}

\begin{cor}[Statistics 54, 1556, 1557]
  \label{54_1556_1557_LC}
  The following FindStat statistics are homomesic:
  \begin{itemize}
    \item The first entry of the permutation is $\frac{n+1}{2}$-mesic.
    \item The number of inversions of the second entry of a permutation is $\frac{n-2}{2}$-mesic.
    \item The number of inversions of the third entry of a permutation is $\frac{n-3}{2}$-mesic.
  \end{itemize}
\end{cor}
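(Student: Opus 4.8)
The plan is to reduce all three statistics to the quantities $L(\sigma)_i$ already handled in Theorem~\ref{thm:LC_inversions_at_entry}, and then to invoke that theorem directly (together with Lemma~\ref{lem:sum_diff_homomesies} for the one statistic that requires a constant shift).

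First I would observe that Statistics $1557$ and $1556$ are, by their very definitions, the number of inversions starting at the second and third entries of $\sigma$, respectively. In the notation of Definition~\ref{def:lehmercode}, these are precisely $L(\sigma)_2$ and $L(\sigma)_3$, since $L(\sigma)_i = \#\{j > i \mid \sigma_j < \sigma_i\}$ counts exactly the inversions starting at position $i$. Applying Theorem~\ref{thm:LC_inversions_at_entry} with $i = 2$ and then $i = 3$ immediately yields that these statistics are $\frac{n-2}{2}$-mesic and $\frac{n-3}{2}$-mesic.

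The remaining statistic, the first entry $\sigma_1$, requires one small extra identity. I would note that $L(\sigma)_1 = \#\{j > 1 \mid \sigma_j < \sigma_1\}$ counts exactly the values smaller than $\sigma_1$, since every such value must occur at some position to the right of the first. Because $\sigma$ is a permutation of $[n]$, there are exactly $\sigma_1 - 1$ values smaller than $\sigma_1$, so $L(\sigma)_1 = \sigma_1 - 1$, that is, $\sigma_1 = L(\sigma)_1 + 1$. By Theorem~\ref{thm:LC_inversions_at_entry} with $i = 1$, the quantity $L(\sigma)_1$ is $\frac{n-1}{2}$-mesic; adding the constant $1$ (which is trivially homomesic) and applying Lemma~\ref{lem:sum_diff_homomesies} gives that $\sigma_1$ is $\left(\frac{n-1}{2} + 1\right) = \frac{n+1}{2}$-mesic.

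There is no substantial obstacle here: the entire content of the corollary is already packaged in Theorem~\ref{thm:LC_inversions_at_entry}, and the only step that is not a direct appeal to definitions is recognizing the identity $\sigma_1 = L(\sigma)_1 + 1$. Accordingly, I would keep the written proof very short, simply matching each FindStat statistic to the corresponding Lehmer code entry and citing the already-established homomesy of $L(\sigma)_i$.
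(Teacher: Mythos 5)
Your proof is correct and follows essentially the same route as the paper's: both reduce Statistics 1556 and 1557 to direct applications of Theorem~\ref{thm:LC_inversions_at_entry}, and both handle the first entry via the identity $\sigma_1 = L(\sigma)_1 + 1$. The only difference is that you spell out why this identity holds and explicitly cite Lemma~\ref{lem:sum_diff_homomesies} for the constant shift, details the paper leaves implicit.
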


\begin{proof}
  The second and third statements are special cases of Theorem \ref{thm:LC_inversions_at_entry}.
  As for the first entry of a permutation $\sigma$, its value is $L(\sigma)_1+1$; following Theorem \ref{thm:LC_inversions_at_entry}, this is $\frac{n+1}{2}$-mesic.
\end{proof}

\begin{remark}
  While the above corollary shows that the first entry of the permutation is a homomesic statistic, this does not hold for the other entries of a permutation. For example, statistic 740 (last entry of a permutation) is not homomesic under Lehmer code rotation; the number of inversions starting with the last entry of a permutation is always indeed $0$ and is unrelated to the value of the last entry.
\end{remark}

\subsection{Statistics related to descents}
\label{subsec:lehmer_des}
In this subsection, we state and prove propositions giving the homomesies of Theorem~\ref{thm:LC} related to descents. Furthermore, Theorem \ref{thm:descents_at_i_LC} proves some homomesies that do not correspond to FindStat statistics, but are helpful in proving some of them.

Recall that descents of a permutation, as well as affiliated concepts and notations, are given in Definition~\ref{def:basic_stats}.
We begin with the following lemma, which will be helpful in proving Theorem~\ref{thm:descents_at_i_LC}.
\begin{lem}\label{lem:descents_correspondence_in_Lehmer_code}
  Descents in a permutation correspond exactly to strict descents of the Lehmer code; formally, $i$ is a descent of $\sigma$ if and only if $L(\sigma)_{i} > L(\sigma)_{i+1}$.
\end{lem}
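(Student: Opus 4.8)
The plan is to prove the claimed equivalence directly from the definition of the Lehmer code, by relating the sizes of the two sets $\{j > i \mid \sigma_j < \sigma_i\}$ and $\{j > i+1 \mid \sigma_j < \sigma_{i+1}\}$ that define $L(\sigma)_i$ and $L(\sigma)_{i+1}$. The key observation is that these two counting sets differ only in two respects: the set for $L(\sigma)_i$ includes the index $i+1$ (which the set for $L(\sigma)_{i+1}$ cannot include), and the two sets are compared against different threshold values, namely $\sigma_i$ versus $\sigma_{i+1}$. So I would decompose $L(\sigma)_i$ according to whether the contribution comes from position $i+1$ or from positions $j > i+1$.

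First I would write, for every $j > i+1$, a comparison of whether $j$ is counted in $L(\sigma)_i$ versus in $L(\sigma)_{i+1}$. Concretely, I would split into the two cases $\sigma_i > \sigma_{i+1}$ (i.e.\ $i$ is a descent) and $\sigma_i < \sigma_{i+1}$ (i.e.\ $i$ is an ascent), and in each case track the net effect on the count. When $i$ is a descent, the entry $i+1$ itself contributes $1$ to $L(\sigma)_i$ (since $\sigma_{i+1} < \sigma_i$), and moreover every $j > i+1$ with $\sigma_j < \sigma_{i+1}$ also satisfies $\sigma_j < \sigma_i$, so all such $j$ counted for $L(\sigma)_{i+1}$ are also counted for $L(\sigma)_i$; this forces $L(\sigma)_i > L(\sigma)_{i+1}$. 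Conversely, when $i$ is an ascent, $\sigma_i < \sigma_{i+1}$, so the index $i+1$ contributes nothing to $L(\sigma)_i$, while every $j > i+1$ counted for $L(\sigma)_i$ (those with $\sigma_j < \sigma_i < \sigma_{i+1}$) is also counted for $L(\sigma)_{i+1}$; this yields $L(\sigma)_i \leq L(\sigma)_{i+1}$, contradicting a strict descent of the Lehmer code.

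The cleanest way to package this is to establish the single identity $L(\sigma)_i = L(\sigma)_{i+1} + [\sigma_i > \sigma_{i+1}] + (\text{correction from } j>i+1)$, where $[\cdot]$ is an Iverson bracket, and then argue that the correction term is nonnegative when $i$ is a descent and nonpositive otherwise. Summarizing, the two inequalities together give the biconditional: $i$ is a descent of $\sigma$ exactly when $L(\sigma)_i > L(\sigma)_{i+1}$.

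The main obstacle, such as it is, lies in handling the positions $j > i+1$ carefully, since for such $j$ the comparison thresholds $\sigma_i$ and $\sigma_{i+1}$ differ, so a given $j$ may be counted for one of $L(\sigma)_i,L(\sigma)_{i+1}$ but not the other. I expect this bookkeeping to be the crux: one must verify that in the descent case no $j > i+1$ is ever counted for $L(\sigma)_{i+1}$ but lost for $L(\sigma)_i$ (guaranteeing the strict inequality goes the right direction), and symmetrically in the ascent case. Once the monotonicity of these inclusions in each case is pinned down, the equivalence follows immediately, so no heavy machinery is needed beyond attentive case analysis.
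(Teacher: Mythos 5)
Your proposal is correct and follows essentially the same route as the paper's proof: both decompose $L(\sigma)_i$ into the contribution of position $i+1$ plus the count over $j>i+1$, and then use the containment of the sets $\{j>i+1 \mid \sigma_j<\sigma_{i+1}\}$ and $\{j>i+1 \mid \sigma_j<\sigma_i\}$ (in one direction for a descent, the other for an ascent) to get $L(\sigma)_i \geq L(\sigma)_{i+1}+1$ in the descent case and $L(\sigma)_i \leq L(\sigma)_{i+1}$ in the ascent case. Your two-case analysis is logically identical to the paper's direct implication plus contrapositive, so nothing further is needed.
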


\begin{proof}
  If $i$ is a descent of $\sigma$, then $\sigma_i > \sigma_{i+1}$, and
  \[ L(\sigma)_i  = \#\{ j>i\mid \sigma_i > \sigma_j \} = \#\{ j>i+1 \mid  \sigma_i > \sigma_j \} + 1 \geq \#\{ j>i+1 \mid \sigma_{i+1} > \sigma_j \} + 1 = L(\sigma)_{i+1}+1.  \]
  Therefore, $L(\sigma)_i > L(\sigma)_{i+1}$.

  We prove the converse by contrapositive: we assume $\sigma_i <\sigma_{i+1}$ (so $i$ is an ascent). Then,
  \[ L(\sigma)_i  = \#\{ j>i \mid \sigma_{i} > \sigma_{j} \} = \#\{ j>i+1 \mid \sigma_i > \sigma_j \}  \leq \#\{ j>i+1 \mid \sigma_{i+1} > \sigma_j \}  = L(\sigma)_{i+1}.  \]
\end{proof}

\begin{thm}\label{thm:descents_at_i_LC}
  The number of descents at position $i$ is $\frac{1}{2}$-mesic under the Lehmer code rotation, for $1\leq i< n$.
\end{thm}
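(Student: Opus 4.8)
The plan is to count, over a single orbit, how often position $i$ is a descent, and show this equals exactly half the orbit size. By Lemma~\ref{lem:descents_correspondence_in_Lehmer_code}, position $i$ is a descent of $\sigma$ precisely when $L(\sigma)_i > L(\sigma)_{i+1}$. So the statistic ``descent at position $i$'' is simply the indicator of the event $L(\sigma)_i > L(\sigma)_{i+1}$ on the Lehmer code. Homomesy with average $\frac{1}{2}$ therefore amounts to showing that, as $\sigma$ ranges over one orbit, the pair $(L(\sigma)_i, L(\sigma)_{i+1})$ satisfies $L(\sigma)_i > L(\sigma)_{i+1}$ exactly half the time.

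The natural tool is Lemma~\ref{lem:equioccurrences_of_pairs_Lehmer_code}: over one orbit, every pair $(a,b)$ with $a \in \{0, \ldots, n-i\}$ and $b \in \{0, \ldots, n-i-1\}$ occurs equally often as $(L(\sigma)_i, L(\sigma)_{i+1})$. Hence the orbit-average of the descent indicator equals the fraction of such pairs $(a,b)$ with $a > b$. So the computation reduces to a purely combinatorial count: among all pairs where $a$ ranges over $\{0, \ldots, n-i\}$ (which has $n-i+1$ values) and $b$ ranges over $\{0, \ldots, n-i-1\}$ (which has $n-i$ values), I would count how many satisfy $a > b$. Writing $m = n-i$ so that $a \in \{0, \ldots, m\}$ and $b \in \{0, \ldots, m-1\}$, for each fixed $b$ the number of valid $a$ with $a > b$ is $(m+1) - (b+1) = m - b$, and summing over $b = 0, \ldots, m-1$ gives $\sum_{b=0}^{m-1}(m-b) = \sum_{k=1}^{m} k = \binom{m+1}{2}$. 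The total number of pairs is $(m+1)\cdot m$, so the fraction with $a > b$ is $\frac{\binom{m+1}{2}}{(m+1)m} = \frac{m(m+1)/2}{m(m+1)} = \frac{1}{2}$, exactly as desired.

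I would present the argument by first invoking Lemma~\ref{lem:descents_correspondence_in_Lehmer_code} to translate the descent condition into the Lehmer-code inequality, then citing Lemma~\ref{lem:equioccurrences_of_pairs_Lehmer_code} to reduce the orbit-average to the uniform count over the rectangular set of pairs, and finally carrying out the short count above to conclude the average is $\frac{1}{2}$ regardless of $i$ or the particular orbit. The result is uniform in $i$, which is convenient: each position contributes the same $\frac{1}{2}$, and summing over $i$ will later recover the homomesy constant for the total number of descents (Statistic 21).

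I do not expect any genuine obstacle here, since the key equidistribution input is already established in Lemma~\ref{lem:equioccurrences_of_pairs_Lehmer_code}. The only point requiring a little care is the asymmetry in the ranges of $a$ and $b$: the first coordinate takes one more value than the second (since $L(\sigma)_i \in \{0, \ldots, n-i\}$ while $L(\sigma)_{i+1} \in \{0, \ldots, n-i-1\}$). One might naively expect a strict inequality $a > b$ between coordinates of unequal ranges to deviate from $\frac{1}{2}$, but the extra top value in $a$'s range is exactly what compensates for the strictness of the inequality, yielding precisely $\frac{1}{2}$. Making sure this counting step is stated cleanly — rather than hand-waved — is the main thing to get right.
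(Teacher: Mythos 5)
Your proposal is correct and follows essentially the same route as the paper's proof: both translate the descent condition via Lemma~\ref{lem:descents_correspondence_in_Lehmer_code}, invoke the equidistribution of adjacent Lehmer-code pairs from Lemma~\ref{lem:equioccurrences_of_pairs_Lehmer_code}, and finish with the same elementary count of pairs $(a,b)$ with $a>b$ (the paper sums over values of the first coordinate, you sum over the second, which is immaterial). Your closing remark about the asymmetric ranges compensating for the strict inequality is a nice point of emphasis, but the substance matches the paper exactly.
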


\begin{proof}
  The key to this result is Lemma \ref{lem:descents_correspondence_in_Lehmer_code}, saying that $i$ is a descent of $\sigma$ if and only if $L(\sigma)_{i+1} < L(\sigma)_i$.
  Therefore, counting descents at position $i$ in permutations corresponds to counting strict descents at position $i$ in the Lehmer code.
  We look at all possible adjacent pairs of entries in the Lehmer code. The entry at position $i$ can take any value in $\{0, \ldots, n-i\}$, so there are $n-i+1$ options for the $i$-th entry  and $n-i$ for the $(i+1)$-st. Since $n-i+1$ and $n-i$ are coprime, all possible pairs of adjacent entries are equally likely to occur in each orbit of the Lehmer code rotation (this is the result of Lemma \ref{lem:equioccurrences_of_pairs_Lehmer_code}). Hence, the proportion of strict descents in the Lehmer code at position $i$ is
  \[\frac{1}{(n-i)(n-i+1)}\sum_{k=1}^{n-i+1}(k-1) = \frac{(n-i)(n-i+1)}{2(n-i)(n-i+1)} =  \frac{1}{2}. \]
\end{proof}

\begin{prop}[Statistics 4, 21, 245, 833]\label{4_21_245_833_LC}
  The number of descents in a permutation of $[n]$ is $\frac{n-1}{2}$-mesic under Lehmer code rotation, and the number of ascents is $\frac{n-1}{2}$-mesic.

  The major index of a permutation of $[n]$ is $\frac{n(n-1)}{4}$-mesic under Lehmer code rotation. Similarly, the comajor index is $\frac{n(n-1)}{4}$-mesic.
\end{prop}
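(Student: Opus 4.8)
The plan is to express all four statistics as integer linear combinations of the ``descent at position $i$'' statistics, each of which is $\frac{1}{2}$-mesic by Theorem~\ref{thm:descents_at_i_LC}, and then invoke Lemma~\ref{lem:sum_diff_homomesies} to conclude homomesy, reading off each average directly from the coefficients. Writing $d_i$ for the statistic that returns $1$ when $i$ is a descent of $\sigma$ and $0$ otherwise, Theorem~\ref{thm:descents_at_i_LC} says precisely that $d_i$ is $\frac{1}{2}$-mesic for each $1 \leq i < n$, so the whole proposition becomes bookkeeping on top of that result.

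First I would handle the number of descents. Since $\des(\sigma) = \sum_{i=1}^{n-1} d_i(\sigma)$, Lemma~\ref{lem:sum_diff_homomesies} immediately gives that $\des$ is homomesic with average $\sum_{i=1}^{n-1} \frac{1}{2} = \frac{n-1}{2}$. For the number of ascents, I would use that every position in $[n-1]$ is either an ascent or a descent, so the number of ascents equals $(n-1) - \des(\sigma)$; as a constant statistic (which is trivially homomesic) minus a homomesic statistic, Lemma~\ref{lem:sum_diff_homomesies} makes it homomesic with average $(n-1) - \frac{n-1}{2} = \frac{n-1}{2}$.

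Next I would treat the two index statistics the same way, now with nontrivial weights. For the major index, $\maj(\sigma) = \sum_{i \in \Des(\sigma)} i = \sum_{i=1}^{n-1} i\, d_i(\sigma)$ is a weighted sum of the same $\frac{1}{2}$-mesic statistics, so Lemma~\ref{lem:sum_diff_homomesies} yields homomesy with average $\sum_{i=1}^{n-1} \frac{i}{2} = \frac{1}{2}\cdot\frac{n(n-1)}{2} = \frac{n(n-1)}{4}$. The comajor index is handled identically: writing $\textnormal{comaj}(\sigma) = \sum_{i \in \Des(\sigma)} (n-i) = \sum_{i=1}^{n-1} (n-i)\, d_i(\sigma)$, the average is $\sum_{i=1}^{n-1} \frac{n-i}{2} = \frac{1}{2}\sum_{k=1}^{n-1} k = \frac{n(n-1)}{4}$, matching the claim.

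There is no genuine obstacle here: the entire argument reduces to recognizing each of the four statistics as a fixed linear combination of the position-wise descent indicators $d_i$ and then applying Theorem~\ref{thm:descents_at_i_LC} together with Lemma~\ref{lem:sum_diff_homomesies}. The one point that demands care is matching the precise FindStat definition of the comajor index (Statistic~$833$) to the weights $(n-i)$, since a different convention would shift the claimed constant; once those weights are confirmed, all four average computations are routine summations of the first $n-1$ positive integers.
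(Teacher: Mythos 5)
Your proof is correct and takes essentially the same approach as the paper: both decompose each statistic as a linear combination of the positionwise descent indicators, invoke Theorem~\ref{thm:descents_at_i_LC} for the $\frac{1}{2}$-mesy of each indicator, and conclude via Lemma~\ref{lem:sum_diff_homomesies}. The only cosmetic difference is that the paper computes the comajor index as $n\des(\sigma)-\maj(\sigma)$ from the two already-established homomesies, whereas you sum the weights $(n-i)$ directly; these are the same calculation.
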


\begin{proof}
  Theorem \ref{thm:descents_at_i_LC} states that the number of descents at position $i$ is $\frac{1}{2}$-mesic for $1 \leq i< n$. Hence, the number of descents is the sum of $n-1$ homomesic statistics, each with average $\frac{1}{2}$. Hence, there are in average $\frac{n-1}{2}$ descents in $\sigma$ for each orbit of the Lehmer code rotation.

  Ascents are the pairs that are not descents; there are therefore on average $n-1-\frac{n-1}{2} = \frac{n-1}{2}$ ascents over each orbit of the Lehmer code rotation.

  Recall that the major index  of a permutation $\sigma$ is the sum of its descents, that is $\maj(\sigma)=\sum_{i \in \text{Des}(\sigma)} i$. We already observed that, for each position in $\{1, \ldots, n-1\}$ under the Lehmer code rotation, the number of descents at this position is $\frac{1}{2}$-mesic. Therefore, the average value of the major index over each orbit is
  \[ \sum_{i=1}^{n-1} \frac{1}{2} i = \frac{n(n-1)}{4}. \]

  The comajor index of a permutation $\sigma$ is defined as $\sum_{i \in \text{Des}(\sigma)} (n-i) = n \des(\sigma) - \text{maj}(\sigma)$. Since  the number of descents is $\frac{n-1}{2}$-mesic and the major index is $\frac{n(n-1)}{4}$-mesic, the comajor index is homomesic with average value $n \frac{n-1}{2} - \frac{n(n-1)}{4}=\frac{n(n-1)}{4}$.
\end{proof}

\begin{prop}[Statistics 325, 470]\label{325_470_LC}
  The number of runs in a permutation of $[n]$ is $\frac{n+1}{2}$-mesic under Lehmer code rotation. The width of a tree associated to a permutation of $[n]$ is also $\frac{n+1}{2}$-mesic under Lehmer code rotation.
\end{prop}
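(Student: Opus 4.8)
The plan is to reduce both statistics to the number of descents, which Proposition~\ref{4_21_245_833_LC} has already shown to be $\frac{n-1}{2}$-mesic under the Lehmer code rotation. This turns the proposition into a one-line consequence of earlier results rather than a fresh computation on the Lehmer code.

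First I would invoke the observation recorded in Definition~\ref{def:basic_stats}, that runs in a permutation are separated precisely by its descents, so that the number of runs equals $\des(\sigma)+1$. Since the constant statistic $\sigma \mapsto 1$ is trivially homomesic with orbit-average $1$, Lemma~\ref{lem:sum_diff_homomesies} (linear combinations of homomesic statistics are homomesic) lets me combine it with the number-of-descents statistic. Concretely, adding the constant $1$ shifts every orbit-average by $1$, so the number of runs is $\left(\frac{n-1}{2}+1\right)$-mesic, which simplifies to $\frac{n+1}{2}$-mesic, giving the first claim.

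For the width of the associated tree, I would appeal to Remark~\ref{widthequalsruns}, which states (following \cite{Luschny}) that the width of a permutation tree equals the permutation's number of runs. Because the two statistics agree as functions on $S_n$, they take identical values on every orbit and hence share the same orbit-averages; the width is therefore also $\frac{n+1}{2}$-mesic with no additional work.

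I do not expect any genuine obstacle here: both halves are corollaries of facts already in hand, namely Proposition~\ref{4_21_245_833_LC}, the run--descent relationship, and Remark~\ref{widthequalsruns}. The only point requiring a word of care is the bookkeeping that a constant additive shift preserves homomesy while translating the mean by that constant, but this is immediate from the definition of homomesy (or from Lemma~\ref{lem:sum_diff_homomesies} applied to the constant function).
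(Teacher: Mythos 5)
Your proposal is correct and matches the paper's own proof essentially verbatim: both reduce the number of runs to $\des(\sigma)+1$ using Proposition~\ref{4_21_245_833_LC}, and both handle the tree width via Remark~\ref{widthequalsruns}. Your extra remark about the constant-shift bookkeeping (via Lemma~\ref{lem:sum_diff_homomesies}) is a fine, if slightly more explicit, rendering of the same step.
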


\begin{proof}
  Since runs are contiguous increasing sequences, runs are separated by descents, and, in each permutation, there is one more run than there are descents. The result follows from the number of descents being $\frac{n-1}{2}$-mesic.
  
  It is shown in \cite{Luschny}, where permutation trees are defined, that the width of the  permutation tree of $\sigma$ is the number of runs of $\sigma$. Therefore, we do not define permutation trees here, as we can prove the homomesy of the statistics by using the homomesy result regarding runs.
\end{proof}

\begin{prop}[Statistics 1114, 1115]\label{1114_1115_LC}
  The number of odd descents of a permutation of $[n]$ is $\frac{1}{2}\lceil \frac{n-1}{2}\rceil$-mesic under Lehmer code rotation, and the number of even descents is $\frac{1}{2}\lfloor\frac{n-1}{2}\rfloor$-mesic.
\end{prop}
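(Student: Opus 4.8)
The plan is to piece these statistics together out of the position-wise descent statistics already shown to be homomesic in Theorem~\ref{thm:descents_at_i_LC}, and then invoke the linearity of homomesy from Lemma~\ref{lem:sum_diff_homomesies}. Observe that a descent of $\sigma$ is odd exactly when it occurs at an odd position $i$, so the number of odd descents of $\sigma$ is precisely $\sum_{i} (\text{number of descents at position } i)$, where the sum ranges over odd $i$ with $1 \le i < n$. Each summand is one of the statistics from Theorem~\ref{thm:descents_at_i_LC} and is therefore $\frac12$-mesic under the Lehmer code rotation. By Lemma~\ref{lem:sum_diff_homomesies}, their sum is homomesic, with orbit-average equal to $\frac12$ times the number of odd positions appearing in the sum. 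The number of even descents is handled identically, summing instead over even positions $i \in \{1, \ldots, n-1\}$.

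What remains is the elementary bookkeeping of counting how many odd and even positions lie in the range $\{1, 2, \ldots, n-1\}$. The number of odd integers in $\{1,\ldots,n-1\}$ is $\lceil \tfrac{n-1}{2}\rceil$, and the number of even integers is $\lfloor \tfrac{n-1}{2}\rfloor$. Multiplying each count by $\frac12$ yields the claimed constants: the number of odd descents is $\frac{1}{2}\lceil \tfrac{n-1}{2}\rceil$-mesic and the number of even descents is $\frac{1}{2}\lfloor\tfrac{n-1}{2}\rfloor$-mesic.

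There is no real obstacle here beyond getting the floor/ceiling parities right, since all the analytic content lives in Theorem~\ref{thm:descents_at_i_LC}; the one point to be careful about is that the descent positions run over $\{1,\ldots,n-1\}$ rather than $\{1,\ldots,n\}$, which is exactly what fixes the arguments of the floor and ceiling functions. As a consistency check, the two constants sum to $\frac{1}{2}\big(\lceil \tfrac{n-1}{2}\rceil + \lfloor \tfrac{n-1}{2}\rfloor\big) = \frac{n-1}{2}$, matching the total-descent average value established in Proposition~\ref{4_21_245_833_LC}, as it must since every descent is either odd or even.
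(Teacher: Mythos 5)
Your proof is correct and follows essentially the same route as the paper's: both decompose odd (resp.\ even) descents into the position-wise descent statistics of Theorem~\ref{thm:descents_at_i_LC}, each $\frac{1}{2}$-mesic, and sum over the $\lceil\frac{n-1}{2}\rceil$ odd (resp.\ $\lfloor\frac{n-1}{2}\rfloor$ even) positions in $\{1,\ldots,n-1\}$. Your explicit appeal to Lemma~\ref{lem:sum_diff_homomesies} and the consistency check against Proposition~\ref{4_21_245_833_LC} are nice touches the paper leaves implicit, but the argument is the same.
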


\begin{proof}
  The proof also follows from Theorem \ref{thm:descents_at_i_LC}, which says that the average number of descents at $i$, $1\leq i\leq n-1$, is $\frac{1}{2}$ over each orbit. Therefore, the number of odd descents is, on average,
  \[\sum_{i=1}^{\left\lceil\frac{n-1}{2}\right\rceil} \frac{1}{2} = \frac{1}{2} \lceil\frac{n-1}{2}\rceil.\] Similarly, the average number of even descents is \[\sum_{i=1}^{\lfloor\frac{n-1}{2}\rfloor} \frac{1}{2} = \frac{1}{2} \lfloor\frac{n-1}{2}\rfloor.\]
\end{proof}

\begin{prop}[Statistics 23, 353, 365, 366]\label{23_353_365_366_LC}
  The number of double descents and the number of double ascents are each $\frac{n-2}{6}$-mesic under Lehmer code rotation. The number of (inner) valleys and the number of (inner) peaks are each $\frac{n-2}{3}$-mesic.
\end{prop}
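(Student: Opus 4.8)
The plan is to reduce all four statistics to a single quantity---the orbit-average probability that a fixed interior position is a double descent---and then to compute that probability by analyzing the joint distribution of three consecutive Lehmer code entries over an orbit.

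First I would pass everything to the Lehmer code. By Lemma~\ref{lem:descents_correspondence_in_Lehmer_code}, position $j$ is a descent of $\sigma$ exactly when $L(\sigma)_j > L(\sigma)_{j+1}$; write $D_j$ for the indicator of this event. For an interior position $i$ with $2 \le i \le n-1$, the three entries $L(\sigma)_{i-1}, L(\sigma)_i, L(\sigma)_{i+1}$ decide the type of $i$: the double-descent indicator is $D_{i-1}D_i$, the double-ascent indicator is $(1-D_{i-1})(1-D_i)$, the inner-peak indicator is $(1-D_{i-1})D_i$, and the inner-valley indicator is $D_{i-1}(1-D_i)$. Each of the four statistics is the sum of the corresponding indicator over $2 \le i \le n-1$, so by Lemma~\ref{lem:sum_diff_homomesies} it suffices to determine the orbit-average of each indicator at a fixed $i$.

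Next I would invoke Theorem~\ref{thm:descents_at_i_LC}, which gives that the orbit-average of $D_j$ equals $\tfrac12$ for every $1 \le j \le n-1$. Writing $p_i$ for the orbit-average of the product $D_{i-1}D_i$, linearity of the orbit-average shows that the double-descent and double-ascent indicators both average to $p_i$ (for the latter, $1 - \tfrac12 - \tfrac12 + p_i = p_i$), while the inner-peak and inner-valley indicators both average to $\tfrac12 - p_i$. Hence all four claims follow once I establish $p_i = \tfrac16$ for every interior $i$: summing over the $n-2$ interior positions then yields averages $\tfrac{n-2}{6}$ for double descents and double ascents, and $(n-2)(\tfrac12-\tfrac16)=\tfrac{n-2}{3}$ for inner peaks and inner valleys.

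Finally I would compute $p_i$, the orbit-probability that $L(\sigma)_{i-1} > L(\sigma)_i > L(\sigma)_{i+1}$. Under the rotation these entries have moduli $n-i+2$, $n-i+1$, $n-i$, and the argument splits on the parity of $n-i$. When $n-i$ is odd the three moduli are pairwise coprime, so (as in Lemmas~\ref{lem:equioccurrences_of_pairs_Lehmer_code} and~\ref{lem:equioccurrences_of_distant_pairs_Lehmer_code}) the triple is equidistributed over the box $\{0,\dots,n-i+1\}\times\{0,\dots,n-i\}\times\{0,\dots,n-i-1\}$, and a direct count of strictly decreasing triples gives $\tfrac16$. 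When $n-i$ is even, the first and third entries are coupled by the parity constraint of Lemma~\ref{lem:pairs_in_Lehmer_code_with_same_parities}, and by Lemma~\ref{lem:equioccurrences_pairs_distance_2} the pair $(L(\sigma)_{i-1}, L(\sigma)_{i+1})$ is equidistributed among those satisfying it; the extra ingredient is that, since the middle modulus $n-i+1$ is coprime to both others, the middle entry $L(\sigma)_i$ is uniform over its range and independent of that pair across the orbit. I expect the main obstacle to be precisely this even case: one must verify that $\Pr[L(\sigma)_{i-1} > L(\sigma)_i > L(\sigma)_{i+1}] = \tfrac16$ regardless of which parity residue the orbit realizes, i.e.\ that the count of admissible strictly decreasing triples is the same for both residue classes, rather than relying on the clean uniform count available in the odd case. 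Once both cases are checked, $p_i = \tfrac16$ and the proposition follows.
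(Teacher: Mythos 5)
Your proposal is correct, and its skeleton is essentially the paper's: both arguments translate descents into comparisons of three consecutive Lehmer code entries via Lemma~\ref{lem:descents_correspondence_in_Lehmer_code}, split on the parity of $n-i$ (pairwise coprime moduli in one case; the parity-coupled outer pair governed by Lemmas~\ref{lem:pairs_in_Lehmer_code_with_same_parities} and~\ref{lem:equioccurrences_pairs_distance_2} in the other), and finish with linearity using Theorem~\ref{thm:descents_at_i_LC} and Lemma~\ref{lem:sum_diff_homomesies}. The genuine difference is the pivot. The paper computes the peak probability $\tfrac13$ by conditioning on the middle entry $L(\sigma)_{i+1}=k$ and checking with floor/ceiling counts that both parity residues give the same conditional probability $\frac{k(k+1)}{(n-i+1)(n-i-1)}$; it then obtains double ascents by subtraction (ascents at positions $1,\dots,n-2$ minus peaks, giving $\frac{n-2}{2}-\frac{n-2}{3}=\frac{n-2}{6}$), with valleys and double descents handled symmetrically. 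You pivot instead on the double-descent probability $p_i$ and recover all four statistics from one quantity by inclusion-exclusion against the $\tfrac12$-mesic descent indicators; this is a more economical packaging, since a single triple count serves all four statistics and the peak/valley and double-ascent/double-descent symmetries come for free. The step you flag but leave unverified---that in the even case both residue classes yield $p_i=\tfrac16$---is exactly the verification the paper carries out in its peak formulation, and it does check out: writing $K=(n-i)/2$, the number of strictly decreasing admissible triples equals $\frac{K(K+1)(2K+1)}{3}$ for both residues, out of $2K(K+1)(2K+1)$ admissible triples, giving $\tfrac16$ in each case. One small caveat: your assertion that the middle entry is uniform and independent of the coupled outer pair requires a Chinese-remainder-type argument not literally contained in the quoted lemmas (it holds because $n-i+1$ is coprime to both outer moduli), but the paper's own computation in the coupled case relies on the same fact, so this is not a gap relative to the paper's standard of rigor.
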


\begin{proof}
  The permutation $\sigma$ has a peak at $i$ if $\sigma_i < \sigma_{i+1} > \sigma_{i+2}$. Following Lemma \ref{lem:descents_correspondence_in_Lehmer_code}, this is exactly when $L(\sigma)_i \leq L(\sigma)_{i+1} > L(\sigma)_{i+2}$. If $n-i$ is even, then $n-i-1$, $n-i$ and $n-i+1$ are two-by-two coprime, meaning that all combinations of values for $L(\sigma)_i$, $L(\sigma)_{i+1}$ and $L(\sigma)_{i+2}$ are possible and equally likely over one single orbit (following Lemmas \ref{lem:equioccurrences_of_pairs_Lehmer_code} and \ref{lem:equioccurrences_of_distant_pairs_Lehmer_code}). The argument is therefore the same as for (single) descents. We first prove that peaks are $\frac{n-2}{3}$-mesic.

  For a given value $i \in \{1,\ldots, n-2\}$, there are $n-i$ choices for $L(\sigma)_{i+1}$. Let $k = L(\sigma)_{i+1}$. Then, the proportion of options for the Lehmer code that represent a peak at position $i$ (i.e.\ $i$ is an ascent and $i+1$ is a descent) is $\frac{k(k+1)}{(n-i+1)(n-i-1)}$. This is because the options $\{0,1, \ldots k\}$ are valid entries for $L(\sigma)_i$, out of the $n-i+1$ options, and the valid entries for $L(\sigma)_{i+2}$ are $\{0,1,\ldots, k-1\}$, when there are a total of $n-i-1$ possible entries.  Averaging over all values of $k$, we obtain:
  \[ \frac{1}{n-i} \sum_{k=1}^{n-i-1} \frac{k(k+1)}{(n-i+1)(n-i-1)} = \frac{1}{3}.\]

  If $n-i$ is odd, $L(\sigma)_i$ and $L(\sigma)_{i+2}$ either always have the same parity, or always have distinct parity, over one given orbit, following Lemma  \ref{lem:pairs_in_Lehmer_code_with_same_parities}.  We recall from Lemma \ref{lem:equioccurrences_pairs_distance_2} that, the parity condition aside, the entries of the Lehmer code $L(\sigma)_{i}$ and $L(\sigma)_{i+2}$ are independent.  We first consider what happens when $L(\sigma)_i$ and $L(\sigma)_{i+2}$ have the same parity. We must treat separately the cases of $L(\sigma)_i$ and $L(\sigma)_{i+2}$ being both odd and both even. The probability of having a peak at $i$ when $L(\sigma)_{i+1} = k$ is:
  \[ \frac{\lceil\frac{k}{2}\rceil\lceil\frac{k+1}{2}\rceil}{\frac{n-i-1}{2}\frac{n-i+1}{2}} + \frac{\lfloor\frac{k}{2}\rfloor\lfloor\frac{k+1}{2}\rfloor}{\frac{n-i-1}{2}\frac{n-i+1}{2}},  \]
  where the first summand corresponds to $L(\sigma)_i$ and $L(\sigma)_{i+2}$ being even, and the second to both of them being odd. The above is always equal to $\frac{k(k+1)}{(n-i+1)(n-i-1)}$, so the proportion of peaks at position $i$ is also $\frac{1}{3}$, which is the same as the proportion of peaks when $n-i$ is even.

  Then, when $L(\sigma)_i$ and $L(\sigma)_{i+2}$ have different parities, the probability  of having a peak at $i$ when $L(\sigma)_{i+1}=k$ is:
  \[ \frac{\lfloor\frac{k}{2}\rfloor\lceil\frac{k+1}{2}\rceil}{\frac{n-i-1}{2}\frac{n-i+1}{2}} + \frac{\lceil\frac{k}{2}\rceil\lfloor\frac{k+1}{2}\rfloor}{\frac{n-i-1}{2}\frac{n-i+1}{2}} = \frac{k(k+1)}{(n-i+1)(n-i-1)}. \]
  Therefore, by what is above, the probability of having a peak at position $i$ is always $\frac{1}{3}$, and the number of peaks is $\frac{n-2}{3}$-mesic.

  An ascent is a double ascent if it is not a peak nor $n-1$. Knowing that peaks are $\frac{n-2}{3}$-mesic and ascents that are not $n-1$ are $\frac{n-2}{2}$-mesic, double ascents are $\frac{n-2}{2}-\frac{n-2}{3}=\frac{n-2}{6}$-mesic (using Lemma \ref{lem:sum_diff_homomesies} and Theorem \ref{thm:descents_at_i_LC}).

  The proof for valleys is the same as the proof for peaks, and the proof for double descents follows from the one for double ascents.
\end{proof}

\begin{prop}[Statistic 638]\label{638_LC}
  The number of up-down runs is $\frac{4n+1}{6}$-mesic under Lehmer code rotation.
\end{prop}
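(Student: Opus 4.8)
The plan is to express the number of up-down runs as an explicit linear combination of statistics already shown to be homomesic, and then invoke Lemma~\ref{lem:sum_diff_homomesies}. Recall that an up-down run of $\sigma$ is a maximal monotone contiguous subsequence of the word $0\,\sigma_1\sigma_2\cdots\sigma_n$ obtained by prepending the value $0$; since $0<\sigma_1$, the first step of this word is always an ascent. Counting the maximal monotone runs of a word amounts to counting $1$ plus the number of interior positions at which the direction of the word changes. First I would translate these direction changes into permutation statistics: a direction change occurs at the very first step exactly when $\sigma_1>\sigma_2$, that is, when $1\in\Des(\sigma)$; and a direction change occurs at an interior position $i$ with $2\le i\le n-1$ exactly when $i$ is an inner peak or an inner valley of $\sigma$. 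This yields the identity
\[
(\text{number of up-down runs}) = 1 + [\,1\in\Des(\sigma)\,] + \#\{\text{inner peaks}\} + \#\{\text{inner valleys}\},
\]
which I would verify on a few small permutations such as $21$, $132$, and $213$ to be confident the boundary bookkeeping is correct.

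Once the decomposition is in hand, the homomesy follows immediately from results already established. By Theorem~\ref{thm:descents_at_i_LC}, the indicator $[\,1\in\Des(\sigma)\,]$, being the number of descents at position $1$, is $\tfrac12$-mesic. By Proposition~\ref{23_353_365_366_LC}, the number of inner peaks and the number of inner valleys are each $\tfrac{n-2}{3}$-mesic (equivalently, Statistic~483 is $\tfrac{2(n-2)}{3}$-mesic). The constant term $1$ is trivially $1$-mesic. Applying Lemma~\ref{lem:sum_diff_homomesies} to this linear combination shows the number of up-down runs is homomesic with average value
\[
1 + \frac12 + \frac{n-2}{3} + \frac{n-2}{3} = \frac{6+3+4(n-2)}{6} = \frac{4n+1}{6}.
\]

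The main obstacle I anticipate is getting the decomposition exactly right, in particular correctly handling the boundary contribution from the prepended $0$. Unlike the count of alternating runs in $\sigma$ itself (Statistic~483), the up-down run count picks up the extra term $[\,1\in\Des(\sigma)\,]$ precisely because the artificial first step is forced to be ascending, so a descent in the first position of $\sigma$ creates an additional direction change that a naive ``$1+(\text{interior extrema})$'' count would miss. Verifying that only this single boundary correction is needed, and that the right endpoint contributes nothing since the word simply terminates at $\sigma_n$, is the step I would check most carefully before invoking the earlier homomesy results.
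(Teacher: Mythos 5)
Your proof is correct and follows essentially the same route as the paper: both decompose the number of up-down runs as $1 + [\,1\in\Des(\sigma)\,] + \#\{\text{peaks}\} + \#\{\text{valleys}\}$ and then combine Theorem~\ref{thm:descents_at_i_LC}, Proposition~\ref{23_353_365_366_LC}, and Lemma~\ref{lem:sum_diff_homomesies}. Your version is in fact slightly more complete, since you also compute the orbit-average $\frac{4n+1}{6}$ explicitly, which the paper leaves implicit.
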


\begin{proof}
  Up-down runs are defined as maximal monotone contiguous subsequences, or the first entry alone if it is a descent. Since peaks and valleys mark the end of a monotone contiguous subsequence (except for the last one that ends at the end of a permutation), the statistic is counted by one more than the number of peaks and valleys (added together), plus one if $1$ is a descent. Since peaks, valleys, and descents at each position are homomesic statistics, by Lemma~\ref{lem:sum_diff_homomesies} their sum is a homomesic statistic.
\end{proof}

\begin{definition}
  We define a few variants of peaks and valleys:
  \begin{enumerate}
    \item A \textbf{left outer peak} is either a peak, or 1 if it is a descent. Similarly, a \textbf{right outer peak} is either a peak, or $n-1$ if it is an ascent. An \textbf{outer peak} is either a left or a right outer peak.
    \item A \textbf{valley of a permutation, including the boundary,} is either a valley, 1 if it is an ascent, or $n-1$ if it is a descent.
  \end{enumerate}
\end{definition}

\begin{prop}[Statistics 35, 92, 99, 483, 834]\label{35_92_99_483_834_LC}
  The following variants of valleys and peaks are homomesic under the Lehmer code rotation:
  \begin{itemize}
    \item The number of left outer peaks of a permutation is $\frac{2n-1}{6}$-mesic;
    \item the number of outer peaks of a permutation is $\frac{n+1}{3}$-mesic;
    \item the number of valleys of a permutation, including the boundary, is $\frac{n+1}{3}$-mesic;
    \item the number of times a permutation switches from increasing to decreasing or decreasing to increasing is $\frac{2n-4}{3}$-mesic;
    \item the number of right outer peaks of a permutation is $\frac{2n-1}{6}$-mesic.
  \end{itemize}
\end{prop}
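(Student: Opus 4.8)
The plan is to write each of the five boundary statistics as an explicit integer linear combination of statistics already proven homomesic, and then to apply Lemma~\ref{lem:sum_diff_homomesies}. Three building blocks suffice. From Proposition~\ref{23_353_365_366_LC}, the number of inner peaks and the number of inner valleys are each $\frac{n-2}{3}$-mesic. From Theorem~\ref{thm:descents_at_i_LC}, for any fixed position $i$ with $1 \le i < n$, the indicator that $i$ is a descent is $\frac{1}{2}$-mesic; since an ascent is exactly the complement of a descent, the indicator that $i$ is an ascent is also $\frac{1}{2}$-mesic by Lemma~\ref{lem:sum_diff_homomesies}. All of the boundary corrections below are such indicators at positions $1$ or $n-1$, hence $\frac{1}{2}$-mesic.

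First I would translate each definition into ``inner peaks or valleys plus boundary indicators,'' reading the conventions so that position $1$ plays the role of a peak or valley against a virtual left end and position $n-1$ against a virtual right end. This gives: the number of left outer peaks equals the number of inner peaks plus the indicator that $1$ is a descent; the number of right outer peaks equals the number of inner peaks plus the indicator that $n-1$ is an ascent; the number of outer peaks, being the union of the two and adding the distinct boundary positions $1$ and $n-1$ for $n \ge 3$, equals the number of inner peaks plus both of these indicators; the number of valleys including the boundary equals the number of inner valleys plus the indicator that $1$ is an ascent plus the indicator that $n-1$ is a descent; and the number of switches between increasing and decreasing runs (Statistic $483$) equals the number of inner peaks plus the number of inner valleys, with no boundary term.

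Granting these identities, Lemma~\ref{lem:sum_diff_homomesies} yields the stated constants at once:
\begin{itemize}
\item left outer peaks: $\frac{n-2}{3} + \frac{1}{2} = \frac{2n-1}{6}$;
\item right outer peaks: $\frac{n-2}{3} + \frac{1}{2} = \frac{2n-1}{6}$;
\item outer peaks: $\frac{n-2}{3} + \frac{1}{2} + \frac{1}{2} = \frac{n+1}{3}$;
\item valleys including the boundary: $\frac{n-2}{3} + \frac{1}{2} + \frac{1}{2} = \frac{n+1}{3}$;
\item switches (Statistic $483$): $\frac{n-2}{3} + \frac{n-2}{3} = \frac{2n-4}{3}$.
\end{itemize}

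I expect no new dynamical content to be required: the orbit-structure work is entirely absorbed into Proposition~\ref{23_353_365_366_LC} and Theorem~\ref{thm:descents_at_i_LC}. The one place that demands care is the bookkeeping of the middle paragraph, namely verifying that each named statistic really decomposes as claimed, with the correct ascent-versus-descent condition at the correct endpoint and with no position double-counted. I would confirm each decomposition directly against the definitions and check the small-$n$ edge cases (for instance where there are no inner peaks, or where positions $1$ and $n-1$ coincide) to ensure the linear identities hold exactly; once they do, homomesy with the listed averages follows formally.
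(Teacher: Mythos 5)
Your proposal is correct and follows essentially the same route as the paper: both decompose each statistic as inner peaks (or valleys) plus ascent/descent indicators at positions $1$ and $n-1$, then invoke Proposition~\ref{23_353_365_366_LC}, Theorem~\ref{thm:descents_at_i_LC}, and Lemma~\ref{lem:sum_diff_homomesies} to conclude. Your constants all agree with the statement (and in fact your computation for valleys including the boundary, $\frac{n-2}{3}+\frac{1}{2}+\frac{1}{2}=\frac{n+1}{3}$, corrects a small arithmetic slip in the paper's own write-up, which records $\frac{n-1}{3}$ there).
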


\begin{proof}
  Recall that the number of ascents (respectively, descents) at each position is $\frac{1}{2}$-mesic. Then, we can express these statistics as sum of homomesic statistics under the Lehmer code rotation. We know that sums of homomesic statistics are also homomesic, following Lemma \ref{lem:sum_diff_homomesies}.
  \begin{itemize}
    \item The number of left outer peaks of a permutation is the number of peaks, plus $1$ if there is a descent at position $1$; it is therefore homomesic with an average of $\frac{n-2}{3} + \frac{1}{2} = \frac{2n-1}{6}$.
    \item The number of outer peaks of a permutation is the number of left outer peaks, plus one if there is an ascent at position $n-1$; it is homomesic with an average of $\frac{2n-1}{6} + \frac{1}{2} = \frac{n+1}{3}$.
    \item Since the number of valleys of a permutation, including the boundary, is the sum of the number of valleys, plus $1$ if there is an ascent at position $1$ and plus $1$ if there is a descent at position $n-1$, the statistic is homomesic with an average of $\frac{n-2}{3}+2\frac{1}{2} = \frac{n+1}{3}$.
    \item The number of times a permutation switches from increasing to decreasing or decreasing to increasing is the sum of the number of valleys and peaks, so it is $\frac{2n-4}{3}$-mesic.
    \item For the same reasons as the number of left out peaks, the  number of right outer peaks is $\frac{2n-1}{6}$-mesic.
  \end{itemize}
\end{proof}

We can also define variants of descents.
\begin{definition}
  A \textbf{descent of distance $2$} is an index $i$ such that $\sigma_i>\sigma_{i+2}$. If $i \in  \{1, \ldots, n-2\}$ is not a descent of distance $2$, it is an ascent of distance $2$.
\end{definition}

\begin{lem}\label{lem:desents_distance_2_in_Lehmer_code}
  The permutation $\sigma$ has a descent of distance $2$ at $i$ if and only if $L(\sigma)_i > L(\sigma)_{i+2}+1$ or $L(\sigma)_{i} = L(\sigma)_{i+2}+1$ and $L(\sigma)_{i}\leq L(\sigma)_{i+1}$.
\end{lem}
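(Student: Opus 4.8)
The plan is to follow the strategy of Lemma~\ref{lem:descents_correspondence_in_Lehmer_code}, but comparing entries two positions apart rather than adjacent ones. The starting point is the decomposition of each relevant Lehmer code entry into a ``tail count'' over positions beyond $i+2$ together with indicator contributions from positions $i+1$ and $i+2$. Writing $A = \#\{j > i+2 \mid \sigma_j < \sigma_i\}$, I have
\[ L(\sigma)_i = A + \#\{j \in \{i+1, i+2\} \mid \sigma_j < \sigma_i\}, \]
and similarly $L(\sigma)_{i+1}$ and $L(\sigma)_{i+2}$ can be expressed using the tail counts $\#\{j > i+2 \mid \sigma_j < \sigma_{i+1}\}$ and $L(\sigma)_{i+2} = \#\{j > i+2 \mid \sigma_j < \sigma_{i+2}\}$. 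The three tail counts are monotonic in their thresholds $\sigma_i, \sigma_{i+1}, \sigma_{i+2}$, so their relative sizes are governed entirely by the relative order of these three values.

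First I would case-split on the relative order of the triple $(\sigma_i, \sigma_{i+1}, \sigma_{i+2})$, equivalently on the length-$3$ pattern it realizes. A descent of distance $2$ at $i$ means $\sigma_i > \sigma_{i+2}$, which happens exactly for the patterns $231$, $312$, and $321$; the ascents of distance $2$ are $123$, $132$, and $213$. In each of the six cases I would evaluate the two indicator contributions and express $L(\sigma)_i$ as $L(\sigma)_{i+2}$ plus an explicit nonnegative correction coming from the tail entries lying strictly between $\sigma_{i+2}$ and $\sigma_i$ (or between $\sigma_i$ and $\sigma_{i+2}$), then read off the inequality between $L(\sigma)_i$ and $L(\sigma)_{i+2}+1$. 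For patterns $312$ and $321$ one finds $L(\sigma)_i \geq L(\sigma)_{i+2}+2$ unconditionally, and for patterns $123$ and $132$ one finds $L(\sigma)_i \leq L(\sigma)_{i+2}$, so in all four of these cases the stated condition is correctly satisfied or correctly violated with no reference to $L(\sigma)_{i+1}$ needed.

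The main obstacle---and the reason the lemma carries a second clause---is the boundary case $L(\sigma)_i = L(\sigma)_{i+2}+1$. This equality can arise both from pattern $231$ (a genuine distance-$2$ descent, when no tail entry lies strictly between $\sigma_{i+2}$ and $\sigma_i$) and from pattern $213$ (a distance-$2$ ascent, when no tail entry lies strictly between $\sigma_i$ and $\sigma_{i+2}$), so the difference $L(\sigma)_i - L(\sigma)_{i+2}$ alone cannot separate these two situations. The key observation that resolves the ambiguity is that the comparison with $L(\sigma)_{i+1}$ does separate them: in the descent pattern $231$ one has $\sigma_i < \sigma_{i+1}$, which via monotonicity of the tail counts forces $L(\sigma)_i \leq L(\sigma)_{i+1}$, whereas in the ascent pattern $213$ one has $\sigma_{i+1} < \sigma_i$, which forces $L(\sigma)_i > L(\sigma)_{i+1}$. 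Checking these two sign comparisons completes the biconditional, and I expect this borderline analysis to be the only delicate point; the remaining cases are routine bookkeeping with the indicators.
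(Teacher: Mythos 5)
Your proof is correct, and its computational engine is the same one the paper runs on: decompose each of $L(\sigma)_i$, $L(\sigma)_{i+1}$, $L(\sigma)_{i+2}$ into a tail count over positions $j > i+2$ plus inversion indicators from positions $i+1$ and $i+2$, then use monotonicity of the tail counts in their thresholds. The difference is in the logical organization. The paper never enumerates patterns; it groups the six relative orders into two families (ascent versus descent of distance $2$) and proves three implications --- a contrapositive showing that an ascent of distance $2$ forces $L(\sigma)_i \leq L(\sigma)_{i+2}+1$, a contradiction argument for the boundary clause, and a direct computation showing that a descent of distance $2$ forces $L(\sigma)_i \geq L(\sigma)_{i+2}+1+\delta$, where $\delta=1$ unless $i$ is an ascent --- and it converts the condition $L(\sigma)_i \leq L(\sigma)_{i+1}$ into ``$i$ is an ascent'' by citing Lemma~\ref{lem:descents_correspondence_in_Lehmer_code} rather than recomputing. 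Your exhaustive six-pattern case split instead verifies both directions of the biconditional within each case and re-derives the $L(\sigma)_i$ versus $L(\sigma)_{i+1}$ comparisons from monotonicity (which works: in pattern $231$ the two codes carry the same indicator contribution from position $i+2$, and in pattern $213$ the indicator asymmetry only strengthens the strict inequality). What your framing buys is an explicit explanation of why the lemma needs its second clause at all: the equality $L(\sigma)_i = L(\sigma)_{i+2}+1$ arises from exactly two patterns, $231$ (a genuine descent of distance $2$, when no tail entry lies between $\sigma_{i+2}$ and $\sigma_i$) and $213$ (an ascent of distance $2$), and only the comparison with $L(\sigma)_{i+1}$ separates them --- a point the paper's grouped argument leaves implicit. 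What the paper's organization buys is economy: two families instead of six cases, and reuse of the already-proven descent correspondence instead of fresh indicator bookkeeping.
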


\begin{proof}
  First, note that the condition $L(\sigma)_{i}\leq L(\sigma)_{i+1}$ is equivalent to $i$ being an ascent by Lemma \ref{lem:descents_correspondence_in_Lehmer_code}.
  We prove three things:
  \begin{enumerate}
    \item If $L(\sigma)_i > L(\sigma)_{i+2}+1$, then $\sigma$ has a descent of distance $2$ at $i$ (we prove the contrapositive).
    \item If $L(\sigma)_{i} = L(\sigma)_{i+2}+1$ and $i$ is an ascent of $\sigma$, then $\sigma$ has a descent of distance $2$ at $i$ (using contradiction).
    \item If $\sigma$ has a descent of distance $2$ at $i$, then either $L(\sigma)_i > L(\sigma)_{i+2}+1$, or $L(\sigma)_{i} = L(\sigma)_{i+2}+1$ and $L(\sigma)_{i}\leq L(\sigma)_{i+1}$.\\
  \end{enumerate}

  \begin{enumerate}
    \item The contrapositive of the statement we want to prove is that if $\sigma$ has an ascent of distance $2$ at $i$, then $L(\sigma)_i \leq L(\sigma)_{i+2}+1$. We prove this below. The function $\delta_{(i,j) \textit{ is an inversion}}$ takes value $1$, if $(i,j)$ is an inversion, or $0$ otherwise.

          If $\sigma_i < \sigma_{i+2}$, then $\{j > i+2 \mid \sigma_j < \sigma_i \} \subseteq \{j > i+2 \mid \sigma_j < \sigma_{i+2} \}.$
          Also,
          \begin{align*}
            L(\sigma)_i & = \#\{j > i+2 \mid \sigma_j < \sigma_i \} + \delta_{(i,i+1) \textit{ is an inversion}} + \underbrace{\delta_{(i,i+2) \textit{ is an inversion}}}_0 \\
                        & \leq \underbrace{\#\{j > i+2 \mid \sigma_j < \sigma_{i+2} \}}_{L(\sigma)_{i+2}} +\ \delta_{(i,i+1) \textit{ is an inversion}}                      \\
                        & \leq L(\sigma)_{i+2}+1.
          \end{align*}

    \item Let $L(\sigma)_{i} = L(\sigma)_{i+2}+1$ and let $i$ be an ascent of $\sigma$. Assume, for now, that $\sigma_i < \sigma_{i+2}$ (this will lead to a contradiction).
          Then,
          \begin{align*}
            L(\sigma)_i & = \#\{j > i+2 \mid \sigma_j < \sigma_i \} + \underbrace{\delta_{(i,i+1) \textit{ is an inversion}}}_{0} + \underbrace{\delta_{(i,i+2) \textit{ is an inversion}}}_0 \\
                        & \leq \#\{j > i+2 \mid \sigma_j < \sigma_{i+2} \}                                                                                                                    \\
                        & = L(\sigma)_{i+2},
          \end{align*}
          which contradicts the hypothesis that $L(\sigma)_i = L(\sigma)_{i+2}+1$. Therefore, the assumption that $\sigma_i < \sigma_{i+2}$ is false, and $i$ is a descent of distance $2$ of $\sigma$.

    \item If $\sigma$ has a descent of distance $2$ at $i$, then $\sigma_i > \sigma_{i+2}$. Then,
          \[ \{j > i+2 \mid \sigma_j < \sigma_i \} \supseteq \{ j > i+2 \mid \sigma_j < \sigma_{i+2} \}. \]
          Going back to the Lehmer code:
          \begin{align*}
            L(\sigma)_i & = \delta_{(i,i+1) \textit{ is an inversion}}+\underbrace{\delta_{(i,i+2) \textit{ is an inversion}}}_{1} + \#\{ j > i+2 \mid \sigma_j < \sigma_{i} \} \\
                        & \geq \delta_{(i,i+1) \textit{ is an inversion}}+ 1 + \underbrace{\#\{ j > i+2 \mid \sigma_j < \sigma_{i+2}\}}_{L(\sigma)_{i+2}}                       \\
                        & \geq L(\sigma)_{i+2} + 1.
          \end{align*}
          Moreover, if $i$ is not an ascent, then $\delta_{(i,i+1) \textit{ is an inversion}}=1$ and $L(\sigma)_{i} \geq L(\sigma)_{i+2} + 2$.

          Hence, either $L(\sigma)_{i} > L(\sigma)_{i+2} + 1$, or $L(\sigma)_{i} = L(\sigma)_{i+2} + 1$ and $i$ is an ascent.
  \end{enumerate}
\end{proof}

\begin{prop}[Statistics 495, 836, 837]\label{495_836_837_LC}
  The number of descents (respectively, ascents) of distance $2$ are $\frac{n-2}{2}$-mesic under the Lehmer code rotation. The inversions of distance at most $2$ are $\frac{2n-3}{2}$-mesic under the Lehmer code rotation.
\end{prop}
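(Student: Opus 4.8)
The plan is to follow the structure of the proof of Proposition~\ref{23_353_365_366_LC} (peaks and valleys), since by Lemma~\ref{lem:desents_distance_2_in_Lehmer_code} a descent of distance $2$ at position $i$ is a condition on the triple $(L(\sigma)_i, L(\sigma)_{i+1}, L(\sigma)_{i+2})$. First I would show that the proportion of positions $i \in \{1,\dots,n-2\}$ that are descents of distance $2$ equals exactly $\frac{1}{2}$ over every orbit; summing over the $n-2$ positions then gives the $\frac{n-2}{2}$-mesy, and the ascents of distance $2$ follow immediately as the complementary statistic, since each $i \in \{1,\dots,n-2\}$ is exactly one of the two, giving $(n-2)-\frac{n-2}{2} = \frac{n-2}{2}$.

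To compute the proportion at a fixed position $i$, set $m = n-i$ and write $(a,b,c) = (L(\sigma)_i, L(\sigma)_{i+1}, L(\sigma)_{i+2})$, with $a \in \{0,\dots,m\}$, $b \in \{0,\dots,m-1\}$, $c \in \{0,\dots,m-2\}$. By Lemma~\ref{lem:desents_distance_2_in_Lehmer_code}, $i$ is a descent of distance $2$ exactly when $a > c+1$, or when $a = c+1$ and $a \le b$. When $m$ is even, the moduli $m+1$, $m$, $m-1$ are pairwise coprime, so by Lemmas~\ref{lem:equioccurrences_of_pairs_Lehmer_code} and~\ref{lem:equioccurrences_of_distant_pairs_Lehmer_code} the triples $(a,b,c)$ are equidistributed over the orbit, and a direct count of the two conditions against the $(m+1)m(m-1)$ possibilities yields $\frac{1}{2}$.

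When $m$ is odd, $a$ and $c$ are no longer independent: by Lemma~\ref{lem:pairs_in_Lehmer_code_with_same_parities} there is a fixed parity $r \in \{0,1\}$ with $a - c \equiv r \pmod 2$ throughout the orbit, and by Lemma~\ref{lem:equioccurrences_pairs_distance_2} all pairs $(a,c)$ obeying this parity are equally likely, while $b$ (whose modulus $m$ is coprime to both $m+1$ and $m-1$) remains independent and uniform. I would then split into the two orbit types. For $r=0$ the boundary case $a=c+1$ cannot occur, so the condition reduces to $a \ge c+2$, and a parity-restricted count gives $\frac{1}{2}$. For $r=1$ the case $a=c+1$ does occur and must be weighted by the probability that $a \le b$; combining the definite descents $a \ge c+3$ with this boundary contribution (using independence and uniformity of $b$) again yields $\frac{1}{2}$. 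The hard part will be precisely this $r=1$ computation, where the boundary term depends on $b$ and one must check that the two pieces recombine to exactly one half of the parity class; note this is also where homomesy could a priori fail, since $r$ varies with the orbit, so it is essential that both $r=0$ and $r=1$ give the same proportion.

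Finally, the inversions of distance at most $2$ are exactly the inversions $(i,j)$ with $j-i \in \{1,2\}$, that is, the descents together with the descents of distance $2$. Since the number of descents is $\frac{n-1}{2}$-mesic by Proposition~\ref{4_21_245_833_LC} and the number of descents of distance $2$ is $\frac{n-2}{2}$-mesic by the above, Lemma~\ref{lem:sum_diff_homomesies} shows their sum is homomesic with average $\frac{n-1}{2} + \frac{n-2}{2} = \frac{2n-3}{2}$.
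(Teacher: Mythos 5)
Your proposal is correct and takes essentially the same route as the paper's proof: the identical case split on the parity of $n-i$ (and, when $n-i$ is odd, on the orbit's parity class), invoking the same lemmas (Lemmas~\ref{lem:desents_distance_2_in_Lehmer_code}, \ref{lem:equioccurrences_of_pairs_Lehmer_code}, \ref{lem:equioccurrences_of_distant_pairs_Lehmer_code}, \ref{lem:pairs_in_Lehmer_code_with_same_parities}, \ref{lem:equioccurrences_pairs_distance_2}), with ascents of distance $2$ handled as the complementary statistic and inversions of distance at most $2$ as the sum of descents and descents of distance $2$ via Lemma~\ref{lem:sum_diff_homomesies}. The three counts you defer (each yielding $\frac{1}{2}$, including the delicate case where $L(\sigma)_i - L(\sigma)_{i+2}$ is odd and the boundary term $L(\sigma)_i = L(\sigma)_{i+2}+1$ must be weighted by the probability that $L(\sigma)_i \leq L(\sigma)_{i+1}$) are exactly the computations the paper carries out, and your claimed outcome in each case matches the paper's.
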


\begin{proof}
  We first prove that, over the course of one orbit, exactly half of the permutations have a descent of distance $2$ at position $i \leq n-2$, which proves these homomesy results for ascents and descents.

  Following Lemma \ref{lem:desents_distance_2_in_Lehmer_code}, we need to count the frequency of $L(\sigma)_i > L(\sigma)_{i+2}+1$ and of $L(\sigma)_i = L(\sigma)_{i+2} +1$ with $L(\sigma)_i \leq L(\sigma)_{i+1}$, over the course of a given orbit. There are three cases, according to Lemma \ref{lem:pairs_in_Lehmer_code_with_same_parities}:
  \begin{enumerate}
    \item $n-i$ is even;
    \item $n-i$ is odd, and, over that orbit, the parity of $L(\sigma)_i$ and $L(\sigma)_{i+2}$ are the same;
    \item $n-i$ is odd, and, over that orbit, the parity of $L(\sigma)_i$ and $L(\sigma)_{i+2}$ are distinct.
  \end{enumerate}

  For the following proofs, let $k = L(\sigma)_i$.
  \begin{enumerate}
    \item When $n-i$ is even. In this case, any triplet $(L(\sigma)_i, L(\sigma)_{i+1}, L(\sigma)_{i+2})$ occurs equally often over the course of one orbit of $\L$, following Lemmas \ref{lem:equioccurrences_of_pairs_Lehmer_code} and \ref{lem:equioccurrences_of_distant_pairs_Lehmer_code}. Then, over one orbit, the probability of having $L(\sigma)_i > L(\sigma)_{i+2}+1$ is
          \[ \frac{1}{n-i+1} \sum_{k=2}^{n-i}\frac{k-1}{n-i-1} = \frac{n-i}{2(n-i+1)}. \]
          The probability of having both  $L(\sigma)_i = L(\sigma)_{i+2} + 1$  and $ L(\sigma)_i \leq L(\sigma)_{i+1}$ is
          \[ \frac{1}{n-i+1}\sum_{k=1}^{n-i-1} \frac{1}{n-i-1}\frac{n-i-k}{n-i}  = \frac{1}{2(n-i+1)}.\]
          The two events being disjoint, we can sum the probabilities of getting them, and we obtain that the probability of getting a descent of distance $2$ at $i$ is $\frac{n-i+1}{2(n-i+1)}=\frac{1}{2}$.

          The next two cases are when $n-i$ is odd, which means that the entries $L(\sigma)_i$ and $L(\sigma)_{i+2}$ are not independent, as exhibited by Lemma \ref{lem:pairs_in_Lehmer_code_with_same_parities}.
    \item When $n-i$ is odd, and, over that orbit, the parity of $L(\sigma)_i$ and $L(\sigma)_{i+2}$ are the same. In this case, one cannot have $L(\sigma)_i = L(\sigma)_{i+2} +1$, and we only need to count how often $L(\sigma)_i > L(\sigma)_{i+2}+1$.  Since the entries of the Lehmer code $L(\sigma)_{i}$ and $L(\sigma)_{i+2}$ are independent except for the parity constraint (as shown in Lemma \ref{lem:equioccurrences_pairs_distance_2}), $L(\sigma)_i > L(\sigma)_{i+2}+1$ occurs with probability
          \begin{align*}
            \frac{1}{n-i+1} \bigg( \sum_{k=2\text{, $k$ even}}^{n-i-1} & \frac{\frac{k}{2}}{\frac{n-i-1}{2}} +  \sum_{k=3\text{, $k$ odd}}^{n-i} \frac{\frac{k-1}{2}}{\frac{n-i-1}{2}} \bigg)                             \\
                                                                       & =  \frac{1}{(n-i+1)(n-i-1)}\left( \sum_{k=2\text{, $k$ even}}^{n-i-1} k +  \sum_{k=3\text{, $k$ odd}}^{n-i} k-1 \right)                          \\
                                                                       & =  \frac{1}{(n-i+1)(n-i-1)}\left( \sum_{m=1\ (m=\frac{k}{2})}^{\frac{n-i-1}{2}} 2m +  \sum_{m=1\ (m=\frac{k-1}{2})}^{\frac{n-i-1}{2}} 2m \right) \\
                                                                       & = \frac{(n-i-1)(n-i+1)}{2(n-i+1)(n-i-1)}
            = \frac{1}{2}.                                                                                                                                                                                                \\
          \end{align*}
    \item When $n-i$ is odd, and, over that orbit, the parity of $L(\sigma)_i$ and $L(\sigma)_{i+2}$ are distinct. We first count how often $L(\sigma)_i > L(\sigma)_{i+2}+1$. This occurs with probability
          \begin{align*}
            \frac{1}{n-i+1} \bigg( \sum_{k=4\text{, $k$ even}}^{n-i-1} & \frac{\frac{k-2}{2}}{\frac{n-i-1}{2}} +  \sum_{k=3\text{, $k$ odd}}^{n-i} \frac{\frac{k-1}{2}}{\frac{n-i-1}{2}} \bigg)                              \\
                                                                       & =  \frac{1}{(n-i+1)(n-i-1)}\left( \sum_{k=4\text{, $k$ even}}^{n-i-1} k-2 +  \sum_{k=3\text{, $k$ odd}}^{n-i} k-1 \right)                           \\
                                                                       & =  \frac{1}{(n-i+1)(n-i-1)}\left( \sum_{m=1\ (m=\frac{k-2}{2}) }^{\frac{n-i-3}{2}} 2m +  \sum_{m=1\ (m=\frac{k-1}{2})}^{\frac{n-i-1}{2}} 2m \right) \\
                                                                       & = \frac{n-i-3+n-i+1}{4(n-i+1)} = \frac{n-i-1}{2(n-i+1)}.                                                                                            \\
          \end{align*}

          We then count the frequency of triplets $(L(\sigma)_{i}, L(\sigma)_{i+1}, L(\sigma)_{i+2})$ such that $L(\sigma)_{i}= L(\sigma)_{i+2}+1$ and $L(\sigma)_{i}\leq L(\sigma)_{i+1}$. We get
          \begin{align*}
            \frac{1}{n-i+1}\bigg( \sum_{k=1,\ k\text{ odd}}^{n-i-2} \frac{1}{\frac{n-i-1}{2}}\frac{n-i-k}{n-i} +  \sum_{k=2,\ k\text{ even}}^{n-i-1} \frac{1}{\frac{n-i-1}{2}}\frac{n-i-k}{n-i} \bigg) = \frac{1}{n-i+1}.
          \end{align*}
          Hence, summing the probability of the two events, we get $\frac{n-i-1+2}{2(n-i+1)} = \frac{1}{2}$.
  \end{enumerate}
  This completes the proof that a descent of distance $2$ occurs at position $i$ ($1\leq i \leq n-2$) in half of the permutations of any given orbit, hence proving that descents of distance $2$ are $\frac{n-2}{2}$-mesic.

  As for inversions of distance at most $2$, they are exactly descents and descents of distance $2$. Therefore, their number is the sum of the number of descents and descents of distance $2$.
\end{proof}

It is worth noting that, unlike inversions of distance at most $2$, inversions of distance at most $3$ (statistic 494 in FindStat) are not homomesic under the Lehmer code rotation. We found a counter-example when $n=6$, where the average over one orbit can be  $\frac{119}{20}$, $6$, or $\frac{121}{20}$.

\subsection{Statistics related to permutation patterns}
\label{subsec:lehmer_pp}
In this subsection, we state and prove propositions giving the homomesies of Theorem~\ref{thm:LC} related to permutation patterns, and ask if one can characterize what patterns are homomesic under the Lehmer code rotation (open problem~\ref{prob:pp_LRC}). Recall that permutation patterns and consecutive patterns were defined in Definitions \ref{def:patterns} and \ref{def:consecutive_patterns}.

\begin{prop}[Statistics 355 to 360]\label{355_to_360_LC}
  The number of occurrences of the pattern $ab-c$, with $\{a,b,c\}=\{1,2,3\}$ is $\frac{(n-1)(n-2)}{12}$-mesic.
\end{prop}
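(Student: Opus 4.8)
The plan is to reduce the count of each of the six patterns to a sum, over the adjacent position pairs $(i,i+1)$, of a quantity that depends only on the pair of Lehmer code entries $(L(\sigma)_i, L(\sigma)_{i+1})$, and then to invoke the equioccurrence of adjacent Lehmer code pairs (Lemma~\ref{lem:equioccurrences_of_pairs_Lehmer_code}). The starting observation is that an occurrence of $ab\text{-}c$ at positions $i<i+1<j$ is determined by two pieces of data: whether $i$ is an ascent or a descent (fixing the relative order of $a,b$), and whether $\sigma_j$ is below, between, or above $\sigma_i$ and $\sigma_{i+1}$ (placing $c$). Thus for a fixed pair $(i,i+1)$ the pattern count is obtained by summing over all $j>i+1$, and the number of admissible $j$ can be read directly off the Lehmer code: the number of $j>i+1$ with $\sigma_j<\sigma_{i+1}$ is exactly $L(\sigma)_{i+1}$, while the number of $j>i+1$ with $\sigma_j<\sigma_i$ is $L(\sigma)_i$ when $i$ is an ascent and $L(\sigma)_i-1$ when $i$ is a descent (in the descent case the inversion $(i,i+1)$ is itself counted by $L(\sigma)_i$ but must be excluded). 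The conceptual point that makes the whole argument work is precisely this: although $ab\text{-}c$ involves a third, possibly distant, entry $\sigma_j$, the \emph{number} of valid $j$ is governed entirely by $L(\sigma)_i$ and $L(\sigma)_{i+1}$, which keeps us within reach of the adjacent-pair lemma and lets us avoid the delicate distant-pair and parity arguments (Lemmas~\ref{lem:equioccurrences_of_distant_pairs_Lehmer_code}--\ref{lem:equioccurrences_pairs_distance_2}) needed for the distance-$2$ statistics.

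Carrying this out for all six patterns, and rewriting the ascent/descent condition through Lemma~\ref{lem:descents_correspondence_in_Lehmer_code} as $L(\sigma)_i\le L(\sigma)_{i+1}$ resp.\ $L(\sigma)_i>L(\sigma)_{i+1}$, I would obtain, for instance,
\[
\#(21\text{-}3)=\sum_{i=1}^{n-1}\bigl[L(\sigma)_i>L(\sigma)_{i+1}\bigr]\,(n-i-L(\sigma)_i), \qquad \#(23\text{-}1)=\sum_{i=1}^{n-1}\bigl[L(\sigma)_i\le L(\sigma)_{i+1}\bigr]\,L(\sigma)_i,
\]
with entirely analogous expressions for the remaining four patterns (e.g.\ $12\text{-}3$ sums $(n-i-1-L(\sigma)_{i+1})$ over ascents, $32\text{-}1$ sums $L(\sigma)_{i+1}$ over descents, and so on). Each of these is a sum over $i$ of a function depending only on the adjacent pair $(L(\sigma)_i,L(\sigma)_{i+1})$.

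Finally I would apply Lemma~\ref{lem:equioccurrences_of_pairs_Lehmer_code}: over one orbit of $\L$ each value $(a,b)$ with $0\le a\le n-i$ and $0\le b\le n-i-1$ occurs equally often as $(L(\sigma)_i,L(\sigma)_{i+1})$, so every summand is homomesic (exactly as in Theorem~\ref{thm:descents_at_i_LC}), and its orbit-average is the uniform average of the associated function over those $(n-i+1)(n-i)$ pairs. Writing $N=n-i$, a short computation of sums such as $\tfrac{1}{(N+1)N}\sum_{a=0}^{N}a(N-a)=\tfrac{N-1}{6}$ shows that \emph{every} one of the six patterns produces the same per-position average $\tfrac{n-i-1}{6}$; summing over $i$ and using Lemma~\ref{lem:sum_diff_homomesies} yields the common orbit-average $\sum_{i=1}^{n-1}\tfrac{n-i-1}{6}=\tfrac{(n-1)(n-2)}{12}$. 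The only place demanding care is the bookkeeping of the $\pm1$ corrections that distinguish the ascent and descent cases; once the six reductions to adjacent pairs are correct, the remaining work is elementary, since adjacency guarantees the coprimality of the moduli $n-i+1$ and $n-i$ that Lemma~\ref{lem:equioccurrences_of_pairs_Lehmer_code} requires.
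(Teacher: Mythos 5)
Your proposal is correct and follows essentially the same route as the paper's proof: reduce each pattern count $ab\text{-}c$ to a function of the adjacent Lehmer code pair $(L(\sigma)_i,L(\sigma)_{i+1})$ (via Lemma~\ref{lem:descents_correspondence_in_Lehmer_code}), invoke the adjacent-pair equioccurrence Lemma~\ref{lem:equioccurrences_of_pairs_Lehmer_code}, and average. The only difference is presentational: the paper works out a single pattern ($13\text{-}2$, written with a $\max$ instead of your indicator functions) and declares the other five analogous, while you sketch all six reductions explicitly.
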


\begin{proof}
  The proof is analogous for all six cases. We do the proof here for $13-2$.
  In the Lehmer code, this corresponds to two adjacent entries: the first needs to be at most as large as the second (because it represents an ascent), and the only inversion in the pattern is its first entry at the second of the two adjacent entries. Therefore, the number of occurrences in the Lehmer code is:
  \[ \sum_{i=2}^n \max(L(\sigma)_i-L(\sigma)_{i-1}, 0). \]
  Since adjacent entries in the Lehmer code are independent, all possibilities appear equally often over each orbit (see Lemma \ref{lem:equioccurrences_of_pairs_Lehmer_code}). Therefore, over one orbit, the average is given by the following (where $j$ is the value of $L(\sigma)_i$, and $k$ is the value of $L(\sigma)_{i-1}$):
  \begin{align*}
    \sum_{i=2}^{n}\Bigg(\frac{1}{n+1-i} & \sum_{j=0}^{n-i} \left(\frac{1}{n+2-i}\sum_{k=0}^j j-k\right)\Bigg)                    \\
                                        & = \sum_{i=2}^{n}\left(\frac{1}{(n+1-i)(n+2-i)}\sum_{j=0}^{n-i} \frac{j(j+1)}{2}\right) \\
                                        & = \frac{(n-2)(n-1)}{12}.
  \end{align*}
\end{proof}

\begin{prop}[Statistics 423, 435, 437]\label{423_435_437_LC}
  The number of total occurrences of the patterns $123$ and $132$ (respectively $213$ and $231$, or $312$ and $321$)
  in a permutation is $\Big(\frac{1}{3}\binom{n}{3}\Big)$-mesic.
\end{prop}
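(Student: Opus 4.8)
The plan is to classify triples of positions by the relative rank of the \emph{leftmost} of the three entries, since each of the three pattern-pairs is characterized by exactly this datum. Writing a triple $i_1 < i_2 < i_3$, the pair $\{123, 132\}$ records precisely the occurrences in which $\sigma_{i_1}$ is the smallest of $\sigma_{i_1}, \sigma_{i_2}, \sigma_{i_3}$; the pair $\{213, 231\}$ those in which $\sigma_{i_1}$ is the median; and the pair $\{312, 321\}$ those in which $\sigma_{i_1}$ is the largest. In particular these three counts partition all $\binom n3$ triples, so their sum is the constant statistic $\binom n3$ on every permutation.

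The key observation is that each of these three counts can be read off from the Lehmer code at the leftmost position alone. Among the $n - i_1$ positions to the right of $i_1$, exactly $L(\sigma)_{i_1}$ carry a value smaller than $\sigma_{i_1}$ (these are the inversions starting at $i_1$), and the remaining $(n - i_1) - L(\sigma)_{i_1}$ carry a larger value. Hence $\sigma_{i_1}$ is the largest of its triple exactly when $i_2, i_3$ are both chosen among the smaller-valued positions, giving $\#\{312, 321\} = \sum_{i} \binom{L(\sigma)_i}{2}$; it is the smallest exactly when both are chosen among the larger-valued positions, giving $\#\{123, 132\} = \sum_i \binom{(n-i) - L(\sigma)_i}{2}$; and it is the median when one of each is chosen, giving $\#\{213, 231\} = \sum_i L(\sigma)_i\big((n-i) - L(\sigma)_i\big)$.

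Next I would average over an orbit using Lemma~\ref{lem:equioccurrences_Lehmer_code}: each value in $\{0, 1, \ldots, n-i\}$ occurs equally often as $L(\sigma)_i$, so the orbit-average of any function of $L(\sigma)_i$ equals its uniform average over that range. A short computation with the hockey-stick identity gives $\frac{1}{n-i+1}\sum_{k=0}^{n-i}\binom k2 = \frac{(n-i)(n-i-1)}{6}$ for the $\{312, 321\}$ count. The map $k \mapsto (n-i) - k$ permutes $\{0, \ldots, n-i\}$, so the $\{123, 132\}$ count has the identical orbit-average; and $x(m - x)$ averages to the same $\frac{m(m-1)}{6}$ (with $m = n-i$), covering the median case as well. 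Summing over $i$ via $\sum_{m=0}^{n-1} \frac{m(m-1)}{6} = \tfrac13\binom n3$ then yields the claimed constant for all three.

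The main thing to get right---rather than a genuine obstacle---is the combinatorial bookkeeping of the middle paragraph: verifying that ``leftmost entry is smallest/median/largest'' really is equivalent to the stated count of right-inversions, and that the three cases exhaust the triples. Once this correspondence is in hand, homomesy is immediate from Lemma~\ref{lem:equioccurrences_Lehmer_code}. Alternatively, one can sidestep the median computation entirely: having shown $\#\{312,321\}$ and $\#\{123,132\}$ are each $\tfrac13\binom n3$-mesic, the partition identity together with Lemma~\ref{lem:sum_diff_homomesies} forces $\#\{213,231\}$ to be $\tfrac13\binom n3$-mesic as well.
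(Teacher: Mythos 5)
Your proposal is correct and follows essentially the same route as the paper: both express the pattern-pair counts through the Lehmer code ($\sum_i \binom{L(\sigma)_i}{2}$ for $\{312,321\}$ and $\sum_i \binom{(n-i)-L(\sigma)_i}{2}$ for $\{123,132\}$) and average over an orbit using Lemma~\ref{lem:equioccurrences_Lehmer_code}. The only difference is cosmetic: for $\{213,231\}$ the paper subtracts the other counts from the constant $\binom{n}{3}$ and invokes Lemma~\ref{lem:sum_diff_homomesies} --- precisely the alternative you note at the end --- whereas you additionally give a direct count $\sum_i L(\sigma)_i\bigl((n-i)-L(\sigma)_i\bigr)$, whose term-by-term orbit-average computation is also valid.
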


\begin{proof}
  The number of occurrences of the pattern 312 or of the pattern 321 in a permutation $\sigma$ is counted by the number of pairs of inversions that have the same first position. Therefore, in the Lehmer code, this is given by
  \[ \sum_{i=1}^n \binom{L(\sigma)_i}{2}. \]
  Given that every possible number appears equally often in the Lehmer code over each orbit (Lemma \ref{lem:equioccurrences_Lehmer_code}), the average over one orbit is
  \[ \sum_{i=1}^n \frac{1}{n-i+1} \sum_{j=0}^{n-i} \binom{j}{2}  = \sum_{i=1}^{n} \frac{(n-i)(n-i-1)}{6} = \frac{1}{3}\binom{n}{3}.\]
  Similarly, for the patterns 132 and 123, these corresponds to two noninversions starting at the same position. Given a permutation $\sigma$, this is given by
  \[  \sum_{i=1}^n \binom{n-i-L(\sigma)_i}{2}. \]
  Averaging over one orbit, this is
  \[ \sum_{i=1}^n \frac{1}{n-i+1} \sum_{j=0}^{n-i} \binom{n-i-j}{2}  = \sum_{i=1}^{n}  \sum_{k=0}^{n-i} \binom{k}{2} = \frac{1}{3}\binom{n}{3},\]
  where the first equality is obtained by the change of variable $k= n-i-j$.\\

  Finally, to obtain the number of occurrences of the pattern $213$ or of the pattern $231$, we consider all occurrences of patterns of length 3  in a permutation, and we subtract the number of occurrences of the other patterns of length 3: $123$, $132$, $312$ and $321$. Therefore, it is a difference of homomesic statistics, so by Lemma~\ref{lem:sum_diff_homomesies} it is also homomesic.
\end{proof}

\begin{remark}
  Note that classical patterns of length at least $3$ (Statistics 2, 119, 217, 218, 219, 220) are not homomesic under the Lehmer code rotation, nor are the sum of other pairs of classical patterns (statistics 424 to 434, as well as 436).
\end{remark}

\begin{prop}[Statistic 709]\label{709_LC}
  The number of occurrences of the vincular patterns $14-2-3$ or $14-3-2$ is $\Big(\frac{1}{12}\binom{n-1}{3}\Big)$-mesic.
\end{prop}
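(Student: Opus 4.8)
The plan is to reinterpret an occurrence of $14-2-3$ or $14-3-2$ combinatorially, translate it into a condition on the Lehmer code, and then average using the equidistribution of adjacent pairs from Lemma~\ref{lem:equioccurrences_of_pairs_Lehmer_code}. First I would observe that in both patterns the first two (adjacent) entries play the roles of the smallest and largest among the four chosen values, while the final two entries take the two intermediate values in either relative order. Thus an occurrence amounts to choosing an adjacent ascent at positions $(i,i+1)$ together with an unordered pair of positions to the right of $i+1$ whose entries both lie strictly between $\sigma_i$ and $\sigma_{i+1}$; the relative order of this pair merely decides which of the two patterns occurs, so each such choice is counted exactly once.

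The key translation step is to express, for an ascent at $i$, the number $m_i$ of entries to the right of position $i+1$ whose value is strictly between $\sigma_i$ and $\sigma_{i+1}$ in terms of the Lehmer code. Since $\sigma_i<\sigma_{i+1}$, the entry $\sigma_{i+1}$ is not counted by $L(\sigma)_i$, so $L(\sigma)_i=\#\{j>i+1 \mid \sigma_j<\sigma_i\}$, while $L(\sigma)_{i+1}=\#\{j>i+1\mid \sigma_j<\sigma_{i+1}\}$; subtracting gives $m_i = L(\sigma)_{i+1}-L(\sigma)_i$, which is nonnegative precisely because $i$ is an ascent, by Lemma~\ref{lem:descents_correspondence_in_Lehmer_code}. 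Hence the statistic equals
\[ \sum_{i=1}^{n-1}\binom{\max\bigl(L(\sigma)_{i+1}-L(\sigma)_i,\,0\bigr)}{2}, \]
an exact analogue of the linear expression used for Statistics 355--360 in Proposition~\ref{355_to_360_LC}, now with a binomial coefficient $\binom{\cdot}{2}$ reflecting the choice of two intermediate entries rather than one.

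To finish, I would average this over an orbit. By Lemma~\ref{lem:equioccurrences_of_pairs_Lehmer_code} the pair $(L(\sigma)_i, L(\sigma)_{i+1})$ takes each value in $\{0,\dots,n-i\}\times\{0,\dots,n-i-1\}$ equally often across the orbit, so the contribution of position $i$ is $\frac{1}{(n-i+1)(n-i)}\sum_{k=0}^{n-i}\sum_{j=0}^{n-i-1}\binom{\max(j-k,0)}{2}$. Writing $N=n-i$, I would evaluate the inner double sum by the hockey-stick identity: for fixed $k$ the sum over $j$ gives $\binom{N-k}{3}$, and summing over $k$ gives $\binom{N+1}{4}$. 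Dividing by $(N+1)N$ collapses this to $\tfrac{1}{12}\binom{N-1}{2}$, and a final application of the hockey-stick identity, now over $i$, yields $\tfrac{1}{12}\binom{n-1}{3}$.

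The main obstacle is the first translation step: one must be careful that the intermediate-value count is taken over positions strictly after $i+1$ (not merely after $i$), and verify that no boundary contribution is lost when $i$ is a descent, where the $\max$ correctly forces the term to zero. Once the identity $m_i=L(\sigma)_{i+1}-L(\sigma)_i$ is firmly established, the remainder is the routine, if bookkeeping-heavy, evaluation of nested sums by repeated hockey-stick identities, entirely parallel to the computation already carried out in Proposition~\ref{355_to_360_LC}.
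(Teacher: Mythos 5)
Your proof is correct and takes essentially the same route as the paper: both arguments translate an occurrence of $14\text{-}2\text{-}3$ or $14\text{-}3\text{-}2$ into the Lehmer-code expression $\sum_{i}\binom{L(\sigma)_{i+1}-L(\sigma)_i}{2}$ (the paper leaves the vanishing at descents implicit where you make it explicit with a $\max$), and then average over an orbit using the equidistribution of adjacent pairs of Lehmer-code entries. Your hockey-stick evaluation is just a cleaner way of computing the same nested sum the paper evaluates directly, and you cite the appropriate pairs lemma (Lemma~\ref{lem:equioccurrences_of_pairs_Lehmer_code}) where the paper's reference to Lemma~\ref{lem:equioccurrences_Lehmer_code} appears to be a slip.
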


\begin{proof}
  These two patterns correspond to two consecutive positions $i-1$ and $i$, and two positions $\ell, m > i$ for which $\sigma_{i-1} < \sigma_\ell, \sigma_m < \sigma_i$. In the Lehmer code, $\ell$ and $m$ are counted as inversions at position $i$ that are not inversions at position $i-1$. Therefore, the number of such patterns in the permutation $\sigma$ is \[\sum_{i=2}^n \binom{L(\sigma)_i - L(\sigma)_{i-1}}{2}.\] In average, over one orbit (here, again, using Lemma \ref{lem:equioccurrences_Lehmer_code}), we have (where $j$ is the value of $L(\sigma)_i$, and $k$ is the value of $L(\sigma)_{i-1}$):
  \begin{align*}
    \sum_{i=2}^n \left(\frac{1}{n+1-i}\sum_{j=0}^{n-i}\left(\frac{1}{n+2-i}\sum_{k=0}^j \binom{j-k}{2}\right)\right) & =\sum_{i=2}^n \left(\frac{1}{n+1-i}\sum_{j=0}^{n-i}\frac{1}{n+2-i}\frac{j^3-j }{6}\right) \\
                                                                                                                     & =\sum_{i=2}^n \frac{(n-i)((n-i)(n-i+1)-2)}{24(n+2-i)}                                     \\
                                                                                                                     & = \sum_{i=2}^n \frac{(n-i)(n-i-1)}{24}                                                    \\
                                                                                                                     & = \frac{1}{12}\binom{n-1}{3}.
  \end{align*}
\end{proof}

\begin{definition}
  The \textbf{vincular pattern $|1-23$} is the number of occurrences of the pattern $123$, where the first two matched entries are the first two entries of the permutation.
\end{definition}

\begin{prop}[Statistics 1084]\label{1084_LC}
  The number of occurrences of the vincular pattern $|1-23$ in a permutation is $\frac{n-2}{6}$-mesic under the Lehmer code rotation. \end{prop}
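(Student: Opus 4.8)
The plan is to unwind the definition directly. By the definition just above, an occurrence of $|1\text{-}23$ is an occurrence of the pattern $123$ whose two smallest matched entries are forced to be $\sigma_1$ and $\sigma_2$; thus we must have $\sigma_1 < \sigma_2$ (they play the roles of ``$1$'' and ``$2$''), and the entry playing the role of ``$3$'' may be any later entry exceeding $\sigma_2$. So the statistic vanishes unless $\sigma_1 < \sigma_2$, in which case it counts the positions $\ell > 2$ with $\sigma_\ell > \sigma_2$. First I would rewrite both conditions in terms of the Lehmer code. By Lemma~\ref{lem:descents_correspondence_in_Lehmer_code}, the condition $\sigma_1 < \sigma_2$ (an ascent at position $1$) is exactly $L(\sigma)_1 \leq L(\sigma)_2$. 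For the second factor, observe that of the $n-2$ entries to the right of position $2$, exactly $L(\sigma)_2$ are smaller than $\sigma_2$, so the number larger than $\sigma_2$ is $(n-2)-L(\sigma)_2$.

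Combining these observations, the number of occurrences of $|1\text{-}23$ in $\sigma$ equals
\[ \bigl[L(\sigma)_1 \leq L(\sigma)_2\bigr]\,\bigl((n-2)-L(\sigma)_2\bigr), \]
a quantity depending only on the adjacent pair $(L(\sigma)_1, L(\sigma)_2)$. I expect this to be the key step: unlike the pattern statistics treated in the neighboring propositions, whose Lehmer-code expressions run over all positions, pinning the ``$1$'' and ``$2$'' to positions $1$ and $2$ collapses the entire count to a function of a single pair of adjacent Lehmer-code entries, after which the orbit average is immediate.

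Next I would invoke Lemma~\ref{lem:equioccurrences_of_pairs_Lehmer_code} with $i=1$: because $n$ and $n-1$ are coprime, every pair $(a,b)\in\{0,\ldots,n-1\}\times\{0,\ldots,n-2\}$ occurs equally often as $(L(\sigma)_1,L(\sigma)_2)$ over a single orbit. Hence the orbit average is the uniform average
\[ \frac{1}{n(n-1)}\sum_{b=0}^{n-2}\#\{a : a\leq b\}\,\bigl((n-2)-b\bigr) = \frac{1}{n(n-1)}\sum_{b=0}^{n-2}(b+1)\bigl((n-2)-b\bigr), \]
where the inner count is $b+1$ since $a$ ranges over $\{0,\ldots,n-1\}$ subject to $a\leq b$.

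Finally I would evaluate the sum. Writing $m=n-2$, the identity $\sum_{b=0}^{m}(b+1)(m-b)=\binom{m+2}{3}=\frac{m(m+1)(m+2)}{6}$ gives $\frac{(n-2)(n-1)n}{6}$, and dividing by $n(n-1)$ leaves exactly $\frac{n-2}{6}$. As this value does not depend on the orbit, the statistic is $\frac{n-2}{6}$-mesic. The only genuine obstacle is the first step — correctly reading the vincular definition and recognizing the collapse to the pair $(L(\sigma)_1,L(\sigma)_2)$; everything afterward is the same routine averaging used throughout this subsection.
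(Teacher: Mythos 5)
Your proof is correct and takes essentially the same route as the paper's: both reduce the statistic to $\bigl[L(\sigma)_1 \leq L(\sigma)_2\bigr]\bigl((n-2)-L(\sigma)_2\bigr)$, invoke Lemma~\ref{lem:equioccurrences_of_pairs_Lehmer_code} to get the uniform distribution of the pair $(L(\sigma)_1,L(\sigma)_2)$ over each orbit, and compute the identical average $\frac{1}{n(n-1)}\sum_{j=0}^{n-2}(j+1)(n-2-j)=\frac{n-2}{6}$. The only difference is cosmetic: you evaluate the sum explicitly via $\sum_{b=0}^{m}(b+1)(m-b)=\binom{m+2}{3}$, where the paper simply states the value.
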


\begin{proof}
  The condition that the first two entries must be the first two entries of the permutations means that we only need to consider $L(\sigma)_1$ and $L(\sigma)_2$ to count the number of occurrences of this pattern.

  More specifically, we need $L(\sigma)_1 \leq L(\sigma)_2$ (so that they form an ascent), and then we multiply it by the number of non-inversions starting at position $2$ (which is $n-2-L(\sigma)_2$). Since all combinations of the first two entries of the Lehmer code appear equally often over each orbit of the Lehmer code rotation (see Lemma \ref{lem:equioccurrences_of_pairs_Lehmer_code}), the average number of occurrences of the pattern over one orbit is (with $j = L(\sigma)_2$)
  \[ \frac{1}{n-1}\sum_{j=0}^{n-2} \frac{j+1}{n} (n-2-j) = \frac{n-2}{6}, \]
  where $\frac{j+1}{n}$ is the likelihood of the first entry in the Lehmer code being at most $L(\sigma)_2 = j$.
\end{proof}

Despite the evidence that the number of occurrences of many permutation patterns are homomesic for the Lehmer code rotation, we have found permutation patterns listed in FindStat that are not homomesic, including patterns as simple as $123$ (i.e.\ increasing subsequences of length $3$). This suggests the following problem:
\begin{prob}\label{prob:pp_LRC}
Characterize the permutation patterns that are homomesic for the Lehmer code rotation.
\end{prob}

\subsection{Miscellaneous statistics}
\label{subsec:lehmer_misc}
A few statistics not directly related to descents, inversions or permutation patterns are also homomesic for the Lehmer code rotation. They appear in this subsection.

\begin{definition}
  A \textbf{left-to-right maximum} in a permutation is the maximum of the entries seen so far in the permutation when we read from left to right: this is $\sigma_i$ such that $\sigma_j < \sigma_i$ for all $j<i$. Similarly, a \textbf{left-to-right minimum} is an entry that is the smallest to be read so far: this is $\sigma_i$ such that $\sigma_j > \sigma_i$ for all $j<i$. We define a \textbf{right-to-left maximum (resp. minimum)} analogously: this is $\sigma_i$ such that $\sigma_j < \sigma_i$ (resp. $\sigma_j > \sigma_i$) for all $j>i$.
\end{definition}

\begin{prop}[Statistics 7, 991]\label{7_991_LC}
  The number of right-to-left maxima and the number of right-to-left minima are each $H_n$-mesic, where $H_n =  \sum_{i=1}^n \frac{1}{i}$ is the $n$-th harmonic number.
\end{prop}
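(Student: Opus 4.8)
The plan is to translate the property of being a right-to-left maximum (resp.\ minimum) into a condition on a single entry of the Lehmer code, and then to apply the equidistribution result of Lemma~\ref{lem:equioccurrences_Lehmer_code} position by position, exactly as in the proofs of Propositions~\ref{18_246_LC} and~\ref{54_1556_1557_LC}.

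First I would observe that $\sigma_i$ is a right-to-left maximum precisely when every entry to its right is smaller, that is, when all $n-i$ entries in positions $j>i$ satisfy $\sigma_j<\sigma_i$. By Definition~\ref{def:lehmercode}, $L(\sigma)_i=\#\{j>i\mid \sigma_j<\sigma_i\}$, so this occurs exactly when $L(\sigma)_i=n-i$, the largest value the $i$-th Lehmer code entry can attain. Dually, $\sigma_i$ is a right-to-left minimum exactly when no entry to its right is smaller, i.e.\ when $L(\sigma)_i=0$. Hence the number of right-to-left maxima of $\sigma$ equals $\#\{i : L(\sigma)_i=n-i\}$, and the number of right-to-left minima equals $\#\{i : L(\sigma)_i=0\}$.

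Next I would compute the orbit average one position at a time. By Lemma~\ref{lem:equioccurrences_Lehmer_code}, over a single orbit of the Lehmer code rotation each of the $n-i+1$ values in $\{0,\ldots,n-i\}$ occurs equally often as $L(\sigma)_i$, so the proportion of the orbit for which $L(\sigma)_i$ takes any one prescribed value (in particular $n-i$ for maxima, or $0$ for minima) is $\frac{1}{n-i+1}$. Summing these contributions over all positions $1\le i\le n$ yields an orbit average of
\[ \sum_{i=1}^n \frac{1}{n-i+1} = \sum_{k=1}^n \frac{1}{k} = H_n \]
for each of the two statistics. Since this value does not depend on the chosen orbit, both statistics are $H_n$-mesic.

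There is no substantial obstacle in this argument: the only point requiring care is getting the Lehmer-code characterization right---the maximal entry $n-i$ detects a right-to-left maximum while the zero entry detects a right-to-left minimum---after which the result follows immediately from the equidistribution lemma. It is notable that, unlike the inversion- and descent-based statistics, here each contributing event has the small probability $\frac{1}{n-i+1}$ rather than a constant, which is precisely what produces the harmonic sum.
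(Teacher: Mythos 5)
Your proof is correct and follows essentially the same route as the paper's: both characterize right-to-left maxima as positions with $L(\sigma)_i = n-i$ and right-to-left minima as positions with $L(\sigma)_i = 0$, then apply Lemma~\ref{lem:equioccurrences_Lehmer_code} to get frequency $\frac{1}{n-i+1}$ at each position and sum to obtain $H_n$. No gaps; the argument matches the paper's proof in both structure and substance.
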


\begin{proof}
  Right-to-left minima are represented with zeros in the Lehmer code, since there is no inversion starting at that position.  The average number of zeros at position $i$ is $\frac{1}{n+1-i}$, following Lemma \ref{lem:equioccurrences_Lehmer_code}. Therefore, the average number of right-to-left minima is $\sum_{i=1}^n \frac{1}{n+1-i} = \sum_{k=1}^n \frac{1}{k} = H_n$.

  Similarly, a right-to-left maximum at entry $i$ corresponds to entry $n-i$ in the Lehmer code, which means that $(i,j)$ is an inversion for all $j > i$. Hence, the average number of entries $n-i$ at position $i$ is also $\frac{1}{n+1-i}$. We therefore obtain the same result as for left-to-right minima.
\end{proof}

Note that the number of left-to-right minima (and maxima) are not homomesic. Counter-examples for the number of left-to-right minima can be found at $n=6$, where the orbit average ranges from $\frac{71}{30}$ to $\frac{5}{2}$. Note that, unlike right-to-left extrema, left-to-right extrema do not correspond to a specific value of given entries in the Lehmer code.

\begin{definition}
  The \textbf{rank} of a permutation of $[n]$ is its position among the $n!$ permutations, ordered lexicographically. This is an integer between $1$ and $n!$.
\end{definition}

Before we prove homomesy for the rank under the Lehmer code rotation, we give a lemma describing the connection between the rank and the Lehmer code. This seems to be a known fact, but we could not find a proof in the literature.
\begin{lem}\label{lem:rank_and_lehmer_code}
  For a permutation $\sigma$ of $[n]$, the rank of $\sigma$ is given directly by the Lehmer code $L(\sigma)$ as:
  \begin{equation}
    \rank(\sigma)=1+\sum_{i=1}^{n-1}L(\sigma)_i(n-i)!.\label{eqn:rank}
  \end{equation}
\end{lem}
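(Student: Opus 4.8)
The plan is to identify $\rank(\sigma)$ with one more than the number of permutations of $[n]$ that precede $\sigma$ in lexicographic order, and then to count those preceding permutations by a standard ``first point of difference'' argument. Concretely, any permutation $\tau \neq \sigma$ is lexicographically smaller than $\sigma$ precisely when, at the first position $i$ where $\tau$ and $\sigma$ disagree, we have $\tau_i < \sigma_i$. So I would partition the set of permutations preceding $\sigma$ according to this first position of difference $i \in \{1, \ldots, n\}$, and count the permutations in each part separately.

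First I would fix a position $i$ and count the permutations $\tau$ that agree with $\sigma$ on positions $1, \ldots, i-1$ and satisfy $\tau_i < \sigma_i$. Once the prefix is forced to equal $\sigma_1 \cdots \sigma_{i-1}$, the admissible values for $\tau_i$ are exactly the unused values (those not among $\sigma_1, \ldots, \sigma_{i-1}$) that are strictly smaller than $\sigma_i$; and once $\tau_i$ is chosen, the remaining $n-i$ positions may be filled with the remaining $n-i$ values in any of $(n-i)!$ ways. Thus the count for position $i$ is (number of admissible values for $\tau_i$) times $(n-i)!$.

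The crux of the argument, and the step I expect to require the most care, is showing that the number of admissible values for $\tau_i$ equals exactly $L(\sigma)_i$. The values strictly smaller than $\sigma_i$, namely $\{1, \ldots, \sigma_i - 1\}$, split into those occurring in $\sigma$ before position $i$ and those occurring after; only the latter are still available for $\tau_i$. By Definition~\ref{def:lehmercode}, the values occurring after position $i$ that are smaller than $\sigma_i$ number precisely $L(\sigma)_i = \#\{j > i \mid \sigma_j < \sigma_i\}$. Hence there are exactly $L(\sigma)_i$ choices for $\tau_i$, giving $L(\sigma)_i \cdot (n-i)!$ preceding permutations whose first disagreement with $\sigma$ is at position $i$.

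Summing over all positions and adding $1$ (to account for $\sigma$ itself, so that the smallest permutation receives rank $1$) yields
\[
\rank(\sigma) = 1 + \sum_{i=1}^{n} L(\sigma)_i (n-i)!.
\]
Finally I would note that $L(\sigma)_n = 0$ always, since no entries lie to the right of the last position, so the $i=n$ term vanishes and the sum may be truncated at $i = n-1$, matching equation~\eqref{eqn:rank}.
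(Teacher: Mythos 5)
Your proof is correct, but it takes a different route from the paper's. The paper argues by induction on $n$: it uses the recursion $\rank(\sigma) = (\sigma_1-1)\,n! + \rank(\sigma_2\cdots\sigma_{n+1})$ for $\sigma \in S_{n+1}$, together with the (implicit) facts that $L(\sigma)_1 = \sigma_1 - 1$ and that the Lehmer code of the normalized suffix $\sigma_2\cdots\sigma_{n+1}$ is exactly the tail $(L(\sigma)_2,\ldots,L(\sigma)_{n+1})$, and then invokes the induction hypothesis. You instead count directly: you partition the permutations lexicographically preceding $\sigma$ by the first position of disagreement, identify the admissible values at position $i$ with the set $\{j > i \mid \sigma_j < \sigma_i\}$ counted by $L(\sigma)_i$, and multiply by the $(n-i)!$ free completions. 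In effect, your argument is the paper's induction fully unrolled: the paper's recursive step handles your $i=1$ class and defers the rest to the smaller instance. What your version buys is transparency --- each summand $L(\sigma)_i\,(n-i)!$ acquires a concrete meaning as the size of a block of the partition, and you never need to check that deleting the first entry and renormalizing shifts the Lehmer code correctly; what the paper's version buys is brevity, since the inductive bookkeeping replaces the partition-and-disjointness discussion with a one-line recursion. Your closing observation that $L(\sigma)_n = 0$ (so the sum may be truncated at $n-1$) is also the right way to reconcile your formula with equation~\eqref{eqn:rank}.
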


\begin{proof}
  We prove this lemma by induction on $n$. The base case is when $n=1$: the only permutation has rank $1$, which satisfies Equation \eqref{eqn:rank}. Assuming Equation \eqref{eqn:rank} holds for permutations of $[n]$, we prove it works for permutations of $[n+1]$ in the following way.

  The key is to notice that the first entry of the permutation gives a range for the rank. The rank of  a permutation $\sigma$ of $[n+1]$ is between $(\sigma_1-1)n!+1$ and $\sigma_1n!$. More specifically, it is given by $(\sigma_1-1)n!+\rank(\sigma_2\ldots\sigma_{n+1})$. Using the induction hypothesis,
  \[ \rank(\sigma) = (\sigma_1-1)n!+\rank(\sigma_2\ldots\sigma_{n+1}) = L(\sigma)_1 n! + 1 + \sum_{i=1}^{n-1}L(\sigma)_{i+1}(n+1-(i+1))! = 1 + \sum_{i=1}^{n}L(\sigma)_{i}(n+1-i)!, \]
  which proves Equation \eqref{eqn:rank}.
\end{proof}

We now have the tools to prove Proposition \ref{20_LC}.
\begin{prop}[Statistics 20]\label{20_LC}
  The rank of the permutation is $\frac{n!+1}{2}$-mesic under the Lehmer code rotation.
\end{prop}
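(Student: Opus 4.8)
The plan is to combine Lemma~\ref{lem:rank_and_lehmer_code}, which expresses the rank directly in terms of the Lehmer code as $\rank(\sigma) = 1 + \sum_{i=1}^{n-1} L(\sigma)_i (n-i)!$, with Lemma~\ref{lem:equioccurrences_Lehmer_code}, which tells us that over one orbit each value in $\{0, 1, \ldots, n-i\}$ occurs equally often as the $i$-th entry of the Lehmer code. Because the rank is an \emph{affine} function of the Lehmer code entries (a constant plus a fixed linear combination), its orbit-average is determined entry-by-entry by the average value of each $L(\sigma)_i$, and hence the rank is homomesic by an argument in the same spirit as Proposition~\ref{18_246_LC}.

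Concretely, I would first invoke Lemma~\ref{lem:equioccurrences_Lehmer_code} to conclude that the orbit-average of $L(\sigma)_i$ equals $\frac{n-i}{2}$ (the average of $0, 1, \ldots, n-i$), exactly as computed in the proof of Proposition~\ref{18_246_LC}. Averaging the rank formula over an orbit $\mathcal{O}$ and using linearity of the average then gives
\begin{align*}
\frac{1}{|\mathcal{O}|}\sum_{\sigma \in \mathcal{O}} \rank(\sigma) &= 1 + \sum_{i=1}^{n-1}\left(\frac{1}{|\mathcal{O}|}\sum_{\sigma \in \mathcal{O}} L(\sigma)_i\right)(n-i)! \\
&= 1 + \sum_{i=1}^{n-1} \frac{n-i}{2}\,(n-i)!.
\end{align*}
Since this value is independent of the orbit, the rank is homomesic; it only remains to identify the constant.

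The final step is the routine combinatorial simplification $1 + \sum_{i=1}^{n-1}\frac{n-i}{2}(n-i)! = \frac{n!+1}{2}$. Reindexing with $k = n-i$ turns the sum into $\frac{1}{2}\sum_{k=1}^{n-1} k\cdot k!$, and the standard telescoping identity $\sum_{k=1}^{m} k\cdot k! = (m+1)! - 1$ (here with $m = n-1$) yields $\frac{1}{2}(n! - 1)$, so the total is $1 + \frac{n!-1}{2} = \frac{n!+1}{2}$, matching the claimed mesic value. I do not anticipate a genuine obstacle here: the argument is structurally identical to the inversion and major-index results already proved, the only new ingredient being Lemma~\ref{lem:rank_and_lehmer_code}, which is already established. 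The one place to be careful is recognizing that the rank is affine rather than merely linear in the Lehmer code (the additive constant $1$), and correctly invoking the factorial-telescoping identity when collapsing the sum.
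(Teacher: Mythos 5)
Your proposal is correct and takes essentially the same approach as the paper: both rest on Lemma~\ref{lem:rank_and_lehmer_code} together with the equidistribution of each Lehmer code entry over an orbit, average the rank formula linearly to get $1+\sum_{i=1}^{n-1}\frac{n-i}{2}(n-i)!$, and collapse the sum via the telescoping identity $\sum_{k=1}^{n-1}k\cdot k!=n!-1$. The only cosmetic difference is that you cite Lemma~\ref{lem:equioccurrences_Lehmer_code} directly, whereas the paper re-derives that equidistribution inline by summing $[L(\sigma)_i+k]_{n-i+1}$ over the orbit.
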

\begin{proof}
  We use Lemma \ref{lem:rank_and_lehmer_code} to compute the rank directly from the Lehmer code.

  Let $m$ be the orbit size under the Lehmer code rotation. By Theorem \ref{Thm: L-Orbit cardinality}, $m=\lcm(1,2,\ldots,n).$  Acting on $\sigma$ by the Lehmer code rotation we get $L(\sigma)+\textbf{1}$ where addition in the $i$-th component is done modulo $n-i+1$.  Thus if we act on $\sigma$ by the Lehmer code rotation $k$ times, the resulting Lehmer code has rank $$\rank(\L^k(\sigma))=1+\sum_{i=1}^{n-1}[L(\sigma)_i+k]_{n-i+1}(n-i)!.$$  Calculating the average over an orbit of the Lehmer code rotation we find
  $$\frac{1}{m}\sum_{k=1}^{m} \rank(\L^k(\sigma))=1+\frac{1}{m}\sum_{i=1}^{n-1}(n-i)!\sum_{k=1}^m\left[L(\sigma)_i+k\right]_{n-i+1}.$$
  Using the fact that $n-i+1$ is a divisor of $m$, it follows that $\sum_{k=1}^m\left[L(\sigma)_i+k\right]_{n-i+1}$ is the sum of the equivalence classes modulo $n-i+1$ repeated $\frac{m}{n-i+1}$ times.  Thus,
  \begin{eqnarray*}
    \frac{1}{m}\sum_{k=1}^{m} \rank(\L^k(\sigma))&=&1+\frac{1}{m}\sum_{i=1}^{n-1}(n-i)!\frac{m}{n-i+1}\sum_{j=0}^{n-i}j\\
    &=&1+\frac{1}{2}\sum_{i=1}^{n-1}(n-i)!(n-i)=\frac{n!+1}{2}.
  \end{eqnarray*}
  This shows that the rank is $\frac{n!+1}{2}$-mesic for the Lehmer code rotation.
\end{proof}

\begin{definition}\label{BabsonSteingr\'imsson}
  Eric Babson and Einar Steingr\'imsson defined a few statistics in terms of occurrences of permutation patterns, including the statistics that they name \textbf{stat} and \textbf{stat$'$} \cite[Proposition 9]{BabsonSteingrimsson}. The statistic stat is the sum of the number of occurrences of the consecutive permutation patterns $13-2$, $21-3$, $32-1$ and $21$, while stat$'$ is the sum of the number of occurrences of $13-2$, $31-2$. $32-1$ and $21$.
\end{definition}

\begin{prop}[Statistics 692, 796]\label{692_796_LC}
  The Babson--Steingr\'imsson statistics stat and stat$'$ are $\frac{n(n-1)}{4}$-mesic.
\end{prop}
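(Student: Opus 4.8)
The plan is to express both statistics in terms of consecutive-pattern counts that have already been shown homomesic, and then invoke Lemma~\ref{lem:sum_diff_homomesies}. Recall from Definition~\ref{BabsonSteingr\'imsson} that $\textnormal{stat}$ is the number of occurrences of $13\text{-}2$, $21\text{-}3$, $32\text{-}1$ and $21$, while $\textnormal{stat}'$ counts $13\text{-}2$, $31\text{-}2$, $32\text{-}1$ and $21$. The first three summands of each are among the patterns $ab\text{-}c$ handled in Proposition~\ref{355_to_360_LC}, and the last summand, the pattern $21$, is exactly the number of descents, since a descent is a consecutive $21$ pattern. So each statistic is a sum of four individually homomesic statistics under the Lehmer code rotation.

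First I would record the homomesy constants for each piece. By Proposition~\ref{355_to_360_LC}, each of the patterns $13\text{-}2$, $21\text{-}3$, $31\text{-}2$ and $32\text{-}1$ is $\frac{(n-1)(n-2)}{12}$-mesic. By Proposition~\ref{4_21_245_833_LC}, the number of descents (the pattern $21$) is $\frac{n-1}{2}$-mesic. Then Lemma~\ref{lem:sum_diff_homomesies} guarantees that $\textnormal{stat}$ and $\textnormal{stat}'$ are each homomesic, and the average value is simply the sum of the component averages.

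The only genuine computation is adding the constants: three copies of $\frac{(n-1)(n-2)}{12}$ plus one copy of $\frac{n-1}{2}$ gives
\[ 3\cdot\frac{(n-1)(n-2)}{12} + \frac{n-1}{2} = \frac{(n-1)(n-2)}{4} + \frac{n-1}{2} = \frac{(n-1)(n-2) + 2(n-1)}{4} = \frac{(n-1)n}{4}, \]
which is $\frac{n(n-1)}{4}$, matching the claimed constant. Since $\textnormal{stat}$ and $\textnormal{stat}'$ both use three of the length-three patterns together with the pattern $21$, and all four such patterns share the same mesic constant $\frac{(n-1)(n-2)}{12}$, the particular choice of which three patterns appear is irrelevant to the final average; both statistics yield the same value.

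There is no real obstacle here, as the work has been done in the earlier propositions; the main point is just to correctly identify the pattern $21$ summand with the descent statistic and to confirm that the four length-three consecutive patterns appearing in these two statistics are all covered by Proposition~\ref{355_to_360_LC} with a common mesic constant. The one thing I would double-check is that the statistic $21$ in the definition is the consecutive pattern counting descents rather than the classical pattern counting inversions; the remark following Definition~\ref{def:consecutive_patterns} confirms that descents are the consecutive pattern $21$, so using the descent homomesy constant $\frac{n-1}{2}$ rather than the inversion constant is correct.
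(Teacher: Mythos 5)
Your proposal is correct and follows essentially the same route as the paper's proof: decompose stat and stat$'$ into the three consecutive patterns of the form $ab$-$c$ (each $\frac{(n-1)(n-2)}{12}$-mesic by Proposition~\ref{355_to_360_LC}) plus the descent count (which is the consecutive pattern $21$, $\frac{n-1}{2}$-mesic by Proposition~\ref{4_21_245_833_LC}), then apply Lemma~\ref{lem:sum_diff_homomesies} and add the constants to get $\frac{n(n-1)}{4}$. Your extra remark verifying that $21$ must be read as the consecutive (descent) pattern rather than the classical (inversion) pattern is a worthwhile sanity check that the paper leaves implicit.
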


\begin{proof}
  We showed in Proposition \ref{355_to_360_LC} that the consecutive patterns of the form $ab-c$ for $\{a,b,c\} = \{1,2,3\}$ are $\frac{(n-1)(n-2)}{12}$-mesic, and we also showed that the number of descents is $\frac{n-1}{2}$-mesic (Proposition \ref{4_21_245_833_LC}).

  Following Lemma \ref{lem:sum_diff_homomesies}, the sum of homomesic statistics is homomesic, and the average of both stat and stat$'$ over one orbit of the Lehmer code is $3\frac{(n-1)(n-2)}{12}+\frac{n-1}{2} = \frac{n(n-1)}{4}.$
\end{proof}

\begin{prop}[Statistics 1377, 1379]\label{1377_1379_LC}
  The major index minus the number of inversions of a permutation is $0$-mesic. The number of inversions plus the major index of a permutation is $\frac{n(n-1)}{2}$-mesic.
\end{prop}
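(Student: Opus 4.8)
The plan is to recognize both statistics as simple linear combinations of two statistics already shown to be homomesic earlier in this section, and then invoke the linearity lemma. Specifically, Proposition~\ref{18_246_LC} establishes that the number of inversions is $\frac{n(n-1)}{4}$-mesic under the Lehmer code rotation, and Proposition~\ref{4_21_245_833_LC} establishes that the major index is likewise $\frac{n(n-1)}{4}$-mesic. Since statistic $1377$ is $\maj(\sigma)-\inv(\sigma)$ and statistic $1379$ is $\inv(\sigma)+\maj(\sigma)$, each is an integer linear combination of these two building blocks.

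The key tool is Lemma~\ref{lem:sum_diff_homomesies}, which guarantees that any linear combination of homomesic statistics is itself homomesic, with orbit-average equal to the same linear combination of the individual orbit-averages. First I would apply it to $\maj-\inv$, obtaining orbit-average $\frac{n(n-1)}{4}-\frac{n(n-1)}{4}=0$, so that statistic $1377$ is $0$-mesic. Then I would apply it to $\inv+\maj$, obtaining orbit-average $\frac{n(n-1)}{4}+\frac{n(n-1)}{4}=\frac{n(n-1)}{2}$, so that statistic $1379$ is $\frac{n(n-1)}{2}$-mesic.

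There is essentially no obstacle here: this statement is a direct corollary of the inversion and major-index homomesies combined with the linearity lemma, and all the substantive work was carried out in the cited propositions. The only thing to be careful about is bookkeeping the signs and coefficients correctly so that the two average values come out to $0$ and $\frac{n(n-1)}{2}$ respectively. I would therefore keep the argument to a couple of lines, citing Propositions~\ref{18_246_LC} and~\ref{4_21_245_833_LC} together with Lemma~\ref{lem:sum_diff_homomesies}.
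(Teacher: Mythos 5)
Your proposal is correct and follows exactly the paper's own argument: the paper likewise cites the $\frac{n(n-1)}{4}$-mesy of the inversion number and major index (Propositions~\ref{18_246_LC} and~\ref{4_21_245_833_LC}) and invokes Lemma~\ref{lem:sum_diff_homomesies} to conclude the difference is $0$-mesic and the sum is $\frac{n(n-1)}{2}$-mesic. No gaps; nothing further is needed.
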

\begin{proof}
  Recall from Lemma \ref{lem:sum_diff_homomesies} that linear combinations of homomesic statistics are homomesic. Both the major index and the number of inversions are $\frac{n(n-1)}{4}$-mesic. Therefore, their difference is $0$-mesic and their sum is $\frac{n(n-1)}{2}$-mesic.
\end{proof}

\begin{definition}
  An \textbf{ascent top} is a position $i$ for which $\sigma_{i-1} < \sigma_{i}$. In other words, $i$ is an ascent top exactly when $i-1$ is an ascent.
\end{definition}

\begin{prop}[Statistic 1640]\label{1640_LC}
  The number of ascent tops in the permutation such that all smaller elements appear before is $\big(1-\frac{1}{n}\big)$-mesic under the Lehmer code rotation.
\end{prop}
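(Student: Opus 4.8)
The plan is to translate Statistic 1640 into a condition on two adjacent entries of the Lehmer code, and then invoke the equidistribution of adjacent pairs (Lemma~\ref{lem:equioccurrences_of_pairs_Lehmer_code}) to evaluate a telescoping sum.

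First I would interpret the defining conditions in terms of the Lehmer code. For a position $i$ to be counted, we need $i$ to be an ascent top, meaning $\sigma_{i-1} < \sigma_i$, and we need every entry smaller than $\sigma_i$ to lie to the left of position $i$. The latter condition says precisely that no entry to the right of position $i$ is smaller than $\sigma_i$, that is, $L(\sigma)_i = \#\{j > i \mid \sigma_j < \sigma_i\} = 0$; equivalently, $\sigma_i$ is a right-to-left minimum. The ascent-top condition says $i-1$ is an ascent, which by Lemma~\ref{lem:descents_correspondence_in_Lehmer_code} is equivalent to $L(\sigma)_{i-1} \leq L(\sigma)_i$. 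Combined with $L(\sigma)_i = 0$, this forces $L(\sigma)_{i-1} = 0$ as well; conversely, if $L(\sigma)_{i-1} = L(\sigma)_i = 0$ then both defining conditions hold. The upshot is that Statistic 1640 counts exactly the indices $i \in \{2, \ldots, n\}$ with $L(\sigma)_{i-1} = L(\sigma)_i = 0$, i.e.\ the number of consecutive pairs of zeros in the Lehmer code.

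Next I would compute the orbit average by writing the statistic as $\sum_{i=2}^n X_i$, where $X_i$ is the indicator that $L(\sigma)_{i-1} = L(\sigma)_i = 0$, and averaging each $X_i$ separately. By Lemma~\ref{lem:equioccurrences_of_pairs_Lehmer_code}, every pair of values $(L(\sigma)_{i-1}, L(\sigma)_i)$ occurs equally often over any orbit, and there are $(n-i+2)(n-i+1)$ such pairs, exactly one of which is $(0,0)$. Hence the orbit-average of $X_i$ is $\frac{1}{(n-i+2)(n-i+1)}$, and the orbit-average of the statistic is
\[ \sum_{i=2}^{n} \frac{1}{(n-i+2)(n-i+1)} = \sum_{k=1}^{n-1} \frac{1}{k(k+1)} = \sum_{k=1}^{n-1}\left(\frac{1}{k} - \frac{1}{k+1}\right) = 1 - \frac{1}{n}, \]
where the substitution $k = n-i+1$ turns the sum into a telescoping series.

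The main obstacle is the translation step: recognizing that the ascent-top condition together with the ``all smaller elements appear before'' condition is equivalent to having two consecutive zeros in the Lehmer code. Once this dictionary is established, the averaging is an immediate application of the adjacent-pair equidistribution lemma followed by a standard telescoping sum. A minor point to check is the boundary index $i = n$, where $L(\sigma)_n = 0$ automatically and the pair ranges over only $(n-i+2)(n-i+1) = 2$ values; there the formula still gives probability $\frac{1}{2}$ for the pair $(0,0)$, so no separate case analysis is needed.
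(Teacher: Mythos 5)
Your proposal is correct and follows essentially the same route as the paper's proof: translate the statistic into counting adjacent pairs of zeros in the Lehmer code (via Lemma~\ref{lem:descents_correspondence_in_Lehmer_code} for the ascent-top condition), then apply the adjacent-pair equidistribution of Lemma~\ref{lem:equioccurrences_of_pairs_Lehmer_code} and evaluate the resulting telescoping sum $\sum_{k=1}^{n-1}\frac{1}{k(k+1)} = 1-\frac{1}{n}$. Your write-up is in fact slightly more careful than the paper's, since you verify the converse of the translation and the boundary index $i=n$ explicitly.
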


\begin{proof}
  Given an index $i$, if all smaller elements appear before position $i$, $L(\sigma)_i = 0$. Using the proof of Proposition \ref{4_21_245_833_LC}, that means that $L(\sigma)_{i-1} \leq L(\sigma)_i = 0$. Therefore, this happens whenever we have two consecutive zero entries in the Lehmer code.

  We then use the fact that all possible choices for adjacent entries of the Lehmer code occur with the same frequency in any given orbit (Lemma \ref{lem:equioccurrences_of_pairs_Lehmer_code}). Hence, the average number of occurrences of $i$ being an ascent top in the permutation such that all smaller elements appear before, in each orbit, is $\frac{1}{n+1-i}\frac{1}{n+2-i}$. Hence, the total number is
  \[ \sum_{i=2}^{n} \frac{1}{n+1-i}\frac{1}{n+2-i} = \sum_{j=1}^{n-1}\frac{1}{j}\frac{1}{j+1} = \sum_{j=1}^{n-1}\left(\frac{1}{j}-\frac{1}{j+1}\right) = 1-\frac{1}{n}. \]
\end{proof}

This concludes the proof of Theorem \ref{thm:LC}, showing that the 45 statistics listed are homomesic under the Lehmer code rotation.

\section{Complement and Reverse Maps}
\label{sec:comp_rev}
In this section, we prove homomesies for the reverse and complement maps. Because these maps behave similarly, there are many statistics that exhibit the same behavior on the orbits of both maps. For that reason, we have divided this section into four parts. Subsection~\ref{sec:comp} discusses the differences and similarities of the two maps and includes lemmas that will be helpful in our later proofs. In Subsection~\ref{sec:both}, we prove homomesies for both the complement and the reverse map. In Subsection~\ref{sec:complement}, we prove homomesies for the complement map, and provide examples to show that they are not homomesic for the reverse. In Subsection~\ref{sec:rev}, we prove homomesies for the reverse map, and provide examples to show that they are not homomesic for the complement.

Many of the statistics that are homomesic under both maps are proven using one of two methods. The first method is to count all possibilities of the statistic and then divide by two, as either it will occur in the permutation or its reverse (or complement). The other method is to use the relationship between the reverse and complement maps seen in Lemma \ref{lem:C&R_relation}. However, there are a few statistics that use different proof techniques. While these statistics are not themselves of more interest than our other results, the proofs are noteworthy for being distinct. For the reverse, these are the \hyperref[prop:R_446]{disorder} of a permutation and the \hyperref[prop:R_304]{load} of a permutation. And for the complement, these are the \hyperref[prop:C_1114_1115]{number of odd descents}, the \hyperref[prop:C_1114_1115]{number of even descents}, and \hyperref[prop:C_692]{the Babson and Steingr\'imsson statistic stat}.

First, we introduce the maps and main theorems.

\begin{definition}
  If $\sigma = \sigma_1 \ldots \sigma_n$, then the \textbf{reverse} of $\sigma$ is $\R(\sigma) = \sigma_n\ldots \sigma_1$. That is, $\R(\sigma)_i=\sigma_{n+1-i}$.
\end{definition}

\begin{definition}
  If $\sigma = \sigma_1 \ldots \sigma_n$, then the \textbf{complement} of $\sigma$ is $\C(\sigma) = (n+1-\sigma_1)\ldots(n+1-\sigma_n)$. That is, $\C(\sigma)_i =n+1 - \sigma_i$.
\end{definition}

\begin{remark}
It is useful to note that when viewing the reverse or complement as actions on permutation matrices, they are seen as horizontal and vertical reflections respectively.
\end{remark}

\begin{example}\label{revcompex}
  Let $\sigma = 52134$. Then $\R(\sigma) = 43125$ and $\C(\sigma) = 14532$.
\end{example}

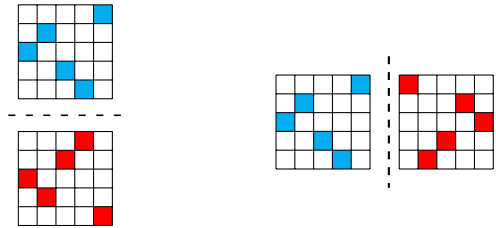
\begin{figure}[h!]

\centering
\begin{minipage}[c]{0.25\linewidth}
\begin{center}
	\begin{tabular}{c}
		$\begin{tikzpicture}[fill=cyan, scale=0.25,
		baseline={([yshift=-.5ex]current bounding box.center)},
		cell30/.style={fill}, cell21/.style={fill}, cell02/.style={fill}, 
		cell13/.style={fill},
		cell44/.style={fill},
		]
		\foreach \i in {0,...,4}
		\foreach \j in {0,...,4}
		\path[cell\i\j/.try] (\i,\j) rectangle +(1,1);
		\draw grid (5,5);
		\end{tikzpicture}$\\
		 - - - - - - - \\	
		
		$\begin{tikzpicture}[fill=red, scale=0.25,
		baseline={([yshift=-.5ex]current bounding box.center)},
		cell02/.style={fill}, cell11/.style={fill}, cell23/.style={fill}, 
		cell40/.style={fill},
		cell34/.style={fill},
		]
		\foreach \i in {0,...,4}
		\foreach \j in {0,...,4}
		\path[cell\i\j/.try] (\i,\j) rectangle +(1,1);
		\draw grid (5,5);
		\end{tikzpicture}$
	\end{tabular}
\end{center}
  \end{minipage}
  \begin{minipage}[c]{0.25\linewidth}
	\[
	\begin{tikzpicture}[fill=cyan, scale=0.25,
		baseline={([yshift=-.5ex]current bounding box.center)},
		cell30/.style={fill}, cell21/.style={fill}, cell02/.style={fill}, 
		cell13/.style={fill},
		cell44/.style={fill},
		]
		\foreach \i in {0,...,4}
		\foreach \j in {0,...,4}
		\path[cell\i\j/.try] (\i,\j) rectangle +(1,1);
		\draw grid (5,5);
		\draw[thick,dashed] (6,6) -- (6,-1);
	\end{tikzpicture} \ 
	\begin{tikzpicture}[fill=red, scale=0.25,
		baseline={([yshift=-.5ex]current bounding box.center)},
		cell04/.style={fill}, cell33/.style={fill}, cell42/.style={fill}, 
		cell21/.style={fill},
		cell10/.style={fill}, 
		]
		\foreach \i in {0,...,4}
		\foreach \j in {0,...,4}
		\path[cell\i\j/.try] (\i,\j) rectangle +(1,1);
		\draw grid (5,5);
	\end{tikzpicture}\]
  \end{minipage}\hfill
  \caption{The Reverse and the Complement}
\end{figure}

While the inverse map shares many similarities with the reverse and the complement, it is interesting to note that it does not exhibit homomesy on any of the statistics found in FindStat. We conjecture that this is due to the number of fixed points under the inverse map. For each permutation $\sigma$ fixed under a map, the value of the statistic evaluated at $\sigma$ has to be the global average of the statistic. Thus, each fixed point of a map adds another constraint on a statistic being homomesic under that map.
\begin{example}
Let $\sigma = 52134$. Then the inverse of $\sigma$ is $\mathcal{I}(\sigma) = 32451.$ 
\begin{figure}[h!]
\begin{center}
	\begin{tabular}{c}
$\begin{tikzpicture}[fill=cyan, scale=0.25,
		baseline={([yshift=-.5ex]current bounding box.center)},
		cell30/.style={fill}, cell21/.style={fill}, cell02/.style={fill}, 
		cell13/.style={fill},
		cell44/.style={fill},
		]
		\foreach \i in {0,...,4}
		\foreach \j in {0,...,4}
		\path[cell\i\j/.try] (\i,\j) rectangle +(1,1);
		\draw grid (5,5);
		\draw[thick,dashed] (6,6) -- (12,-1);
	\end{tikzpicture} \ 
	\begin{tikzpicture}[fill=red, scale=0.25,
		baseline={([yshift=-.5ex]current bounding box.center)},
		cell24/.style={fill}, cell13/.style={fill}, cell32/.style={fill}, 
		cell41/.style={fill},
		cell00/.style={fill}, 
		]
		\foreach \i in {0,...,4}
		\foreach \j in {0,...,4}
		\path[cell\i\j/.try] (\i,\j) rectangle +(1,1);
		\draw grid (5,5);
	\end{tikzpicture}$
  	\end{tabular}
\end{center}
  \caption{The Inverse}
\end{figure}
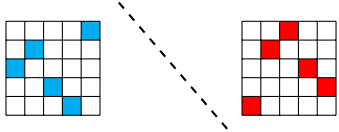\label{fig:inv}
\end{example}

\begin{remark}
One could manufacture statistics where the inverse map does exhibit homomesy. Sergi Elizalde suggested two such examples. The number of exceedances + $\frac{1}{2}$ the number of fixed points and the number of deficiencies + $\frac{1}{2}$ the number of fixed points are both $\frac{n}{2}$-mesic (see Definition \ref{def:exceedance} for the definition of exceedances and deficiencies). We see this as the number of exceedances equals the number of filled boxes above the main diagonal, the number of deficiencies equals the number of filled boxes below the main diagonal, the number of fixed points equals the number of filled boxes in the main diagonal, and the inverse acts on a permutation matrix by reflecting it along the main diagonal.
\end{remark}

The main theorems of this section are as follows.

\begin{thm}\label{thmboth}
  The reverse map and the complement map are both homomesic under the following statistics:
  \begin{itemize} \rm
    \item Statistics related to inversions:
          \begin{itemize}
            \item \hyperref[prop:RC_18_246]{$\Stat~18$}: The number of inversions of a permutation $(${\small average: $\frac{n(n-1)}{4}$ }$)$
            \item \hyperref[prop:RC_55_341]{$\Stat~55$}: The inversion sum of a permutation $(${\small average: $\frac{1}{2}\binom{n+1}{3}$ }$)$
            \item \hyperref[prop:RC_18_246]{$\Stat$ $246$}: The number of non-inversions of a permutation $(${\small average: $\frac{n(n-1)}{4}$ }$)$
            \item \hyperref[prop:RC_55_341]{$\Stat$ $341$}: The non-inversion sum of a permutation $(${\small average: $\frac{1}{2}\binom{n+1}{3}$ }$)$
            \item \hyperref[prop:RC_495]{$\Stat$ $494$}: The number of inversions of distance at most $3$ of a permutation $(${\small average: $\frac{3n-6}{2}$ }$)$
            \item \hyperref[prop:RC_495]{$\Stat$ $495$}: The number of inversions of distance at most $2$ of a permutation $(${\small average: $\frac{2n-3}{2}$ }$)$
            \item \hyperref[prop:RC_538_539]{$\Stat$ $538$}: The number of even inversions of a permutation $(${\small average: $\frac{1}{2}\cdot \lfloor \frac{n}{2}\rfloor\lfloor\frac{n-1}{2}\rfloor$ }$)$
            \item \hyperref[prop:RC_538_539]{$\Stat$ $539$}: The number of odd inversions of a permutation $(${\small average: $\frac{1}{2}\lfloor\frac{n^2}{4}\rfloor$ }$)$
            \item \hyperref[prop:RC_677]{$\Stat$ $677$}: The standardized bi-alternating inversion number of a permutation $(${\small average: $\frac{\lfloor\frac{n}{2}\rfloor^2}{2}$ }$)$
          \end{itemize}
    \item Statistics related to descents:
          \begin{itemize}
            \item \hyperref[prop:RC_21_245_et]{$\Stat$ $21$}: The number of descents of a permutation $(${\small average: $\frac{n-1}{2}$ }$)$
            \item \hyperref[prop:RC_21_245_et]{$\Stat$ $245$}: The number of ascents of a permutation $(${\small average: $\frac{n-1}{2}$ }$)$
            \item \hyperref[prop:RC_21_245_et]{$\Stat$ $470$}: The number of runs in a permutation $(${\small average: $\frac{n+1}{2}$ }$)$
            \item \hyperref[prop:RC_21_245_et]{$\Stat$ $619$}: The number of cyclic descents of a permutation $(${\small average: $\frac{n}{2}$ }$)$
            \item \hyperref[prop:RC_836_837]{$\Stat$ $836$}: The number of descents of distance $2$ of a permutation $(${\small average: $\frac{n-2}{2}$ }$)$
            \item \hyperref[prop:RC_836_837]{$\Stat$ $837$}: The number of ascents of distance $2$ of a permutation $(${\small average: $\frac{n-2}{2}$ }$)$
            \item \hyperref[prop:RC_836_837]{$\Stat$ $1520$}: The number of strict $3$-descents of a permutation $(${\small average: $\frac{n-3}{2}$ }$)$
          \end{itemize}
    \item Statistics related to other permutation properties:
          \begin{itemize}
            \item \hyperref[prop:RC_21_245_et]{$\Stat$ $325$}: The width of the tree associated to a permutation $(${\small average: $\frac{n+1}{2}$ }$)$
            \item \hyperref[prop:RC_342]{$\Stat$ $342$}: The cosine of a permutation $(${\small average: $\frac{(n+1)^2n}{4}$ }$)$
            \item \hyperref[prop:RC_354]{$\Stat$ $354$}: The number of recoils of a permutation $(${\small average: $\frac{n-1}{2}$ }$)$
            \item \hyperref[prop:RC_457]{$\Stat$ $457$}: The number of occurrences of one of the patterns $132$, $213$, or $321$ in a permutation $(${\small average: $\frac{\binom{n}{3}}{2}$ }$)$
            \item \hyperref[prop:RC_21_245_et]{$\Stat$ $824$}: The sum of the number of descents and the number of recoils of a permutation $(${\small average: $n-1$ }$)$
            \item \hyperref[prop:RC_828]{$\Stat$ $828$}: The Spearman’s rho of a permutation and the identity permutation $(${\small average: $\binom{n+1}{3}$ }$)$
          \end{itemize}
  \end{itemize}
\end{thm}

\begin{thm}\label{onlycomp}
  The complement map is homomesic under the following statistics, but the reverse map is not:
  \begin{itemize} \rm
    \item Statistics related to inversions:
          \begin{itemize}
            \item \hyperref[prop:C_1557_1556]{$\Stat$ $1556$}: The number of inversions of the second entry of a permutation $(${\small average: $\frac{n-2}{2}$ }$)$
            \item \hyperref[prop:C_1557_1556]{$\Stat$ $1557$}: The number of inversions of the third entry of a permutation $(${\small average: $\frac{n-3}{2}$ }$)$
          \end{itemize}
    \item Statistics related to descents:
          \begin{itemize}
            \item \hyperref[prop:C_4]{$\Stat$ $4$}: The major index $(${\small average: $\frac{n(n-1)}{4}$ }$)$
            \item \hyperref[prop:C_1114_1115]{$\Stat$ $1114$}: The number of odd descents of a permutation $(${\small average: $\frac{1}{2}\lceil \frac{n-1}{2}\rceil$ }$)$
            \item \hyperref[prop:C_1114_1115]{$\Stat$ $1115$}: The number of even descents of a permutation $(${\small average: $\frac{1}{2}\lfloor\frac{n-1}{2}\rfloor$ }$)$
          \end{itemize}
    \item Statistics related to other permutation properties:
          \begin{itemize}
            \item \hyperref[prop:C_20]{$\Stat$ $20$}: The rank of a permutation $(${\small average: $\frac{n!+1}{2}$ }$)$
            \item \hyperref[prop:C_54_740]{$\Stat$ $54$}: The first entry of the permutation $(${\small average: $\frac{n+1}{2}$ }$)$
            \item \hyperref[prop:C_662]{$\Stat$ $662$}: The staircase size of a permutation $(${\small average: $\frac{n-1}{2}$ }$)$
            \item \hyperref[prop:C_692]{$\Stat$ $692$}: Babson and Steingr\'imsson’s statistic stat of a permutation $(${\small average: $\frac{n(n-1)}{4}$ }$)$
            \item \hyperref[prop:C_54_740]{$\Stat$ $740$}: The last entry of a permutation $(${\small average: $\frac{n+1}{2}$ }$)$
            \item \hyperref[prop:C_1332]{$\Stat$ $1332$}: The number of steps on the non-negative side of the walk associated with a permutation $(${\small average: $\frac{n-1}{2}$ }$)$
            \item \hyperref[prop:C_1377_1379]{$\Stat$ $1377$}: The major index minus the number of inversions of a permutation $(${\small average: $\frac{n(n-1)}{2}$ }$)$
            \item \hyperref[prop:C_1377_1379]{$\Stat$ $1379$}: The number of inversions plus the major index of a permutation $(${\small average: $\frac{n(n-1)}{2}$ }$)$
            \item \hyperref[thm:ith_entry_comp]{$i$-th entry}: The $i$-th entry of a permutation $(${\small average: $\frac{n+1}{2}$ }$)$
          \end{itemize}
  \end{itemize}
\end{thm}

\begin{thm}\label{onlyrev}
  The reverse map is homomesic under the following statistics, but the complement map is not:
  \begin{itemize} \rm
    \item \hyperref[prop:R_304]{$\Stat$ $304$}: The load of a permutation $(${\small average: $\frac{n(n-1)}{4}$ }$)$
    \item \hyperref[prop:R_305]{$\Stat$ $305$}: The inverse major index $(${\small average: $\frac{n(n-1)}{4}$ }$)$
    \item \hyperref[prop:R_446]{$\Stat$ $446$}: The disorder of a permutation $(${\small average: $\frac{n(n-1)}{4}$ }$)$
    \item \hyperref[prop:R_616]{$\Stat$ $616$}: The inversion index of a permutation $(${\small average: $\binom{n+1}{3}$ }$)$
    \item \hyperref[prop:R_798]{$\Stat$ $798$}: The makl of a permutation $(${\small average: $\frac{n(n-1)}{4}$ }$)$
  \end{itemize}
\end{thm}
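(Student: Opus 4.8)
The plan is to exploit the fact that the reverse map $\R$ is an involution, since $\R(\R(\sigma))=\sigma$. Consequently every orbit of $\R$ has size $1$ or $2$: the fixed points are the palindromic permutations, and all other orbits are pairs $\{\sigma,\R(\sigma)\}$. For a statistic $f$, the orbit-average on a size-$2$ orbit is $\tfrac12\big(f(\sigma)+f(\R(\sigma))\big)$, while on a fixed point it is simply $f(\sigma)$. Thus it suffices to prove a single \emph{constant-sum identity} $f(\sigma)+f(\R(\sigma))=c$ for all $\sigma\in S_n$, with $c$ depending only on $n$: on a size-$2$ orbit the average is then $c/2$, and on a fixed point $\R(\sigma)=\sigma$ forces $2f(\sigma)=c$, so $f(\sigma)=c/2$ as well. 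Hence $f$ is $\tfrac{c}{2}$-mesic and both orbit types are handled uniformly.

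The structural tool behind every such identity is the interaction of $\R$ with the inverse. A direct computation gives $(\R\sigma)^{-1}=\C(\sigma^{-1})$, so reversing a permutation complements its inverse. Combined with the elementary fact that complementation exchanges descents and ascents, i.e. $\Des(\C\tau)=[n-1]\setminus\Des(\tau)$ and therefore $\maj(\C\tau)=\binom{n}{2}-\maj(\tau)$, this explains why the statistics in this theorem -- all ``inverse-flavored'' Mahonian statistics, with the inverse major index as prototype -- satisfy $f(\sigma)+f(\R\sigma)=\binom{n}{2}$ or a comparable constant. For Statistic $305$, writing $f(\sigma)=\maj(\sigma^{-1})$, we get $f(\R\sigma)=\maj\big((\R\sigma)^{-1}\big)=\maj\big(\C(\sigma^{-1})\big)=\binom{n}{2}-\maj(\sigma^{-1})$, so the sum is $\binom{n}{2}$. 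I would carry out the analogous reduction for the load (Statistic $304$), the inversion index (Statistic $616$), the disorder (Statistic $446$) and makl (Statistic $798$), in each case rewriting the statistic so that $\R$ acts through the duality $(\R\sigma)^{-1}=\C(\sigma^{-1})$ and the complement then produces a constant sum.

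For the second half -- that these statistics are \emph{not} homomesic under the complement map $\C$ -- I would exhibit explicit counterexamples, matching the paper's computational method. Since $\C$ is also an involution, it suffices to display two $\C$-orbits with distinct averages. Here the duality works against us: $(\C\sigma)^{-1}=\R(\sigma^{-1})$, and $\maj(\tau)+\maj(\R\tau)$ is \emph{not} constant (already at $n=3$ it equals $4$ on $\{132,231\}$ but $2$ on $\{213,312\}$), which is precisely the obstruction to complement-homomesy for the inverse major index and its relatives. Small explicit orbits at $n=3$ or $n=4$ will certify non-homomesy for each listed statistic.

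The main obstacle I anticipate is pinning down the precise definitions of the less standard statistics: the load, the disorder, and makl are specified by scanning or sorting procedures rather than by a transparent closed form, and one must verify that each genuinely reduces to an inverse-$\maj$-type quantity for which the constant-sum identity holds. The cleanest route, where available, is to show that several of these statistics coincide \emph{as functions} on $S_n$ (not merely in distribution) with $\maj(\sigma^{-1})$ or a fixed affine transform thereof; a single constant-sum computation would then settle multiple items simultaneously, and establishing such coincidences is where the real work lies.
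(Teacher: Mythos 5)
Your overall framework is exactly the paper's: $\R$ is an involution, every orbit is a pair $\{\sigma,\R(\sigma)\}$ (in fact $\R$ has no fixed points for $n\geq 2$, since distinct entries force $\sigma_i\neq\sigma_{n+1-i}$ whenever $2i\neq n+1$, so your ``palindromic'' case is vacuous), and it suffices to prove a constant-sum identity $f(\sigma)+f(\R(\sigma))=c$ together with explicit complement orbits whose averages differ. Your argument for Statistic $305$ is precisely the paper's, and your duality route also disposes of Statistic $304$ cleanly (arguably more cleanly than the paper does): from $\operatorname{load}(\sigma)=\maj(\R(\sigma^{-1}))$, the identity $(\R\sigma)^{-1}=\C(\sigma^{-1})$, and the fact that $\R$ and $\C$ commute, one gets $\operatorname{load}(\sigma)+\operatorname{load}(\R\sigma)=\maj(\tau)+\maj(\C(\tau))=\binom{n}{2}$ with $\tau=\R(\sigma^{-1})$.

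The gap lies in Statistics $446$, $616$ and $798$, where you defer the ``real work'' and the route you propose for it provably fails. Your preferred reduction --- that these statistics coincide as functions with $\maj(\sigma^{-1})$ or a fixed affine transform of it --- is false: they are only equidistributed with $\maj$, not pointwise equal to $\maj$ of the inverse. Concretely, $\sigma=2413$ has $\sigma^{-1}=3142$ and $\maj(\sigma^{-1})=4$, while $\operatorname{makl}(2413)=2$ (one occurrence each of $21$ and $31-2$) and the inversion index of $2413$ is $2+4+4=10$; no affine correction works either, since makl agrees with $\maj(\sigma^{-1})$ on all of $S_3$ but not at $2413$, and the inversion index already disagrees non-affinely on $S_3$. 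For makl the duality $(\R\sigma)^{-1}=\C(\sigma^{-1})$ is moreover vacuous: makl is defined by vincular patterns, not through $\sigma^{-1}$, so ``rewriting it so that $\R$ acts through the duality'' merely restates the problem. What the paper actually does for these three statistics is a direct pairing argument that your proposal never identifies: for $616$, each pair of values $a>b$ is an inversion pair of exactly one of $\sigma$, $\R(\sigma)$, so the two inversion indices sum to $\sum_{a=2}^{n}a(a-1)$; for $446$, each adjacent-value pair $(i+1,i)$ is an inversion pair of exactly one of the two and contributes $n-i$ to the disorder of that one; for $798$, reversal turns occurrences of $1-32$, $31-2$, $32-1$, $21$ into occurrences of $23-1$, $2-13$, $1-23$, $12$, and these eight pattern types partition all pairs of positions, giving the constant sum $\binom{n}{2}$. (Your duality can be salvaged for $616$ by first proving that the inversion index equals $\sum_{(i,j)\in\Inv(\sigma^{-1})}j$, and for $446$ by proving that the disorder equals the comajor index of $\sigma^{-1}$, but these identities are themselves the substance of the proofs and are not supplied.) As written, three of the five homomesy claims therefore rest on an identification that is either missing or false.
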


\subsection{Comparing and contrasting the reverse and complement}
\label{sec:comp}

Before we provide proofs for our main theorems, we introduce some general lemmas which show how the two maps are similar and how they differ. First, we note that both the complement and reverse maps are involutions, and thus their orbits always have size 2.

One of the main differences between these two maps is illustrated by the following lemma.

\begin{lem}
  Whenever $n>2$ is odd, we note the following:
  \begin{enumerate}
    \item $\R(\sigma)$ has a fixed point: $\sigma_{\frac{n+1}{2}} =\R(\sigma)_{\frac{n+1}{2}}.$
    \item $\C(\sigma)$ has a fixed point: If $\sigma_i=\frac{n+1}{2}$, then $\C(\sigma)_i =\frac{n+1}{2} $.
  \end{enumerate}
  When $n$ is even, $\R(\sigma)$ and $\C(\sigma)$ have no fixed points.
\end{lem}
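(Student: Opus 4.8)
The plan is to read off each claim directly from the definitions $\R(\sigma)_i = \sigma_{n+1-i}$ and $\C(\sigma)_i = n+1-\sigma_i$, interpreting a ``fixed point'' as a position $i$ at which the map leaves the entry unchanged, so that $\sigma$ and its image agree in position $i$. Every statement then reduces to a single parity/integrality question: whether $\frac{n+1}{2}$ is an integer lying in $[n]$.

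For the reverse when $n$ is odd, I would substitute $i = \frac{n+1}{2}$, which is a genuine element of $[n]$ precisely because $n$ is odd, into the definition; this gives $\R(\sigma)_{\frac{n+1}{2}} = \sigma_{n+1-\frac{n+1}{2}} = \sigma_{\frac{n+1}{2}}$, so the central position is fixed for every $\sigma$. For the complement when $n$ is odd, the value $\frac{n+1}{2}$ is again a legitimate element of $[n]$, so by bijectivity of $\sigma$ there is a unique position $i$ with $\sigma_i = \frac{n+1}{2}$; plugging this into the definition yields $\C(\sigma)_i = n+1-\frac{n+1}{2} = \frac{n+1}{2} = \sigma_i$, so that position is fixed.

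For the even case I would argue that the defining equations cannot be met. If $\R(\sigma)_i = \sigma_i$, then $\sigma_{n+1-i} = \sigma_i$, and since $\sigma$ is injective this forces $n+1-i = i$, i.e. $i = \frac{n+1}{2}$; likewise $\C(\sigma)_i = \sigma_i$ forces $n+1-\sigma_i = \sigma_i$, i.e. $\sigma_i = \frac{n+1}{2}$. In both cases the required position (respectively entry) must equal $\frac{n+1}{2}$, which is not an integer when $n$ is even, so no such $i$ exists and there are no fixed points. The only step requiring any care is invoking injectivity of $\sigma$ to pass from $\sigma_{n+1-i}=\sigma_i$ to $n+1-i=i$ in the reverse computation; beyond that, the argument is a direct unwinding of the two definitions, and I expect no substantial obstacle.
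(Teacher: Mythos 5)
Your proof is correct and takes essentially the same approach as the paper: in the odd case, substitute $i=\frac{n+1}{2}$ (resp.\ the position where $\sigma_i=\frac{n+1}{2}$) directly into the definitions of $\R$ and $\C$, and in the even case reduce everything to the non-integrality of $\frac{n+1}{2}$. If anything, your even-case argument is slightly more complete than the paper's, since by invoking injectivity of $\sigma$ you show that \emph{any} fixed point of $\R$ (resp.\ $\C$) would have to occur at position (resp.\ value) $\frac{n+1}{2}$, a converse direction the paper leaves implicit.
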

\begin{proof}
  The proof follows directly from the definitions of the reverse and complement map. Let $n>2$ be an odd integer.
  \begin{enumerate}
    \item Since $\R(\sigma)_i = \sigma_{n + 1 - i}, \R(\sigma)_{\frac{n+1}{2}} = \sigma_{n + 1 - \frac{n+1}{2}} = \sigma_{\frac{n+1}{2}}$.
    \item Since $\C(\sigma)_i = n+1-\sigma_i$, $\sigma_i = \frac{n+1}{2}$ implies that $\C(\sigma)_i = n+1-\frac{n+1}{2} = \frac{n+1}{2}.$
  \end{enumerate}
  Let $n>2$ be an even integer. Then $\frac{n+1}{2}$ is not an integer, and there is no elements $\sigma_{\frac{n+1}{2}}$ or $\sigma_i=\frac{n+1}{2}$ as parts of a permutation $\sigma$.
\end{proof}

\begin{example}
  Continuing Example \ref{revcompex}, let $\sigma = 52134$. Then $\R(\sigma) = 43125$, and we see $\R(\sigma)$ has a fixed point $\sigma_3 = 1 = \R(\sigma)_3$. Additionally, $\C(\sigma) = 14532$, and we see $\C(\sigma)$ has a fixed point $\sigma_4 = 3 = \C(\sigma)_4$.
\end{example}

The following lemma exhibits the relationship between the complement and the reverse maps, which will be used in the proofs of our main results.

\begin{lem} \label{lem:C&R_relation}
  Let $\sigma \in S_n$. Then,
  \begin{enumerate}
    \item $\C(\sigma)^{-1} = \R(\sigma^{-1})$ and $\C(\sigma^{-1}) = \R(\sigma)^{-1}.$
    \item $(\R\circ \C)^2=e$, where $e$ is the identity map on permutations.
    \item $(\R\circ \mathcal{I})^4= e$, where $\mathcal{I}$ is the map that sends $\sigma$ to its inverse, $\sigma^{-1}$.
  \end{enumerate}
\end{lem}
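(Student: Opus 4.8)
The plan is to prove part (1) by a direct computation that tracks the distinction between positions and values, and then to deduce parts (2) and (3) almost formally, using that $\R$ and $\C$ are involutions. Throughout I would use the characterization of the inverse permutation, $\sigma^{-1}(v) = i$ if and only if $\sigma_i = v$. To compute $\C(\sigma)^{-1}$, I would find, for each value $v$, the unique position $i$ with $\C(\sigma)_i = v$; since $\C(\sigma)_i = n+1-\sigma_i$, this forces $\sigma_i = n+1-v$, i.e. $i = \sigma^{-1}(n+1-v)$, so that $\C(\sigma)^{-1}(v) = \sigma^{-1}(n+1-v)$. On the other hand, the reverse formula gives $\R(\sigma^{-1})(v) = \sigma^{-1}_{\,n+1-v} = \sigma^{-1}(n+1-v)$, and comparing the two yields the first identity $\C(\sigma)^{-1} = \R(\sigma^{-1})$.

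For the second identity of part (1), I would simply apply the first identity with $\sigma^{-1}$ in place of $\sigma$, obtaining $\C(\sigma^{-1})^{-1} = \R((\sigma^{-1})^{-1}) = \R(\sigma)$, and then take inverses of both sides to get $\C(\sigma^{-1}) = \R(\sigma)^{-1}$ (one could instead repeat the position-versus-value bookkeeping directly). For part (2), I would first record that $\R$ and $\C$ commute: from the definitions, $\R(\C(\sigma))_i = n+1-\sigma_{n+1-i} = \C(\R(\sigma))_i$, while $\R^2 = \C^2 = e$ is immediate since both maps are involutions. Hence $(\R\circ\C)^2 = \R\circ\C\circ\R\circ\C = \R^2\circ\C^2 = e$.

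For part (3), I would reduce everything to part (2) by showing that $(\R\circ\mathcal{I})^2 = \R\circ\C$. Writing $(\R\circ\mathcal{I})(\sigma) = \R(\sigma^{-1})$ and applying the map once more gives $\R\bigl(\R(\sigma^{-1})^{-1}\bigr)$. Using the second identity of part (1) in the form $\R(\tau)^{-1} = \C(\tau^{-1})$ with $\tau = \sigma^{-1}$, we get $\R(\sigma^{-1})^{-1} = \C(\sigma)$, and therefore $(\R\circ\mathcal{I})^2(\sigma) = \R(\C(\sigma)) = (\R\circ\C)(\sigma)$. Combining this with part (2) gives $(\R\circ\mathcal{I})^4 = (\R\circ\C)^2 = e$, as desired.

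The main obstacle is entirely the careful bookkeeping in part (1): because taking an inverse interchanges the roles of positions and values, one must consistently distinguish ``the entry at position $i$'' from ``the position of the value $v$,'' and must keep track of the order of composition (which map acts first). Once part (1) is established with these conventions fixed, parts (2) and (3) are short formal consequences and require no further permutation-level computation.
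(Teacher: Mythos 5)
Your proof is correct and follows essentially the same route as the paper's: a direct position-versus-value computation for part (1), the second identity obtained by substituting $\sigma^{-1}$ into the first and taking inverses, and part (3) deduced by showing $(\R\circ\mathcal{I})^2 = \R\circ\C$ and invoking part (2). The only cosmetic difference is in part (2), where you factor the argument through the commutativity of $\R$ and $\C$ together with $\R^2=\C^2=e$, while the paper expands $(\R\circ\C)^2(\sigma)$ step by step; both are equally short direct verifications.
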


\begin{proof}
Each of these equations becomes clear when the maps are viewed as actions on permutation matrices, as the reverse map is equivalent to a horizontal reflection, the complement map is equivalent to a vertical reflection, and the inverse map is equivalent to a reflection along the main diagonal. 
\end{proof}

Now we are ready to prove our main theorems.

\subsection{Statistics homomesic for both the reverse and the complement}
\label{sec:both}

In this subsection, we prove homomesy of the complement and reverse maps for the statistics listed in Theorem \ref{thmboth}.

First, we consider statistics related to inversions. We will use the following lemma and definition.

\begin{lem}\label{inversion_pairs}
  The permutation $\sigma$ has $(a,b)\in \Inv (\sigma)$ if and only if $(a,b) \notin \Inv(\C(\sigma))$ and $(n+1-b, n+1-a) \notin \Inv(\R(\sigma))$.
\end{lem}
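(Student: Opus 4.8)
The plan is to decouple the biconditional into two separate equivalences, each asserting that one of the two non-membership conditions is by itself equivalent to $(a,b)\in\Inv(\sigma)$. Once both are established, the claim follows by pure logic: if $P\iff Q$ and $P\iff R$, then $P\iff(Q\wedge R)$. Throughout I would keep in mind that writing $(a,b)\in\Inv(\sigma)$ already presupposes $a<b$, so I work only with ordered pairs of positions $a<b$, for which ``being an inversion'' is equivalent to $\sigma_a>\sigma_b$ and ``not being an inversion'' is equivalent to $\sigma_a<\sigma_b$ (there are no ties since $\sigma$ is a bijection).

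For the complement, the first step is the direct computation $\C(\sigma)_a=n+1-\sigma_a$ and $\C(\sigma)_b=n+1-\sigma_b$. Since $a<b$, the pair $(a,b)$ lies in $\Inv(\C(\sigma))$ exactly when $\C(\sigma)_a>\C(\sigma)_b$, i.e. $n+1-\sigma_a>n+1-\sigma_b$, which simplifies to $\sigma_a<\sigma_b$. Thus $(a,b)\in\Inv(\C(\sigma))$ iff $(a,b)$ is a non-inversion of $\sigma$, and equivalently $(a,b)\notin\Inv(\C(\sigma))$ iff $\sigma_a>\sigma_b$ iff $(a,b)\in\Inv(\sigma)$. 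This gives the first equivalence.

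For the reverse, the only real bookkeeping is the index substitution. I would first check that the indices behave: from $a<b$ we get $n+1-b<n+1-a$, so $(n+1-b,\,n+1-a)$ is a legitimately ordered pair of positions. Then, using $\R(\sigma)_i=\sigma_{n+1-i}$, I compute $\R(\sigma)_{n+1-a}=\sigma_a$ and $\R(\sigma)_{n+1-b}=\sigma_b$. Hence $(n+1-b,\,n+1-a)\in\Inv(\R(\sigma))$ iff the later-position value is smaller, i.e. $\R(\sigma)_{n+1-a}<\R(\sigma)_{n+1-b}$, i.e. $\sigma_a<\sigma_b$. As before, negating gives $(n+1-b,\,n+1-a)\notin\Inv(\R(\sigma))$ iff $\sigma_a>\sigma_b$ iff $(a,b)\in\Inv(\sigma)$, which is the second equivalence.

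Combining the two equivalences yields the lemma. I do not expect a genuine obstacle here; the content is entirely elementary once the definitions of $\C$, $\R$, and $\Inv$ are unwound. The one place to be careful is the reverse computation, where it is tempting to mis-substitute the index and confuse $n+1-(n+1-a)=a$ with something else, and to make sure the two positions are compared in the correct order after reversal. Keeping the ordered-pair convention $a<b$ explicit throughout removes any ambiguity.
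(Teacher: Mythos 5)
Your proof is correct and follows essentially the same route as the paper: fix an ordered pair of positions $a<b$, unwind the definitions of $\C$, $\R$, and $\Inv$, and compare values directly, concluding that $(a,b)$ is an inversion of $\sigma$ exactly when the corresponding pairs fail to be inversions of $\C(\sigma)$ and $\R(\sigma)$. The only cosmetic difference is that you split the biconditional into two separate equivalences and combine them by propositional logic, whereas the paper verifies both conditions simultaneously in a single computation.
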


\begin{proof}
  Suppose that $(i, j)$ is a pair such that $1 \leq i < j \leq n$. If $\sigma_i>\sigma_j$, then $\C(\sigma)_i = n+1-\sigma_i<n+1-\sigma_j = \C(\sigma)_j$ and
  $\R(\sigma)_{n+1-i} = \sigma_i > \sigma_j = \R(\sigma)_{n+1-j}$. So $(i, j)$ is an inversion of $\sigma$ if and only if $(i, j)$ is not an inversion of $\C(\sigma)$ and $(n + 1 - j, n + 1 - i)$ is not an inversion of $\R(\sigma)$.
\end{proof}

\begin{definition}
  An inversion, where $i<j$, is said to be an \bb{odd inversion} if $i\neq j \mod 2$. An inversion is said to be an \bb{even inversion} if $i=j\mod 2$.
\end{definition}

\begin{prop}[Statistics 538, 539]\label{prop:RC_538_539}
  The number of even inversions of a permutation are $\Big(\frac{1}{2}\cdot \lfloor \frac{n}{2}\rfloor\lfloor\frac{n-1}{2}\rfloor\Big)$-mesic, and the number of odd inversions of a permutation are $\Big(\frac{1}{2}\lfloor\frac{n^2}{4}\rfloor\Big)$-mesic under the complement and reverse maps.
\end{prop}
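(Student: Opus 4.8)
The plan is to use the fact, recalled just above the statement, that both $\C$ and $\R$ are involutions whose orbits all have size $2$; hence proving $c$-mesy amounts to showing that $f(\sigma)+f(M(\sigma))$ is a constant (equal to $2c$) for every $\sigma$, where $M$ is $\C$ or $\R$ and $f$ counts even (resp.\ odd) inversions. The crucial simplification is that whether a pair of positions $(i,j)$ with $i<j$ is ``even'' or ``odd'' depends only on $i$ and $j$, not on $\sigma$. So I would first count, among all $\binom{n}{2}$ ordered position-pairs, how many are even and how many are odd: the even ones are those with $i\equiv j \pmod 2$, numbering $E_n:=\binom{\lfloor n/2\rfloor}{2}+\binom{\lceil n/2\rceil}{2}=\lfloor \tfrac n2\rfloor\lfloor\tfrac{n-1}{2}\rfloor$, and the odd ones number $O_n:=\lceil\tfrac n2\rceil\lfloor\tfrac n2\rfloor=\lfloor \tfrac{n^2}{4}\rfloor$. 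These are exactly twice the claimed averages.

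For the complement, I would invoke Lemma~\ref{inversion_pairs}, which gives $(i,j)\in\Inv(\sigma)$ if and only if $(i,j)\notin\Inv(\C(\sigma))$. Thus $\Inv(\sigma)$ and $\Inv(\C(\sigma))$ partition the set of all position-pairs. Since the even/odd label of a pair is a function of its positions alone, every even position-pair is an even inversion of exactly one of $\sigma,\C(\sigma)$; summing, the number of even inversions of $\sigma$ plus that of $\C(\sigma)$ equals $E_n$. Dividing by the orbit size $2$ gives the stated average, and the identical argument with $O_n$ handles odd inversions.

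For the reverse, the positions get relabeled, so I would introduce the involution $\phi(i,j)=(n+1-j,\,n+1-i)$ on ordered position-pairs. Lemma~\ref{inversion_pairs} states $(i,j)\in\Inv(\sigma)$ iff $\phi(i,j)\notin\Inv(\R(\sigma))$; negating, $\phi$ restricts to a bijection from the non-inversions of $\sigma$ to the inversions of $\R(\sigma)$. The key point I would verify is that $\phi$ preserves the even/odd label, since $i\equiv j\pmod 2$ exactly when $n+1-j\equiv n+1-i\pmod 2$. Hence $\phi$ carries even non-inversions of $\sigma$ bijectively to even inversions of $\R(\sigma)$, so the number of even inversions of $\R(\sigma)$ equals $E_n$ minus the number of even inversions of $\sigma$; once more the orbit sum is $E_n$, giving the claimed $\tfrac{E_n}{2}$-mesy, and likewise for odd inversions.

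The routine part is the two binomial-coefficient evaluations of $E_n$ and $O_n$ (a short case split on the parity of $n$). I expect the only conceptual step to be the reverse case: one must notice that, unlike the complement, $\R$ moves the inversion set by the relabeling $\phi$ rather than simply complementing it, and then check that $\phi$ respects the parity of a position-pair. Once that is in hand, both maps reduce to the same statement, namely that the orbit sum of each statistic is the constant $E_n$ (or $O_n$), which is manifestly independent of $\sigma$.
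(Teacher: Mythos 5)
Your proposal is correct and follows essentially the same route as the paper: it invokes Lemma~\ref{inversion_pairs}, observes that the relabeling $(i,j)\mapsto(n+1-j,n+1-i)$ preserves the parity class of a position pair, counts the total number of even and odd pairs, and divides the constant orbit sum by the orbit size $2$. Your explicit involution $\phi$ and the binomial-coefficient evaluation of $E_n$ just make precise what the paper states more tersely.
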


\begin{proof}
  Using Lemma \ref{inversion_pairs} if  $(i,j)$ is an inversion of $\sigma$, then $(n + 1 - j, n + 1 - i)$ is not an inversion of $\R(\sigma)$.
  \begin{itemize}
    \item If $(i,j)$ is an odd inversion, then so is $(n + 1 - j, n + 1 - i)$.
    \item If $(i,j)$ is an even inversion, then so is $(n + 1 - j, n + 1 - i)$.
  \end{itemize}
  In either case, each odd or even inversion of $\sigma$ is matched with an odd or even inversion that is not present in $\R(\sigma)$.

  Similarly, if $(i,j)$ is an odd or even inversion of $\sigma$, it is not an inversion pair for $\C(\sigma)$.

  There are $\lfloor \frac{n}{2} \rfloor \cdot \lfloor \frac{n+1}{2} \rfloor = \lfloor\frac{n^2}{4}\rfloor$ ways to choose an odd inversion, and $\lfloor \frac{n}{2}\rfloor\lfloor\frac{n-1}{2}\rfloor$ ways to choose an even inversion.

  Therefore, the number odd inversions of a permutation are $\Big(\frac{1}{2}\cdot \lfloor\frac{n^2}{4}\rfloor\Big)$-mesic, and the number of even inversions of a permutation is $\Big(\frac{1}{2}\cdot \lfloor \frac{n}{2}\rfloor\lfloor\frac{n-1}{2}\rfloor\Big)$-mesic.
\end{proof}

\begin{prop}[Statistics 18, 246]\label{prop:RC_18_246}
  The number of inversions and number of non-inversions of a permutation is $\frac{n(n-1)}{4}$-mesic for both the complement and reverse maps.
\end{prop}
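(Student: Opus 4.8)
The plan is to exploit Lemma \ref{inversion_pairs} to pair up the inversions of $\sigma$ with the \emph{non}-inversions of its image under each map, and then to use the fact that both maps are involutions, so that every orbit has exactly two elements. The key bookkeeping fact is that there are exactly $\binom{n}{2} = \frac{n(n-1)}{2}$ pairs $(i,j)$ with $1 \le i < j \le n$, and each such pair is either an inversion or a non-inversion of a given permutation.

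First I would handle the complement. Lemma \ref{inversion_pairs} tells us that $(i,j) \in \Inv(\sigma)$ if and only if $(i,j) \notin \Inv(\C(\sigma))$. Hence the inversions of $\sigma$ and the inversions of $\C(\sigma)$ together partition the $\binom{n}{2}$ pairs, giving $\inv(\sigma) + \inv(\C(\sigma)) = \frac{n(n-1)}{2}$. For the reverse, the same lemma gives $(i,j) \in \Inv(\sigma)$ if and only if $(n+1-j, n+1-i) \notin \Inv(\R(\sigma))$. The assignment $(i,j) \mapsto (n+1-j, n+1-i)$ is an involution on the set of pairs $\{(i,j) : i < j\}$, so it sets up a bijection between $\Inv(\sigma)$ and the non-inversions of $\R(\sigma)$; counting then yields $\inv(\sigma) + \inv(\R(\sigma)) = \frac{n(n-1)}{2}$ as well.

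With these two identities in hand, the homomesy is immediate. Since $\C$ and $\R$ are involutions, each orbit $\mathcal{O}$ is a two-element set $\{\sigma, \mathcal{X}(\sigma)\}$, and the orbit-average of $\inv$ is
\[ \frac{\inv(\sigma) + \inv(\mathcal{X}(\sigma))}{2} = \frac{1}{2}\cdot\frac{n(n-1)}{2} = \frac{n(n-1)}{4}, \]
the same value for every orbit. Finally, for the number of non-inversions I would observe that it equals $\frac{n(n-1)}{2} - \inv(\sigma)$, a linear combination of a constant statistic and the (now established) homomesic statistic $\inv$; by Lemma \ref{lem:sum_diff_homomesies} it is therefore homomesic, with average value $\frac{n(n-1)}{2} - \frac{n(n-1)}{4} = \frac{n(n-1)}{4}$.

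There is no serious obstacle here once Lemma \ref{inversion_pairs} is available; the only point requiring care is the index bookkeeping in the reverse case, namely verifying that $(i,j) \mapsto (n+1-j, n+1-i)$ really is a well-defined involution on ordered pairs with $i < j$, so that it produces a genuine bijection between inversions of $\sigma$ and non-inversions of $\R(\sigma)$ rather than an off-by-one mismatch.
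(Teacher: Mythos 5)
Your proof is correct and takes essentially the same approach as the paper's: both rest on Lemma~\ref{inversion_pairs} and the fact that across the two-element orbit $\{\sigma,\mathcal{X}(\sigma)\}$ each of the $\binom{n}{2}$ pairs occurs as an inversion exactly once, giving orbit average $\frac{n(n-1)}{4}$, with non-inversions handled identically via $\frac{n(n-1)}{2}-\textnormal{inv}(\sigma)$ and Lemma~\ref{lem:sum_diff_homomesies}. The only cosmetic difference is that the paper first invokes the even/odd inversion homomesies (Proposition~\ref{prop:RC_538_539}) plus linearity before making this same counting observation, whereas you count directly.
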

\begin{proof}
  First note that the number of inversions of a permutation is the sum of even and odd inversions of that permutation. Using Lemma \ref{lem:sum_diff_homomesies}, we see that the number of inversions is homomesic for both the complement and reverse.

  Similarly, the number of non-inversions of a permutation is given by $\frac{n(n-1)}{2}-\inv(\sigma)$. Since we have proven that $\inv(\sigma)$ is homomesic, we see that the number of non-inversions is homomesic for both maps as well.

  Between $\sigma$ and $\C(\sigma)$, or $\sigma$ and $\R(\sigma)$, we count all the possible inversion or non-inversion pairs: $\frac{n(n-1)}{2}$. Thus the number of inversions or non-inversions is $\frac{n(n-1)}{4}$-mesic for both maps.
\end{proof}

\begin{definition}
  The \bb{inversion sum} of a permutation is given by $\displaystyle \sum_{(a, b) \in \Inv(\sigma)} (b - a)$.
\end{definition}

\begin{prop}[Statistics 55, 341]\label{prop:RC_55_341}
  The inversion sum of a permutation and the non-inversion sum of a permutation are both $\left(\frac{1}{2}\binom{n+1}{3}\right)$-mesic under the complement and reverse maps.
\end{prop}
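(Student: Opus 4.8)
The plan is to exploit the fact that both $\C$ and $\R$ are involutions, so every orbit has size $2$, namely $\{\sigma,\C(\sigma)\}$ or $\{\sigma,\R(\sigma)\}$. Writing the inversion sum as $\sum_{(a,b)\in\Inv(\sigma)}(b-a)$, a sum of the \emph{distances} $b-a$ over all inversions, I would add the inversion sums of the two permutations in an orbit and show the total is a constant independent of $\sigma$; the orbit average is then half that constant. The engine for this is Lemma~\ref{inversion_pairs}, which controls how inversions transform under each map.

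For the complement map, I would use the part of Lemma~\ref{inversion_pairs} stating that $(a,b)\in\Inv(\sigma)$ if and only if $(a,b)\notin\Inv(\C(\sigma))$: for each position pair $(a,b)$ with $a<b$, exactly one of $\sigma$ and $\C(\sigma)$ has $(a,b)$ as an inversion. Summing distances over the orbit therefore counts each distance exactly once:
\[ \sum_{(a,b)\in\Inv(\sigma)}(b-a) + \sum_{(a,b)\in\Inv(\C(\sigma))}(b-a) = \sum_{1\le a<b\le n}(b-a). \]
For the reverse map, the substitution $(a,b)\mapsto(n+1-b,\,n+1-a)$ is a bijection on pairs with $a<b$ that preserves distance, since $(n+1-a)-(n+1-b)=b-a$, and by Lemma~\ref{inversion_pairs} the pair $(n+1-b,n+1-a)$ is an inversion of $\R(\sigma)$ exactly when $(a,b)$ is \emph{not} an inversion of $\sigma$. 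Reindexing the inversion sum of $\R(\sigma)$ through this bijection yields $\sum_{(a,b)\in\Inv(\R(\sigma))}(b-a)=\sum_{a<b,\,(a,b)\notin\Inv(\sigma)}(b-a)$, so the orbit total is again $\sum_{1\le a<b\le n}(b-a)$.

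It then remains to evaluate this constant. Grouping pairs by their common distance $d=b-a$, there are $n-d$ pairs at distance $d$, so $\sum_{1\le a<b\le n}(b-a)=\sum_{d=1}^{n-1}d(n-d)=\binom{n+1}{3}$ by a routine computation, and the orbit average of the inversion sum is $\tfrac12\binom{n+1}{3}$ for both maps. The non-inversion sum follows for free: since $\binom{n+1}{3}=\sum_{1\le a<b\le n}(b-a)$ is the sum of the inversion sum and the non-inversion sum of \emph{any} single permutation, the non-inversion sum equals the constant $\binom{n+1}{3}$ minus the inversion sum, and Lemma~\ref{lem:sum_diff_homomesies} makes it $\tfrac12\binom{n+1}{3}$-mesic as well.

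The main obstacle I expect is the reverse case: one must carefully verify that $(a,b)\mapsto(n+1-b,\,n+1-a)$ is simultaneously a distance-preserving bijection on position pairs and that it toggles inversion status through Lemma~\ref{inversion_pairs}, so that the reindexed sum correctly ranges over the \emph{non}-inversions of $\sigma$. By contrast, the closed-form evaluation $\sum_{d=1}^{n-1}d(n-d)=\binom{n+1}{3}$ is standard arithmetic.
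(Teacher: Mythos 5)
Your proposal is correct and takes essentially the same approach as the paper: both proofs use Lemma~\ref{inversion_pairs} to pair each inversion of $\sigma$ with a distance-preserving non-inversion of $\C(\sigma)$ (respectively, via the reindexing $(a,b)\mapsto(n+1-b,n+1-a)$, of $\R(\sigma)$), so that the orbit total is $\sum_{1\leq a<b\leq n}(b-a)=\binom{n+1}{3}$ and the orbit average is half of that. Your explicit handling of the non-inversion sum as the constant $\binom{n+1}{3}$ minus the inversion sum, combined with Lemma~\ref{lem:sum_diff_homomesies}, is a slightly more detailed write-up of a step the paper leaves implicit.
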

\begin{proof}
  Using the result from Lemma \ref{inversion_pairs}, when we add the inversion sum for $\sigma $ with that of $\R(\sigma)$, we have:
  \[
    \sum_{(a,b) \in \Inv (\sigma)}(b-a) + \sum_{(a,b) \notin \Inv (\sigma)}\left((n+1-a)-(n+1-b)\right) = \sum_{1\leq a < b \leq n} (b - a).
  \]
  This is also the result from adding the inversion sum for $\sigma $ with that of $\C(\sigma)$:
  \[
    \sum_{(a,b) \in \Inv (\sigma)}(b-a) + \sum_{(a,b) \notin \Inv (\sigma)} (b-a) = \sum_{1\leq a < b \leq n} (b - a).
  \]
  From here, we find
  \[
    \sum_{1\leq a < b \leq n} (b - a) = \sum_{i=1}^{n-1}i(n-i) = \frac{(n-1)n^2}{2} - \frac{(n-1)n(2n-1)}{6} = \frac{n(n-1)(n+1)}{6} = \binom{n+1}{3}.\]

  Hence, the average is $\frac{1}{2}\binom{n+1}{3}$.
\end{proof}

\begin{definition}
  The \bb{sign of an integer} is given by
  \[
    \mbox{sign}(n) =\begin{cases} ~~1 & \mbox{ if } n >0 \\
      ~~0 & \mbox{ if } n=0  \\
      -1  & \mbox{ if } n <0
    \end{cases}.
  \]
\end{definition}

\begin{definition}\cite{Even_Zohar_2016}
  The \textbf{standardized bi-alternating inversion number} of a permutation $\sigma = \sigma_1 \sigma_2 \ldots \sigma_n$ is defined as
  \[
    \frac{j(\sigma) + \lfloor \frac{n}{2} \rfloor^2}{2}
  \]
  where
  \[
    j(\sigma) = \sum_{1\leq y<x \leq n} (-1)^{x+y}\mbox{sign}(\sigma_x-\sigma_y).
  \]
\end{definition}

\begin{prop}[Statistic 677]\label{prop:RC_677}
  The standardized bi-alternating inversion number of a permutation is $\displaystyle \  \frac{\lfloor\frac{n}{2}\rfloor^2}{2}$-mesic under the complement and the reverse maps.
\end{prop}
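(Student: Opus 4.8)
The plan is to exploit that both the complement and the reverse are involutions, so (as already noted in this section) every orbit has exactly two elements: $\{\sigma, \C(\sigma)\}$ for the complement, and $\{\sigma, \R(\sigma)\}$ for the reverse. Homomesy with the claimed average is then equivalent to showing that the statistic averages to $\frac{\lfloor \frac{n}{2}\rfloor^2}{2}$ over each such pair. Writing the statistic as $\frac{j(\sigma)+\lfloor \frac{n}{2}\rfloor^2}{2}$, the orbit-average over $\{\sigma,\tau\}$ (with $\tau=\C(\sigma)$ or $\tau=\R(\sigma)$) equals
\[
\frac{j(\sigma)+j(\tau)+2\lfloor \tfrac{n}{2}\rfloor^2}{4},
\]
which is exactly $\frac{\lfloor \frac{n}{2}\rfloor^2}{2}$ precisely when $j(\tau)=-j(\sigma)$. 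So the entire proposition reduces to the single identity that each map negates the quantity $j$: the constant term $\lfloor \frac{n}{2}\rfloor^2$ contributes the target average on its own, while the two copies of $j$ cancel.

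For the complement the negation is immediate and term-by-term. Since $\C(\sigma)_i=n+1-\sigma_i$, we have $\mbox{sign}(\C(\sigma)_x-\C(\sigma)_y)=\mbox{sign}(\sigma_y-\sigma_x)=-\mbox{sign}(\sigma_x-\sigma_y)$, and the alternating prefactor $(-1)^{x+y}$ is untouched. Hence $j(\C(\sigma))=-j(\sigma)$ directly from the definition, with no reindexing required.

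For the reverse, I would substitute $x\mapsto n+1-x$ and $y\mapsto n+1-y$ in the defining sum, using $\R(\sigma)_i=\sigma_{n+1-i}$. Two observations drive this: first, the substitution sends the index set $\{1\le y<x\le n\}$ bijectively to itself while swapping which index is larger; second, it preserves the sign prefactor, because $(-1)^{(n+1-x)+(n+1-y)}=(-1)^{2(n+1)-(x+y)}=(-1)^{x+y}$. After relabelling the summation indices, $j(\R(\sigma))$ becomes $\sum_{1\le p<q\le n}(-1)^{p+q}\mbox{sign}(\sigma_p-\sigma_q)$, which is $-j(\sigma)$ since swapping the two arguments of $\mbox{sign}$ flips its value. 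The only point demanding care—rather than a genuine obstacle—is verifying that this index substitution leaves $(-1)^{x+y}$ invariant; once that parity check is in place, both halves of the claim follow, and combining the two cases with the reduction above completes the proof.
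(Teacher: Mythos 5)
Your proposal is correct and follows essentially the same route as the paper: both proofs reduce the claim to the identity $j(\tau)=-j(\sigma)$ for $\tau=\C(\sigma)$ or $\tau=\R(\sigma)$, handle the complement by the term-by-term sign flip, and handle the reverse by the index substitution $x\mapsto n+1-x$, $y\mapsto n+1-y$ together with the parity check $(-1)^{(n+1-x)+(n+1-y)}=(-1)^{x+y}$. Your write-up is slightly more explicit about the reduction to two-element orbits and the reindexing, but the mathematical content is identical.
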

\begin{proof}

  For the complement, we have:
  \[
    j(\C(\sigma)) = \sum_{1\leq y<x \leq n} (-1)^{x+y}\mbox{sign}((n+1-\sigma_x)-(n+1-\sigma_y) = \sum_{1\leq y<x \leq n} (-1)^{x+y}\mbox{sign}(-\sigma_x+\sigma_y ).
  \]
  Since $\mbox{sign}(\sigma_x-\sigma_y)$ and $\mbox{sign}(-\sigma_x +\sigma_y )$ are always opposites, while $(-1)^{x+y}$ is always the same (as $(-1)^{y+x}$), the average of $j(\sigma)+j(\C(\sigma))$ is 0.

  For the reverse, we have:
  \[
    j(\R(\sigma)) = \sum_{1\leq y<x \leq n} (-1)^{x+y}\mbox{sign}(\sigma_{n+1-x}-\sigma_{n+1-y})\\
    =\sum_{1\leq y<x \leq n} (-1)^{x+y}\mbox{sign}(\sigma_y-\sigma_x).
  \]
  As with the complement, $j(\R(\sigma)) + j(\sigma) = 0$, and the average over the orbit for both maps is given by $\displaystyle \frac{\lfloor\frac{n}{2}\rfloor^2}{2}$.
\end{proof}

Recall Definition \ref{def:basic_stats} for the definition of an inversion pair.
\begin{definition}
  The number of \bb{recoils} of a permutation $\sigma$ is defined as the number of inversion pairs of $\sigma$ of the form $(i+1, i)$. Alternatively, the number of recoils of a permutation $\sigma$ is the number of descents of $\sigma^{-1}$.
\end{definition}

\begin{prop}[Statistic 354]\label{prop:RC_354}
  The number of recoils of a permutation is $\frac{n-1}{2}$-mesic under the complement and the reverse maps.
\end{prop}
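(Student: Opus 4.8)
The plan is to use that $\C$ and $\R$ are involutions, so every orbit has exactly two elements; it then suffices to show that for each $\sigma \in S_n$ and each $f \in \{\C,\R\}$ the orbit sum $\operatorname{rec}(\sigma) + \operatorname{rec}(f(\sigma))$ equals $n-1$, where $\operatorname{rec}(\sigma)$ denotes the number of recoils of $\sigma$. Dividing by the orbit size $2$ then gives the claimed average $\frac{n-1}{2}$. The engine of the argument is the alternate description recorded in the definition: $\operatorname{rec}(\sigma) = \des(\sinv)$, the number of descents of the inverse permutation. So I would translate every statement about recoils of a permutation into a statement about descents of its inverse.

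The two ingredients I would assemble are a pair of elementary ``descent-flipping'' identities together with Lemma~\ref{lem:C&R_relation}(1). For any $\tau \in S_n$ one has $\des(\tau) + \des(\C(\tau)) = n-1$, because $\C$ replaces $\tau_i$ by $n+1-\tau_i$ at each position and so reverses every comparison $\tau_i > \tau_{i+1}$; thus $i$ is a descent of $\C(\tau)$ exactly when it is an ascent of $\tau$, and descents and ascents partition the $n-1$ positions. Likewise $\des(\tau) + \des(\R(\tau)) = n-1$, since $i$ is a descent of $\R(\tau)$ iff $\tau_{n-i} < \tau_{n+1-i}$, i.e.\ iff $n-i$ is an ascent of $\tau$, which sets up a bijection between descents of $\R(\tau)$ and ascents of $\tau$. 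Now I invoke Lemma~\ref{lem:C&R_relation}(1), namely $\C(\sigma)^{-1} = \R(\sinv)$ and $\R(\sigma)^{-1} = \C(\sinv)$. Over a complement orbit this gives $\operatorname{rec}(\C(\sigma)) = \des(\C(\sigma)^{-1}) = \des(\R(\sinv))$, so $\operatorname{rec}(\sigma) + \operatorname{rec}(\C(\sigma)) = \des(\sinv) + \des(\R(\sinv)) = n-1$ by the reverse identity applied to $\tau = \sinv$. Over a reverse orbit, $\operatorname{rec}(\R(\sigma)) = \des(\R(\sigma)^{-1}) = \des(\C(\sinv))$, so the sum is $\des(\sinv) + \des(\C(\sinv)) = n-1$ by the complement identity. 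Both cases give orbit sum $n-1$, hence average $\frac{n-1}{2}$.

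The main obstacle is the bookkeeping of how inversion interacts with the two maps: the crux is that a complement orbit of $\sigma$ corresponds, under $\sigma \mapsto \sinv$, to a \emph{reverse} orbit of $\sinv$, and a reverse orbit corresponds to a \emph{complement} orbit of the inverse. Getting this swap right via Lemma~\ref{lem:C&R_relation} is the only genuinely subtle point; the descent-flipping identities are routine once stated. This also suggests a shortcut: since the number of descents is already $\frac{n-1}{2}$-mesic for both maps (Proposition~\ref{prop:RC_21_245_et}), one may bypass re-deriving the flip identities and instead observe directly that averaging $\operatorname{rec}$ over the complement (resp.\ reverse) orbit of $\sigma$ equals averaging $\des$ over the reverse (resp.\ complement) orbit of $\sinv$, which is $\frac{n-1}{2}$.
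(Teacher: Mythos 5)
Your proposal is correct and follows essentially the same route as the paper's own proof: both translate recoils into descents of the inverse permutation, invoke Lemma~\ref{lem:C&R_relation}(1) to trade $\C(\sigma)^{-1}$ for $\R(\sinv)$ and $\R(\sigma)^{-1}$ for $\C(\sinv)$, and conclude that each two-element orbit has descent/recoil sum $n-1$. The only difference is one of exposition: you prove the descent-flipping identities $\des(\tau)+\des(\C(\tau))=n-1$ and $\des(\tau)+\des(\R(\tau))=n-1$ explicitly, whereas the paper asserts them here and only formalizes them afterwards in Lemmas~\ref{lem:positionofdes_C&R} and~\ref{lem:numberofdes_C&R}.
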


\begin{proof}
  Using Lemma \ref{lem:C&R_relation}, note that $\des(\C(\sigma)^{-1}) + \des(\sigma^{-1}) = \des(\R(\sigma^{-1})) + \des(\sigma^{-1}) = n-1$ and $\des(\sigma^{-1}) + \des(\R(\sigma)^{-1})= \des(\sigma^{-1}) + \des(\C(\sigma^{-1}))=n-1$.

  Therefore, the number of recoils is $\frac{n-1}{2}$-mesic for both maps.
\end{proof}

Before we look at statistics related to descents and ascents, we consider Lemmas \ref{lem:positionofdes_C&R} and \ref{lem:numberofdes_C&R} related to the position and number of descents and ascents for both maps.

\begin{lem}\label{lem:positionofdes_C&R}
  If $\sigma$ has a descent at position $i$, $\C(\sigma)$ has an ascent at position $i$ and $\R(\sigma)$ has an ascent at position $n - i$. And if $\sigma$ has an ascent at position $i$, $\C(\sigma)$ has a descent at position $i$ and $\R(\sigma)$ has a descent at position $n - i$
\end{lem}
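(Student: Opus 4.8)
The plan is to reduce everything to the defining formulas $\C(\sigma)_i = n+1-\sigma_i$ and $\R(\sigma)_i = \sigma_{n+1-i}$, since a descent (resp.\ ascent) of $\sigma$ at position $i$ is nothing more than the inequality $\sigma_i > \sigma_{i+1}$ (resp.\ $\sigma_i < \sigma_{i+1}$) for $i \in \{1,\ldots,n-1\}$. Both halves of the lemma then follow by tracking how each map transforms such an inequality, and there is no deep obstacle: the work is entirely in the inequality manipulation for the complement and in the index bookkeeping for the reverse.

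For the complement, I would argue directly, exploiting that $x \mapsto n+1-x$ is order-reversing, so applying it to both $\sigma_i$ and $\sigma_{i+1}$ flips the inequality while fixing the index. Explicitly, a descent $\sigma_i > \sigma_{i+1}$ yields $\C(\sigma)_i = n+1-\sigma_i < n+1-\sigma_{i+1} = \C(\sigma)_{i+1}$, i.e.\ an ascent of $\C(\sigma)$ at position $i$; the ascent case is symmetric. This is the same order-reversing observation already used in the proof of Lemma~\ref{inversion_pairs}.

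For the reverse, the content is purely positional, and this is the one place to be careful. A descent/ascent of $\R(\sigma)$ at position $j$ compares $\R(\sigma)_j$ with $\R(\sigma)_{j+1}$, so I would evaluate these at $j = n-i$: from $\R(\sigma)_k = \sigma_{n+1-k}$ we obtain $\R(\sigma)_{n-i} = \sigma_{i+1}$ and $\R(\sigma)_{n-i+1} = \sigma_i$. Hence the comparison of consecutive entries at position $n-i$ in $\R(\sigma)$ is exactly the comparison of $\sigma_{i+1}$ against $\sigma_i$, which is the opposite of the comparison defining a descent/ascent of $\sigma$ at $i$. Thus a descent of $\sigma$ at $i$ becomes an ascent of $\R(\sigma)$ at $n-i$, and an ascent becomes a descent.

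The only thing that could go wrong is mis-indexing the reverse, namely picking $n+1-i$ or $n-i+1$ rather than $n-i$ for the target position, so I would double-check the two substitutions above and confirm that the statement about positions is well posed: when $i$ ranges over $\{1,\ldots,n-1\}$ so does $n-i$, and in fact $i \mapsto n-i$ is an involution of this index set. Beyond verifying these substitutions, no genuine difficulty arises.
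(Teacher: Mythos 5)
Your proposal is correct and follows essentially the same route as the paper: both reduce the claim to the defining formulas $\C(\sigma)_i = n+1-\sigma_i$ and $\R(\sigma)_i = \sigma_{n+1-i}$, use the order-reversing nature of $x \mapsto n+1-x$ for the complement, and verify the substitutions $\R(\sigma)_{n-i} = \sigma_{i+1}$, $\R(\sigma)_{n-i+1} = \sigma_i$ for the reverse. Your extra check that $i \mapsto n-i$ is an involution of $\{1,\ldots,n-1\}$ is a nice touch of care but adds nothing beyond what the paper's argument already implicitly contains.
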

\begin{proof}
  Let $\sigma\in S_n$. If $\sigma_i> \sigma_{i+1}$, then $\C(\sigma)_i=n+1-\sigma_i< n+1-\sigma_{i+1}=\C(\sigma)_{i+1}$ and $\R(\sigma)_{n-i} = \sigma_{i+1} < \sigma_i = \R(\sigma)_{n-i+1}$. This means that a descent at position $i$ is mapped to an ascent at position $i$ under the complement and an ascent at position $n - i$ under the reverse. Similarly, we can see an ascent at position $i$ is mapped to a descent at position $i$ under the complement and a descent at position $n-i$ under the reverse.
\end{proof}

\begin{lem}\label{lem:numberofdes_C&R}
  If $\sigma$ has $k$ descents and $n-1-k$ ascents, then $\C(\sigma)$ and $\R(\sigma)$ both have $k$ ascents and $n-1-k$ descents.
\end{lem}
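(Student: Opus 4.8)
The plan is to derive this immediately from Lemma \ref{lem:positionofdes_C&R}, which already records the position-by-position behavior of descents and ascents under both maps; the present statement is just the corresponding count. Since $\sigma$ is a permutation of $[n]$, the positions at which a descent or an ascent can occur are precisely the indices $i \in \{1, \ldots, n-1\}$, so there are exactly $n-1$ of them, partitioned by hypothesis into $k$ descents and $n-1-k$ ascents.

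For the complement, Lemma \ref{lem:positionofdes_C&R} says that a descent of $\sigma$ at position $i$ becomes an ascent of $\C(\sigma)$ at the same position $i$, while an ascent of $\sigma$ at position $i$ becomes a descent of $\C(\sigma)$ at position $i$. Thus the identity map on $\{1,\ldots,n-1\}$ gives a bijection between the descents of $\sigma$ and the ascents of $\C(\sigma)$, and likewise between the ascents of $\sigma$ and the descents of $\C(\sigma)$. Counting then yields that $\C(\sigma)$ has $k$ ascents and $n-1-k$ descents.

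For the reverse, the same lemma sends a descent of $\sigma$ at position $i$ to an ascent of $\R(\sigma)$ at position $n-i$, and an ascent at position $i$ to a descent at position $n-i$. The one step that deserves a moment's care is to observe that $i \mapsto n-i$ is a bijection of $\{1, \ldots, n-1\}$ onto itself, so descents and ascents are again exchanged with no double counting and no omitted position; hence $\R(\sigma)$ also has $k$ ascents and $n-1-k$ descents.

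There is no genuine obstacle here, as the result is a direct corollary of Lemma \ref{lem:positionofdes_C&R}; the only subtlety worth flagging in the write-up is the verification that $i \mapsto n-i$ is a genuine self-bijection of the position set, which is exactly what guarantees the reverse count matches on the nose.
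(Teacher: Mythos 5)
Your proof is correct and follows essentially the same route as the paper: both deduce the count directly from Lemma \ref{lem:positionofdes_C&R}, which exchanges descents and ascents under each map. Your added observation that $i \mapsto n-i$ is a self-bijection of $\{1,\ldots,n-1\}$ is a nice explicit touch that the paper leaves implicit, but it does not change the argument.
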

\begin{proof}
  In Lemma \ref{lem:positionofdes_C&R} we showed that every descent in $\sigma$ contributed to an ascent in $\C(\sigma)$ and $\R(\sigma)$, and every ascent in $\sigma$ contributed to a descent in $\C(\sigma)$ and $\R(\sigma)$, so the result follows.
\end{proof}

\begin{prop}\label{prop:RC_21_245_et}
  For both the complement and the reverse maps,
  \begin{itemize}
    \item \textnormal{(Statistics 21, 245)} The number of descents of a permutation, and the number of ascents of a permutation, are $\frac{n-1}{2}$-mesic;
    \item \textnormal{(Statistic 619)} The number of cyclic descents of a permutation is $\frac{n}{2}$-mesic;
    \item \textnormal{(Statistic 470)} The number of runs in a permutation is $\frac{n+1}{2}$-mesic;
    \item \textnormal{(Statistic 325)} The width of the tree associated to a permutation is $\frac{n+1}{2}$-mesic;
    \item \textnormal{(Statistic 824)} The sum of the number of descents and the number of recoils of a permutation is $(n-1)$-mesic.
  \end{itemize}
\end{prop}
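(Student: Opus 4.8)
The plan is to exploit the fact, established just above, that both $\C$ and $\R$ are involutions, so every orbit consists of exactly two permutations $\{\sigma, \mathcal{X}(\sigma)\}$. For an involution, homomesy is equivalent to the statement that $f(\sigma) + f(\mathcal{X}(\sigma))$ is a constant independent of $\sigma$, with the orbit-average being half of that constant. Almost every item then reduces to an earlier result. For the number of descents I would invoke Lemma~\ref{lem:numberofdes_C&R} directly: if $\sigma$ has $k$ descents, then both $\C(\sigma)$ and $\R(\sigma)$ have $n-1-k$ descents, so the orbit sum is $k + (n-1-k) = n-1$ and the average is $\frac{n-1}{2}$. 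Because the number of ascents is $(n-1)$ minus the number of descents, Lemma~\ref{lem:sum_diff_homomesies} immediately yields that ascents are $\frac{n-1}{2}$-mesic as well.

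For runs and tree-width, I would use that the number of runs is always one more than the number of descents (recorded in Definition~\ref{def:basic_stats}), hence equal to the descent statistic plus the constant statistic $1$; by Lemma~\ref{lem:sum_diff_homomesies} it is $\frac{n-1}{2}+1 = \frac{n+1}{2}$-mesic. Remark~\ref{widthequalsruns} identifies the tree-width with the number of runs, so the same average applies. For Statistic $824$ I would combine the descent homomesy just obtained with Proposition~\ref{prop:RC_354}, which gives that recoils are $\frac{n-1}{2}$-mesic; their sum is then $(n-1)$-mesic, again by Lemma~\ref{lem:sum_diff_homomesies}.

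The only item requiring a genuinely new argument is the number of cyclic descents (Statistic~$619$), which counts descent positions with indices read cyclically, i.e.\ $\des(\sigma)$ together with an extra $1$ precisely when $\sigma_n > \sigma_1$. The ordinary descents still contribute $n-1$ to the orbit sum exactly as above, so the crux is the behavior of the wrap-around term. Under the reverse, $\R(\sigma)_n = \sigma_1$ and $\R(\sigma)_1 = \sigma_n$, so the wrap-around descent of $\R(\sigma)$ occurs exactly when $\sigma_1 > \sigma_n$; under the complement, $\C(\sigma)_n = n+1-\sigma_n$ and $\C(\sigma)_1 = n+1-\sigma_1$, so its wrap-around descent likewise occurs exactly when $\sigma_1 > \sigma_n$. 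In both cases the wrap-around indicator for $\sigma$ and that for $\mathcal{X}(\sigma)$ are complementary (the two entries are distinct, since $\sigma$ is a bijection), so together they contribute exactly $1$ to the orbit sum. The cyclic-descent orbit sum is therefore $n$, giving average $\frac{n}{2}$.

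The main obstacle is purely the wrap-around bookkeeping for the cyclic descents: one must verify that the extra cyclic position behaves complementarily across the orbit rather than mirroring an ordinary descent. Everything else follows mechanically from Lemma~\ref{lem:numberofdes_C&R}, Lemma~\ref{lem:sum_diff_homomesies}, Proposition~\ref{prop:RC_354}, and Remark~\ref{widthequalsruns}.
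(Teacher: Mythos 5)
Your proposal is correct and follows essentially the same route as the paper's proof: a two-element orbit sum using that descents of $\sigma$ become ascents of $\C(\sigma)$ and $\R(\sigma)$ (your Lemma~\ref{lem:numberofdes_C&R} is just the counting form of the paper's Lemma~\ref{lem:positionofdes_C&R}), the same complementary wrap-around analysis for cyclic descents, runs as descents plus one together with Remark~\ref{widthequalsruns} for the tree width, and Proposition~\ref{prop:RC_354} with Lemma~\ref{lem:sum_diff_homomesies} for Statistic 824.
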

\begin{proof}
  There are $n-1$ possible ascents or descents in a permutation $\sigma$. From Lemma \ref{lem:positionofdes_C&R}, we note that between $\sigma$ and $\C(\sigma)$, and $\sigma$ and $\R(\sigma)$, we have all possible ascents and descents. Therefore, the number of ascents and descents of a permutation are $\frac{n-1}{2}$-mesic

  Cyclic descents only differ from standard descents by allowing a descent at position $n$ if $\sigma_n<\sigma_1$. Using the same argument from Lemma \ref{lem:numberofdes_C&R}, if $\sigma_n<\sigma_1$, we have $\C(\sigma)_1<\C(\sigma)_n$ and $\R(\sigma)_1< \R(\sigma)_n$. So between $\sigma$ and $\C(\sigma)$, or $\sigma$ and $\R(\sigma)$, we have all possible cyclic descents. Thus the average is $\frac{n}{2}$.

  The width of the tree associated to a permutation is the same as the number of runs, as stated in  \cite{Luschny} (and in the proof of Proposition \ref{325_470_LC}). The number of runs is the same as the number of descents, plus 1. Since we just showed that the number of descents is $\frac{n-1}{2}$-mesic, this gives an average of $\frac{n+1}{2}$.

  The sum of the number of recoils and the number of descents is also homomesic by Proposition \ref{prop:RC_354} and Lemma \ref{lem:sum_diff_homomesies}. Using these results, this sum is $(n-1)$-mesic for both maps.
\end{proof}

\begin{prop}\label{prop:RC_836_837}
  For both the complement and the reverse maps,
  \begin{itemize}
    \item \textnormal{(Statistics 836, 837)} The number of descents of distance $2$ of a permutation, and the number of ascents of distance $2$ of a permutation is $\frac{n-2}{2}$-mesic.
    \item \textnormal{(Statistic 1520)} The number of strict $3$-descents of a permutation is $\frac{n-3}{2}$-mesic.
  \end{itemize}
\end{prop}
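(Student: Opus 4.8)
The plan is to mirror the argument of Proposition~\ref{prop:RC_21_245_et}, replacing ordinary (distance-$1$) descents with descents of distance $2$ and, for Statistic~$1520$, with descents of distance $3$. The essential input is a distance-$k$ analogue of Lemmas~\ref{lem:positionofdes_C&R} and~\ref{lem:numberofdes_C&R}: first I would record how the property ``$\sigma_i > \sigma_{i+k}$'' transforms under each map. For the complement, since $\C(\sigma)_i = n+1-\sigma_i$ is order-reversing, $\sigma_i > \sigma_{i+k}$ holds if and only if $\C(\sigma)_i < \C(\sigma)_{i+k}$; thus a descent of distance $k$ at position $i$ in $\sigma$ becomes an ascent of distance $k$ at the \emph{same} position $i$ in $\C(\sigma)$, and conversely. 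For the reverse, using $\R(\sigma)_a = \sigma_{n+1-a}$, the entries $\sigma_i$ and $\sigma_{i+k}$ occupy positions $n+1-i$ and $n+1-k-i$ of $\R(\sigma)$, which differ by $k$; hence $\sigma_i > \sigma_{i+k}$ is equivalent to $\R(\sigma)$ having an \emph{ascent} of distance $k$ at position $n+1-k-i$.

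Given these two transformation rules, the homomesy follows by the same orbit-counting as in the distance-$1$ case. The orbits of both maps have size $2$, as both are involutions. For the complement, at each of the $n-k$ admissible positions $i \in \{1,\ldots,n-k\}$ exactly one of $\sigma$ and $\C(\sigma)$ has a descent of distance $k$, so summing over the orbit gives the total $n-k$, and the orbit-average number of descents of distance $k$ is $\frac{n-k}{2}$. For the reverse, I would observe that $i \mapsto n+1-k-i$ is an involution of $\{1,\ldots,n-k\}$, so the number of descents of distance $k$ in $\R(\sigma)$ equals the number of ascents of distance $k$ in $\sigma$; adding this to the number of descents of distance $k$ in $\sigma$ again yields $n-k$ over the orbit, hence the average $\frac{n-k}{2}$. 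Specializing $k=2$ gives the claimed $\frac{n-2}{2}$-mesy for descents of distance $2$, and $k=3$ gives the $\frac{n-3}{2}$-mesy for strict $3$-descents (Statistic~$1520$), under the reading that a strict $3$-descent at $i$ means $\sigma_i > \sigma_{i+3}$.

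The statement about ascents of distance $2$ then comes for free: since the number of descents of distance $2$ plus the number of ascents of distance $2$ is the constant $n-2$, Lemma~\ref{lem:sum_diff_homomesies} makes ascents of distance $2$ homomesic with average $(n-2) - \frac{n-2}{2} = \frac{n-2}{2}$ under both maps.

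The one place where a genuine subtlety could hide is the reverse case: I must confirm that as $i$ runs over $\{1,\ldots,n-k\}$ the reflected index $n+1-k-i$ runs over exactly the same set, so that the descent/ascent pairing is a true bijection on admissible positions and nothing is double-counted or dropped at the boundary. I would also want to pin down the precise FindStat definition of a ``strict $3$-descent'' to make sure it is exactly the distance-$3$ descent $\sigma_i > \sigma_{i+3}$ with no extra strictness requirement; once the definition is confirmed, the $k=3$ computation is verbatim the $k=2$ one, so I do not expect any real difficulty beyond this bookkeeping.
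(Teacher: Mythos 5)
Your proposal is correct and takes essentially the same approach as the paper: both maps send a distance-$k$ descent to a distance-$k$ ascent (at the same position for $\C$, at the reflected position $n+1-k-i$ for $\R$), so across each size-$2$ orbit every one of the $n-k$ admissible positions is a descent exactly once, giving the averages $\frac{n-2}{2}$ and $\frac{n-3}{2}$. Your explicit check that $i\mapsto n+1-k-i$ is an involution of $\{1,\ldots,n-k\}$ is actually more careful than the paper's terse argument, and your reading of a strict $3$-descent as $\sigma_i>\sigma_{i+3}$ is exactly the paper's ("a descent of distance $3$").
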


\begin{proof}
  If $\sigma$ has a descent of distance $2$, under $\C(\sigma)$ and $\R(\sigma)$ the descent is mapped to an ascent of distance $2$. We have all possible ascents and descents of distance $2$ either present in $\sigma$ or $\R(\sigma)$, and all possible ascents or descents in either $\sigma$ or $\C(\sigma)$.

  There are a total of $n-2$ possible descents or ascents of distance $2$, so the average is $\frac{n-2}{2}$.

  What is called a strict $3$-descent in FindStat is a descent of distance $3$. The argument for descents of distance $3$ follows similarly. Because there are a total of $n-3$ possible descents of distance $3$, the average is $\frac{n-3}{2}$.
\end{proof}

\begin{prop} \label{prop:RC_495} For both the complement and reverse maps,
  \begin{itemize}
    \item \textnormal{(Statistic 495)} The number of inversions of distance at most $2$ of a permutation is $\frac{2n-3}{2}$-mesic;
    \item \textnormal{(Statistic 494)} The number of inversions of distance at most $3$ of a permutation is $\frac{3n-6}{2}$-mesic.
  \end{itemize}
\end{prop}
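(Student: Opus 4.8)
The plan is to recognize that inversions of bounded distance decompose cleanly into the descent-type statistics already handled in Propositions~\ref{prop:RC_21_245_et} and~\ref{prop:RC_836_837}, and then to invoke linearity of homomesy. An inversion $(i,j)$ of $\sigma$ has \emph{distance} $j-i$, which is always at least $1$. Thus an inversion of distance at most $2$ is precisely an inversion with $j-i \in \{1,2\}$: those with $j-i=1$ are exactly the descents, and those with $j-i=2$ are exactly the descents of distance $2$. Since these two families are disjoint and together exhaust all inversions of distance at most $2$, the statistic equals the sum of the number of descents and the number of descents of distance $2$. The same reasoning gives, for distance at most $3$, the sum of the numbers of descents, descents of distance $2$, and strict $3$-descents (the distance-$3$ case).

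Next I would apply Lemma~\ref{lem:sum_diff_homomesies}, which says that a linear combination of homomesic statistics is homomesic for the same map, with average equal to the corresponding combination of the averages. Proposition~\ref{prop:RC_21_245_et} gives that the number of descents is $\frac{n-1}{2}$-mesic, and Proposition~\ref{prop:RC_836_837} gives that the number of descents of distance $2$ is $\frac{n-2}{2}$-mesic and the number of strict $3$-descents is $\frac{n-3}{2}$-mesic, all for both the complement and the reverse maps. Summing, the number of inversions of distance at most $2$ is $\frac{n-1}{2} + \frac{n-2}{2} = \frac{2n-3}{2}$-mesic, and the number of inversions of distance at most $3$ is $\frac{n-1}{2} + \frac{n-2}{2} + \frac{n-3}{2} = \frac{3n-6}{2}$-mesic, as claimed.

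There is no serious obstacle here; the entire content lies in the observation that ``distance at most $k$'' partitions by exact distance with no overlap and no omission, so a bounded-distance inversion count is literally a sum of the per-distance descent statistics. The only points requiring minor care are confirming that no inversion has distance $0$ (so the decomposition begins at distance $1$) and that the FindStat terminology ``strict $3$-descent'' coincides with a distance-$3$ inversion, the latter already being established in the proof of Proposition~\ref{prop:RC_836_837}.
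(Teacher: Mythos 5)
Your proposal is correct and follows essentially the same route as the paper: decompose the bounded-distance inversion count as the disjoint sum of descents, descents of distance $2$ (and strict $3$-descents for the second statistic), cite Propositions~\ref{prop:RC_21_245_et} and~\ref{prop:RC_836_837} for the individual homomesies, and conclude via Lemma~\ref{lem:sum_diff_homomesies} with the same averages $\frac{2n-3}{2}$ and $\frac{3n-6}{2}$. Your additional remarks on disjointness and the FindStat term ``strict $3$-descent'' are slightly more careful than the paper's phrasing but do not change the argument.
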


\begin{proof}
  First note that the number of inversions of distance at most $i$ is the sum of all descents, descents of distance $2$, up to descents of distance $i$.

  In Propositions \ref{prop:RC_21_245_et} and \ref{prop:RC_836_837}, we showed that the number of descents, and the numbers of descents of distance $2$ and $3$ are homomesic for both the reverse and the complement. By Lemma \ref{lem:sum_diff_homomesies}, the sums of these statistics are also homomesic.

  The average number of inversions of distance at most $2$ is $\frac{(n-1)+(n-2)}{2} = \frac{2n-3}{2}$.

  Similarly, the average number of inversions of distance at most $3$ is $\frac{(n-1)+(n-2)+(n-3)}{2 } = \frac{3n-6}{2}$.
\end{proof}

Lastly, we prove homomesy for statistics related to permutation properties other than inversions or descents and ascents.

\begin{prop}[Statistic 457]\label{prop:RC_457}
  The number of total occurrences of one of the patterns $132$, $213$, or $321$ in a permutation is $\frac{\binom{n}{3}}{2}$-mesic for both the complement and reverse maps.
\end{prop}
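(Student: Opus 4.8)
The plan is to exploit that both $\C$ and $\R$ are involutions, so (as already noted) every orbit has size $2$; writing $f$ for Statistic $457$, an orbit $\{\sigma,\C(\sigma)\}$ (resp.\ $\{\sigma,\R(\sigma)\}$) has the desired average exactly when $f(\sigma)+f(\C(\sigma))=\binom{n}{3}$ (resp.\ $f(\sigma)+f(\R(\sigma))=\binom{n}{3}$). So it suffices to prove these two identities. The structural input is a small set-theoretic fact about length-$3$ patterns: partition $S_3$ into $P=\{132,213,321\}$ and $Q=\{123,231,312\}$. A direct check shows that the complement operation on patterns (sending each value $v$ to $4-v$) maps $P$ bijectively onto $Q$, and that the reverse operation (reading the pattern backwards) also maps $P$ bijectively onto $Q$. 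Thus \emph{both} operations interchange $P$ and $Q$.

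For the complement map, I would argue over triples of positions. For each $i_1<i_2<i_3$, the entries $\sigma_{i_1},\sigma_{i_2},\sigma_{i_3}$ form some pattern $p\in S_3$, while the corresponding entries of $\C(\sigma)$ form the complemented pattern, since $\C(\sigma)_i=n+1-\sigma_i$ reverses every order relation. Because complement interchanges $P$ and $Q$, exactly one of $\sigma$ and $\C(\sigma)$ realizes a pattern of $P$ on this triple. Summing over all $\binom{n}{3}$ triples yields $f(\sigma)+f(\C(\sigma))=\binom{n}{3}$.

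For the reverse map the same idea works, but one must track how positions are relabeled. Reading the entries of $\R(\sigma)$ at positions $i_1<i_2<i_3$ is the same as reading the entries of $\sigma$ at positions $n+1-i_3<n+1-i_2<n+1-i_1$ backwards, so the pattern realized by $\R(\sigma)$ on a triple $T$ is the reverse of the pattern realized by $\sigma$ on the relabeled triple $T'$, and $T\mapsto T'$ is a bijection on the set of all position-triples. Since reverse also interchanges $P$ and $Q$, the number of $P$-occurrences in $\R(\sigma)$ equals the number of $Q$-occurrences in $\sigma$; adding $f(\sigma)$, the number of $P$-occurrences in $\sigma$, accounts for every triple exactly once, giving $f(\sigma)+f(\R(\sigma))=\binom{n}{3}$. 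In both cases the orbit average is $\tfrac{1}{2}\binom{n}{3}$.

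The main obstacle is purely the bookkeeping for the reverse map: one has to verify carefully that the relabeling $T\mapsto T'$ is a bijection on triples and that the realized pattern genuinely reverses, whereas the complement case is immediate once the $P$/$Q$ interchange is checked. The essential phenomenon making the statistic homomesic is exactly that $P$ and $Q$ are swapped by both involutions, so that each of the $\binom{n}{3}$ triples contributes to the count in precisely one member of any orbit.
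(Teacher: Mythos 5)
Your proposal is correct and is essentially the paper's own argument: the paper likewise observes that the chosen patterns are half of $S_3$ with their reverses and complements excluded, and then argues that for each triple of positions exactly one of $\sigma_a\sigma_b\sigma_c$ and $\C(\sigma)_a\C(\sigma)_b\C(\sigma)_c$ (resp. $\R(\sigma)_{n+1-c}\R(\sigma)_{n+1-b}\R(\sigma)_{n+1-a}$, your relabeled triple $T'$) lies in the list, giving the sum $\binom{n}{3}$ over each orbit of size $2$. Your write-up just makes the $P$/$Q$ interchange and the position-relabeling bijection more explicit than the paper does.
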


\begin{proof}
  These patterns are half of the patterns of length $3$ and, for each of the patterns, the reverse patterns and complement patterns are not included.

  So for a triple of positions $a<b<c$ and a permutation $\sigma$, either $\sigma_a\sigma_b\sigma_c$ or $\R(\sigma)_{n+1-c}\R(\sigma)_{n+1-b}\R(\sigma)_{n+1-a}$ is a pattern in the list. Similarly, for a triple of positions $a<b<c$, either $\sigma_a\sigma_b \sigma_c$ or $\C(\sigma)_a\C(\sigma)_b\C(\sigma)_c$ is a pattern in the list.

  Hence, $\frac{\binom{n}{3}}{2}$ is the average number of occurrences of these patterns in each orbit.
\end{proof}

\begin{definition}\cite{https://doi.org/10.48550/arxiv.1106.1995}
  The \bb{cosine of $\sigma$} is defined as $\cos(\sigma)=\displaystyle \sum_{i = 1}^{n} i\sigma_i$.
\end{definition}

The name of the statistic is due to the following construction, found in \cite{https://doi.org/10.48550/arxiv.1106.1995}: we treat the permutations $\sigma$ and $e$, the identity, as vectors. Then the dot product of the two vectors is calculated as
\[
  e\cdot \sigma =\displaystyle \sum_{i = 1}^{n} i\sigma_i
\]
or, alternatively,
\[
  e\cdot \sigma = |e||\sigma|\cos(\theta)=|\sigma|^2 \cos(\theta) = \frac{n(n+1)(2n+1)}{6} \cos(\theta),
\]
where $\theta$ is the angle between the vectors. Thus the dot product only relies on the cosine of the angle between the vectors, which is where the statistic derives its name.

\begin{prop}[Statistic 342]\label{prop:RC_342}
  The cosine of a permutation is $\frac{(n+1)^2n}{4}$-mesic
\end{prop}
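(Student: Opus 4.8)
The plan is to use the fact, already observed at the start of this section, that $\C$ and $\R$ are involutions with no fixed permutations, so each orbit consists of exactly two elements $\{\sigma, \mathcal{X}(\sigma)\}$. For such a map, a statistic $f$ is $c$-mesic if and only if $f(\sigma)+f(\mathcal{X}(\sigma))=2c$ for every $\sigma$. It therefore suffices to compute $\cos(\sigma)+\cos(\C(\sigma))$ and $\cos(\sigma)+\cos(\R(\sigma))$ and to show that each equals the constant $\frac{(n+1)^2n}{2}$, which is twice the claimed average.

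For the complement, I would substitute $\C(\sigma)_i=n+1-\sigma_i$ into the definition to get $\cos(\C(\sigma))=\sum_{i=1}^n i(n+1-\sigma_i)=(n+1)\sum_{i=1}^n i-\cos(\sigma)$. Adding $\cos(\sigma)$ and using $\sum_{i=1}^n i=\frac{n(n+1)}{2}$ gives $\cos(\sigma)+\cos(\C(\sigma))=(n+1)\cdot\frac{n(n+1)}{2}=\frac{n(n+1)^2}{2}$, independent of $\sigma$.

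For the reverse, I would use $\R(\sigma)_i=\sigma_{n+1-i}$ and reindex with $j=n+1-i$, obtaining $\cos(\R(\sigma))=\sum_{i=1}^n i\,\sigma_{n+1-i}=\sum_{j=1}^n (n+1-j)\sigma_j=(n+1)\sum_{j=1}^n\sigma_j-\cos(\sigma)$. The only extra ingredient is that $\sum_{j=1}^n\sigma_j=\sum_{j=1}^n j=\frac{n(n+1)}{2}$ because $\sigma$ permutes $[n]$, which yields the same total as in the complement case. Dividing the common total by the orbit size $2$ gives the average $\frac{(n+1)^2n}{4}$ for both maps.

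There is essentially no obstacle here: the argument is a direct computation. The only points requiring a little care are the reindexing in the reverse case and the observation that $\sum_j\sigma_j$ equals $\sum_j j$, which is precisely what makes the reverse and complement behave identically for this statistic.
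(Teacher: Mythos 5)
Your proof is correct and follows essentially the same route as the paper's: both arguments pair $\sigma$ with its image and compute $\cos(\sigma)+\cos(\C(\sigma))$ and $\cos(\sigma)+\cos(\R(\sigma))$ directly, using $\C(\sigma)_i=n+1-\sigma_i$ for the complement and the reindexing $j=n+1-i$ together with $\sum_j\sigma_j=\frac{n(n+1)}{2}$ for the reverse, arriving at the common orbit sum $\frac{n(n+1)^2}{2}$ and hence the average $\frac{(n+1)^2n}{4}$.
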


\begin{proof} By definition,
  \[
    \cos(\sigma) + \cos(\C(\sigma)) = (n+1) \sum_{i = 1}^{n} i = (n+1) \frac{(n+1)n}{2},
  \]
  and
  \[
    \cos(\sigma) + \cos(\R(\sigma)) = \sum_{i = 1}^{n} i\sigma_i + \sum_{i = 1}^n (n + 1 - i) \sigma_i = \sum_{i = 1}^n (n+1) \sigma_i = (n+1) \frac{(n+1)n}{2}.
  \]
  Therefore, the average is
  \[
    \frac{(n+1)^2n}{4}.
  \]
\end{proof}

In statistics, Spearman's rho is used as a test to determine the relationship between two variables. In the study of permutations, it can be used as a measure for a distance between $\sigma$ and the identity permutation.

\begin{definition}\cite{ChatterjeeDiaconis}
  The \textbf{Spearman's rho of a permutation and the identity permutation} is given by $\displaystyle \sum_{i = 1}^n (\sigma_i - i)^2$.
\end{definition}

\begin{prop}[Statistic 828]\label{prop:RC_828}
  The Spearman's rho of a permutation and the identity permutation is $\binom{n+1}{3}$-mesic for both the complement and the reverse maps.
\end{prop}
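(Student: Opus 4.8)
The plan is to reduce Spearman's rho to the cosine statistic (Statistic 342), for which homomesy was just established in Proposition \ref{prop:RC_342}, and then invoke linearity. Write $f(\sigma) = \sum_{i=1}^n (\sigma_i - i)^2$ for the statistic in question. First I would expand the square and exploit the fact that $\sigma$ merely permutes the values $1, \ldots, n$, so that $\sum_{i=1}^n \sigma_i^2 = \sum_{i=1}^n i^2$. This is the one genuinely substantive observation; everything else is formal.

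With that identity in hand, the expansion collapses cleanly:
\[
f(\sigma) = \sum_{i=1}^n \sigma_i^2 - 2\sum_{i=1}^n i\sigma_i + \sum_{i=1}^n i^2 = 2\sum_{i=1}^n i^2 - 2\cos(\sigma),
\]
since $\cos(\sigma) = \sum_{i=1}^n i\sigma_i$ by definition. Thus $f$ is an affine function of the cosine statistic, namely a fixed constant minus twice $\cos$.

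The rest follows from the general machinery. The constant statistic $\sigma \mapsto 2\sum_{i=1}^n i^2$ is trivially homomesic (its orbit-average is itself on every orbit), and $\cos$ is $\frac{(n+1)^2 n}{4}$-mesic for both the complement and the reverse by Proposition \ref{prop:RC_342}. Hence by Lemma \ref{lem:sum_diff_homomesies} their linear combination $f$ is homomesic for both maps, with orbit-average
\[
2\sum_{i=1}^n i^2 - 2\cdot\frac{(n+1)^2 n}{4} = \frac{n(n+1)(2n+1)}{3} - \frac{n(n+1)^2}{2}.
\]
The only remaining work is the routine simplification of this expression; factoring out $n(n+1)/6$ reduces the bracketed difference to $n-1$, giving $\frac{(n-1)n(n+1)}{6} = \binom{n+1}{3}$, as claimed.

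I do not anticipate a real obstacle here: once the permutation-invariance of $\sum \sigma_i^2$ is noticed, the result is an immediate corollary of the cosine homomesy, and the ``hard part'' is merely checking that the arithmetic lands on $\binom{n+1}{3}$. One could alternatively prove it directly by pairing $f(\sigma)$ with $f(\C(\sigma))$ and $f(\R(\sigma))$ term by term, but that route is messier and ultimately recomputes the cosine identity, so the reduction above is preferable.
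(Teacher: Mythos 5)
Your proof is correct, but it takes a genuinely different route from the paper. The paper argues by direct orbit pairing: it computes $\sum_{i=1}^n\bigl((\sigma_i-i)^2+(\R(\sigma)_i-i)^2\bigr)$, re-indexes so everything is expressed in terms of $\sigma$, expands, and uses $\sum_i\sigma_i^2=\sum_i i^2$ and $\sum_i\sigma_i=\sum_i i$ inside that expansion to show the orbit sum is the constant $2\binom{n+1}{3}$; it then observes that the complement produces literally the same sum $\frac{1}{2}\bigl(\sum_i(\sigma_i-i)^2+\sum_i(\sigma_i-(n+1-i))^2\bigr)$, so both maps have the same orbit average. You instead notice the affine identity
\[
\sum_{i=1}^n(\sigma_i-i)^2 \;=\; 2\sum_{i=1}^n i^2 - 2\cos(\sigma),
\]
which reduces the statement to Proposition~\ref{prop:RC_342} together with Lemma~\ref{lem:sum_diff_homomesies}, leaving only arithmetic (which you carry out correctly: the average simplifies to $\frac{(n-1)n(n+1)}{6}=\binom{n+1}{3}$). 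Your reduction is shorter and more structural: it makes transparent that Spearman's rho is homomesic for \emph{exactly} the same maps as the cosine statistic, since the two differ by an affine transformation with constant coefficients, so any future homomesy (or non-homomesy) result for one transfers instantly to the other. What the paper's approach buys in exchange is self-containment: it does not depend on Proposition~\ref{prop:RC_342} having been established first, and it exhibits explicitly the constant-pair-sum structure $f(\sigma)+f(\R(\sigma))=f(\sigma)+f(\C(\sigma))=2\binom{n+1}{3}$, which is the mechanism driving all the involution homomesies in that section.
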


\begin{proof}
  Under the reverse map, the average is calculated by

  \begin{align*}
     & \frac{1}{2}\sum_{i = 1}^n \left((\sigma_i - i)^2 + (\sigma_{n + 1 - i} - i)^2\right)                          \\
     & = \frac{1}{2}\sum_{i = 1}^n \left((\sigma_i - i)^2 + (\sigma_{i} - (n + 1 - i))^2\right)                      \\
     & = \frac{1}{2}\sum_{i = 1}^n \left((\sigma_i - i)^2 + (\sigma_i + i)^2 - 2(n+1)(\sigma_i + i) + (n+1)^2\right) \\
     & = \frac{1}{2}\sum_{i = 1}^n \left(2\sigma_i^2 + 2i^2- 2(n+1)(\sigma_i + i) + (n+1)^2\right)                   \\
     & = \frac{1}{2}\sum_{i = 1}^n \left(4i^2- 4(n+1)(i) + (n+1)^2\right)                                            \\
     & = 2\sum_{i = 1}^n i^2 - 2(n+1)\sum_{i = 1}^n i + \frac{n(n+1)^2}{2}                                           \\
     & = \frac{n(n+1)(2n+1)}{3} - n(n+1)^2 + \frac{n(n+1)^2}{2}                                                      \\
     & = \frac{n(n+1)(2n+1)}{3} - \frac{n(n+1)^2}{2}                                                                 \\
     & = \frac{n(n+1)(n-1)}{6}                                                                                       \\
     & = \binom{n+1}{3}.
  \end{align*}

  For the complement, we find that
  \[
    \sum_{i = 1}^n (\C(\sigma)_i - i)^2 = \sum_{i = 1}^n (n+1-\sigma_i - i)^2 = \sum_{i = 1}^n (\sigma_i - (n-i+1))^2.
  \]

  So the average is
  \[
    \frac{1}{2} \left( \sum_{i = 1}^n (\sigma_i - i)^2 + \sum_{i = 1}^n (\sigma_i - (n-i+1))^2\right),
  \]
  which means that the average for both the reverse and the complement is given by $\binom{n+1}{3}$.
\end{proof}

This concludes the proof of Theorem \ref{thmboth}, showing the statistics listed there exhibit homomesy for both the reverse and the complement maps.

\subsection{Statistics homomesic for the complement but not the reverse}
\label{sec:complement}

In this subsection, we prove that the statistics listed in Theorem \ref{onlycomp} are homomesic under the complement map and provide examples illustrating that they are not homomesic under the reverse map. Note that it is enough to provide an example of an orbit whose average under the statistic does not match that of the global average, following Remark~\ref{global_avg}.

\begin{prop}[Statistic 4]\label{prop:C_4}
  The major index is $\frac{n(n-1)}{4}$-mesic for the complement, but is not homomesic for the reverse.
\end{prop}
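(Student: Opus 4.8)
The plan is to treat the two assertions separately, exploiting the fact already noted in this subsection that the complement and reverse maps are involutions without fixed points (for $n\geq 2$), so every orbit has exactly two elements; the orbit-average of $\maj$ on $\{\sigma,\mathcal{X}(\sigma)\}$ is then simply $\tfrac{1}{2}\big(\maj(\sigma)+\maj(\mathcal{X}(\sigma))\big)$, and it suffices to understand this pairwise sum.

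For the complement, the key tool is Lemma \ref{lem:positionofdes_C&R}: position $i$ is a descent of $\sigma$ if and only if $i$ is an ascent of $\C(\sigma)$. Hence $\Des(\sigma)$ and $\Des(\C(\sigma))$ partition $\{1,2,\ldots,n-1\}$, and summing the descent positions telescopes cleanly:
\[
\maj(\sigma)+\maj(\C(\sigma)) = \sum_{i\in\Des(\sigma)} i + \sum_{i\in\Des(\C(\sigma))} i = \sum_{i=1}^{n-1} i = \frac{n(n-1)}{2}.
\]
Dividing by the orbit size $2$ gives the orbit average $\frac{n(n-1)}{4}$ regardless of $\sigma$, which establishes homomesy.

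For the reverse the same lemma shows a descent of $\sigma$ at position $i$ becomes an ascent of $\R(\sigma)$ at the reflected position $n-i$, so the descent-to-ascent swap no longer occurs at a fixed position and the partition argument yields $\maj(\sigma)+\maj(\R(\sigma)) = \sum_{i\in\Des(\sigma)} i + \sum_{i\notin\Des(\sigma)} (n-i)$, which genuinely depends on $\sigma$. To disprove homomesy I would invoke Remark \ref{global_avg} and exhibit two orbits with different averages; I expect $n=3$ to already suffice, with the orbit $\{132,231\}$ having both major indices equal to $2$ (average $2$) while $\{213,312\}$ has both equal to $1$ (average $1$).

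The complement direction is essentially immediate once the descent-position lemma is invoked, so there is no genuine obstacle there; the only conceptual point is recognizing that the two maps treat positions differently (fixed versus reflected), which is precisely what makes the weighted sum of descent positions collapse to a constant for the complement but not for the reverse. Producing the reverse counterexample is routine tabulation, and the only mild risk is accidentally choosing two orbits whose averages happen to coincide, which is why I would list out the small $n=3$ orbits explicitly.
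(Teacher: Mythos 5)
Your proposal is correct and follows essentially the same route as the paper: the complement homomesy is proved via Lemma \ref{lem:positionofdes_C&R} together with the observation that the descent positions of $\sigma$ and $\C(\sigma)$ partition $\{1,\ldots,n-1\}$, so $\maj(\sigma)+\maj(\C(\sigma))=\tfrac{n(n-1)}{2}$, and non-homomesy for the reverse is shown by exhibiting orbit data that violates homomesy. Your $n=3$ counterexample checks out ($\maj(132)=\maj(231)=2$ while $\maj(213)=\maj(312)=1$, giving orbit averages $2$ and $1$); the paper instead displays a single $n=6$ orbit $\{634215,\,512436\}$ whose average $\tfrac{13}{2}$ differs from the global average $\tfrac{15}{2}$, but both are valid disproofs.
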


\begin{proof}
  For $\sigma$, recall from Definition \ref{def:basic_stats} that
  \[
    \maj (\sigma) = \sum_{\sigma_i>\sigma_{i+1}} i.
  \]
  For the complement, we know from Lemma \ref{lem:positionofdes_C&R} that $\C(\sigma)_i<\C(\sigma)_{i+1}$ whenever $i$ is a descent for $\sigma$, and  $\C(\sigma)_j>\C(\sigma)_{j+1}$ whenever $j$ is an ascent for $\sigma$. Thus
  \[
    \maj (\sigma) +\maj (\C(\sigma)) = \sum_{i=1}^{n-1} i = \frac{n(n-1)}{2}.
  \]
  Thus the average over an orbit is $\frac{n(n-1)}{4}$.

  To see that it is not homomesic under the reverse, we exhibit an orbit with an average that differs from the global average of $\frac{n(n-1)}{4}$. Consider the orbit $(\sigma, \R(\sigma))$ where $\sigma = 132$ and $\R(\sigma) = 231$. The major index of $\sigma$ is 2 and the major index of $\R(\sigma)$ is 2, so the average over the orbit is $2$, not $\frac{3(3-1)}{4} = \frac{3}{2}.$
\end{proof}

\begin{cor}[Statistics 1377, 1379]\label{prop:C_1377_1379}
  The major index minus the number of inversions of a permutation is $0$-mesic, and the number of inversions plus the major index of a permutation is $\frac{n(n-1)}{2}$-mesic under the complement, but  is not homomesic for the reverse.
\end{cor}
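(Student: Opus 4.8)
The plan is to recognize this statement as a direct corollary of the homomesy results already established for $\maj$ and $\inv$, together with the linearity principle of Lemma~\ref{lem:sum_diff_homomesies}. First I would assemble the two ingredients for the complement: Proposition~\ref{prop:C_4} gives that $\maj$ is $\frac{n(n-1)}{4}$-mesic under the complement, while Proposition~\ref{prop:RC_18_246} gives that $\inv$ is likewise $\frac{n(n-1)}{4}$-mesic under the complement. Since Statistic 1377 is exactly $\maj - \inv$ and Statistic 1379 is exactly $\maj + \inv$, applying Lemma~\ref{lem:sum_diff_homomesies} with coefficients $(a,b)=(1,-1)$ and $(a,b)=(1,1)$ respectively shows that $\maj - \inv$ is $\left(\frac{n(n-1)}{4}-\frac{n(n-1)}{4}\right)=0$-mesic and that $\maj + \inv$ is $\left(\frac{n(n-1)}{4}+\frac{n(n-1)}{4}\right)=\frac{n(n-1)}{2}$-mesic under the complement.

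For the reverse, the argument must be run in the opposite direction, and I would exploit the asymmetry that under the reverse $\inv$ remains homomesic (Proposition~\ref{prop:RC_18_246} covers both maps) whereas $\maj$ does not (the second half of Proposition~\ref{prop:C_4}). The cleanest route is by contradiction through Lemma~\ref{lem:sum_diff_homomesies}: if either $\maj + \inv$ or $\maj - \inv$ were homomesic under the reverse, then adding or subtracting the homomesic statistic $\inv$ would exhibit $\maj = (\maj \pm \inv) \mp \inv$ as a linear combination of homomesic statistics, hence homomesic, contradicting Proposition~\ref{prop:C_4}. Alternatively, and more in keeping with the explicit counterexamples used elsewhere in this section, I would reuse the orbit $\{634215, 512436\}$ from Proposition~\ref{prop:C_4}: there $\inv(634215)=10$ and $\inv(512436)=5$, while $\maj(634215)=8$ and $\maj(512436)=5$, so the orbit averages of $\maj - \inv$ and $\maj + \inv$ are $\frac{(-2)+0}{2}=-1$ and $\frac{18+10}{2}=14$, neither matching the respective global averages $0$ and $\frac{6\cdot 5}{2}=15$ required by Remark~\ref{global_avg}.

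I do not anticipate a real obstacle, since everything reduces to bookkeeping over results already in hand; the only point requiring care is the direction of the reverse argument, where one must remember that it is $\inv$ rather than $\maj$ that survives the reverse, so that the failure of homomesy for the two combinations is inherited from $\maj$ rather than proved from scratch.
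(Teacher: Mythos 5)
Your proposal is correct and matches the paper's proof: the complement case follows from Propositions~\ref{prop:C_4} and \ref{prop:RC_18_246} via Lemma~\ref{lem:sum_diff_homomesies}, and the failure for the reverse is deduced exactly as you argue, since homomesy of $\maj\pm\inv$ together with homomesy of $\inv$ would force $\maj$ to be homomesic under the reverse, contradicting Proposition~\ref{prop:C_4}. Your supplementary explicit counterexample on the orbit $\{634215, 512436\}$ (orbit averages $-1$ and $14$ versus global averages $0$ and $15$) is correct but not needed; the paper relies solely on the linearity contradiction.
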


\begin{proof}
  Both of these are combinations of the major index and the number of inversions, which are both homomesic under the complement. The major index is $\frac{n(n-1)}{4}$-mesic, as is the number of inversions. This means that their difference is 0-mesic, and their sum is $\frac{n(n-1)}{2}$-mesic.

  The reverse map is homomesic under the number of inversions but is not under the major index, so it cannot be homomesic under the major index minus the number of inversions of a permutation, or under the number of inversions plus the major index of a permutation.
\end{proof}

\begin{thm}\label{thm:inversion_positions_RC}
  The number of inversions of the $i$-th entry of a permutation is $\frac{n-i}{2}$-mesic under the complement.
\end{thm}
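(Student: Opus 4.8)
The plan is to reduce the claim to a single identity on each orbit. First I would pin down the statistic: the number of inversions of the $i$-th entry of a permutation $\sigma$ is exactly $\#\{j > i \mid \sigma_j < \sigma_i\}$, that is, the number of inversion pairs $(i,j)$ with first coordinate $i$ (equivalently, the Lehmer code entry $L(\sigma)_i$). Since the complement is an involution with no fixed permutation for $n \geq 2$ (the equations $\sigma_k = n+1-\sigma_k$ cannot hold simultaneously for all $k$), every orbit has the form $\{\sigma, \C(\sigma)\}$ and has size $2$. It therefore suffices to show that the sum of the statistic over $\sigma$ and $\C(\sigma)$ equals $n-i$.

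Next I would invoke Lemma~\ref{inversion_pairs}, which in its complement half states that for positions $i < j$, the pair $(i,j)$ is an inversion of $\sigma$ if and only if it is \emph{not} an inversion of $\C(\sigma)$. Restricting to pairs with first coordinate equal to $i$ — of which there are exactly $n-i$, one for each $j \in \{i+1, \ldots, n\}$ — each such pair is counted in the inversion total at position $i$ of precisely one of $\sigma$ and $\C(\sigma)$. Summing over these $n-i$ pairs gives that the number of inversions of the $i$-th entry of $\sigma$ plus the number of inversions of the $i$-th entry of $\C(\sigma)$ equals $n-i$.

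Dividing by the orbit size $2$ then yields the orbit average $\frac{n-i}{2}$, which is independent of the orbit and hence establishes the claimed homomesy. The argument needs no case analysis, and the only point requiring care is confirming that the statistic counts inversions whose first coordinate is $i$ (so that it is governed by Lemma~\ref{inversion_pairs}) rather than all inversions incident to position $i$; once that identification is made, the lemma supplies everything, so I do not anticipate any genuine obstacle.
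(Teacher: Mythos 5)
Your proposal is correct and matches the paper's proof in essence: the paper likewise observes that each of the $n-i$ pairs $(i,j)$ with $j>i$ is an inversion of exactly one of $\sigma$ and $\C(\sigma)$ (re-deriving inline what you cite as Lemma~\ref{inversion_pairs}), and divides by the orbit size $2$ to get the average $\frac{n-i}{2}$. Your explicit check that the complement has no fixed permutations is a small additional justification of the orbit size that the paper states earlier without this detail.
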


\begin{proof}
  In general, if $\sigma_i>\sigma_j$ when $i<j$, we have $\C(\sigma)_i<\C(\sigma)_j$.
  There are $n-i$ possible inversions for the $i$-th entry. Each of those $n-i$ inversions is present in either $\sigma$ or $\C(\sigma)$. Thus we have an average of $\frac{n-i}{2}$ over one orbit.
\end{proof}

\begin{cor}[Statistics 1557, 1556]\label{prop:C_1557_1556}
  The number of inversions of the second entry of a permutation is $\frac{n-2}{2}$-mesic, and the number of inversions of the third entry of a permutation is $\frac{n-3}{2}$-mesic under the complement, but is not homomesic for  the reverse.
\end{cor}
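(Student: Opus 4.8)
The plan is to dispatch the complement claims immediately and then build explicit counterexamples for the reverse. The homomesy under the complement is not new: it is the special case $i=2$ and $i=3$ of Theorem~\ref{thm:inversion_positions_RC}, which gives that the number of inversions of the $i$-th entry is $\frac{n-i}{2}$-mesic under $\C$. Substituting $i=2$ yields the stated $\frac{n-2}{2}$ for Statistic $1557$, and $i=3$ yields $\frac{n-3}{2}$ for Statistic $1556$. So the only real work is to show that these two statistics fail to be homomesic under $\R$.

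For the reverse, I would first recall that $\R$ is an involution with no fixed permutations for $n\geq 2$: the equation $\R(\sigma)=\sigma$ requires $\sigma_i=\sigma_{n+1-i}$ for all $i$, which forces two distinct positions to share a value. Hence every orbit has the form $\{\sigma,\R(\sigma)\}$ and size exactly $2$. By Remark~\ref{global_avg}, if either statistic were homomesic then its orbit-average would have to equal the global average over all of $S_n$. The global average of the number of inversions of the $i$-th entry is $\frac{n-i}{2}$, since $\#\{j>i \mid \sigma_j<\sigma_i\}=L(\sigma)_i$ is uniformly distributed on $\{0,1,\ldots,n-i\}$ as $\sigma$ ranges over $S_n$. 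Thus it suffices to exhibit, for each statistic, a single orbit $\{\sigma,\R(\sigma)\}$ whose average differs from $\frac{n-2}{2}$ (resp. $\frac{n-3}{2}$).

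The conceptual reason such an orbit exists --- and the point I would make explicit --- is that $\R$ does not fix positions: it sends position $i$ to position $n+1-i$. Under $\C$ the $n-i$ potential inversions of the $i$-th entry split evenly between $\sigma$ and $\C(\sigma)$ because the entry stays in place and each order relation flips; under $\R$ the $i$-th entry of $\R(\sigma)$ is $\sigma_{n+1-i}$, so the statistic read at the fixed position $i$ compares unrelated pairs of entries and there is no reason for the two orbit values to sum to $n-i$. Concretely, for Statistic $1557$ one can take $n=3$ and $\sigma=132$, whose orbit is $\{132,231\}$; the number of inversions of the second entry is $1$ on each, giving orbit average $1\neq\frac{1}{2}=\frac{n-2}{2}$. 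For Statistic $1556$ the statistic is identically $0$ when $n=3$, so I would instead take $n=4$ and $\sigma=1243$, whose orbit is $\{1243,3421\}$; the number of inversions of the third entry is $1$ on each, giving orbit average $1\neq\frac{1}{2}=\frac{n-3}{2}$.

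I anticipate essentially no obstacle here: the complement direction is a one-line corollary of Theorem~\ref{thm:inversion_positions_RC}, and the reverse direction reduces to a finite check once the global average is pinned down. The only steps requiring minor care are confirming that the global average is exactly $\frac{n-i}{2}$ (so that a single deviating orbit genuinely disproves homomesy through Remark~\ref{global_avg}) and verifying that the two chosen small orbits really have the wrong average; both are short, routine computations.
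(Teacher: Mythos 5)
Your proposal is correct and follows essentially the same route as the paper: the complement claims are read off from Theorem~\ref{thm:inversion_positions_RC}, and non-homomesy under the reverse is established by exhibiting an explicit orbit whose average differs from the global average, invoking Remark~\ref{global_avg}. The only differences are cosmetic: you use smaller counterexamples ($n=3$ for the second entry, $n=4$ for the third, correctly noting the statistic is trivially $0$-mesic at $n=3$) and justify the global average via the distribution of Lehmer code entries, whereas the paper uses a single $n=6$ orbit ($\sigma=634215$) for both statistics.
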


\begin{proof}
  From Theorem \ref{thm:inversion_positions_RC}, we have the desired homomesies for the complement.

  To see that the number of inversions of the second entry is not homomesic under the reverse, we exhibit an orbit with an average that differs from the global average of $\frac{n-2}{2}$. Consider the orbit $(\sigma, \R(\sigma))$ where $\sigma = 132$ and $\R(\sigma) = 231$. The number of inversions of the second entry of $\sigma$ is 1 and the number of inversions of the second entry of $\R(\sigma)$ is 1, so the average over the orbit is 1, not $\frac{3-2}{2} = \frac{1}{2}$.

  To see that the number of inversions of the third entry is not homomesic under the reverse, we exhibit an orbit with an average that differs from the global average of $\frac{n-3}{2}$. Consider the orbit $(\sigma, \R(\sigma))$ where $\sigma = 1243$ and $\R(\sigma) = 3421$. The number of inversions of the third entry of $\sigma$ is 1 and the number of inversions of the third entry of $\R(\sigma)$ is 1, so the average over the orbit is 1, not $\frac{4-3}{2} = \frac{1}{2}$.
\end{proof}

\begin{prop}[Statistics 1114, 1115]\label{prop:C_1114_1115}
  The number of odd descents of a permutation and the number of even descents of a permutation are both homomesic under the complement, but not the reverse. The average number of odd descents over one orbit is $\frac{1}{2}\lceil \frac{n-1}{2}\rceil$, and the average number of even descents is $\frac{1}{2}\lfloor\frac{n-1}{2}\rfloor$.
\end{prop}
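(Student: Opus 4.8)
The plan is to treat the two maps separately: establish the positive result for the complement by a position-by-position averaging argument (mirroring the proof of Proposition~\ref{1114_1115_LC}), and refute homomesy for the reverse with a single explicit orbit. The one genuinely delicate point, described below, is that the reverse statistic behaves differently according to the parity of $n$, so the counterexample must be chosen with care.

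For the complement, I would start from Lemma~\ref{lem:positionofdes_C&R}: if $\sigma$ has a descent at position $i$ then $\C(\sigma)$ has an ascent at $i$, and conversely. Hence, for every $i \in \{1,\dots,n-1\}$, exactly one of $\sigma$ and $\C(\sigma)$ has a descent at position $i$, so on the size-two orbit $\{\sigma,\C(\sigma)\}$ the number of descents at position $i$ is $\frac12$-mesic. Summing this $\frac12$ over the $\lceil\frac{n-1}{2}\rceil$ odd positions of $\{1,\dots,n-1\}$ yields the asserted average $\frac12\lceil\frac{n-1}{2}\rceil$ for odd descents, and summing over the $\lfloor\frac{n-1}{2}\rfloor$ even positions yields $\frac12\lfloor\frac{n-1}{2}\rfloor$ for even descents. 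By Remark~\ref{global_avg}, these two values are the corresponding global averages, which is what any reverse-orbit average would have to equal if the statistics were homomesic under the reverse.

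For the reverse, writing $d_i(\sigma)$ for the indicator that $i$ is a descent, Lemma~\ref{lem:positionofdes_C&R} gives $d_i(\R(\sigma)) = 1 - d_{n-i}(\sigma)$. The orbit-sum of odd descents is then $\sum_{i\text{ odd}} d_i(\sigma) + \lceil\frac{n-1}{2}\rceil - \sum_{i\text{ odd}} d_{n-i}(\sigma)$. The main obstacle is hidden here: when $n$ is even the involution $i\mapsto n-i$ preserves parity and permutes the odd positions, so the two variable sums cancel and the statistic is in fact homomesic; only when $n$ is odd does $i\mapsto n-i$ swap odd and even positions, leaving the orbit-dependent residual $\sum_{i\text{ odd}} d_i(\sigma) - \sum_{j\text{ even}} d_j(\sigma)$. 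Consequently a counterexample cannot come from an even $n$ (in particular the orbit $\{634215,512436\}$ used repeatedly in this section does not work here), and I would instead exhibit one at $n=5$: taking $\sigma = 21345$, so $\R(\sigma) = 54312$, we have $\Des(\sigma)=\{1\}$ and $\Des(\R(\sigma))=\{1,2,3\}$. This gives orbit averages of $\frac{1+2}{2}=\frac32$ odd descents and $\frac{0+1}{2}=\frac12$ even descents, neither matching the global averages $\frac12\lceil\frac{4}{2}\rceil=1$ and $\frac12\lfloor\frac{4}{2}\rfloor=1$; by Remark~\ref{global_avg} this proves non-homomesy under the reverse.
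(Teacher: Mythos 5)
Your proposal is correct, and both halves follow the same path as the paper: the complement half rests on the descent-to-ascent correspondence of Lemma~\ref{lem:positionofdes_C&R} (the paper phrases it as one global count over the orbit rather than position-by-position, an immaterial difference), and the reverse half uses exactly the same orbit $\{21345,\,54312\}$ at $n=5$ that the paper uses. Where you go beyond the paper is the parity analysis of the involution $i \mapsto n-i$: the paper simply exhibits the $n=5$ orbit with no explanation, whereas you prove that for even $n$ the number of odd descents is in fact $\tfrac{1}{2}\lceil\tfrac{n-1}{2}\rceil$-mesic under the reverse, so no even-$n$ orbit (in particular the orbit $\{634215,\,512436\}$ used throughout Subsection~\ref{sec:complement}) could ever serve as a counterexample. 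That is a genuine refinement: it shows the failure of homomesy under the reverse is a purely odd-$n$ phenomenon, which the paper's proof leaves invisible, and it correctly identifies the orbit-dependent residual $\sum_{i\ \mathrm{odd}} d_i(\sigma) - \sum_{j\ \mathrm{even}} d_j(\sigma)$ as the obstruction when $n$ is odd. One minor divergence: for the even-descent statistic the paper argues indirectly (descents are homomesic for the reverse while odd descents are not, so their difference cannot be homomesic, by Lemma~\ref{lem:sum_diff_homomesies}), while you verify non-homomesy directly on the same orbit; both are valid, with the paper's version slightly shorter and yours slightly more self-contained.
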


\begin{proof}
  An odd descent in $\sigma$ is an odd ascent in $\C(\sigma)$ and vice versa. The sum of odd descents in $\sigma$ and $\C(\sigma)$ is the number of possible odd descents in a string $n$, so that the average is $\frac{1}{2}\lceil \frac{n-1}{2}\rceil$.

  To see that it is not homomesic under the reverse, we exhibit an orbit with an average that differs from the global average of $\frac{1}{2}\lceil \frac{n-1}{2}\rceil$. Consider the orbit $(\sigma, \R(\sigma))$ where $\sigma = 132$ and $\R(\sigma) = 231$. Then the number of odd descents of $\sigma$ is 0 and the number of odd descents of $\R(\sigma)$ is 0, so the average over the orbit is $0$, not $\frac{1}{2}\lceil \frac{3-1}{2}\rceil = \frac{1}{2}$.

  We can think of even descents as the complement of the set of all odd descents. The average is $\frac{1}{2}\lfloor\frac{n-1}{2}\rfloor$. As the number of descents is homomesic for the reverse map, but the number of odd descents is not, the number of even descents is not homomesic.

\end{proof}

\begin{prop}\label{thm:ith_entry_comp}
  The $i$-th entry of the permutation is $\frac{n+1}{2}$-mesic under the complement.
\end{prop}

\begin{proof}
  Since $\sigma_i + \C(\sigma)_i = n+1$, the average of the $i$-th entry is $\frac{n+1}{2}$.
\end{proof}

\begin{cor}[Statistics 54, 740]\label{prop:C_54_740}
  The first entry of the permutation and the last entry of a permutation is $\frac{n+1}{2}$-mesic under the complement, but not the reverse.
\end{cor}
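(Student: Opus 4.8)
The plan is to derive the complement case directly from Theorem~\ref{thm:ith_entry_comp} and to dispatch the reverse case with an explicit counterexample. For the complement, I would observe that the first entry of a permutation (Statistic~54) is exactly the $i$-th entry statistic with $i=1$, and the last entry (Statistic~740) is the $i$-th entry statistic with $i=n$. Since Theorem~\ref{thm:ith_entry_comp} already establishes that the $i$-th entry is $\frac{n+1}{2}$-mesic under the complement for every $i$, both statistics are $\frac{n+1}{2}$-mesic under $\C$ with no additional argument required.

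For the reverse map, the key observation is that $\R$ is an involution, so every orbit has the form $\{\sigma, \R(\sigma)\}$, and that $\R(\sigma)_1 = \sigma_n$ while $\R(\sigma)_n = \sigma_1$. Hence the first-entry values appearing over the orbit of $\sigma$ are $\sigma_1$ and $\sigma_n$, whose average is $\frac{\sigma_1 + \sigma_n}{2}$, and the last-entry values are $\sigma_n$ and $\sigma_1$, with the same average. This orbit-average equals the global average $\frac{n+1}{2}$ precisely when $\sigma_1 + \sigma_n = n+1$, a condition that fails for most permutations once $n \geq 3$. By Remark~\ref{global_avg}, exhibiting a single orbit whose average differs from $\frac{n+1}{2}$ suffices to rule out homomesy under $\R$.

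To finish, I would reuse the permutation $\sigma = 634215$ already employed elsewhere in this subsection, with $\R(\sigma) = 512436$. Here $\sigma_1 = 6$ and $\sigma_6 = 5$, so the first entry averages $\frac{6+5}{2} = \frac{11}{2}$ over the orbit rather than the global value $\frac{6+1}{2} = \frac{7}{2}$; the last-entry values are $5$ and $6$, producing the same discrepancy. The only point needing any care is the routine check that the global average of each statistic over all of $S_n$ is $\frac{n+1}{2}$, which holds by symmetry since each value in $[n]$ occurs equally often in any fixed position across $S_n$. There is no substantive obstacle here: the result is a corollary of Theorem~\ref{thm:ith_entry_comp} together with one counterexample.
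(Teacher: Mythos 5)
Your proposal is correct and follows essentially the same route as the paper: the complement case is read off from Theorem~\ref{thm:ith_entry_comp}, and the reverse case is refuted by the same orbit $\{634215, 512436\}$, whose first- and last-entry average $\frac{11}{2}$ differs from the global average $\frac{7}{2}$. Your added observation that an orbit average under $\R$ equals $\frac{n+1}{2}$ exactly when $\sigma_1+\sigma_n=n+1$ is a nice touch but does not change the substance of the argument.
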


\begin{proof}
  From Proposition \ref{thm:ith_entry_comp}, we have the desired homomesies under the complement.

  To see that it is not homomesic under the reverse, we exhibit an orbit with an average that differs from the global average of $\frac{n+1}{2}$. Consider the orbit $( \sigma, \R(\sigma))$ where $\sigma = 132$ and $\R(\sigma) = 231$. The average over the orbit for the first entry (and last entry) is $\frac{3}{2}$, not $\frac{3+1}{2} = 2$.
\end{proof}

\begin{prop}[Statistic 20] \label{prop:C_20}
  The rank of a permutation is $\frac{n!+1}{2}$-mesic under the complement, but not the reverse.
\end{prop}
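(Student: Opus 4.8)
The plan is to exploit the explicit formula for the rank in terms of the Lehmer code, Lemma~\ref{lem:rank_and_lehmer_code}, together with the fact that the complement has orbits of size exactly $2$. First I would observe that $\C$ is a fixed-point-free involution: $\C(\sigma)=\sigma$ would force $\sigma_i = n+1-\sigma_i$ for every $i$, which is impossible for a permutation. Hence every orbit is a pair $\{\sigma, \C(\sigma)\}$, and it suffices to show that $\rank(\sigma) + \rank(\C(\sigma)) = n!+1$, since then each orbit average is $\frac{n!+1}{2}$.

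The key step is to track the Lehmer code under the complement. Since $\C(\sigma)_i = n+1-\sigma_i$, a position $j>i$ contributes to $L(\C(\sigma))_i$ exactly when $\C(\sigma)_j < \C(\sigma)_i$, that is, when $\sigma_j > \sigma_i$. Thus $L(\C(\sigma))_i$ counts the non-inversions starting at position $i$, which gives $L(\C(\sigma))_i = (n-i) - L(\sigma)_i$. Substituting into Lemma~\ref{lem:rank_and_lehmer_code} and adding the two ranks, the $L(\sigma)_i$ terms cancel:
\[ \rank(\sigma) + \rank(\C(\sigma)) = 2 + \sum_{i=1}^{n-1}\big(L(\sigma)_i + L(\C(\sigma))_i\big)(n-i)! = 2 + \sum_{i=1}^{n-1}(n-i)\,(n-i)!. \]
Setting $k = n-i$ and using the telescoping identity $k\cdot k! = (k+1)! - k!$, the sum collapses to $n!-1$, so that $\rank(\sigma) + \rank(\C(\sigma)) = n!+1$, as desired.

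To show the statistic is not homomesic under the reverse, I would exhibit a single orbit whose average differs from the global average $\frac{n!+1}{2}$, which suffices by Remark~\ref{global_avg}. For instance, at $n=3$ the reverse-orbit $\{132, 231\}$ has ranks $2$ and $4$, averaging $3$, whereas $\frac{3!+1}{2} = \frac{7}{2}$; one could equally give a larger example matching the style of the surrounding proofs.

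I expect no serious obstacle here; the only points requiring care are correctly identifying $L(\C(\sigma))_i$ as the non-inversion count and recognizing the telescoping sum. Alternatively, one could argue more conceptually that $\C$ is an order-reversing bijection of the lexicographically ordered set of permutations, so it sends the $r$-th permutation to the $(n!+1-r)$-th, which yields $\rank(\sigma)+\rank(\C(\sigma))=n!+1$ directly without any computation.
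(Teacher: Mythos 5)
Your proof is correct and follows essentially the same route as the paper's: both rest on the identity $L(\C(\sigma))_i = (n-i) - L(\sigma)_i$ combined with the rank formula of Lemma~\ref{lem:rank_and_lehmer_code} to get $\rank(\sigma)+\rank(\C(\sigma)) = n!+1$, and both refute homomesy for the reverse by exhibiting a single orbit (yours at $n=3$, the paper's at $n=6$); your telescoping evaluation of $\sum_{i=1}^{n-1}(n-i)(n-i)!$ merely replaces the paper's citation of the computation in Proposition~\ref{20_LC}. Your closing alternative --- that $\C$ reverses lexicographic order and hence sends the $r$-th permutation to the $(n!+1-r)$-th --- is also valid and is a slicker way to obtain the complement half, though it is not the argument the paper uses.
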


\begin{proof}
  From Lemma \ref{lem:rank_and_lehmer_code}, we know that the rank of a permutation can be found by
  \[
    \rank(\sigma) = 1 + \sum_{i=1}^{n-1} L(\sigma)_i (n-i)!
  \]
  Since $\C(\sigma)_i = n+1-\sigma_i$, the definition of the Lehmer code implies that the sum of  $i$-th entries of the Lehmer codes of $\sigma$ and its complement is $L(\sigma)_i+L(\C(\sigma))_i = n-i$. This allows us to find the following.
  \begin{align*}
    \rank (\sigma) + \rank (\C(\sigma)) & =2+ \sum_{i=1}^{n-1} L(\sigma)_i (n-i)!  + \sum_{i=1}^{n-1} L(\C(\sigma))_i (n-i)! \\
                                        & = 2+ \sum_{i=1}^{n-1}(n-i)! \left ( L(\sigma)_i +  L(\C(\sigma))_i\right)          \\
                                        & =2+ \sum_{i=1}^{n-1}(n-i)! (n-i).
  \end{align*}
  This means that, as seen in Proposition \ref{20_LC}, the average is
  \[
    1+\frac{1}{2} \sum_{i=1}^{n-1}(n-i)! (n-i) = \frac{n!+1}{2}.
  \]

  To see that it is not homomesic under the reverse, we exhibit an orbit with an average that differs from the global average of $\frac{n!+1}{2}$. Consider the orbit $( \sigma, \R(\sigma))$ where $\sigma = 132$ and $\R(\sigma) = 231$. The rank of $\sigma$ is 2 and the rank of $\R(\sigma)$ is 4, so the average over the orbit for the rank is $3$, not $\frac{3!+1}{2} = \frac{7}{2}$.
\end{proof}

The next proposition examines Babson and Steingr\'imsson's statistic stat, which is defined in Definition \ref{BabsonSteingr\'imsson}.

\begin{prop}[Statistic 692]\label{prop:C_692}
  Babson and Steingr\'imsson's statistic stat of a permutation is $\frac{n(n-1)}{4}$-mesic under the complement, but not the reverse.
\end{prop}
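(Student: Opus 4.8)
The plan is to exploit that the complement $\C$ is an involution. For $n\geq 2$ there is no permutation $\sigma$ with $\C(\sigma)=\sigma$ (this would force $\sigma_i=\frac{n+1}{2}$ for all $i$), so every orbit of $\C$ has size exactly $2$. Consequently, proving that stat is $\frac{n(n-1)}{4}$-mesic is equivalent to showing that $\mathrm{stat}(\sigma)+\mathrm{stat}(\C(\sigma))=\frac{n(n-1)}{2}$ for every $\sigma\in S_n$. Recall from Definition \ref{BabsonSteingr\'imsson} that $\mathrm{stat}$ is the total number of occurrences of the consecutive patterns $13\text{-}2$, $21\text{-}3$, $32\text{-}1$, together with $\des(\sigma)$ (the consecutive pattern $21$).

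First I would record how $\C$ acts on each ingredient. Since the complement fixes every position but reverses the order of values via $v\mapsto n+1-v$, an occurrence of a consecutive pattern $ab\text{-}c$ at positions $i,i+1,j$ (with $j>i+1$) becomes an occurrence of $(4{-}a)(4{-}b)\text{-}(4{-}c)$ at the same positions. Tracking this gives $13\text{-}2\mapsto 31\text{-}2$, $21\text{-}3\mapsto 23\text{-}1$, $32\text{-}1\mapsto 12\text{-}3$, while the consecutive pattern $21$ (a descent) maps to $12$ (an ascent). Hence $\mathrm{stat}(\C(\sigma))$ counts, in $\sigma$ itself, the occurrences of $31\text{-}2$, $23\text{-}1$, $12\text{-}3$ together with $\mathrm{asc}(\sigma)$.

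The crux is then a completeness observation: the three patterns of $\mathrm{stat}$, namely $13\text{-}2,21\text{-}3,32\text{-}1$, together with their complement-images $31\text{-}2,23\text{-}1,12\text{-}3$, are exactly the six consecutive patterns $ab\text{-}c$ with $\{a,b,c\}=\{1,2,3\}$. Every triple of positions $(i,i+1,j)$ with $j>i+1$ realizes precisely one of these six patterns, so summing all six counts over $\sigma$ yields the purely positional total $\sum_{i=1}^{n-1}(n-i-1)=\binom{n-1}{2}$, while $\des(\sigma)+\mathrm{asc}(\sigma)=n-1$. Therefore $\mathrm{stat}(\sigma)+\mathrm{stat}(\C(\sigma))=\binom{n-1}{2}+(n-1)=\frac{n(n-1)}{2}$, giving the orbit average $\frac{n(n-1)}{4}$. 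I expect the only delicate point to be the pattern-complement bookkeeping; note in particular that the individual patterns (and even complement-conjugate pairs such as $13\text{-}2$ and $31\text{-}2$) are not homomesic under $\C$, so the argument genuinely relies on collecting all six patterns and both descents and ascents at once.

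Finally, to show $\mathrm{stat}$ is not homomesic for the reverse, I would invoke Remark~\ref{global_avg}: since $\mathrm{stat}$ is $\frac{n(n-1)}{4}$-mesic for the complement, its global average is $\frac{n(n-1)}{4}$, so it suffices to exhibit one reverse-orbit whose average differs from this value. Taking $n=6$ and the orbit $\{\sigma,\R(\sigma)\}$ with $\sigma=634215$ and $\R(\sigma)=512436$ (the same example used elsewhere in this section) works: a direct count gives $\mathrm{stat}(\sigma)=8$ and $\mathrm{stat}(\R(\sigma))=5$, so the orbit average is $\frac{13}{2}$, not $\frac{6\cdot 5}{4}=\frac{15}{2}$. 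This completes the proof.
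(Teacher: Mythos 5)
Your proposal is correct and follows essentially the same route as the paper's proof: you pair each of the patterns $13\text{-}2$, $21\text{-}3$, $32\text{-}1$, $21$ with its complement image $31\text{-}2$, $23\text{-}1$, $12\text{-}3$, $12$, observe that together these account for every adjacent pair and every triple of positions $(i,i+1,j)$ with $j>i+1$, and conclude $\mathrm{stat}(\sigma)+\mathrm{stat}(\C(\sigma))=\binom{n-1}{2}+(n-1)=\frac{n(n-1)}{2}$, then refute homomesy for the reverse with the very same orbit $\{634215, 512436\}$ used in the paper. Your write-up is in fact slightly more explicit than the paper's (spelling out the fixed-point-free involution structure and the pattern bookkeeping), but the underlying argument is identical.
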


\begin{proof}
  In terms of generalized patterns, this statistic is given by the sum of the number of occurrences of each of the patterns $13-2$, $21-3$, $32-1$ and $21$.
  Numbers in the pattern which are not separated by a dash must appear consecutively, as explained in Definition \ref{def:consecutive_patterns}.

  The patterns $13-2$, $21-3$, $32-1$ and $21$ have complement, respectively, $31-2$, $23-1$, $12-3$ and $12$. The sum of the number of occurrences of each of the patterns $13-2$, $21-3$, $32-1$, $21$, $31-2$, $23-1$, $12-3$ and $12$ is the total number of pairs of adjacent entries, plus the number of triples made of two adjacent entries, plus another entry to their right. This is also $\text{stat}(\sigma)+\text{stat}(\C(\sigma))$. Hence, over one orbit, the statistics has average
  \[
    \frac{1}{2}\left(\text{stat}(\sigma)+\text{stat}(\C(\sigma))\right) = \frac{1}{2}\left( (n-1) + \sum_{i=1}^{n-2} (n-1-i) \right) = \frac{n(n-1)}{4}.
  \]
  The statistic is hence $\frac{n(n-1)}{4}$-mesic.

  To see that it is not homomesic under the reverse, we exhibit an orbit with an average that differs from the global average of $\frac{n(n-1)}{4}$. Consider the orbit $(\sigma, \R(\sigma))$ where $\sigma = 2314$ and $\R(\sigma) = 4132$. The statistic on $\sigma$ is 2 and the statistic on $\R(\sigma)$ is 3, so the average over the orbit is $\frac{5}{2}$, not $\frac{4(4-1)}{4} = 3$.
\end{proof}

\begin{prop}[Statistic 1332]\label{prop:C_1332}
  The number of steps on the non-negative side of the walk associated with a permutation is $\frac{n-1}{2}$-mesic under the complement, but not the reverse.
\end{prop}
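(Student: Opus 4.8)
The plan is to establish the pointwise identity $s(\sigma) + s(\C(\sigma)) = n-1$, where $s$ denotes this statistic, so that each orbit $\{\sigma, \C(\sigma)\}$ of the (involutive) complement has average exactly $\frac{n-1}{2}$; homomesy then follows immediately. The key structural fact I would use is that the walk associated to a permutation is built from its $n-1$ consecutive comparisons, recording an up-step at each ascent and a down-step at each descent, so the walk is the sequence of partial sums $h_0 = 0, h_1, \ldots, h_{n-1}$ with $h_i - h_{i-1} = \pm 1$. By Lemma \ref{lem:positionofdes_C&R}, the complement turns every ascent into a descent and every descent into an ascent, hence flips the sign of every step. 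Consequently the walk of $\C(\sigma)$ is exactly the reflection $(0, -h_1, \ldots, -h_{n-1})$ of the walk of $\sigma$ across the horizontal axis.

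First I would make precise what ``on the non-negative side'' means for a single step and then read off the reflected behavior. A step between consecutive heights $h_{i-1}$ and $h_i$ lies on the non-negative side exactly when both of its endpoints are $\geq 0$; after reflection the same step has endpoints $-h_{i-1}, -h_i$, so it lies on the non-negative side of the walk of $\C(\sigma)$ exactly when both endpoints are $\leq 0$. Since consecutive heights differ by $1$, they can never both be $0$, and a short case check (the pair $\{h_{i-1}, h_i\}$ being both positive, both negative, or containing a $0$) shows that each of the $n-1$ steps lies on the non-negative side of exactly one of the two walks. Summing over all steps gives $s(\sigma) + s(\C(\sigma)) = n-1$, and the average over the orbit is $\frac{n-1}{2}$.

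For the reverse, the plan is to exhibit a single orbit violating this average, which suffices by Remark \ref{global_avg}. The reason the reflection argument fails is that, by Lemma \ref{lem:positionofdes_C&R}, the reverse sends a descent at $i$ to an ascent at $n-i$: it both negates and reverses the order of the steps, so the walk of $\R(\sigma)$ is a reversed-and-reflected copy of the walk of $\sigma$ rather than a clean reflection, and the step-by-step pairing breaks down. I would take the running example $\sigma = 634215$, with $\R(\sigma) = 512436$, and compute $s$ directly on each; the orbit average comes out to $\frac{3}{2}$, which differs from the global value $\frac{5}{2}$ for $n = 6$, proving the statistic is not homomesic under the reverse.

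The main obstacle I anticipate is pinning down the exact FindStat definition of the walk and, correspondingly, the precise convention for ``the non-negative side'' (whether height $0$ is counted, and whether steps are of unit size or weighted); the reflection principle under the complement is robust to these choices, but the clean pointwise identity $s(\sigma)+s(\C(\sigma)) = n-1$ hinges on checking the steps incident to height $0$ under whichever convention FindStat uses. Once that convention is fixed, the remainder is the routine case analysis sketched above.
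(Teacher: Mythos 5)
Your proposal is correct and follows essentially the same route as the paper: the complement reflects the ascent/descent walk across the horizontal axis, so the $n-1$ steps are split between the non-negative sides of the two walks in each orbit, and the same orbit $\{634215, 512436\}$ with average $\frac{3}{2}$ rules out the reverse. In fact your step-by-step case check at height $0$ makes precise a point the paper's one-line ``flips the path upside down'' argument leaves implicit, so your write-up is, if anything, more careful than the published proof.
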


\begin{proof}
  Consider the walk taking an up step for each ascent, and a down step for each descent of the permutation. Then this statistic is the number of steps that begin and end at non-negative height. The complement of a permutation flips the path upside down. Since the path takes $n-1$ steps, the statistic is $\frac{n-1}{2}$-mesic.

  To see that it is not homomesic under the reverse, we exhibit an orbit with an average that differs from the global average of $\frac{n-1}{2}$. Consider the orbit $( \sigma, \R(\sigma))$ where $\sigma = 132$ and $\R(\sigma) = 231$. The statistic on $\sigma$ is 2 and the statistic on $\R(\sigma)$ is 2, so the average over the orbit is $2$, not $\frac{3-1}{2} = 1$.
\end{proof}

\begin{definition}
  The \textbf{staircase size} of a permutation $\sigma$ is the largest index $k$ for which there exist indices \mbox{$i_k < i_{k-1}< \ldots < i_1$} with  $L(\sigma)_{i_j}\geq j$, where $L(\sigma)_i$ is the $i$-th entry of the Lehmer code of $\sigma$ (see Section \ref{sec:lehmer} for a definition of the Lehmer code).
\end{definition}

\begin{example}\label{ex:staircase_size}
  In this example, we give the Lehmer code of a permutation and its complement, in which we highlight (in bold) the entries of the Lehmer code at the positions that form the staircases. Notice that the set of highlighted positions in $L(\C(\sigma))$ is the complement (in $[n-1]$) of the set of highlighted positions in $L(\sigma)$. This fact will be used in the proof of the next proposition.
  \begin{align*}
    \sigma = 15286347,\qquad L(\sigma) =       & (0, \mathbf{3}, 0, \mathbf{4}, \mathbf{2}, 0, 0, 0), \quad \text{staircase size is 3}           \\
    \C(\sigma) = 84713652,\quad L(\C(\sigma)) = & (\mathbf{7}, 3, \mathbf{5}, 0, 1, \mathbf{2}, \mathbf{1}, 0), \quad \text{staircase size is 4}.
  \end{align*}
\end{example}

\begin{prop}[Statistic 662]\label{prop:C_662}
  The staircase size of the code of a permutation is $\frac{n-1}{2}$-mesic under the complement, but not the reverse.
\end{prop}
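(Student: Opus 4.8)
The plan is to prove the complement statement by a greedy analysis of the staircase on the Lehmer code, and to dispose of the reverse statement with a single explicit orbit. Since $\C$ is an involution, every orbit is $\{\sigma,\C(\sigma)\}$, so it suffices to show $\mathrm{sc}(\sigma)+\mathrm{sc}(\C(\sigma))=n-1$, where $\mathrm{sc}$ denotes the staircase size; the orbit average is then $\frac{n-1}{2}$. Two facts drive the argument: first, recalling the proof of Proposition~\ref{prop:C_20}, the Lehmer codes satisfy $L(\sigma)_i+L(\C(\sigma))_i=n-i$ for every $i$; second, since $L(\sigma)_n=0$ always, a staircase only ever uses positions in $[n-1]$. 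The guiding picture is Example~\ref{ex:staircase_size}: the staircase positions chosen for $\sigma$ and for $\C(\sigma)$ will partition $[n-1]$, and $|[n-1]|=n-1$ yields the claim.

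To make ``the staircase positions'' canonical, I would compute the staircase size greedily from the right. Define $c_{n+1}=0$ and, scanning $i=n,n-1,\dots,1$, set $c_i=c_{i+1}+1$ if $L(\sigma)_i>c_{i+1}$ and $c_i=c_{i+1}$ otherwise; say position $i$ is \emph{selected} in the first case. A standard exchange argument (carried out by a downward induction showing $c_i$ equals the staircase size of the truncated code $(L(\sigma)_i,\dots,L(\sigma)_n)$) gives $c_1=\mathrm{sc}(\sigma)$. The heart of the proof is then the invariant
\[ c_i(\sigma)+c_i(\C(\sigma))=n-i \qquad (1\le i\le n), \]
proved by downward induction with base case $i=n$, where both suffix codes are $(0)$ and both sides equal $0$. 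For the inductive step, the induction hypothesis gives $c_{i+1}(\C(\sigma))=(n-i-1)-c_{i+1}(\sigma)$; combining this with $L(\C(\sigma))_i=(n-i)-L(\sigma)_i$ shows that $\C(\sigma)$ selects position $i$ exactly when $L(\sigma)_i\le c_{i+1}(\sigma)$, whereas $\sigma$ selects $i$ exactly when $L(\sigma)_i> c_{i+1}(\sigma)$. These two conditions are complementary, so precisely one of the two codes is incremented at position $i$ and the sum increases by exactly $1$, advancing the induction. At $i=1$ the invariant reads $\mathrm{sc}(\sigma)+\mathrm{sc}(\C(\sigma))=n-1$ (and confirms the partition of $[n-1]$ seen in Example~\ref{ex:staircase_size}), so the staircase size is $\frac{n-1}{2}$-mesic under the complement.

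The main obstacle I anticipate is purely the bookkeeping: one must justify that the right-to-left greedy actually computes the staircase size (the exchange/optimality lemma) and then run the two interlocking inductions cleanly, since the ``selected'' dichotomy for $\C(\sigma)$ relies on the invariant one position to the right. For the reverse map it suffices, by Remark~\ref{global_avg}, to exhibit one orbit whose average differs from the global average $\frac{n-1}{2}$. Take $n=5$ and the orbit $\{\sigma,\R(\sigma)\}$ with $\sigma=45213$ and $\R(\sigma)=31254$. Their Lehmer codes are $L(\sigma)=(3,3,1,0,0)$ and $L(\R(\sigma))=(2,0,0,1,0)$, whose staircase sizes are $3$ and $2$ respectively; the orbit average is $\frac{5}{2}\neq 2=\frac{5-1}{2}$, so the staircase size is not homomesic under the reverse. (It is worth noting that all orbits happen to average $n-1$ for $n\le 4$, so the smallest counterexample genuinely requires $n=5$.)
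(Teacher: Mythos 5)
Your proof is correct and follows essentially the same route as the paper's: both rest on the identity $L(\C(\sigma))_i = (n-i) - L(\sigma)_i$ and show that the positions of maximum staircases of $\sigma$ and $\C(\sigma)$ partition $[n-1]$, the paper packaging this as an induction on $n$ that peels off the leftmost Lehmer code entry, which is exactly your right-to-left greedy invariant in different clothing (the selection dichotomy $L(\sigma)_i > c_{i+1}(\sigma)$ versus $L(\sigma)_i \le c_{i+1}(\sigma)$ matches the paper's two cases, and both arguments equally defer the optimal-substructure/exchange lemma). Your reverse counterexample $\{45213, 31254\}$ differs from the paper's orbit $\{21453, 35412\}$ but is equally valid (staircase sizes $3$ and $2$, orbit average $\tfrac{5}{2} \neq 2$); the only slip is the parenthetical claim that orbits for $n \le 4$ ``average $n-1$,'' which should read $\tfrac{n-1}{2}$.
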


\begin{proof}
  Following Lemma \ref{inversion_pairs}, we know that $(i,j)$ is an inversion of $\sigma$ exactly when it is a noninversion of $\C(\sigma)$. Therefore, $L(\C(\sigma))_i = n-i-L(\sigma)_i$.

  To show homomesy, it is enough to prove the following claim:

  \underline{Claim}: There exists a subset $I = \{i_k<\ldots<i_1\}\subseteq [n-1]$ such that $L(\sigma)_{i_j} >j$ for all $j \in [k]$, and the numbers $[n-1]-I$ form a sequence $l_{n-1-k} < \ldots < l_1$ with $L(\C(\sigma))_{l_j} > j$ for all $j\in [n-1-k]$. In other terms, we partition the numbers $[n-1]$ into two sets that form the ``staircases'' of $\sigma$ and $\C(\sigma)$.

  An example of a partition into two staircases appear in Example \ref{ex:staircase_size}.

  We prove the claim by induction on $n$, the number of items in the permutations.

  The base case is when $n=2$. There is only one orbit, formed of $12$, with Lehmer code $(0,0)$, and $21$, that has Lehmer code $(\mathbf{1},0)$. The former has staircase size $0$, whereas the latter's staircase is $\{1\}$.

  For the induction step, we let $\sigma$ be a permutation of $[n+1]$ with Lehmer code $L(\sigma)$, and we define $\sigma'$ to be the unique permutation of $n$ elements with Lehmer code $(L(\sigma)_2, L(\sigma)_3, \ldots, L(\sigma)_{n+1})$. Let $I$ be the staircase of $\sigma'$, with size $|I|=k$. We know that $\C(\sigma')$ has Lehmer code $(n-1-L(\sigma)_1, n-2-L(\sigma)_2, \ldots, n-n-L(\sigma)_n)$, which correspond to the whole Lehmer code of $\C(\sigma)$ except the first entry. By induction hypothesis, there is a set $I \subseteq [n-1]$ of size $k$ that correspond to the staircase of $\sigma'$, and $[n-1]-I$ correspond to the staircase of $\C(\sigma')$. Denote $I\oplus 1 = \{i+1 \mid i \in I\}$.

  For $\sigma$, there are two cases:\begin{itemize}
    \sloppy
    \item If $L(\sigma)_1 \geq k+1$, then the staircase of $\sigma$ is $(I\oplus 1) \cup \{1\}$, and has size $k+1$. Then \mbox{$L(\C(\sigma))_1 = (n+1) - 1 - L(\sigma)_1 \leq n-k-1$}, and $\C(\sigma)$ has staircase, $I\oplus 1$, of size $n-k-1$. The union of the staircases of $\sigma$ and $\C(\sigma)$ is $[n]$.
    \item If $L(\sigma)_1 \leq k$, then the staircase of $\sigma$ is $I\oplus 1$, and has size $k$, but $L(\C(\sigma))_1 = (n+1) - 1 - L(\sigma)_1 \geq n-k$, so the staircase of $\C(\sigma)$ is $\{1\}\cup (I\oplus 1)$, of size $n-k$, and the staircases' union over the orbit is again $[n]$. \end{itemize}
  This concludes the proof of the claim, which means that the staircase size of a permutation is $\frac{n-1}{2}$-mesic under the complement.

  To see that it is not homomesic under the reverse, we exhibit an orbit with an average that differs from the global average of $\frac{n-1}{2}$. Consider the orbit $( \sigma, \R(\sigma))$ where $\sigma = 21453$ and $\R(\sigma) = 35412$. The statistic on $\sigma$ is 1 and the statistic on $\R(\sigma)$ is 2, so the average over the orbit is $\frac{3}{2}$, not $\frac{5-1}{2} = 2$.
\end{proof}

This concludes the proofs of Theorem~\ref{onlycomp}, showing the statistics listed there are homomesic for the complement map but not for the reverse map.

\subsection{Statistics homomesic for the reverse but not the complement}
\label{sec:rev}
In this subsection, we prove the statistics listed in Theorem \ref{onlyrev} are homomesic under the reverse map and provide examples illustrating that they are not homomesic under the complement map. Note that it is enough to provide an example of an orbit whose average under the statistic does not match that of the global average, following Remark \ref{global_avg}.

We begin with the inversion index, which is defined based on inversion pairs (see Definition \ref{def:basic_stats} for the definition of an inversion pair).

\begin{definition} The \textbf{inversion index} of a permutation $\sigma$ is given by summing all $\sigma_i$ where $(\sigma_i, \sigma_j)$ is an inversion pair for $\sigma$.
\end{definition}

\begin{prop}[Statistic 616]\label{prop:R_616}
  The inversion index is $\binom{n+1}{3}$-mesic for the reverse, but not homomesic for the complement.
\end{prop}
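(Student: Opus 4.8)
The plan is to exploit that $\R$ is an involution, so (as already noted in this section) every orbit is a two-element set $\{\sigma,\R(\sigma)\}$. It then suffices to show that the sum of the inversion index over the two permutations in each orbit equals the constant $2\binom{n+1}{3}$; dividing by the orbit size $2$ gives the average $\binom{n+1}{3}$, independent of the orbit, which establishes homomesy. Write $f(\sigma)=\sum_{i<j,\ \sigma_i>\sigma_j}\sigma_i$ for the inversion index, i.e. the sum, over all inversions recorded as position pairs $(i,j)$, of the larger (leftmost) entry $\sigma_i$ of each inversion.

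The key step is to rewrite $f(\R(\sigma))$ in terms of $\sigma$. By Lemma~\ref{inversion_pairs}, the non-inversions of $\sigma$ at positions $(i,j)$ correspond bijectively, via $k\mapsto n+1-k$, to the inversions of $\R(\sigma)$ at positions $(n+1-j,\,n+1-i)$. I would then track which entry is summed along this correspondence: the leftmost (larger) entry of such an inversion of $\R(\sigma)$ is exactly $\sigma_j$, the larger entry of the non-inversion $(i,j)$ of $\sigma$ (where $\sigma_i<\sigma_j$). This yields $f(\R(\sigma))=\sum_{i<j,\ \sigma_i<\sigma_j}\sigma_j$. Adding the two expressions, every position pair $i<j$ contributes its larger entry to exactly one of $f(\sigma),\,f(\R(\sigma))$, so
\[ f(\sigma)+f(\R(\sigma))=\sum_{1\le i<j\le n}\max(\sigma_i,\sigma_j). \]
Since $\sigma$ is a bijection, as $(i,j)$ ranges over position pairs the set $\{\sigma_i,\sigma_j\}$ ranges over all value pairs, so this equals $\sum_{1\le a<b\le n}b=\sum_{b=2}^{n}(b-1)b=\frac{n(n+1)(n-1)}{3}=2\binom{n+1}{3}$, as required.

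The only delicate point — and the main obstacle — is the bookkeeping in the middle step: one must verify that under the position reversal the summed entry is in each case the maximum of the pair, so that the two sums merge into a single symmetric sum of maxima; everything else is a routine evaluation. Finally, to show the inversion index is \emph{not} homomesic under the complement, the plan is to note that $\C$ flips values ($\C(\sigma)_i=n+1-\sigma_i$) rather than positions, so the summed quantity is no longer a maximum over positions and the pairing argument breaks down. I would then exhibit a single orbit $\{\sigma,\C(\sigma)\}$ whose average differs from $\binom{n+1}{3}$, which suffices by Remark~\ref{global_avg}.
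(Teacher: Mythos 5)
Your proof of the homomesy half is correct and essentially the paper's own argument: both rest on the observation that each value pair contributes its larger element to the inversion index of exactly one of $\sigma$ and $\R(\sigma)$, so the orbit sum is $\sum_{1\le a<b\le n} b = 2\binom{n+1}{3}$; your version just makes the position-reversal bookkeeping explicit via Lemma~\ref{inversion_pairs}, where the paper states the pairing of inversion pairs in one sentence. The only incomplete piece is the non-homomesy half, where you must actually exhibit an orbit rather than promise one (the heuristic that the pairing ``breaks down'' is not a proof); the paper uses $\sigma = 132465$, $\C(\sigma) = 645312$, whose inversion indices are $9$ and $63$, giving orbit average $36 \neq \binom{7}{3} = 35$.
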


\begin{proof} Since any pair $(\sigma_i, \sigma_j)$ with $\sigma_i > \sigma_j$ is either an inversion pair for $\sigma$ or $\R(\sigma)$, the inversion index of $\sigma$ added to the inversion index of $\R(\sigma)$ is $n(n - 1) + (n-1)(n - 2) + \ldots + 2(1)$, and the average over the orbit is the sum divided by 2, which is $\binom{n+1}{3}$.

  To see that it is not homomesic under the complement, we exhibit an orbit with an average that differs from the global average of $\binom{n+1}{3}$. Consider the orbit $( \sigma, \C(\sigma))$ where $\sigma = 132$ and $\C(\sigma) = 312$. The inversion index for $\sigma$ is 3 and the inverse index for $\C(\sigma)$ is 6, so the average over the orbit is $\frac{9}{2}$, not $\binom{3+1}{3} = 4$.\end{proof}

The disorder of a permutation is defined by Emeric~Deutsch in the comments of the OEIS page for sequence A008302~\cite{OEIS}.

\begin{definition}
  Given a permutation $\sigma = \sigma_1\sigma_2 \ldots \sigma_n$, cyclically pass through the permutation left to right and remove the numbers $1, 2, \ldots, n$ in order. The \textbf{disorder} of the permutation is then defined by counting the number of times a position is not selected and summing that over all the positions.
\end{definition}

\begin{example}
  Let $\sigma = 12543$. In the first pass, $54$ remains. In the second pass only $5$ remains. In the third pass, nothing remains. Thus the disorder of $\sigma$ is 3.
\end{example}

\begin{prop}[Statistic 446]\label{prop:R_446} The disorder of a permutation is $\frac{n(n-1)}{4}$-mesic for the reverse, but not homomesic for the complement.
\end{prop}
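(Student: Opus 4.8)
The plan is to prove the stronger pointwise identity
\[
\text{disorder}(\sigma) + \text{disorder}(\R(\sigma)) = \binom{n}{2},
\]
from which the homomesy is immediate: since the reverse is an involution, each orbit is a pair $\{\sigma,\R(\sigma)\}$, and the average of the disorder over it is $\tfrac12\binom{n}{2} = \tfrac{n(n-1)}{4}$.

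The crux is to reinterpret the disorder as a known statistic of $\sigma^{-1}$. First I would observe that one ``pass'' of the removal procedure is a single left-to-right sweep, during which every currently present position is visited once and is either selected (removed) or skipped. Writing $\rho(v)$ for the index of the pass in which the value $v$ is removed, the position of $v$ is skipped exactly once during each of the passes $1,2,\ldots,\rho(v)-1$ and removed in pass $\rho(v)$, so summing all skips gives $\text{disorder}(\sigma)=\sum_{v=1}^{n}\big(\rho(v)-1\big)$. Next I would track when a new pass begins: after removing $u$ we continue within the same sweep precisely when $u+1$ lies to the right of $u$, and we must start a new pass exactly when $u+1$ lies to the left of $u$. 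Since the position of $u$ is $\sigma^{-1}_u$, this shows $\rho(v)-1 = \#\{u<v : \sigma^{-1}_{u+1} < \sigma^{-1}_u\}$, i.e.\ $\rho(v)-1$ counts the descents of $\sigma^{-1}$ among $\{1,\ldots,v-1\}$. Exchanging the order of summation then yields
\[
\text{disorder}(\sigma) = \sum_{u\in\Des(\sigma^{-1})}(n-u),
\]
which is precisely the comajor index of $\sigma^{-1}$.

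With this identity the reverse computation is short. By Lemma~\ref{lem:C&R_relation} we have $\R(\sigma)^{-1} = \C(\sigma^{-1})$, and by Lemma~\ref{lem:positionofdes_C&R} the complement exchanges ascents and descents, so $\Des(\C(\sigma^{-1}))$ is exactly the set $\mathrm{Asc}(\sigma^{-1})$ of ascents of $\sigma^{-1}$. Hence
\[
\text{disorder}(\sigma) + \text{disorder}(\R(\sigma)) = \sum_{u\in\Des(\sigma^{-1})}(n-u) + \sum_{u\in\mathrm{Asc}(\sigma^{-1})}(n-u) = \sum_{u=1}^{n-1}(n-u) = \binom{n}{2},
\]
proving the identity. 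For the negative statement, following Remark~\ref{global_avg} it suffices to exhibit one complement-orbit whose average differs from the global average $\tfrac{n(n-1)}{4}$; for instance at $n=3$ the orbit $\{132,312\}$ has disorders $1$ and $1$, giving average $1\neq\tfrac32$.

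I expect the main obstacle to be the bookkeeping of the first step: turning the informal ``cyclically pass and remove'' description into the clean count $\text{disorder}(\sigma)=\sum_v(\rho(v)-1)$ and then identifying $\rho(v)-1$ with descents of $\sigma^{-1}$. Once the disorder is recognized as the comajor index of the inverse, the contrasting behavior under the reverse and the complement falls out directly from the relations among $\R$, $\C$, and inversion already established in Subsection~\ref{sec:comp}.
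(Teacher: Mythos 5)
Your proposal is correct and takes essentially the same approach as the paper: both proofs decompose the disorder as $\sum (n-i)$ over the indices $i$ for which $i+1$ precedes $i$ in $\sigma$ (you phrase this as the comajor index of $\sigma^{-1}$, the paper as a sum over inversion pairs $(i+1,i)$), then observe that the reverse map complements this index set so that the two disorders in each orbit sum to $\binom{n}{2}$, giving the average $\frac{n(n-1)}{4}$. The only difference is cosmetic: you derive the disorder identity via a careful pass-counting argument and the lemmas on $\R$, $\C$, and inversion, and you use a smaller counterexample ($n=3$, orbit $\{132,312\}$) for the complement than the paper's $n=6$ example; both are valid.
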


\begin{proof}
  Each pass through the permutation ends when encountering an inversion pair of the form $(i + 1, i)$ (meaning $i + 1$ is to the left of $i$ in $\sigma = \sigma_1\sigma_2 \ldots \sigma_n$). To count the disorder, we break the sum into parts based on those inversion pairs. Any inversion pair $(i + 1, i)$ contributes $n - i$ to the disorder because when $i$ is removed, $i + 1, i + 2, \ldots, n$ remain. As we are summing the disorder over both $\sigma$ and $\R(\sigma)$, we encounter every possible inversion pair of the form $(i + 1, i)$ exactly once. Thus the disorder over $\sigma$ and $\R(\sigma)$ can be found by the sum $\sum_{i = 1}^{n-1} n - i = 1 + 2 + \ldots + (n-1)$, and the average over the orbit is $\frac{n(n-1)}{4}$.

  To see that it is not homomesic under the complement, we exhibit an orbit with an average that differs from the global average of $\frac{n(n-1)}{4}$.  Consider the orbit $( \sigma, \C(\sigma))$ where $\sigma = 132$ and $\C(\sigma) = 312$. The disorder of $\sigma$ is 1 and the disorder of $\C(\sigma)$ is 1, so the average over the orbit is $1$, not $\frac{3(3-1)}{4} = \frac{3}{2}$.\end{proof}

The makl of a permutation was first defined in \cite{ClarkeSteingrimssonZeng} as the sum of the descent bottoms of the permutation with the left embracing sum of the permutation. In this paper, we use the alternative definition in \cite{BabsonSteingrimsson} that defines the makl of the permutation in terms of summing the occurrence of certain patterns.

\begin{definition}\cite{BabsonSteingrimsson} The \textbf{makl} of a permutation is the sum of the number of occurrences of the patterns $1-32, 31-2, 32-1, 21,$ where letters without a dash appear side by side in the pattern, as explained in Definition \ref{def:consecutive_patterns}.
\end{definition}

\begin{example}
  Let $\sigma = 12543$. The pattern $1-32$ appears 4 times, the pattern $31-2$ appears 0 times, the pattern $32-1$ appears 1 time, and the pattern $21$ appears 2 times. Thus the makl of $\sigma$ is 7.
\end{example}

\begin{prop}[Statistic 798]\label{prop:R_798}
  The makl of a permutation is $\frac{n(n-1)}{4}$-mesic for the reverse, but not homomesic for the complement.
\end{prop}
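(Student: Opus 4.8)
The plan is to follow the template of Proposition~\ref{prop:C_692}, pairing $\text{makl}(\sigma)$ with $\text{makl}(\R(\sigma))$, but the analysis must be pushed one step further. In each of the four patterns $1-32$, $31-2$, $32-1$, $21$ defining the makl, the two dash-free letters occupy adjacent positions and form a descent, so $\text{makl}(\sigma)$ is supported on the adjacent descents of $\sigma$. First I would compute the total contribution of a descent at position $i$: the patterns $31-2$ and $32-1$ together count $\#\{k>i+1:\sigma_k<\sigma_i\}$ (splitting the right-hand entries at the value $\sigma_{i+1}$), the pattern $21$ contributes $1$, and the pattern $1-32$ contributes $\#\{\ell<i:\sigma_\ell<\sigma_{i+1}\}$, for a total of $W(i):=1+\#\{k>i+1:\sigma_k<\sigma_i\}+\#\{\ell<i:\sigma_\ell<\sigma_{i+1}\}$.

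Next I would reverse the patterns. Since an occurrence of a pattern in $\R(\sigma)$ is exactly an occurrence of its reversal in $\sigma$, the patterns $1-32, 31-2, 32-1, 21$ turn into $23-1, 2-13, 1-23, 12$, whose dash-free letters now form an \emph{ascent}; hence $\text{makl}(\R(\sigma))$ is supported on the adjacent ascents of $\sigma$. The crux is to show that the contribution of an adjacent ascent at position $i$ is the \emph{same} quantity $W(i)$: the patterns $2-13$ and $1-23$ together count $\#\{\ell<i:\sigma_\ell<\sigma_{i+1}\}$ (now splitting the left-hand entries at the value $\sigma_i$), the pattern $23-1$ counts $\#\{k>i+1:\sigma_k<\sigma_i\}$, and $12$ contributes $1$. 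Thus every adjacent pair $(\sigma_i,\sigma_{i+1})$, whether an ascent or a descent, contributes precisely $W(i)$ to $\text{makl}(\sigma)+\text{makl}(\R(\sigma))$.

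It then remains to sum: $\sum_{i=1}^{n-1}W(i)=(n-1)+\sum_{i=1}^{n-1}\#\{k>i+1:\sigma_k<\sigma_i\}+\sum_{i=1}^{n-1}\#\{\ell<i:\sigma_\ell<\sigma_{i+1}\}$. The first double sum counts the inversions of distance at least $2$ and the second counts the non-inversions of distance at least $2$; together they enumerate every pair of positions at distance at least $2$ exactly once, namely $\binom{n-1}{2}$ pairs. Therefore $\text{makl}(\sigma)+\text{makl}(\R(\sigma))=(n-1)+\binom{n-1}{2}=\binom{n}{2}=\frac{n(n-1)}{2}$, independent of $\sigma$, so every reverse-orbit $\{\sigma,\R(\sigma)\}$ averages $\frac{n(n-1)}{4}$, giving the reverse homomesy (consistent with the inversion count of Proposition~\ref{prop:RC_18_246}). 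For the complement I would exhibit a single orbit breaking homomesy: for $\sigma=132465$ with $\C(\sigma)=645312$ one has $\text{makl}(\sigma)=7$ and $\text{makl}(\C(\sigma))=10$, so the orbit average $\frac{17}{2}$ differs from the global average $\frac{n(n-1)}{4}=\frac{15}{2}$ (Remark~\ref{global_avg}).

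The main obstacle I anticipate is precisely this coincidence of the ascent- and descent-contributions. Unlike the complement case of Proposition~\ref{prop:C_692}, where the six length-$3$ patterns are all of type $ab-c$ and therefore jointly realize every relative order of an adjacent-pair-plus-right-entry triple, here the six length-$3$ patterns split into three of type $ab-c$ (orders $312,321,231$) and three of type $a-bc$ (orders $132,213,123$). Consequently they do \emph{not} enumerate a fixed family of configurations pair-by-pair, and the $\sigma$-independence only emerges after each pair's contribution is collapsed to the single symmetric quantity $W(i)$. Care is needed to verify the two coincident tallies, splitting the right-hand count at $\sigma_{i+1}$ and the left-hand count at $\sigma_i$, so that the descent bookkeeping and the ascent bookkeeping land on the identical expression.
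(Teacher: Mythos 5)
Your proposal is correct and is essentially the paper's own argument: both proofs show that $\mathrm{makl}(\sigma)+\mathrm{makl}(\R(\sigma))$ is the constant $\binom{n}{2}$ by noting that $\mathrm{makl}(\R(\sigma))$ counts the reversed patterns $23-1$, $2-13$, $1-23$, $12$ in $\sigma$ and that the eight patterns together account for every pair of positions exactly once, and both use the identical complement counterexample $\sigma=132465$. The only difference is bookkeeping: the paper classifies each pair $(i,j)$ directly into one of the eight patterns (anchored at $\sigma_{i+1}$ or $\sigma_{j-1}$), whereas you transpose the order of summation, collecting the same incidences at each adjacent position through the symmetric weight $W(i)$.
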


\begin{proof}
  Summing the number of occurrence of the patterns $1-32, 31-2, 32-1, 21$ in the reverse permutation is the same as summing the number of occurrences of the patterns $23-1, 2-13, 1-23, 12$ in the original permutation. Summing all of these patterns is equal to $1 + 2 + \ldots + n-1 = \frac{n(n-1)}{2}$, as each pair $(\sigma_i, \sigma_j)$ in the permutation falls under one of these patterns, as explained below. Let $\sigma = \sigma_1 \sigma_2 \ldots \sigma_n$ a permutation of $[n]$ and $i, j$ such that $1\leq i < j \leq n$.

  \begin{enumerate}
    \item If $j = i + 1$, then $(\sigma_i, \sigma_j)$ either has the pattern $12$ or $21$.
    \item If $j > i + 1$ and $\sigma_i < \sigma_j$, then either
          \begin{itemize}
            \item $\sigma_i < \sigma_{j-1} <\sigma_j$ and $(\sigma_i, \sigma_j)$ has pattern $1-23.$
            \item $\sigma_{j-1} < \sigma_i < \sigma_j$ and $(\sigma_i, \sigma_j)$ has pattern $2-13.$
            \item $\sigma_i < \sigma_j < \sigma_{j-1}$ and $(\sigma_i, \sigma_j)$ has pattern $1-32.$
          \end{itemize}
    \item If $j > i + 1$ and $\sigma_j < \sigma_i$, then either
          \begin{itemize}
            \item $\sigma_i > \sigma_{i + 1} > \sigma_j$ and $(\sigma_i, \sigma_j)$ has pattern $32-1.$
            \item $\sigma_i > \sigma_j > \sigma_{i+1}$ and $(\sigma_i, \sigma_j)$ has pattern $31-2.$
            \item $\sigma_{i+1} > \sigma_i > \sigma_j$ and $(\sigma_i, \sigma_j)$ has pattern $23-1.$
          \end{itemize}
  \end{enumerate}
  So the average over the orbit is $\frac{n(n-1)}{4}$.

  To see that it is not homomesic under the complement, we exhibit an orbit with an average that differs from the global average of $\frac{n(n-1)}{4}$.  Consider the orbit $(\sigma, \C(\sigma))$ where $\sigma = 132$ and $\C(\sigma) = 312$. The makl of $\sigma$ is 2 and the makl of $\C(\sigma)$ is 2, so the average over the orbit is $2$, not $\frac{3(3-1)}{4} = \frac{3}{2}$.
\end{proof}

For the next proposition, we examine the inverse major index of a permutation, $\sigma$, which is the major index for $\sigma^{-1}$ (see Definition \ref{def:basic_stats} for the definition of major index).

\begin{prop}[Statistic 305]\label{prop:R_305}
  The inverse major index is $\frac{n(n-1)}{4}$-mesic for the reverse, but not homomesic for the complement.
\end{prop}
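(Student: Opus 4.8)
The plan is to reduce the inverse major index under the reverse map to the \emph{ordinary} major index under the complement map, using the inverse relations from Lemma~\ref{lem:C&R_relation}. Recall that the inverse major index of $\sigma$ is $\maj(\sigma^{-1})$, and that the reverse is an involution whose orbits all have size $2$, so it suffices to compute $\maj(\sigma^{-1}) + \maj(\R(\sigma)^{-1})$ and show it is constant. The crucial observation is that, by Lemma~\ref{lem:C&R_relation}(1), $\R(\sigma)^{-1} = \C(\sigma^{-1})$. Hence the two values of the statistic over the orbit $\{\sigma, \R(\sigma)\}$ are $\maj(\sigma^{-1})$ and $\maj(\C(\sigma^{-1}))$.

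With this identity in hand, the reverse half is immediate. First I would invoke Proposition~\ref{prop:C_4}, which establishes that for every permutation $\tau$ of $[n]$ one has $\maj(\tau) + \maj(\C(\tau)) = \frac{n(n-1)}{2}$. Applying this with $\tau = \sigma^{-1}$ gives $\maj(\sigma^{-1}) + \maj(\C(\sigma^{-1})) = \frac{n(n-1)}{2}$, and substituting $\C(\sigma^{-1}) = \R(\sigma)^{-1}$ yields $\maj(\sigma^{-1}) + \maj(\R(\sigma)^{-1}) = \frac{n(n-1)}{2}$. Dividing by $2$ shows the orbit average of the inverse major index equals $\frac{n(n-1)}{4}$, which is the claimed homomesy.

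For the complement, the relevant identity is instead $\C(\sigma)^{-1} = \R(\sigma^{-1})$, again from Lemma~\ref{lem:C&R_relation}(1). Thus over the orbit $\{\sigma, \C(\sigma)\}$ the statistic takes the values $\maj(\sigma^{-1})$ and $\maj(\R(\sigma^{-1}))$, which is exactly the ordinary major index evaluated at $\sigma^{-1}$ and at its reverse. Since the major index is \emph{not} homomesic under the reverse (Proposition~\ref{prop:C_4}), this will generically fail to be constant. To make this explicit, I would take the permutation witnessing reverse-non-homomesy of the major index in Proposition~\ref{prop:C_4}, namely $634215$ with $\maj(634215) = 8$ and $\maj(\R(634215)) = \maj(512436) = 5$, and invert it: setting $\sigma = 542361 = (634215)^{-1}$, one checks $\C(\sigma) = 235416$ and $\C(\sigma)^{-1} = 512436$, so the inverse major indices over this orbit are $8$ and $5$, averaging $\frac{13}{2} \neq \frac{15}{2} = \frac{6\cdot 5}{4}$.

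The only real insight needed is recognizing the chain of inverse identities, that reversing-then-inverting coincides with inverting-then-complementing (and vice versa); once this is seen, both directions follow at once from the already-proven behavior of the ordinary major index. I expect the main (mild) obstacle to be just bookkeeping: correctly matching up which of the two relations in Lemma~\ref{lem:C&R_relation}(1) applies to each map, and verifying the explicit counterexample arithmetic, both of which are routine.
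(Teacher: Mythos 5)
Your proof is correct and follows essentially the same route as the paper: the homomesy half uses the identity $\R(\sigma)^{-1} = \C(\sigma^{-1})$ together with the complement-homomesy of the major index from Proposition~\ref{prop:C_4}, exactly as the paper does. For the non-homomesy half you use the companion identity $\C(\sigma)^{-1}=\R(\sigma^{-1})$ to transport the paper's maj/reverse counterexample, producing the valid orbit $\{542361, 235416\}$ with average $\frac{13}{2}\neq\frac{15}{2}$, whereas the paper simply exhibits the orbit $\{132465, 645312\}$; this is the same method with a different (and slightly more systematically derived) witness.
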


\begin{proof}

  As $\R(\sigma)^{-1} = \C(\sigma^{-1})$ (see Lemma \ref{lem:C&R_relation}), the average over an orbit is $\frac{1}{2}(\maj(\sigma^{-1})+\maj(\R(\sigma)^{-1})) = \frac{1}{2}(\maj(\sigma^{-1})+\maj(\C(\sigma^{-1}))) = \frac{n(n-1)}{4}$, where the last equality is obtained since the major index is homomesic for the complement (proven in Proposition \ref{prop:C_4}).

  To see that it is not homomesic under the complement, we exhibit an orbit with an average that differs from the global average of $\frac{n(n-1)}{4}$. Consider the orbit $(\sigma, \C(\sigma))$ where $\sigma = 132$ and $\C(\sigma) = 312$. The inverse major index of $\sigma$ is 2 and the inverse major index of $\C(\sigma)$ is 2, so the average over the orbit is $2$, not $\frac{3(3-1)}{4} = \frac{3}{2}$.\end{proof}

Lastly, we examine, the load of a permutation, which is defined in \cite{LascouxSchutzenberger} for finite words in a totally ordered alphabet, but for a permutation $\sigma$ it reduces to the major index for $\R(\sigma^{-1})$.

\begin{prop}[Statistic 304]\label{prop:R_304}
  The load of a permutation is $\frac{n(n-1)}{4}$-mesic under the reverse, but not homomesic for the complement.
\end{prop}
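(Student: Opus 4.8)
The plan is to reduce the load to the major index using the given description of the statistic together with the relations of Lemma \ref{lem:C&R_relation}, and then to invoke the complement-homomesy of the major index already established in Proposition \ref{prop:C_4}. Write $\mathrm{load}(\sigma) = \maj(\R(\sinv))$ for the load, as stated. Since the reverse is an involution, each orbit is a pair $\{\sigma, \R(\sigma)\}$, so it suffices to show that $\mathrm{load}(\sigma) + \mathrm{load}(\R(\sigma)) = \frac{n(n-1)}{2}$, whence the orbit average is $\frac{n(n-1)}{4}$.

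The main work is rewriting the second summand. By definition $\mathrm{load}(\R(\sigma)) = \maj(\R(\R(\sigma)^{-1}))$, and Lemma \ref{lem:C&R_relation}(1) gives $\R(\sigma)^{-1} = \C(\sinv)$, so $\mathrm{load}(\R(\sigma)) = \maj(\R(\C(\sinv)))$. Using that $\R$ and $\C$ commute — immediate from the definitions, since $\R(\C(\tau))_i = n+1-\tau_{n+1-i} = \C(\R(\tau))_i$ — this equals $\maj(\C(\R(\sinv)))$. Setting $\rho = \R(\sinv)$, the two orbit values are precisely $\mathrm{load}(\sigma) = \maj(\rho)$ and $\mathrm{load}(\R(\sigma)) = \maj(\C(\rho))$. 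Now the statement collapses onto Proposition \ref{prop:C_4}, where it is shown that $\maj(\rho) + \maj(\C(\rho)) = \sum_{i=1}^{n-1} i = \frac{n(n-1)}{2}$ for every permutation $\rho$; hence $\mathrm{load}(\sigma) + \mathrm{load}(\R(\sigma)) = \frac{n(n-1)}{2}$ and the load is $\frac{n(n-1)}{4}$-mesic under the reverse. I expect the only real subtlety to be the bookkeeping in this step — correctly tracking which of $\R$, $\C$, and inversion appears where, and applying the commutation of $\R$ and $\C$ to the right argument — after which no computation remains.

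For the second assertion I would exhibit one complement-orbit whose average differs from the global value $\frac{n(n-1)}{4}$, which suffices by Remark \ref{global_avg}. Reusing the orbit $\{\sigma, \C(\sigma)\}$ with $\sigma = 132465$ and $\C(\sigma) = 645312$ (so $n=6$), one has $\R(\sinv) = 564231$ with major index $10$, and $\R(\C(\sigma)^{-1}) = 132465$ with major index $7$; thus the orbit average is $\frac{17}{2} \neq \frac{15}{2} = \frac{6\cdot 5}{4}$, so the load is not homomesic under the complement.
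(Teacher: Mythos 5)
Your proof is correct, and at its core it rests on the same identity as the paper's, but you organize the reduction differently and more directly. The paper sets $\beta^{-1}=\R(\sinv)$, uses composition identities with the inverse map $\mathcal{I}$ to show $(\R(\beta))^{-1}=\R(\R(\sigma)^{-1})$, and thereby identifies the load-sum over $\{\sigma,\R(\sigma)\}$ with the inverse-major-index-sum over $\{\beta,\R(\beta)\}$, finishing by citing Proposition~\ref{prop:R_305} (which in turn rests on Proposition~\ref{prop:C_4}). You skip the intermediate statistic entirely: using $\R(\sigma)^{-1}=\C(\sinv)$ from Lemma~\ref{lem:C&R_relation} and the commutation $\R\circ\C=\C\circ\R$ (which you verify entrywise, and which the paper never states explicitly), you identify the two orbit values as $\maj(\rho)$ and $\maj(\C(\rho))$ for $\rho=\R(\sinv)$, and then apply the pointwise identity $\maj(\rho)+\maj(\C(\rho))=\frac{n(n-1)}{2}$ from the proof of Proposition~\ref{prop:C_4}. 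In fact the two arguments compute the same pair of numbers --- in the paper's notation $\beta=\C(\sigma)$ and its second summand also equals $\maj(\C(\rho))$ --- so nothing essentially new is happening, but your version is flatter: one fewer layer of indirection, at the cost of proving the (easy) commutativity of reverse and complement. Your non-homomesy counterexample for the complement is the same orbit as the paper's, and your computed load values $10$ and $7$ agree with the paper's, giving the orbit average $\frac{17}{2}\neq\frac{15}{2}$.
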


\begin{proof} The load of $\sigma$ is given by taking the major index of $\R(\sigma^{-1})$. So the average over an orbit under $\R$ sums the major index of $\R(\sigma^{-1})$ and the major index of $\R(\R(\sigma)^{-1})$ and divides by 2.

  Let $\beta^{-1} = \R(\sigma^{-1})$. We will show that the average above is the same as taking the average of the inverse major index of $\beta$ and $\R(\beta)$. As we know the inverse major index is homomesic for the reverse map, this will prove that the load of a permutation is also homomesic for the reverse map and attains the same average over each orbit as is attained by the inverse major index, which is $\frac{n(n-1)}{4}$.

  Note that $(\mathcal{I} \circ \R \circ \mathcal{I})(\beta^{-1}) = (\R(\beta))^{-1}$ and $(\R \circ \mathcal{I} \circ \R \circ \mathcal{I} \circ \R)(\R(\sigma^{-1})) = \R(\R(\sigma)^{-1})$. Using Lemma \ref{lem:C&R_relation}, this shows that if $\beta^{-1} = \R(\sigma^{-1})$, then $(\R(\beta))^{-1} = \R(\R(\sigma)^{-1})$.

  So summing the major index of $\R(\sigma^{-1})$ and the major index of $\R(\R(\sigma)^{-1})$ is equal to summing the major index of $\beta^{-1}$ and the major index of $(\R(\beta))^{-1}$, which is the same as summing the inverse major index of $\beta$ and $\R(\beta)$.

  To see that it is not homomesic under the complement, we exhibit an orbit with an average that differs from the global average of $\frac{n(n-1)}{4}$. Consider the orbit $(\sigma, \C(\sigma))$ where $\sigma = 132$ and $\C(\sigma) = 312$. The load of $\sigma$ is 2 and the load of $\C(\sigma)$ is 2, so the average over the orbit is $2$, not $\frac{3(3-1)}{4} = \frac{3}{2}$.
\end{proof}

This concludes the proof of Theorem \ref{onlyrev}, showing the statistics listed there are homomesic for the reverse map but not for the complement map.
Thus we have proven Theorems \ref{thmboth}, \ref{onlycomp}, \ref{onlyrev}, illustrating homomesy for the reverse map with 27 of the statistics found in FindStat and for the complement map with 35.

\section{Foata bijection and variations}
\label{sec:foata}
This section examines homomesies of the following statistic under the bijection of Foata \cite{Foata} (also appearing in Foata and Sch\"utzenberger~\cite{FoataSchutzenberger1978}) and related maps.  Recall the inversion number and major index from Definition~\ref{def:basic_stats}.

\begin{definition}(Statistic 1377) $\maj-\inv$ denotes the statistic equal to the difference of the major index and the inversion number: $(\maj-\inv)(\sigma)=\maj(\sigma)-\inv(\sigma)$.
\end{definition}

We turn our attention to defining the maps under which this statistic is homomesic, starting with the Foata bijection.

\begin{definition}
  The \textbf{Foata bijection} $\F$ (Map 67) is defined recursively on $n$:

  Given a permutation $\sigma=\sigma_1 \sigma_2 \ldots \sigma_n$, compute the image inductively by starting with $\F(\sigma_1) = \sigma_1$.
  At the $i$-th step, if $\F(\sigma_1 \sigma_2 \ldots \sigma_i) = \tau_1 \tau_2 \ldots \tau_i$, define $\F(\sigma_1 \sigma_2 \ldots \sigma_i \sigma_{i+1})$ by placing $\sigma_{i+1}$ at the end of $\tau_1 \tau_2 \ldots \tau_i$ and breaking into blocks as follows:
  \begin{itemize}
    \item Place a vertical line to the left of $\tau_1$.
    \item If $\sigma_{i+1} \geq \tau_i$, place a vertical line to the right of each $\tau_k$ for which $\sigma_{i+1} > \tau_k$.
    \item If $\sigma_{i+1} < \tau_i$, place a vertical line to the right of each $\tau_k$ for which $\sigma_{i+1} < \tau_k$.
  \end{itemize}
  Now, within each block between vertical lines, cyclically shift the entries one place to the right.
\end{definition}

\begin{example}
  To compute $\F(31542)$, the sequence of words is:
  \begin{align*}
    3         & \to 3      \\
    |3|1      & \to 31     \\
    |3|1|5    & \to 315    \\
    |315|4    & \to 5314   \\
    |5|3|14|2 & \to 53412.
  \end{align*}
  In total, this gives $\F(31542) = 53412$.
\end{example}

The Foata bijection also behaves nicely with respect to major index and inversions; see the reference \cite{Foata} for the proof.
\begin{lem}[\protect{\cite[Theorem 4.3]{Foata}}]
  \label{lem:Foata}
  The Foata bijection sends the major index to the number of inversions. That is, $\maj(\sigma)=\inv(\F(\sigma))$.
\end{lem}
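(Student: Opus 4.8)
The plan is to prove this classical identity (due to Foata and Sch\"utzenberger) by induction on $n=|\sigma|$, tracking how both statistics change as one reads $\sigma$ from left to right and builds $\F(\sigma)$ one letter at a time. Writing $w=\sigma_1\cdots\sigma_i$, $a=\sigma_{i+1}$, $\tau=\F(w)=\tau_1\cdots\tau_i$, and $\tau'=\F(wa)$, the goal of the inductive step is to show $\inv(\tau')-\inv(\tau)=\maj(wa)-\maj(w)$, so that the running totals agree throughout. The base case $n=1$ is immediate, since both statistics vanish on a one-letter word. Here the prefixes $w$ are words with distinct entries, on which $\maj$ and $\inv$ are defined exactly as for permutations.

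First I would record the change in major index. Appending $a$ to $w$ introduces only one new adjacent pair, at position $i$, so $\maj(wa)-\maj(w)=i$ if $\sigma_i>a$ and $0$ otherwise. The point that makes the two sides line up is a preliminary observation: the Foata bijection preserves the last letter, i.e. $\tau_i=\sigma_i$. This holds because, in either branch of the construction, the bar placed just to the left of the appended letter $a$ (since $a\geq\tau_i$ forces a bar after $\tau_i$ in the first case, and $a<\tau_i$ does so in the second) isolates $a$ in the final block, so the cyclic shifts leave it at the end; induction then gives $\tau_i=\sigma_i$. Consequently the dichotomy $a\geq\tau_i$ versus $a<\tau_i$ that defines the blocks is exactly the ascent/descent dichotomy at position $i$ of $\sigma$.

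Next I would compute $\inv(\tau')-\inv(\tau)$. Appending $a$ to $\tau$ (before shifting) creates exactly $\#\{k:\tau_k>a\}$ new inversions. Because each block occupies a contiguous set of positions, an entry lying outside a given block lies entirely to one side of it, so its inversions with that block depend only on the block's multiset, not its internal order; hence the cyclic shifts alter $\inv$ only through a sum of within-block contributions. A single right cyclic shift sends a block $(b_1,\ldots,b_m)$ to $(b_m,b_1,\ldots,b_{m-1})$, changing its inversion count by $\#\{j<m:b_m>b_j\}-\#\{j<m:b_j>b_m\}$. The block rules make $b_m$ extremal relative to $a$ in every block other than the singleton $\{a\}$: in the ascent case $b_m\leq a<b_j$ for all $j<m$, giving change $-(m-1)$; in the descent case $b_1,\ldots,b_{m-1}\leq a<b_m$, giving change $+(m-1)$.

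Finally I would add the appended-letter and shift contributions in each case. In the ascent case the appended letter contributes $\#\{k:\tau_k>a\}=\sum_B(m_B-1)$ inversions (one interior large entry per block), which cancels the total shift contribution $-\sum_B(m_B-1)$, so $\inv(\tau')-\inv(\tau)=0$. In the descent case the appended letter contributes one inversion per non-$\{a\}$ block, say $r$ of them, while the shifts contribute $\sum_B(m_B-1)=i-r$; these sum to $i$, the number of entries in those blocks, so $\inv(\tau')-\inv(\tau)=i$. In both cases the result matches $\maj(wa)-\maj(w)$, which closes the induction. The main obstacle is the inversion bookkeeping for the cyclic shifts: organizing the block structure and checking that the newly appended inversions and the shift-induced inversions combine to give precisely $0$ or $i$. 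The enabling step is the last-letter-preservation lemma, which is what aligns the Foata block dichotomy with the descent structure that drives the major index.
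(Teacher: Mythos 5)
Your proof is correct, but it is genuinely different from what the paper does: the paper does not prove Lemma~\ref{lem:Foata} at all, it simply cites Foata's original work (``see the reference for the proof''). You instead give a complete, self-contained inductive argument, which is essentially the classical Foata--Sch\"utzenberger proof: the key observation that $\F$ preserves the last letter is exactly what identifies the block dichotomy $a \geq \tau_i$ versus $a < \tau_i$ with the ascent/descent dichotomy of $\sigma$ at position $i$, and your bookkeeping is sound --- appending $a$ creates $\#\{k : \tau_k > a\}$ inversions, the cyclic shifts change only within-block inversions (since each block is contiguous, so inversions with outside entries depend only on the block's underlying set), and in each block the shifted-in last entry is extremal relative to $a$, giving a per-block change of $-(m_B-1)$ in the ascent case and $+(m_B-1)$ in the descent case, so the increments of $\inv(\F(\cdot))$ and $\maj(\cdot)$ agree ($0$ or $i$, respectively). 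One small wording slip: in the ascent case you say the appended letter contributes ``one interior large entry per block,'' but a block may contain several entries larger than $a$; what is true, and what your formula $\#\{k:\tau_k>a\} = \sum_B (m_B-1)$ correctly uses, is that the entries larger than $a$ are precisely the non-final entries of the blocks. What each approach buys: the paper's citation keeps the exposition short since the result is classical and tangential to its homomesy results, whereas your argument makes the statement self-contained and makes visible \emph{why} the transformation trades $\maj$ for $\inv$ one letter at a time.
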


Other maps relevant to Theorem~\ref{thm:foata} include the Lehmer code to major code bijection and its inverse.  The major code is defined below.  See Definition~\ref{def:lehmercode} for the definition of the Lehmer code.

\begin{definition}\label{def:foata}
  Given $\sigma \in S_n$, the  \textbf{major code} is the sequence $M(\sigma)=(M(\sigma)_1, M(\sigma)_2, \ldots, M(\sigma)_n)$ where $M(\sigma)_i$ is defined as follows. Let $\operatorname{del}_i(\sigma)$ be the permutation obtained by removing all $\sigma_j < i$ from $\sigma$ and then normalizing. $M(\sigma)_i$ is  given by
  $M(\sigma)_i = \operatorname{maj}(\operatorname{del}_i(\sigma)) - \operatorname{maj}(\operatorname{del}_{i-1}(\sigma))$.
\end{definition}

\begin{example}
  For the permutation $31542$, $\operatorname{del}_1(31542)=31542$. To obtain $\operatorname{del}_2(31542)$, we first remove $1$, obtaining $3542$. Then we normalize so that the values are in the interval $[1,4]$, obtaining $\operatorname{del}_2(31542)=2431$. Similarly, $\operatorname{del}_3(31542)=132$, $\operatorname{del}_4(31542)=21$, $\operatorname{del}_5(31542)=1$. Thus, $31542$
  has major code $(3,3,1,1)$, since:
  \[\maj(31542)=8, \quad \maj(2431)=5, \quad \maj(132)=2, \quad \maj(21)=1, \quad \maj(1) = 0.\]
\end{example}

One can recover the permutation from the major code by inverting the process in the Definition \ref{def:foata}.
The sum of the major code of $\sigma$ equals the major index of $\sigma$, that is, $\sum_i M(\sigma)_i=\maj(\sigma)$. Analogously, the sum of the Lehmer code of $\sigma$ equals the inversion number of $\sigma$, that is, $\sum_i L(\sigma)_i=\inv(\sigma)$ (see Proposition~\ref{prop:LC_num_inv_is_sum}).

\begin{definition}
  The \textbf{Lehmer-code-to-major-code} map $\M$ (Map 62) sends a permutation to the unique permutation such that the Lehmer code is sent to the major code. The \textbf{major-index-to-inversion-number} map (Map 73) is its inverse.
\end{definition}

The following lemma is clear from construction.

\begin{lem}
  \label{lem:invtomaj}
  The Lehmer-code-to-major-code map sends the number of inversions to the major index. That is, $\inv(\sigma)=\maj(\M(\sigma))$.
\end{lem}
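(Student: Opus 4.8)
The plan is to chain together three facts, two of which are already recorded in the excerpt and one of which is simply the defining property of the map $\M$. First I would unwind the definition of the Lehmer-code-to-major-code map: by construction $\M(\sigma)$ is the unique permutation whose major code coincides, entry by entry, with the Lehmer code of $\sigma$. In symbols, $M(\M(\sigma))_i = L(\sigma)_i$ for every $i$. This is the crux of the argument, and it is immediate from how $\M$ is defined, so no real computation is needed here.

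Next I would invoke the two summation identities. Proposition~\ref{prop:LC_num_inv_is_sum} gives that the entries of the Lehmer code sum to the inversion number, $\sum_i L(\sigma)_i = \inv(\sigma)$. The analogous statement for the major code, namely $\sum_i M(\tau)_i = \maj(\tau)$, is stated just before the definition of $\M$ in the excerpt. Both are facts I am permitted to assume.

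Finally I would combine these. Starting from $\inv(\sigma)$, rewrite it as the sum of the Lehmer code entries of $\sigma$; replace each such entry by the corresponding major code entry of $\M(\sigma)$ using the defining property of $\M$; then recognize the resulting sum as the major index of $\M(\sigma)$. That is,
\[
\inv(\sigma) = \sum_{i} L(\sigma)_i = \sum_{i} M(\M(\sigma))_i = \maj(\M(\sigma)),
\]
which is exactly the claim. There is no genuine obstacle in this proof: the only thing to be careful about is the bookkeeping that $\M$ preserves the code sequence entry by entry, so that the sum of the Lehmer code of $\sigma$ and the sum of the major code of $\M(\sigma)$ are literally the same number, after which the two summation identities translate that common value into $\inv(\sigma)$ on one side and $\maj(\M(\sigma))$ on the other.
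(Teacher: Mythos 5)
Your proof is correct and takes essentially the same approach as the paper: the paper does not even write out a proof, declaring the lemma ``clear from construction'' on precisely the grounds you identify, namely that $\M$ carries the Lehmer code of $\sigma$ entry by entry to the major code of $\M(\sigma)$, and that these codes sum to $\inv(\sigma)$ and $\maj(\M(\sigma))$ respectively. You have simply made explicit the chain $\inv(\sigma)=\sum_i L(\sigma)_i=\sum_i M(\M(\sigma))_i=\maj(\M(\sigma))$ that the paper leaves implicit.
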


\begin{thm}
  \label{thm:foata}
  The statistic $\maj-\inv$ \textnormal{(Stat 1377)} is 0-mesic with respect to each of the following maps:
  \begin{itemize} \rm
    \item \textnormal{Map} $62$: The Lehmer-code-to-major-code bijection,
    \item \textnormal{Map} $73$: The major-code-to-Lehmer-code bijection,
    \item \textnormal{Map} $67$: The Foata bijection, and
    \item \textnormal{Map} $175$: The inverse Foata bijection.
  \end{itemize}
\end{thm}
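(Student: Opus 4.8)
The plan is to prove all four cases by reducing them to a single cyclic-cancellation argument over each orbit, using the two transport lemmas (Lemmas~\ref{lem:Foata} and \ref{lem:invtomaj}) together with the inverse-map lemma (Lemma~\ref{lem:inverse}). Since homomesy of a map automatically transfers to its inverse, it suffices to treat Map $67$ (the Foata bijection $\F$) and Map $62$ (the Lehmer-code-to-major-code map $\M$); Maps $175$ and $73$ then follow immediately from Lemma~\ref{lem:inverse} applied to these two results.

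For the Foata bijection, I would fix an orbit $\mathcal{O} = \{\sigma, \F(\sigma), \ldots, \F^{m-1}(\sigma)\}$ of size $m$, so that $\F^m(\sigma) = \sigma$. The crucial input is Lemma~\ref{lem:Foata}, namely $\maj(\tau) = \inv(\F(\tau))$, which lets me rewrite the orbit sum of major indices as an orbit sum of inversion numbers shifted by one step:
\[
\sum_{i=0}^{m-1} \maj(\F^i(\sigma)) = \sum_{i=0}^{m-1} \inv(\F^{i+1}(\sigma)) = \sum_{i=0}^{m-1} \inv(\F^i(\sigma)),
\]
where the last equality holds because the indices are read cyclically modulo $m$ (the term $i=m$ coincides with $i=0$, since $\F^m(\sigma)=\sigma$). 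Subtracting, the orbit sum of $\maj - \inv$ vanishes, so the orbit average is $0$, giving $0$-mesy for $\F$.

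The identical cyclic cancellation handles Map $62$, now using Lemma~\ref{lem:invtomaj}, $\inv(\tau) = \maj(\M(\tau))$: over an orbit of $\M$ the sum of $\inv$ equals the sum of $\maj$ after shifting by one application of $\M$, so again $\sum_{\tau \in \mathcal{O}} (\maj - \inv)(\tau) = 0$. I do not anticipate a genuine obstacle; the one point to state carefully is that this is not an ordinary telescoping sum but a \emph{cyclic} one, relying on the orbit being a finite cycle so that the index shift $i \mapsto i+1 \pmod{m}$ is a bijection of the orbit and the boundary terms match up. Once this is noted, the two transport lemmas do all the work, and Lemma~\ref{lem:inverse} disposes of Maps $73$ and $175$ with no further computation.
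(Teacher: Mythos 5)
Your proposal is correct and matches the paper's own proof: both rely on Lemma~\ref{lem:Foata} and Lemma~\ref{lem:invtomaj} to convert the orbit sum of $\maj$ into the orbit sum of $\inv$ (the paper writes the re-indexing as $\sum_{\sigma}\maj(\M(\sigma))=\sum_{\sigma}\maj(\sigma)$, which is exactly your cyclic shift), and both dispose of Maps $73$ and $175$ by passing to inverses. Your explicit remark that the cancellation is cyclic rather than telescoping is a nice clarification, but the argument is the same.
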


\begin{proof}
  By Lemma~\ref{lem:invtomaj}, $\inv(\sigma)=\maj(\M(\sigma))$, that is, the Lehmer-code-to-major-code bijection $\M$ sends inversion number to major index. Therefore, the sum over an orbit of $\M$ is:
  \[\sum_{\sigma} (\maj-\inv)(\sigma)=\sum_{\sigma} \maj(\sigma)- \sum_{\sigma}\inv(\sigma) = \sum_{\sigma} \maj(\sigma)- \sum_{\sigma}\maj(\M(\sigma)) = \sum_{\sigma} \maj(\sigma)- \sum_{\sigma}\maj(\sigma)=0.\]

  By Lemma~\ref{lem:Foata}, $\maj(\sigma)=\inv(\F(\sigma))$, that is, the Foata bijection $\F$ sends major index to inversion number. Therefore, the sum over an orbit of $\F$ is:
  \[\sum_{\sigma} (\maj-\inv)(\sigma)=\sum_{\sigma} \maj(\sigma)- \sum_{\sigma}\inv(\sigma) = \sum_{\sigma} \inv(\F(\sigma))- \sum_{\sigma}\inv(\sigma) = \sum_{\sigma} \inv(\sigma)- \sum_{\sigma}\inv(\sigma)=0.\]

  Thus, $\maj-\inv$ is $0$-mesic with respect to $\M$ (Map 62) and $\F$ (Map 67), and their inverses (Map 73 and Map 175).
\end{proof}

\section{Kreweras and inverse Kreweras complements.}
\label{sec:krew}
The Kreweras complement was introduced in 1972  as a bijection on noncrossing partitions \cite{kreweras1972partitions}.
In this section, we first describe the Kreweras complement map. We then state Theorem~\ref{Thm:Kreweras} which lists
the statistics we show are homomesic, three from the FindStat database and one not in FindStat at the time of the investigation. Before proving this theorem we describe in detail the orbit structure of the Kreweras complement in Subsection~\ref{sec:KrewOrb}. The homomesies are then proved in Subsection~\ref{sec:KrewHom}.

Given a noncrossing partition on $n$ elements, the Kreweras complement $\K$ can be understood geometrically as rotating an associated noncrossing matching on $2n$ elements and finding the resulting noncrossing partition~
\cite{gobet2016noncrossing}.
The action of the Kreweras complement may be extended to all permutations as follows.
\begin{definition}\label{def:krew}
  Let $\sigma$ be a permutation of $n$ elements.  The \textbf{Kreweras complement} of $\sigma$ and its inverse (Maps 88 and 89 in the FindStat database) are defined as $$\K(\sigma)=c\circ\sigma^{-1}\qquad\mbox{ and }\qquad \K^{-1}(\sigma)=\sigma^{-1}\circ c,$$ where $c$ is the long cycle $234\ldots 1$.
\end{definition}
Here composition is understood from right to left.
Note that in the literature, the definitions of $\K$ and $\K^{-1}$ are often swapped as compared to the above. But since all our results describe orbit sizes and homomesy, which are both invariant under taking the inverse map (see Lemma~\ref{lem:inverse}), this convention choice is immaterial. We chose the above convention to match the code for these maps in FindStat.

\begin{example}
  Consider $\sigma=43152.$  By definition, $\K(43152)=23451\circ 35214=41325$.
\end{example}

In \cite{EinsteinFGJMPR16}, the number of disjoint sets in a noncrossing partition of $n$ elements is shown to be $\frac{n+1}{2}$-mesic under a large class of operations which can be realized as compositions of toggles, including the Kreweras complement.  In this section, we study the generalized action of the Kreweras complement on permutations, proving the following homomesy results.

\begin{thm}
  \label{Thm:Kreweras}
  The Kreweras complement and its inverse exhibit homomesy for the following statistics
  \begin{itemize}
    \item \hyperref[KHom exc]{$\Stat$ $155$}: The number of exceedances of a permutation $(${\small average: $\frac{n-1}{2}$}$)$
    \item \hyperref[KHom exc]{$\Stat$ $702$}: The number of weak deficiencies of a permutation $(${\small average: $\frac{n+1}{2}$}$)$
    \item \hyperref[Khom lastentry]{$\Stat$ $740$}: The last entry of the permutation $(${\small average: $\frac{n+1}{2}$}$)$
    \item \hyperref[Khom lastentry]{$\frac{n}{2}$-th element}: When $n$ is even, the $\frac{n}{2}$-th element of the permutation $(${\small average: $\frac{n+1}{2}$}$)$
  \end{itemize}
\end{thm}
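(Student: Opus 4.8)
The plan is to reduce everything to the map $\K$ itself: since $\K$ and $\K^{-1}$ share the same orbits, Lemma~\ref{lem:inverse} lets me prove each homomesy for $\K$ only. The central structural fact I would record first is that $\K^2$ is conjugation by the long cycle $c$, namely $\K^2(\sigma) = c\circ(\K\sigma)^{-1} = c\circ(c\,\sigma^{-1})^{-1} = c\,\sigma\,c^{-1}$. Iterating this yields the explicit formulas $\K^{2k}(\sigma)_j = c^k\big(\sigma_{(j-k)\bmod n}\big)$ and $\K^{2k+1}(\sigma)_j = c^{k+1}\big(\sigma^{-1}_{(j-k)\bmod n}\big)$, which drive the entry computations. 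Here subscripts are read cyclically in $\{1,\dots,n\}$ and $c$ acts as $x\mapsto x+1 \bmod n$.

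For the number of exceedances I would prove the pointwise identity $\exc(\sigma)+\exc(\K\sigma)=n-1$. Writing $\K(\sigma)_i = c(\sigma^{-1}(i))$ and reindexing by $j=\sigma^{-1}(i)$, position $i$ is an exceedance of $\K\sigma$ exactly when $j<n$ and $\sigma_j\le j$; counting these gives $\exc(\K\sigma) = \wdec(\sigma)-1 = (n-1)-\exc(\sigma)$. Since $\K$ turns $\exc$ into the involution $x\mapsto (n-1)-x$, the exceedance values along any orbit alternate between some value $a$ and $(n-1)-a$. On an even orbit these two values occur equally often, and on an odd orbit consistency forces $a=(n-1)/2$; either way the orbit-average is $\tfrac{n-1}{2}$, so $\exc$ is $\tfrac{n-1}{2}$-mesic. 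The number of weak deficiencies equals $n-\exc$, hence is $\tfrac{n+1}{2}$-mesic by Lemma~\ref{lem:sum_diff_homomesies}.

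The last entry and, for $n$ even, the $\tfrac{n}{2}$-th entry are the substantive cases. For a position $j$ I would sum the $j$-th entry over an orbit by splitting it into the ``even'' contributions $c^k(\sigma_{(j-k)\bmod n})$ and the ``odd'' contributions $c^{k+1}(\sigma^{-1}_{(j-k)\bmod n})$ coming from the two formulas above. Reducing modulo $n$, an even term at position $p$ contributes a value congruent to $\sigma_p-p+j$, while the odd term at the same position contributes one congruent to $(p-\sigma_p)+j+1$; a short check shows such a matched pair sums to exactly $n+1$ precisely when $n\mid 2j$, that is, when $j=n$ or $j=\tfrac{n}{2}$. This is exactly why only these two positions appear in the theorem, and summing the paired values over the orbit gives orbit-average $\tfrac{n+1}{2}$.

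The main obstacle is making this pairing rigorous for orbits of every size, not just the generic orbits of size $2n$. Here I would exploit the conjugation structure: if the conjugation period is $m$, then the displacement $t_p=(\sigma_p-p)\bmod n$ is invariant under shifting $p$ by $m$, hence is $d$-periodic for $d=\gcd(m,n)$, and $\sigma$ descends to a bijection of $\mathbb{Z}/d$. Consequently the block $B$ of positions visited by the even iterates and the set $\sigma^{-1}(B)$ visited by the odd iterates equidistribute identically modulo $d$, which supplies the displacement-preserving bijection needed to pair terms so that each pair sums to $n+1$. For an odd orbit the even and odd iterates each already traverse the entire orbit, so the two partial sums coincide and the same pairing identity delivers the average; and for the $\tfrac{n}{2}$-th entry the hypothesis that $n$ is even lets me invoke Theorem~\ref{Prop:K even orbs} to know every orbit has even size, so only the even-orbit case is needed there.
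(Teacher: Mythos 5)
Your proposal is correct, and its skeleton largely matches the paper's: the same explicit formulas for $\K^{j}(\sigma)_i$ (the paper's Lemma~\ref{ithentry}), the same alternation argument for exceedances on even and odd orbits, and the same pairing identity in which a matched even/odd pair of entries sums to $n+1$ precisely when $n$ divides $2j$. Two genuine differences are worth recording. First, your proof of the key identity $\exc(\K(\sigma)) = n-1-\exc(\sigma)$ is cleaner than the paper's: you reindex by $j=\sigma^{-1}(i)$ and observe that exceedances of $\K(\sigma)$ are exactly the weak deficiencies of $\sigma$ other than position $n$, whereas the paper (Lemmas~\ref{exc_sinv} and~\ref{exc_K}) routes through $\exc(\sinv)=n-\exc(\sigma)-\fp$ and a case analysis on fixed points. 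Second, and in the other direction, your handling of the entry statistics on orbits of arbitrary size is much more laborious than necessary. The paper's device is a one-line divisibility observation: the orbit size $l$ divides $2n$, so $\sum_{j=0}^{2n-1}\K^{j}(\sigma)_i = \frac{2n}{l}\sum_{j=0}^{l-1}\K^{j}(\sigma)_i$; hence the orbit average equals the full-period average, and the $n+1$ pairing only ever needs to be carried out over the full period $j=0,\dots,2n-1$, where it is immediate. Your alternative — descending $\sigma$ to a bijection of $\mathbb{Z}/d$ with $d=\gcd(m,n)$, noting the displacement $(\sigma_p-p)\bmod n$ is $d$-periodic, and proving the even-iterate block $B$ and $\sigma^{-1}(B)$ equidistribute modulo $d$ — is correct as far as I can check it (the equidistribution does hold, since $\sigma$ permutes residue classes mod $d$ and $B$ meets each class equally often), but it is the hard way to the same conclusion, and it is the one place where your sketch stops short of a complete argument. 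One small repair: the fact that there are no odd-size orbits when $n$ is even is not Theorem~\ref{Prop:K even orbs} (a counting formula for even orbit sizes); it is Corollary~\ref{Korb odd divisor even n}, which the paper deduces from the exceedance homomesy — exactly the integrality argument already available at that point in your write-up, since a constant exceedance value $\frac{n-1}{2}$ is impossible for even $n$.
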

As we prove in Corollary \ref{KC: no entry but}, the last two homomesies are the only $i$-th entry homomesies possible for the Kreweras complement.

\subsection{Kreweras complement orbit structure}
\label{sec:KrewOrb}
In this subsection, we examine the action of the Kreweras complement and its orbit structure, finding the order of the map in Theorem  ~\ref{thm:K_order}, and completely characterizing the distribution of orbits in Theorems ~\ref{thm:orbit_count_odd_sizes}, and \ref{Prop:K even orbs}.  We give explicit generators for orbits of certain sizes in Theorem \ref{thm:Orbit_generators_K}.

The following lemma will be used to prove several results in this section.
\begin{lem}\label{ithentry}
  Let $\sigma = \sigma_1\sigma_2 \ldots \sigma_n$. Then for all integer values of $j$, the $i$-th entry of $\K^j(\sigma)$ is given by:
  \[\K^{j}(\sigma)_i =\begin{cases}
      \sigma_{i - \frac{j}{2}} + \frac{j}{2} \pmod{n}            & \textnormal{if } j \textnormal{ is an even integer}, \\
      \sigma^{-1}_{i - \frac{j-1}{2}} + \frac{j + 1}{2} \pmod{n} & \textnormal{if } j \textnormal{ is an odd integer}.
    \end{cases}\]
  Note, the operation in the subscripts is also modulo $n$, and in both cases $n$ is used for the $0$-th equivalence class representative.
\end{lem}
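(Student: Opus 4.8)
The plan is to prove the formula by induction on $j$, bootstrapping from the single-step action of $\K$ on individual entries. First I would record the base relation directly from the definition $\K(\sigma)=c\circ\sigma^{-1}$ with $c(k)=k+1 \pmod{n}$: since composition is read right-to-left, $\K(\sigma)_i = c(\sigma^{-1}_i) = \sigma^{-1}_i + 1 \pmod{n}$, which one checks against the worked example $\K(43152)=41325$. Dually, from $\K^{-1}(\sigma)=\sigma^{-1}\circ c$ one obtains $\K^{-1}(\sigma)_i = \sigma^{-1}_{i+1} \pmod{n}$. These are exactly the $j=\pm 1$ instances of the claimed formula, and $j=0$ is the trivial identity $\K^0(\sigma)_i=\sigma_i$ (the even branch with all shifts zero), so the base cases hold.

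The computational heart of the induction is a single inversion identity: if a permutation $\rho$ has the shape $\rho_i = \pi_{i-a}+b \pmod{n}$ for some permutation $\pi$ and integer constants $a,b$, then $\rho^{-1}_i = \pi^{-1}_{i-b}+a \pmod{n}$. This follows by unwinding the equation $\rho_k=i$: it gives $\pi_{k-a}=i-b$, hence $k-a=\pi^{-1}_{i-b}$, i.e. $k=\pi^{-1}_{i-b}+a$. Both branches of the claimed formula have this shape, namely the even branch with $(\pi,a,b)=(\sigma,\tfrac{j}{2},\tfrac{j}{2})$ and the odd branch with $(\pi,a,b)=(\sigma^{-1},\tfrac{j-1}{2},\tfrac{j+1}{2})$, so this one identity drives the entire argument.

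For the inductive step I would compute $\K^{j+1}(\sigma)_i = \K(\K^j(\sigma))_i = (\K^j(\sigma))^{-1}_i + 1 \pmod{n}$ using the base relation, and then apply the inversion identity to $\rho=\K^j(\sigma)$. When $j$ is even this turns the $\sigma$-branch into the $\sigma^{-1}$-branch for the odd value $j+1$, raising the additive constant by one; when $j$ is odd the inversion identity replaces $(\sigma^{-1})^{-1}$ by $\sigma$ and produces the even branch for $j+1$. The bookkeeping $\tfrac{j}{2}=\tfrac{(j+1)-1}{2}$ and $\tfrac{j}{2}+1=\tfrac{(j+1)+1}{2}$, together with its odd-to-even analogue, confirms that the shifted indices and constants match the claimed formula. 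Running the identical computation with the relation $\K^{-1}(\sigma)_i=\sigma^{-1}_{i+1}$ handles the step $j\mapsto j-1$, so an induction on $|j|$ covers all integers $j$ in both directions.

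I expect the main obstacle to be purely organizational: keeping the modular index arithmetic consistent under the stated convention (using $n$ as the representative of the zero class) while the roles of $\sigma$ and $\sigma^{-1}$ alternate with each application of $\K$. This alternation is precisely what forces the two-case statement, and it is where half-integer and off-by-one errors are easiest to introduce; isolating the inversion identity as a standalone lemma-within-the-proof is what keeps that error-prone bookkeeping contained to a single clean step.
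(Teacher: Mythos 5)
Your proposal is correct: the base relations $\K(\sigma)_i=\sigma^{-1}_i+1$ and $\K^{-1}(\sigma)_i=\sigma^{-1}_{i+1}$, the inversion identity for maps of shape $\rho_i=\pi_{i-a}+b$, and both directions of the induction all check out (I verified the even-to-odd and odd-to-even steps, including the downward step for negative $j$). However, it is organized differently from the paper's proof. The paper does not induct on entries at all; it first records the closed-form composition identity
\[
\K^j(\sigma)=
\begin{cases}
c^{\frac{j}{2}}\circ\sigma\circ c^{-\frac{j}{2}} & \text{if } j \text{ is even},\\
c^{\frac{j+1}{2}}\circ\sigma^{-1}\circ c^{-\left(\frac{j-1}{2}\right)} & \text{if } j \text{ is odd},
\end{cases}
\]
i.e.\ that $\K^2$ is conjugation by the long cycle $c$, and then reads the entry formula off this expression by interpreting pre- and post-composition with powers of $c$ as cyclic rotation of positions and addition of a constant to values. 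Your route is more elementary and more fully spelled out (the paper asserts the composition identity ``from the definition'' without writing the induction), and your standalone inversion identity neatly isolates the error-prone bookkeeping. What the paper's route buys is the composition formula itself as a reusable artifact: it is cited again to show $\K$ has order $2n$ (via inner automorphisms) and to count elements fixed by $\K^{2k}$ as the centralizer of $c^k$, neither of which follows as directly from the purely entry-wise statement you prove.
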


\begin{proof}
  From the definition of $\K$ and $\K^{-1}$,
  \begin{equation}\label{Eqn:k power m}
    \K^j(\sigma)=
    \begin{cases}
      c^{\frac{j}{2}}\circ\sigma \circ c^{-\frac{j}{2}}                      & \mbox{if $j$ is an even integer,} \\
      c^{\frac{j+1}{2}}\circ\sigma^{-1}\circ c^{-\left(\frac{j-1}{2}\right)} & \mbox{if $j$ is an odd integer.}
    \end{cases}
  \end{equation}
  Thus, if $j$ is even, $\K^j(\sigma)$ is found by rotating $\sigma$ cyclically $\frac{j}{2}$ units and adding $\frac{j}{2}$ to each entry modulo $n$, while if $j$ is odd, $\K^j(\sigma)$ is found by rotating $\sigma^{-1}$ cyclically $\frac{j-1}{2}$ units and adding $\frac{j+1}{2}$ to each entry modulo $n$.
\end{proof}

\begin{thm}[Order]
  \label{thm:K_order}
  For all $n>2,$ $\K$ and $\K^{-1}$ have order $2n$ as elements of $S_{S_n}.$
\end{thm}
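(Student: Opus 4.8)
The plan is to first bound the order from above and then rule out every proper divisor of $2n$. For the upper bound, apply the even case of Lemma~\ref{ithentry} with $j=2n$: since $i-n\equiv i$ and $n\equiv 0\pmod n$, we get $\K^{2n}(\sigma)_i=\sigma_{i-n}+n\equiv\sigma_i\pmod n$ for every $i$ and every $\sigma$. Hence $\K^{2n}=\mathrm{id}$, the identity map on $S_n$, so the order of $\K$ (as an element of $S_{S_n}$) divides $2n$. It remains to show that no proper divisor of $2n$ already kills all of $S_n$.

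The engine for this is the even case of Equation~\eqref{Eqn:k power m}, which reads $\K^{2k}(\sigma)=c^{k}\circ\sigma\circ c^{-k}$; that is, $\K^{2k}$ acts on $S_n$ by conjugation by $c^{k}$. Such a conjugation is the identity map on all of $S_n$ precisely when $c^{k}$ is central in $S_n$. Since $S_n$ has trivial center for $n>2$ and $c$ is an $n$-cycle, this holds if and only if $c^{k}=e$, i.e. if and only if $n\mid k$. I record this as the key fact: among even exponents, $\K^{2k}=\mathrm{id}$ if and only if $n\mid k$.

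Because the order divides $2n$, it equals $2n$ as soon as $\K^{2n/p}\neq\mathrm{id}$ for every prime $p\mid 2n$. If $p$ is an odd prime dividing $n$, then $2n/p=2(n/p)$ is even, and the key fact gives $\K^{2n/p}=\mathrm{id}\iff n\mid n/p$, which is false; so these exponents fail. The remaining prime is $p=2$, where $2n/p=n$, and here I split on the parity of $n$. If $n$ is even, then $\K^{n}$ is conjugation by $c^{n/2}$, and since $0<n/2<n$ we have $c^{n/2}\neq e$, so by the key fact $\K^{n}\neq\mathrm{id}$. If $n$ is odd, the identity permutation $e=12\cdots n$ already witnesses nontriviality: the odd case of Lemma~\ref{ithentry} with $j=n$ gives $\K^{n}(e)_i=e_{\,i-(n-1)/2}+(n+1)/2\equiv i+1\pmod n$, so $\K^{n}(e)=c\neq e$. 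In all cases $\K^{2n/p}\neq\mathrm{id}$, so the order of $\K$ is exactly $2n$. Finally, an element and its inverse always have equal order in a group, so $\K^{-1}$ has order $2n$ as well.

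The only delicate point I anticipate is the prime $p=2$, which forces the parity split above: the clean conjugation argument covers even $n$ (relying on the non-centrality of $c^{n/2}$ and hence on $Z(S_n)=\{e\}$ for $n>2$), while odd $n$ requires the separate, genuinely ``odd-power'' computation $\K^{n}(e)=c$. Everything else reduces to the single key fact about even iterates, so once that is isolated the argument is short. As a sanity check one can also verify directly that the transposition $\sigma=(1\,2)$ has a $\K$-orbit of size exactly $2n$, giving an independent confirmation that the order is a multiple of $2n$.
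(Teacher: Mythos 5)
Your proof is correct, and it rests on exactly the same two facts as the paper's own proof: that even powers $\K^{2k}$ act on $S_n$ by conjugation by $c^k$, hence are the identity map only when $c^k=e$ (using that $S_n$ has trivial center for $n>2$), and that odd powers are never the identity because $\K^{j}(e)=c\neq e$. The only difference is organizational: the paper argues directly that the order must be even and that an even power can be trivial only when $2n\mid j$, whereas you bound the order above by $2n$ and then rule out the maximal proper divisors $2n/p$ prime by prime, which forces your parity split at $p=2$. The two routes are logically interchangeable; the paper's is marginally cleaner because the single computation $\K^{j}(e)=c$ for odd $j$ disposes of all odd exponents at once, including the exponent $n$ when $n$ is odd.

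One genuine error, though a harmless one since it carries no weight in your argument: the closing ``sanity check'' is false as stated. For $n=3$, the transposition $(1\,2)=213$ has $\K$-orbit $\{213,\,321,\,132\}$ of size $3$, not $2n=6$; indeed the paper's orbit data for $n=3$ is $[1,2,3]$, so no orbit of size $6$ exists at all, and the order $6$ arises only as $\lcm(1,2,3)$. This also highlights a conceptual point: the order of $\K$ in $S_{S_n}$ is the least common multiple of the orbit sizes, so it need not be witnessed by any single orbit. (Your claim does hold for $n\geq 4$: an odd power fixing $(1\,2)$ would force $c=(1\,2)\tau$ for some transposition $\tau$, writing the $n$-cycle $c$ as a product of two transpositions, which is impossible; and an even power $\K^{2k}$ fixes $(1\,2)$ only when $n\mid k$.) Either delete the remark or restrict it to $n\geq 4$; the proof itself stands without it.
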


\begin{proof}
  Let $e$ be the identity permutation $123\ldots n$ in $S_n$.  By equation (\ref{Eqn:k power m}), when $j$ is odd, $\K^j(e)=c\neq e$. Thus, as an element of $S_{S_n},$ the order of $\K$ must be even.  Also by equation (\ref{Eqn:k power m}), when $j$ is even, $\K^j$ is an element of $\mbox{Inn}(S_n),$ the group of inner automorphisms of $S_n$ (automorphisms defined by conjugation). Since for all $n>2$, $\mbox{Inn}(S_n)\cong S_n$, it follows that $\K^j$ acts as the identity automorphism only when $c^{\frac{j}{2}}=e,$ i.e.\ when $j$ is a multiple of $2n$. Thus the order of $\K$, and equivalently $\K^{-1},$ is $2n.$
\end{proof}

It follows by the orbit-stabilizer theorem that the orbit sizes under the action of $\K$ must be divisors of the order of $\K.$  The distribution of orbit sizes for $n\le 10$ are listed below.  The exponents denote the number of orbits of a given size.
\begin{itemize}
  \item $n=2:$ $[2]$
  \item $n=3:$ $[1,2,3]$
  \item $n=4:$  $[2^{(2)},4,8^{(2)}]$
  \item $n=5: [1, 2^{(2)},5^{(5)},10^{(9)}]$
  \item $n=6: [2^{(3)}, 4^{(3)}, 6^{(7)}, 12^{(55)}]$
  \item $n=7: [1, 2^{(3)}, 7^{(33)}, 14^{(343)}]$
  \item $n=8: [2^{(4)}, 4^{(6)}, 8^{(44)}, 16^{(2496)}]$
  \item $n=9: [1, 2^{(4)}, 3^{(3)}, 6^{(24)}, 9^{(290)}, 18^{(20006)}]$
  \item $n=10: [2^{(5)}, 4^{(10)}, 10^{(383)}, 20^{(181246)}]$
\end{itemize}
Since the orbit of a given permutation under the action of $\K$ is the same as that under the action of $\K^{-1}$, just generated in the opposite order, these results also hold for $\K^{-1}$.  The patterns observed in these results are captured in Theorems ~\ref{thm:orbit_count_odd_sizes}, and \ref{Prop:K even orbs}.  

We start by examining elements of $S_n$ belonging to orbits of odd size, and thank Joel Brewster Lewis for contributing the proofs of Proposition~\ref{prop:orbits_of_odd_length}, Proposition~\ref{prop:orbits_of_size_d_odd}, and Theorem ~\ref{thm:orbit_count_odd_sizes}.

\begin{prop}\label{prop:orbits_of_odd_length}
  A permutation $\sigma \in S_n$ with $n$ odd belongs to an orbit of odd size under the Kreweras complement if and only if $\sigma = c^{\frac{n + 1}{2}} \tau$ for some involution $\tau$ in $S_n$. Hence, there is a bijection between permutations of $S_n$ that are involutions and permutations that belong to an orbit of odd size.
\end{prop}
\begin{proof}
  Suppose $n$ is odd.  Since the order of $\K$ is $2n$, an element $\sigma$ belongs to an orbit of odd size under the action of $\K$ if and only if $\K^n(\sigma) = \sigma$.  
  For any $\sigma$,
  \[
    \K^n(\sigma) = \K\left(\K^{2 \cdot \frac{n - 1}{2}}(\sigma)\right) = \K(c^{\frac{n - 1}{2}} \sigma c^{-\frac{n - 1}{2}}) = c^{\frac{n +1}{2}} \sigma^{-1} c^{-\frac{n - 1}{2}}.
  \]
  Therefore, $\sigma = \K^n(\sigma)$ if and only if
  \[
    \sigma = c^{\frac{n + 1}{2}} \sigma^{-1} c^{-\frac{n - 1}{2}}.
  \]
  Multiplying through by $c^{\frac{n - 1}{2}} = c^{-\frac{n + 1}{2}}$ on the left, we have that $ \sigma = \K^n(\sigma)$ if and only if
  \[
    c^{\frac{n - 1}{2}} \sigma = \sigma^{-1} c^{-\frac{n - 1}{2}}.
  \]
  Setting $\tau := c^{\frac{n - 1}{2}} \sigma$, we have $\sigma = \K^n(\sigma)$ if and only if $\tau = \tau^{-1}$, in other words exactly when $\tau$ is an involution.  Multiplying through by $c^{-\frac{n - 1}{2}} = c^{\frac{n + 1}{2}}$ on the left gives the result.
\end{proof}

It is possible to refine the last result, by describing the permutations in orbits of size $d$ for any divisor $d$ of $n$.

\begin{prop}\label{prop:orbits_of_size_d_odd}
Let $d$ be an (odd) divisor of $n,$ with $n$ odd. A permutation $\sigma \in S_n$ belongs to an orbit of size that divides $d$ if and only if $\sigma = c^{\frac{n - d}{2}} \tau = \tau c^{\frac{n-d}{2}}$ for some involution $\tau$ in $S_n$. Furthermore, there is a bijection between permutations of $S_n$ that belong to an orbit of odd size that divides $d$ and pairs made of an involution $\pi$ of $S_d$ and a word on the alphabet $\{1,\ldots, \frac{n}{d}\}$ of length $\frac{d-\#\fix(\pi)}{2}$, where $\fix(\pi)$ is the set of fixed points of $\pi$: $\{x \in [d] \mid \pi(x) = x\}$.
\end{prop}
\begin{proof}
Let $n$ be odd, $d$ be a divisor of $n$ and $\sigma \in S_n$ be in an orbit of odd size that divides $d$ for $\K$. Hence, $\K^n(\sigma) = \K^d(\sigma) = \sigma$. Therefore, we know from Proposition \ref{prop:orbits_of_odd_length} that there exists an involution $\tau \in S_n$ for which $\sigma = c^{\frac{n+1}{2}} \tau$ . Repeating the same trick as before, we have
  \begin{equation}\label{eq:K_orbit_q}
    \sigma = \K^d(\sigma) = c^{\frac{d+1}{2}} \sigma^{-1} c^{-\frac{d-1}{2}}.
  \end{equation}
Using that $\sigma = c^{\frac{n+1}{2}} \tau$ for some involution $\tau$, we rewrite Equation \eqref{eq:K_orbit_q} as
  \[
    \sigma = c^{\frac{n+1}{2}} \tau = c^{\frac{d+1}{2}} \tau c^{-\frac{n+1}{2}} c^{-\frac{d-1}{2}} = c^{\frac{d+1}{2}} \tau c^{-\frac{n+d}{2}} = c^{\frac{d+1}{2}} \tau c^{\frac{n-d}{2}}
  \]
  (where in the last step we use $c^n = e$).
  Multiplying $c^{\frac{n+1}{2}} \tau =c^{\frac{d+1}{2}} \tau c^{\frac{n-d}{2}}$ through by $c^{-\frac{d+1}{2}}$ on the left, this becomes
  \[
    c^{\frac{n-d}{2}}   \tau = \tau c^{\frac{n-d}{2}},
  \] 
  meaning that we need to count involutions that commute with the power $c^{\frac{n-d}{2}}$ of our original $n$-cycle. We will show that the number of such involutions is the number of pairs made of an involution $\pi$ of $S_d$ and a word on the alphabet $\{1,\ldots, \frac{n}{d}\}$ of length $\frac{d-\#\fix(\pi)}{2}$.\\
  
Since $\gcd(n, \frac{n-d}{2}) = \gcd(n, n - d) = d$ and $c$ is an $n$-cycle, the element $c' := c^{\frac{n-d}{2}}$ has $d$ cycles, each of order $\frac{n}{d}$.  Suppose that the involution $\tau$ commutes with $c'$, and let $a \in [n]$.  We consider an exhaustive list of cases for $\tau(a)$: $\tau(a) = a$; $\tau(a) \neq a$ and $\tau(a)$ belongs to the same cycle in $c'$ as $a$; or $\tau(a)$ belongs to a different cycle of $c'$ than $a$ does.

  In the first case, we have $\tau(c'(a)) = c'(\tau(a)) = c'(a)$ and so $\tau$ also fixes $c'(a)$; and thus $\tau$ fixes the entire cycle of $a$ pointwise.

  In the second case, let $b := \tau(a) = (c')^k(a)$ for some $k$.  Applying $\tau$ to this equation and using the hypothesis gives $a = \tau(b) = \tau( (c')^k(a)) = (c')^k(\tau(a)) = (c')^k(b)$, and so $a = (c')^{2k}(a)$.  However, since all cycles of $c'$ are of odd length, this can only happen if actually $a = (c')^k(a)$ -- which is the previous case $\tau(a) = a$.  Thus, this case never occurs.

  Finally, in the third case, suppose $(a_1 \cdots a_{n/d})$ and $(b_1 \cdots b_{n/d})$ are two cycles of $c'$, and that $\tau(a_1) = b_m$.  Then (by the hypothesis) $\tau(a_{i + 1}) = \tau( (c')^i(a_1)) = (c')^i(\tau(a_1)) = (c')^i(b_m) = b_{m + i}$ -- so the value of $\tau$ is determined on the entire cycle by the choice of the image of $a = a_1$. Because $\tau$ is an involution, $\tau(b_m) = a$, and the the value of $\tau$ on the cycle $(b_1 \cdots b_{n/d})$ is also determined by the image of $a$.  

  The preceding case analysis becomes the following enumeration: for each of the $d$ cycles of $c'$, we find the cycle that is its image under $\tau$. Listing all the cycles in some order $(c_1, \ldots, c_d)$, there exists an involution $\pi \in S_d$ for which  the following equality of sets holds: $\{\tau(x) \mid x \in c_i\} = c_{\pi(i)}$. For each fixed point of $\pi$, no further choice is needed as the value of $\tau$ is fixed on the whole cycle. For each transposition $(i,j)$ of $\pi$ with $i < j$, it is sufficient to choose the image in $c_j$ of the smallest element of $c_i$ to determine the image of both cycles under $\tau$. Since $\pi$ is an involution in $S_d$, the number of transpositions of $\pi$ is $\frac{d-\#\fix(\pi)}{2}$. For each transposition $(i,j)$, there are $\frac{n}{d}$ possible choices for the image of the smallest element of $c_i.$ Putting it all together, the number of involutions $\tau$ commuting with $c'$ in which each cycle $c_i$ is sent to $c_{\pi(i)}$ under $\tau$ is exactly $\left(\frac{n}{d}\right)^{\frac{d-\#\fix(\pi)}{2}}$.
\end{proof}

This allows us to compute the orbit sizes when $n$ is odd, as follows.

\begin{thm}\label{thm:orbit_count_odd_sizes}
  Let $n$ be an odd number and $d$ be an (odd) divisor of $n$.  Then the number of orbits of $\K$ of order $d$ is
  \[\frac{1}{d} \sum_{q \mid d} \mu\left(\frac{d}{q}\right)\sum_{j = 0}^{\lfloor q/2 \rfloor} \binom{q}{2j} \cdot (2j - 1)!! \cdot \left(\frac{n}{q}\right)^j,
  \]
  where the double factorial $k!!$ is the product of the integers from $1$ to $k$ that have the same parity as $k$, and $\mu$ is the number-theoretic M\"obius function
\end{thm}

\begin{proof}

The proof technique is to compute all permutations $\sigma$ for which $\K^d(\sigma) = \sigma$ and then use inclusion-exclusion to get those that belong to an orbit of size $d$ for each divisor $d$ of $n$.

From Proposition \ref{prop:orbits_of_size_d_odd}, we know that there is a bijection between permutations of $S_n$ that belong to an orbit of odd size that divides $d$ (what we are trying to count) and pairs made of an involution $\pi$ of $S_d$ and a word on the alphabet $\{1,\ldots, \frac{n}{d}\}$ of length $\frac{d-\#\fix(\pi)}{2}$, where $\fix(\pi)$ is the set of fixed points of $\pi$. The latter is easier to compute.

First, the number of words of length $\frac{d-\#\fix(\pi)}{2}$ with letters in $\left[\frac{n}{d}\right]$ is $\left(\frac{n}{d}\right)^{\frac{d-\#\fix(\pi)}{2}}$. Therefore, the number of permutations $\sigma$ for which $\K^d(\sigma)=\sigma$ is 
\[  \sum_{\pi \in S_d,\ \pi^2=e} \left(\frac{n}{d}\right)^{\frac{d-\#\fix(\pi)}{2}}. \]

We next rewrite this sum so permutations of $S_d$ are grouped by their number of fixed points, and let $j = \frac{d-\#\fix(\pi)}{2}$:
\[  \sum_{j=0}^{\lfloor d/2 \rfloor} \left(\frac{n}{d}\right)^{j} \#\left\{\pi \in S_d \mid \pi^2=e, \frac{d-\#\fix(\pi)}{2} = j \right\}.\]

Involutions of $d$ with $d-2j$ fixed points are counted in the following way: we first pick the non-fixed points (there are $\binom{d}{2j}$ of them), and then match them so they come in pairs. The pairs are counted by $(2j-1)(2j-3)(2j-5)\ldots1 = (2j-1)!!$.

  Consequently, our sum becomes
  \[
    \sum_{j = 0}^{\lfloor d/2 \rfloor} \binom{d}{2j} \cdot (2j - 1)!! \cdot \left(\frac{n}{d}\right)^j.
  \]
  This is the number of elements in $S_n$ that are fixed by $K^d$, i.e.\ that belong to orbits of $K$ of size \emph{dividing} $d$.  In order to find the number of elements that belong to orbits of size \emph{exactly} $d$, we do a M\"obius inversion, finding the number of such elements is
 \[
    \sum_{q \mid d} \mu\left(\frac{d}{q}\right) \sum_{j = 0}^{\lfloor q/2 \rfloor} \binom{q}{2j} \cdot (2j - 1)!! \cdot \left(\frac{n}{q}\right)^j,
  \]
where $\mu$ is the number-theoretic M\"obius function.  Finally, to get the number of orbits, we divide by the common size $d$ of the orbits, as needed.
\end{proof}

Calculating the number of orbits of even size is even more straight forward.

\begin{thm}[Even sized orbit cardinality]\label{Prop:K even orbs}
For each even divisor $2k$ of $2n$, the number of orbits of size $2k$ under the action of the Kreweras complement is equal to $$\frac{\left(\frac{n}{k}\right)^kk!-T}{2k},$$ where $\left(\frac{n}{k}\right)^kk!$ is the number of elements in $S_n$ fixed by $\K^{2k}$ and $T$ is the number of elements in orbits of size $q$ for each proper divisor $q$ of $2k.$ 
\end{thm}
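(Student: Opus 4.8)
The plan is to prove the formula in two stages: first establish that the number of permutations fixed by $\K^{2k}$ equals $\left(\frac{n}{k}\right)^k k!$, and then convert this fixed-point count into the number of orbits of size exactly $2k$ by a standard orbit-counting argument.

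First I would apply Lemma~\ref{ithentry} with $j = 2k$. Since $j$ is even, the lemma gives $\K^{2k}(\sigma)_i = \sigma_{i-k} + k \pmod{n}$, so $\sigma$ is fixed by $\K^{2k}$ precisely when $\sigma_{i+k} \equiv \sigma_i + k \pmod{n}$ for every $i$ (all arithmetic modulo $n$). Because $2k$ is an even divisor of $2n$, we have $k \mid n$; set $m = n/k$. The condition says that $\sigma$ intertwines the cyclic shift of positions by $k$ with the cyclic shift of values by $k$. I would partition the positions into the $k$ residue classes $P_r = \{i : i \equiv r \pmod{k}\}$ and the values into the $k$ classes $V_s = \{v : v \equiv s \pmod{k}\}$, each of size $m$. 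The key observation is that specifying $\sigma$ on a single representative of $P_r$ determines $\sigma$ on all of $P_r$ via $\sigma_{r+jk} = \sigma_r + jk$, and that the image of $P_r$ is exactly the value class $V_s$ with $s \equiv \sigma_r \pmod{k}$ (the $m$ values $\sigma_r + jk$ are distinct modulo $n$ and all lie in that class). Bijectivity of $\sigma$ then forces the induced map from position classes to value classes to be a bijection. Counting: there are $k!$ bijections of classes, and for each class an independent choice of $\sigma_r$ among the $m = n/k$ elements of its target value class, giving $(n/k)^k$ offset choices. Hence the number of fixed points of $\K^{2k}$ is $k!\,(n/k)^k = \left(\frac{n}{k}\right)^k k!$.

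Second, I would invoke the orbit–stabilizer viewpoint. Since $\K$ generates a cyclic group of order $2n$ (Theorem~\ref{thm:K_order}), every orbit size divides $2n$, and a permutation $\sigma$ satisfies $\K^{2k}(\sigma) = \sigma$ if and only if the size of its orbit divides $2k$. Thus the set of fixed points of $\K^{2k}$ is the disjoint union of all orbits whose size is a divisor of $2k$. Writing $N_d$ for the number of permutations lying in orbits of size exactly $d$, this gives
\[
\left(\frac{n}{k}\right)^k k! = \sum_{d \mid 2k} N_d = N_{2k} + T,
\]
where $T = \sum_{d \mid 2k,\, d < 2k} N_d$ is precisely the number of elements in orbits of size equal to a proper divisor of $2k$. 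Solving for $N_{2k}$ and dividing by the common orbit size $2k$ yields the number of orbits of size $2k$ as $\frac{1}{2k}\left(\left(\frac{n}{k}\right)^k k! - T\right)$, as claimed.

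The main obstacle is the fixed-point count in the first stage: one must argue carefully that the intertwining condition decouples across the $k$ residue classes, that each class contributes an independent factor of $m$, and that the class-to-class assignment is forced to be a bijection, so that no choice produces a non-injective $\sigma$ and every fixed permutation arises exactly once this way. The second stage is routine once Theorem~\ref{thm:K_order} is in hand, as it is the standard translation between the fixed points of a power and the union of orbits whose size divides that power in a cyclic action.
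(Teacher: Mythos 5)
Your proof is correct, and it reaches the key count $\left(\frac{n}{k}\right)^k k!$ by a genuinely different route than the paper. Both arguments share the same skeleton: fixed points of $\K^{2k}$ are exactly the permutations commuting with $c^k$ (your intertwining condition $\sigma_{i+k}\equiv\sigma_i+k \pmod{n}$, derived from Lemma~\ref{ithentry}, is precisely the condition $c^k\circ\sigma\circ c^{-k}=\sigma$), and both finish with the same standard translation: the fixed-point set of $\K^{2k}$ is the disjoint union of the orbits whose size divides $2k$, so one subtracts $T$ and divides by $2k$. The difference is in how the fixed points are counted. The paper counts the centralizer of $c^k$ abstractly: by the orbit--stabilizer theorem applied to the conjugation action, its size is $n!$ divided by the size of the conjugacy class of $c^k$, and since $c^k$ consists of $k$ cycles of length $\frac{n}{k}$, that class has $\frac{n!}{(n/k)^k k!}$ elements, giving $\left(\frac{n}{k}\right)^k k!$. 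You instead enumerate the centralizer directly and combinatorially, splitting positions and values into the $k$ residue classes modulo $k$ and showing that a fixed permutation is determined by a bijection between position classes and value classes ($k!$ choices) together with one representative image per class ($(n/k)^k$ choices), with bijectivity forcing the class map to be a bijection. The paper's route is shorter but leans on standard facts about conjugacy classes and cycle types in $S_n$; yours is more elementary and self-contained, and as a bonus it gives an explicit structural description of the permutations fixed by $\K^{2k}$, which the paper's argument does not provide. Both are complete proofs of the theorem.
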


\begin{proof}
  Let $\sigma\in S_n$ be an arbitrary element fixed under the action of $\K^{2k}$.  By Equation \eqref{Eqn:k power m}, $c^k\circ\sigma\circ c^{-k}=\sigma,$ or equivalently $\sigma\circ c^k\circ \sigma^{-1}=c^k.$  Thus $\sigma$ is fixed by $\K^{2k}$ if and only if it is in the centralizer of $c^k$ under the action of the inner automorphism group $\mbox{Inn}(S_n).$  By the orbit-stabilizer theorem, the number of elements in the centralizer of $c^k$ is equal to $\frac{|\mbox{Inn}(S_n)|}{|\mbox{Orb}(c^k)|}$ where $|\mbox{Orb}(c^k)|$ is the number of elements in the orbit of $c^k$ under conjugation, or equivalently the number of elements in $S_n$ with the same cycle structure as $c^k$.  As $c^k$ consists of $k$ cycles, each of length $\frac{n}{k}$, and there are exactly $\frac{n!}{(\frac{n}{k})^kk!}$ permutations in $S_n$ with this cycle structure, it follows that the number of elements in the centralizer of $c^k$ is $\left(\frac{n}{k}\right)^kk!.$
  
Elements fixed by the action of $K^{2k}$ include those in orbits of size $q$ for all $q\mid 2k.$  To find the number of elements in orbits of size $2k$ we need to subtract out elements in orbits of size equal to a proper divisor of $2k$.  Dividing the result by $2k$ gives the number of distinct orbits of size $2k$ under the action of $\K.$

\end{proof}

\begin{remark}
Consider $\sigma$ in the centralizer of $c^k$ under the action of the inner automorphism group.  Since $\sigma\circ c^k \circ\sigma^{-1}=c^k$, conjugation by $\sigma$ has the effect of permuting the disjoint cycles $c_1,c_2,\ldots, c_k$ of $c^k.$ Furthermore, since $\sigma\circ c^k\circ \sigma^{-1}=\sigma c_1c_2\ldots c_k \sigma^{-1}=\sigma c_1\sigma^{-1}\sigma c_2\sigma^{-1}\sigma\ldots \sigma^{-1}\sigma c_k \sigma^{-1}$ and writing $c_i=(c_{i1}c_{i2}\ldots c_{i\frac{n}{k}})$, we have $\sigma c_i\sigma^{-1}=(\sigma(c_{i1})\sigma(c_{i2})\ldots \sigma(c_{i\frac{n}{k}})),$ it follows that elements of the centralizer of $c^k$ are completely determined by how they permute the disjoint cycles of $c^k$ and where they map $c_{i1}$ for each $1\leq i\leq k.$  Put another way, for each permutation $\pi$ in $S_k$, we can construct an element of the centralizer of $c^k$ from a word $w$ of length $k$ on the alphabet $\{1,2,\ldots,\frac{n}{k}\}$ 
by setting $\sigma(c_{i1})=c_{\pi(i)w_i}.$  Thus, similar to the proof of Proposition ~\ref{prop:orbits_of_size_d_odd}, we can count the number of elements in the centralizer of $c^k$ using a bijection between these elements and pairs consisting of a permutation in $S_k$ and a word of length $k$ on the alphabet $\{1,2,\ldots,\frac{n}{k}\}.$  
\end{remark}

\begin{remark}
  As we prove in Corollary \ref{Korb odd divisor even n}, there are no orbits of odd size when $n$ is even. Thus, Theorem \ref{Prop:K even orbs} completely characterizes the distribution of orbits when $n$ is even, allowing us to calculate the number of orbits of size $2k$ using the M\"obius inversion formula.  By Theorem \ref{Prop:K even orbs}, the number of elements fixed by $2k$, or equivalently in orbits of size dividing $2k$, is $\left(\frac{n}{k}\right)^kk!.$ When $n$ is even, the number of elements in orbits of size exactly $2k$ (when $k$ divides $n$) is $$\sum_{q|k}\mu\left(\frac{k}{q}\right)\cdot\left(\frac{n}{q}\right)^q\cdot q!.$$ 
\end{remark}

Below we examine orbits of size $1$, $2$, $n$, and $2n$, providing explicit generators in each case.

\begin{thm}\label{thm:Orbit_generators_K} The following permutations generate orbits of a given size for the Kreweras complement and its inverse:
  \begin{itemize}
    \item When $n$ is odd  $\left\{\frac{n+3}{2}\frac{n+5}{2}\ldots n123\ldots\frac{n+1}{2}\right\}$ is the unique orbit of size $1$. There is no orbit of size $1$ when $n$ is even.
    \item For all $n$, there are $\lfloor \frac{n}{2}\rfloor$ orbits of size $2,$ one of which is generated by the identity permutation.
    \item For all $n$, an orbit of size $n$ is generated by $n(n-1)\ldots 321$.
    \item For even values of $n>3$, an orbit of size $2n$ is generated by $13\ldots(n-1)24\ldots n$, and for odd values of $n>3$ an orbit of size $2n$ is generated by $13\ldots n24\ldots (n-1).$
  \end{itemize}
\end{thm}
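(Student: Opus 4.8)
The plan is to compute, for each listed generator $\sigma$, the least positive $j$ with $\K^{j}(\sigma)=\sigma$; by Theorem~\ref{thm:K_order} this \emph{orbit size} divides $2n$, and Equation~\eqref{Eqn:k power m} lets me translate ``$\K^{j}$ fixes $\sigma$'' into a statement about conjugation by powers of the long cycle $c$. Two reformulations will be used repeatedly: for even $j=2m$ one has $\K^{2m}(\sigma)=c^{m}\sigma c^{-m}$, so $\K^{2m}(\sigma)=\sigma$ exactly when $\sigma$ commutes with $c^{m}$; and the centralizer of the $n$-cycle $c$ in $S_n$ is exactly $\langle c\rangle$, a fact I will invoke several times.

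For the size-$1$ claim, $\K(\sigma)=\sigma$ reads $c\circ\sigma^{-1}=\sigma$, i.e. $\sigma^{2}=c$, so the fixed points are precisely the square roots of $c$. Since $\sigma$ commutes with $\sigma^{2}=c$, any such $\sigma$ lies in $\langle c\rangle$; writing $\sigma=c^{k}$ reduces the equation to $2k\equiv 1\pmod n$, which is solvable exactly when $n$ is odd and then has the unique solution $k=\tfrac{n+1}{2}$. A one-line computation of $c^{(n+1)/2}$, whose $i$-th entry is $i+\tfrac{n+1}{2}\bmod n$, matches the stated permutation. For the size-$2$ claim, $\K^{2}(\sigma)=c\sigma c^{-1}$ forces every orbit of size at most $2$ to lie in $\langle c\rangle$, on which $\K$ acts by $c^{k}\mapsto c^{\,1-k}$. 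This affine involution of $\mathbb{Z}/n\mathbb{Z}$ has a fixed point precisely when $n$ is odd (recovering the size-$1$ count) and pairs the remaining $n-[n\text{ odd}]$ elements into $\lfloor n/2\rfloor$ two-element orbits, one of which is $\{e,c\}$.

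For the size-$n$ claim, let $w=n(n-1)\cdots1$; it is an involution satisfying $w\,c\,w=c^{-1}$, hence $w\,c^{-m}=c^{m}w$. Feeding this into Equation~\eqref{Eqn:k power m} gives $\K^{2m}(w)=c^{m}w\,c^{-m}=c^{2m}w$ for even exponents and $\K^{j}(w)=c^{(j+1)/2}w\,c^{-(j-1)/2}=c^{j}w$ for odd exponents; in either case $\K^{j}(w)=w$ holds exactly when $c^{j}=e$, i.e. when $n\mid j$, so the orbit size is exactly $n$.

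The size-$2n$ claim is where the real work lies, and I would prove it by ruling out every proper divisor of $2n$ as a period. The key reduction is that $S=\{m\in\mathbb{Z}/n\mathbb{Z}\ :\ \sigma c^{m}=c^{m}\sigma\}$ is a subgroup, so $S=\{0\}$ as soon as $\sigma$ fails to commute with $c^{n/p}$ for each prime $p\mid n$ (every nontrivial subgroup contains some such element). Granting $S=\{0\}$, an even period $2m$ forces $n\mid m$ and hence $2m=2n$, while an odd period $d$ would double to an even period and thus also force $d=n$ with $n$ odd; so the only proper divisor left to exclude is $d=n$ in the odd case, which I handle by a direct check that $\K^{n}(\sigma)=c^{(n+1)/2}\sigma^{-1}c^{-(n-1)/2}\ne\sigma$. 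The commutativity failures themselves are verified by testing $\sigma(i+m)\equiv\sigma(i)+m\pmod n$ at $i=1$ (where $\sigma(1)=1$), since $\sigma$ essentially doubles indices. I expect the genuine obstacle to be the bookkeeping at the boundary between the two interleaved halves of $\sigma$ in these commutativity computations, the odd-$n$ verification involving $\sigma^{-1}$, and separate treatment of the smallest in-range values of $n$.
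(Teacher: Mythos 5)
Your proposal is correct, and on the two hardest bullets it takes a genuinely different route from the paper's. For the fixed point, the paper chases entries of $\sigma^{2}=c$ to reconstruct $\sigma$ explicitly, whereas you invoke $C_{S_n}(c)=\langle c\rangle$ and solve $2k\equiv 1\pmod n$; and for size $n$, the paper computes coordinates via Lemma~\ref{ithentry} using $\sigma=\sigma^{-1}$, whereas your relation $wcw=c^{-1}$ for $w=n(n-1)\cdots 321$ gives $\K^{j}(w)=c^{j}w$ for all $j$ --- these two are the same skeleton, with yours more streamlined and coordinate-free. The real divergences are bullets two and four. For size $2$, the paper combines its orbit-counting formula (Theorem~\ref{Prop:K even orbs}) with the fixed-point count, while you observe that $\K^{2}(\sigma)=c\sigma c^{-1}$ forces every orbit of size at most $2$ into $\langle c\rangle$, on which $\K$ is the affine involution $k\mapsto 1-k$ of $\mathbb{Z}/n\mathbb{Z}$; this is self-contained, needs no counting theorem, and settles the first two bullets simultaneously. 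For size $2n$, the paper argues that the pattern ``all odd entries before all even entries'' cannot recur under rotation-plus-shift (handling $n=4,5$ by direct computation and tracking the position of $1$ when $n$ is odd), whereas you note that $S=\{m\in\mathbb{Z}/n\mathbb{Z}:\sigma c^{m}=c^{m}\sigma\}$ is a subgroup, so ruling out every proper period reduces to noncommutation with $c^{n/p}$ for each prime $p\mid n$, plus the single check $\K^{n}(\sigma)\neq\sigma$ when $n$ is odd. Your reduction buys uniformity --- finitely many local checks and no small-$n$/large-$n$ dichotomy --- while the paper's argument exhibits an explicit structural invariant. The computations you defer do go through: for $p=2$ one gets $\sigma_{1+n/2}=2\not\equiv 1+n/2\pmod n$ once $n>2$; for odd $p\mid n$ the index $1+n/p$ stays in the first block (since $n/p\le n/3$), giving $\sigma_{1+n/p}=2n/p+1\not\equiv 1+n/p\pmod n$; and for odd $n$, Lemma~\ref{ithentry} yields $\K^{n}(\sigma)_{1}=(3n+7)/4$ or $(5n+7)/4$ reduced mod $n$ (according to $n\bmod 4$), which is never $1=\sigma_{1}$ when $n>3$. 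So there is no gap, only the routine bookkeeping you correctly flagged.
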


We prove each part of this theorem separately through Propositions \ref{Prop:K fixed points} --\ref{Prop:K_orbit_size_2n}.

\begin{prop}\label{Prop:K fixed points} There are no fixed points in $S_n$ under the action of the Kreweras complement when $n$ is even and exactly one at
  \[\sigma = \frac{n+3}{2}\frac{n+5}{2}\ldots n123\ldots\frac{n+1}{2}\] when $n$ is odd.
\end{prop}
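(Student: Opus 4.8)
The plan is to recognize that a permutation $\sigma$ is a fixed point of $\K$ exactly when it is a square root of the long cycle $c$, and then to count such square roots by an elementary centralizer argument in $S_n$. Starting from the definition $\K(\sigma)=c\circ\sigma^{-1}$, the fixed-point condition $\K(\sigma)=\sigma$ reads $\sigma=c\circ\sigma^{-1}$; composing on the right with $\sigma$ turns this into $\sigma\circ\sigma=c$, that is, $\sigma^2=c$. Conversely, any solution of $\sigma^2=c$ is fixed by $\K$. This reduces the proposition to determining the square roots of the $n$-cycle $c$ in $S_n$.

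Next I would use that $\sigma$ automatically commutes with $c=\sigma^2$, since every permutation commutes with its own powers. Hence $\sigma$ lies in the centralizer of the $n$-cycle $c$. Because the conjugacy class of an $n$-cycle has $(n-1)!$ elements, the centralizer has order $n!/(n-1)!=n$ and therefore equals $\langle c\rangle$. Thus $\sigma=c^k$ for some integer $k$, and the equation $\sigma^2=c$ becomes $c^{2k}=c$, i.e. the congruence $2k\equiv 1 \pmod{n}$.

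I would then split on the parity of $n$. When $n$ is even, $\gcd(2,n)=2$ does not divide $1$, so $2k\equiv 1 \pmod{n}$ has no solution and there are no fixed points. When $n$ is odd, the congruence has the unique solution $k\equiv \frac{n+1}{2}\pmod{n}$, giving exactly one fixed point $\sigma=c^{(n+1)/2}$, which matches the count of a single size-$1$ orbit recorded in the orbit tables for odd $n$.

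Finally I would convert $c^{(n+1)/2}$ into one-line notation to confirm it is the claimed permutation. Since $c$ acts by $i\mapsto i+1\pmod n$, we have $\sigma_i=i+\tfrac{n+1}{2}\pmod n$ with representatives in $\{1,\dots,n\}$; for $i\le \tfrac{n-1}{2}$ this produces the increasing run $\tfrac{n+3}{2},\dots,n$, and for $i\ge \tfrac{n+1}{2}$ it produces $1,2,\dots,\tfrac{n+1}{2}$, which is exactly $\frac{n+3}{2}\frac{n+5}{2}\ldots n123\ldots\frac{n+1}{2}$. The only nonroutine ingredient is the identification of the centralizer of $c$ with $\langle c\rangle$, but that is a standard fact; the rest is a short computation.
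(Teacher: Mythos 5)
Your proposal is correct, and it reaches the result by a genuinely different route than the paper. Both arguments begin identically, by rewriting the fixed-point condition $\K(\sigma)=c\circ\sigma^{-1}=\sigma$ as $\sigma^2=c$. From there the paper stays entirely inside one-line notation: setting $\sigma_1=i$, it chases entries ($\sigma_i=2$, $\sigma_2=i+1$, and so on) to conclude $\sigma=i(i+1)\ldots n12\ldots(i-1)$, and then compares two expressions for the position of the entry $n$ to force $i=\frac{n+3}{2}$; the parity dichotomy is read off from the integrality of this index. You instead observe that $\sigma$ commutes with $c=\sigma^2$, identify the centralizer of the $n$-cycle $c$ as $\langle c\rangle$ via orbit--stabilizer (class size $(n-1)!$, so centralizer order $n$), and reduce the problem to the congruence $2k\equiv 1\pmod{n}$, which is solvable (uniquely) exactly when $n$ is odd. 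Note that the two middle steps prove the same intermediate fact --- every square root of $c$ is a power of $c$ --- since the paper's one-line form $i(i+1)\ldots n12\ldots(i-1)$ is precisely $c^{i-1}$; the paper just never says so. What your version buys is brevity and a transparent source for the even/odd split ($\gcd(2,n)\mid 1$), at the cost of importing a standard group-theoretic fact; the paper's version is self-contained and purely combinatorial, matching the elementary one-line-notation style of the rest of its Kreweras section. Your final conversion of $c^{(n+1)/2}$ to one-line notation is also correct and confirms the stated permutation.
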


\begin{proof}
  Let $\sigma\in S_n$ be a fixed point under the action of the Kreweras complement, i.e.\ $\K(\sigma)=c\circ\sinv=\sigma,$ and $\sigma^2=c.$  Thus, if $\sigma_1=i,$ then $\sigma_i=2$ and $\sigma_2=i+1$ and so on with $\sigma_j=i+j-1$ and $\sigma_{(i+j-1)}=j+1$, in other words $\sigma=i(i+1)\ldots n 1 2 \ldots (i-1)$. Examining $\sigma$ we find $n$ appears in the $n-(i-1)$ spot, that is, $\sigma_{n-(i-1)}=n$. However since $\sigma_i=2$ and $n$ appears two positions to the left of $2,$ $n$ is in the $(i-2)$ spot, i.e.\ $\sigma_{i-2}=n.$ Setting the indices $n-(i-1)$ and $i-2$ equal and solving for $i$ we find $i=\frac{n+3}{2}$.
\end{proof}

\begin{prop}
  The Kreweras complement has $\lfloor \frac{n}{2} \rfloor $ orbits of size $2$. In particular, the identity permutation generates an orbit of size $2$.
\end{prop}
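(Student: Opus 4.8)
The plan is to reduce everything to an easy group-theoretic count by identifying the permutations fixed by $\K^2$ and then discarding those already fixed by $\K$. By Equation~(\ref{Eqn:k power m}), since $2$ is even we have $\K^2(\sigma)=c\circ\sigma\circ c^{-1}$, so a permutation $\sigma$ lies in an orbit whose size divides $2$ exactly when $c\circ\sigma\circ c^{-1}=\sigma$, i.e. when $\sigma$ commutes with the long cycle $c$. The centralizer of an $n$-cycle in $S_n$ is the cyclic subgroup $\langle c\rangle=\{e,c,c^2,\ldots,c^{n-1}\}$, which has exactly $n$ elements. This is the same count that appears by setting $k=1$ in the proof of Theorem~\ref{Prop:K even orbs}, where the number of elements fixed by $\K^{2k}$ is $\left(\tfrac{n}{k}\right)^k k!$, evaluating to $n$ here.

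With these $n$ elements in hand, I would split them according to orbit size: the fixed points of $\K$ itself (orbits of size $1$) must be removed, and the remaining elements all lie in orbits of size exactly $2$, which partition the leftover set into pairs. By Proposition~\ref{Prop:K fixed points}, the number $T$ of size-$1$ orbits is $1$ when $n$ is odd and $0$ when $n$ is even. Hence the number of orbits of size $2$ is $\frac{n-T}{2}$, which equals $\frac{n-1}{2}$ for odd $n$ and $\frac{n}{2}$ for even $n$; in both cases this is $\left\lfloor\frac{n}{2}\right\rfloor$. This is precisely the $k=1$ instance of Theorem~\ref{Prop:K even orbs}, so I could alternatively obtain the count by directly invoking that theorem together with Proposition~\ref{Prop:K fixed points}.

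For the ``in particular'' claim, I would simply verify the orbit of the identity $e$ by hand. Using $\K(\sigma)=c\circ\sigma^{-1}$ gives $\K(e)=c\circ e=c$, which differs from $e$ for every $n>1$, while $\K^2(e)=c\circ e\circ c^{-1}=e$. Thus $\{e,c\}$ is an orbit of size exactly $2$. The only point requiring care is the assertion that the centralizer of the $n$-cycle is exactly $\langle c\rangle$ of order $n$; this is a standard fact (equivalently, the orbit-stabilizer computation already carried out in the proof of Theorem~\ref{Prop:K even orbs}), so I do not anticipate any genuine obstacle, and the argument is essentially a bookkeeping of the $n$ fixed points of $\K^2$ against the $T$ fixed points of $\K$.
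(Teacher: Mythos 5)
Your proposal is correct and follows essentially the same route as the paper: the paper's proof also verifies the orbit $\{e,c\}$ directly and then counts orbits of size $2$ as $\frac{n-T}{2}$ by invoking Theorem~\ref{Prop:K even orbs} with $k=1$ together with Proposition~\ref{Prop:K fixed points}. Your explicit centralizer computation (fixed points of $\K^2$ being exactly $\langle c\rangle$) merely inlines the $k=1$ case of that theorem's proof, as you yourself note.
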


\begin{proof}
  By direct computation, we find that $\{1234\ldots n, 234\ldots n1\}$ is an orbit of size 2 under the Kreweras complement. By Theorem \ref{Prop:K even orbs}, there are $\frac{n-T}{2}$ distinct orbits of size 2, where $T$ is the number of orbits of size 1.  By Proposition \ref{Prop:K fixed points}, $T$ is zero when $n$ is even and $1$ when $n$ is odd, thus $\frac{n-T}{2}=\lfloor \frac{n}{2} \rfloor.$
\end{proof}

The following proposition shows the reverse of the identity permutation is in an orbit of size~$n$.

\begin{prop}\label{prop:orbit_size_n}
  The Kreweras complement has an orbit of size $n$ of the form $\{ n(n-1) \ldots 321, 1n(n-1) \ldots 32, 21n \ldots 43, 321n \ldots 54, \ldots, (n-1)(n-2)\ldots 21n\}$.
\end{prop}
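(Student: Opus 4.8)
The plan is to apply Lemma~\ref{ithentry} directly to $\sigma = n(n-1)\ldots 321$, the reverse of the identity, whose $i$-th entry is $\sigma_i = n+1-i$. The first thing I would record is that this $\sigma$ is an involution, so $\sigma^{-1} = \sigma$; moreover, reducing modulo $n$ (with $n$ playing the role of the $0$-th class, as in the lemma) we have $\sigma_k = n+1-k \equiv 1-k \pmod{n}$. This self-inverse symmetry is precisely what will let the two cases of Lemma~\ref{ithentry} merge into a single formula.

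Next I would substitute into both branches of Lemma~\ref{ithentry}. For even $j = 2m$ the lemma gives $\K^j(\sigma)_i \equiv \sigma_{i-m} + m \equiv (1-(i-m)) + m = 1 - i + 2m = j+1-i \pmod{n}$. For odd $j$, writing $m = (j-1)/2$ and using $\sigma^{-1} = \sigma$, it gives $\K^j(\sigma)_i \equiv \sigma_{i-m} + (m+1) \equiv (1-(i-m)) + (m+1) = 2 - i + 2m = j+1-i \pmod{n}$. The payoff is that both parities collapse to the same uniform expression $\K^j(\sigma)_i \equiv j+1-i \pmod{n}$, with the representative taken in $\{1,\dots,n\}$.

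From here the orbit size is immediate. We have $\K^j(\sigma)=\sigma$ for every entry exactly when $j+1-i \equiv 1-i \pmod n$ for all $i$, i.e. when $j \equiv 0 \pmod n$; hence the least positive return time is $j=n$ and the orbit has size exactly $n$ (consistent with Theorem~\ref{thm:K_order}, as $n \mid 2n$). To finish, I would evaluate the uniform formula at $j = 0, 1, \ldots, n-1$: for each such $j$ the entries $i = 1, \ldots, j$ produce $j, j-1, \ldots, 1$ (all lying in $\{1,\ldots,n-1\}$), while the entries $i = j+1, \ldots, n$ produce $n, n-1, \ldots, j+1$. Thus $\K^j(\sigma) = j(j-1)\ldots 1\, n(n-1)\ldots(j+1)$, which is exactly the $(j{+}1)$-st permutation listed, matching $n(n-1)\ldots 321$, $1n(n-1)\ldots 32$, $21n\ldots 43$, up through $(n-1)(n-2)\ldots 21n$ in order.

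The only real obstacle is the modular bookkeeping: one must track carefully that the subscript reduction $i-m \bmod n$ and the subsequent value reduction interact correctly, and that the ``$n$ for the $0$-th class'' convention is honored so that, for instance, $\K^j(\sigma)_i = n$ exactly when $j+1-i \equiv 0 \pmod n$. Once the uniform identity $\K^j(\sigma)_i \equiv j+1-i$ is in hand, every remaining claim is a direct reading-off, so the care belongs in establishing that identity.
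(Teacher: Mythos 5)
Your proposal is correct and follows essentially the same route as the paper's proof: both exploit $\sigma=\sigma^{-1}$ and $\sigma_i=n+1-i$ to collapse the two parity cases of Lemma~\ref{ithentry} into the single formula $\K^j(\sigma)_i \equiv j+1-i \pmod{n}$ (the paper writes it as $n-(i-1)+j \pmod{n}$), from which the return time $n$ and the listed orbit elements are read off. Your explicit verification of which permutation appears at each $j$ is slightly more detailed than the paper's, but the argument is the same.
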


\begin{proof}
  Let $\sigma=n(n-1) \ldots 321$.  We start by noting that the $i$-th entry of $\sigma$ is given by $\sigma_i=n-(i-1)$ and $\sigma=\sigma^{-1}$.  By Lemma \ref{ithentry}, $\K^j(\sigma)_i$ is equal to $n-(i-\frac{j}{2}-1)+\frac{j}{2}\pmod{n}$ when $j$ is even, and $n-(i-\frac{j-1}{2}-1)+\frac{j+1}{2}\pmod{n}$ when $j$ is odd.
  In both cases $\K^j(\sigma)_i=n-(i-1)+j\pmod{n},$ which is only equal to $\sigma_i$ when $j$ is a multiple of $n$, and gives the specified orbit when evaluated at $0\le j<n$.
\end{proof}

Finally, the proposition below exhibits an orbit of size $2n$, completing the proof of Theorem~\ref{thm:Orbit_generators_K}.
\begin{prop}\label{Prop:K_orbit_size_2n}
  For $n > 3$, the permutation $\sigma = 1 3 \ldots (n-1)2 4 \ldots n$ when $n$ is even, and $\sigma = 1 3 \ldots (n)2 4 \ldots (n-1)$ when $n$ is odd,
  generates an orbit of size $2n$ under the Kreweras complement.
\end{prop}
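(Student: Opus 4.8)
The plan is to prove the orbit has size exactly $2n$ by showing that $\K^j(\sigma)\neq\sigma$ for every $j$ with $0<j<2n$; since $\K$ has order $2n$ (Theorem~\ref{thm:K_order}), the smallest positive $j$ with $\K^j(\sigma)=\sigma$ is then $2n$. It is convenient to regard permutations as bijections of $\mathbb{Z}/n\mathbb{Z}$, with the representative $n$ standing for the class of $0$; then the long cycle $c$ is the translation $x\mapsto x+1$, and Equation~\eqref{Eqn:k power m} reads $\K^{2\ell}(\sigma)=c^{\ell}\circ\sigma\circ c^{-\ell}$ and $\K^{2m+1}(\sigma)=c^{m+1}\circ\sinv\circ c^{-m}$.

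The first step is to identify $\sigma$ concretely. When $n$ is odd, a direct check shows that $\sigma$ is the affine map $\sigma(x)=2x-1$ on $\mathbb{Z}/n\mathbb{Z}$ (a bijection precisely because $\gcd(2,n)=1$), with $\sinv(x)=2^{-1}(x+1)$. This makes the odd-$n$ case essentially bookkeeping: for even $j=2\ell$ one computes $\K^{2\ell}(\sigma)(x)=\sigma(x-\ell)+\ell=2x-1-\ell$, which equals $\sigma(x)=2x-1$ only if $\ell\equiv 0\pmod n$, impossible for $0<\ell<n$; for odd $j$ the map $\K^{j}(\sigma)$ has linear coefficient $2^{-1}$, whereas $\sigma$ has linear coefficient $2$, and $2^{-1}\equiv 2\pmod n$ would force $n\mid 3$, which is excluded since $n>3$. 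Hence no $j$ with $0<j<2n$ fixes $\sigma$ when $n$ is odd.

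When $n$ is even, $\sigma$ is no longer affine (indeed $2$ is not invertible modulo $n$), so I would instead exploit the following structural feature: $\sigma$ maps the first half $H_1=\{1,\dots,n/2\}$ onto the odd values and the second half $H_2=\{n/2+1,\dots,n\}$ onto the even values, so the parity of $\sigma(x)$ detects whether $x\in H_1$ or $x\in H_2$. I would reduce both cases to a single commutation statement: for even $j=2\ell$ we have $\K^{2\ell}(\sigma)=\sigma$ if and only if $\sigma$ commutes with $c^{\ell}$; and for odd $j$, the group identity obtained from $c^{m+1}\sinv c^{-m}=\sigma$ (inverting and re-substituting to get $\sigma=c^{-j}\sigma c^{j}$) shows $\sigma$ must again commute with $c^{j}$. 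The crux is then the claim that, for even $n\geq 4$, $\sigma$ commutes with $c^{t}$ for no residue $t\not\equiv 0\pmod n$. This I would prove by parity: commutation forces translation by $t$ either to preserve the interval $H_1$ (when $t$ is even) or to interchange $H_1$ and $H_2$ (when $t$ is odd); since $H_1$ is a cyclic interval of length $n/2$, the only surviving possibility is $t\equiv n/2$ with $n\equiv 2\pmod 4$, and that case is eliminated by the single evaluation $\sigma(1+n/2)=2\neq 1+n/2=\sigma(1)+n/2$.

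The main obstacle is the even-$n$ analysis: because $\sigma$ is not affine there, the clean linear-algebra argument is unavailable and one must argue through the half/parity structure, with the borderline residue $t=n/2$ (occurring when $n\equiv 2\pmod 4$) requiring a separate, explicit computation. A secondary point to get right is the reduction of the odd-$j$ case to a commutation condition, together with the observation that for even $n$ an odd $j$ with $0<j<2n$ never reduces to the trivial residue (since $j\neq n$), so the commutation claim genuinely applies.
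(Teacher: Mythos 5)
Your proof is correct, but it takes a genuinely different route from the paper's. The paper verifies $n=4,5$ by direct computation and then, for $n>5$, argues combinatorially in one-line notation: for odd $j$ it compares the block structure of $\sigma$ (all odd entries before all even entries) with the alternating even/odd pair structure of $\sinv$, a structure that rotation-plus-shift preserves; for even $j$ with $n$ even it uses the parity-alternation of $\K^2$ to force $j\equiv 0\pmod{n}$ and then rules out $j=n$ by one explicit evaluation; for even $j$ with $n$ odd it tracks the position of the entry $1$ via Lemma~\ref{ithentry}. You replace all of this with two algebraic observations: for odd $n$, $\sigma$ is the affine map $x\mapsto 2x-1$ on $\mathbb{Z}/n\mathbb{Z}$, so every $\K^j(\sigma)$ is affine and comparing linear coefficients ($2$ versus $2^{-1}$, equal only when $n\mid 3$) and constant terms settles both parities of $j$ at once; for even $n$, both parities of $j$ reduce to the claim that $\sigma$ commutes with no nontrivial power of the long cycle $c$, which your parity/arc argument establishes, with the genuine borderline case $t\equiv n/2$, $n\equiv 2\pmod 4$ correctly eliminated by a single evaluation. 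Your reduction of the odd-$j$ case to commutation with $c^{j}$ --- together with the observation that an odd $j$ with $0<j<2n$ is never $\equiv 0\pmod{n}$ when $n$ is even --- is the one step with no counterpart in the paper, and you carried it out correctly. What your route buys: uniformity in $n>3$ (no separate base cases $n=4,5$), a two-line odd-$n$ case, an explanation of why $n=3$ genuinely fails (there $2\equiv 2^{-1}\pmod 3$), and consistency with the centralizer viewpoint the paper itself uses in Theorem~\ref{Prop:K even orbs}. What the paper's route buys is that it stays entirely within the elementary entry-tracking toolkit of Lemma~\ref{ithentry}, at the cost of a three-way case analysis and explicit small-$n$ checks.
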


\begin{proof}
  Let $\sigma$ be the permutation on $n$ elements given in the proposition, i.e.\
  \[\sigma_i=
    \begin{cases}
      2(i-1)+1                     & \mbox{for } 1\leq i\leq\lceil\frac{n}{2}\rceil \\
      2(i-\lceil\frac{n}{2}\rceil) & \mbox{for } \lceil\frac{n}{2}\rceil<i\leq n
    \end{cases}.
  \]
  Direct calculation shows that the orbit of $\sigma$ under the Kreweras complement has size $n$ for $n=2,$ and $3$ and size $2n$ for $n=4,$ and $5.$
  For $n>5$, we use the fact that all odd entries of $\sigma$ are followed by all even entries to show that $\K^{j}(\sigma) \neq \sigma$ for $0 < j < 2n.$

  Consider $\sigma^{-1}=1(\frac{n}{2}+1)2(\frac{n}{2}+2)3(\frac{n}{2}+3)\ldots \frac{n}{2} n$ for $n$ even, and $\sigma^{-1}=1(\lceil\frac{n}{2}\rceil+1)2 (\lceil\frac{n}{2}\rceil+2)3( \lceil\frac{n}{2}\rceil+3)\ldots \lceil\frac{n}{2}\rceil$ for $n$ odd.  Unlike in $\sigma$, even and odd entries of $\sigma^{-1}$ appear in alternating pairs, potentially bookended on either or both sides by a single even or odd entry. To illustrate this point, when $n=4$, $\sigma^{-1}=1324$, when $n=5$, $\sigma^{-1}=14253$, when $n=6$, $\sigma^{-1}=142536$, and when $n=7$, $\sigma^{-1}=1526374.$  As conveyed in Lemma \ref{ithentry}, when $j$ is odd, $\K^j(\sigma)$ is equivalent to rotating $\sigma^{-1}$ some set amount and then adding a constant amount to each entry modulo $n$. This process preserves the grouping of even and odd entries of $\sigma^{-1}$ when viewed as a cyclic ordering, and for $n>5$  never results in all even entries followed by all odd entries.  Thus, for $n>5,$ $\K^j(\sigma)\neq \sigma$ when $j$ is odd.

  Next, consider $j$ even, in which case the action of $\K^j$ can be realized as acting on $\sigma$ by $\K^2$ some number of times.  By Lemma \ref{ithentry}, the action of $\K^2$ is the same as cyclically shifting each entry of $\sigma$ one place to the right and adding one modulo $n$.  When $n$ is even, this process alternates the parity of every entry and thus preserves the string of $\frac{n}{2}$ even and $\frac{n}{2}$ odd entries (though cyclically shifted). For the permutation to return to a state where the first $\frac{n}{2}$ entries (and thus also the last $\frac{n}{2}$ entries) have the same parity, we have to shift the entries some multiple of $\frac{n}{2}$ times, i.e.\ $j$ must be some multiple of $n$.
  For $n$ even, $\K^n(\sigma) = (2 + \frac{n}{2})(4 + \frac{n}{2}) \ldots (n + \frac{n}{2})(1 + \frac{n}{2}) \ldots (n - 1 + \frac{n}{2})$, where the entries are calculated modulo $n$.  If $\K^n(\sigma) = \sigma$, then $(2 + \frac{n}{2}) \mod{n} = 1$. But as $n > 5$, this is not possible. Thus $\K^n(\sigma) \neq \sigma$ and the smallest $j$ such that $\K^j(\sigma) = \sigma$ is $2n$.

  For $n$ odd, the case where $j$ is even is slightly more complicated.  $\K^j(\sigma)$ can still be thought of as repeatedly acting on $\sigma$ by $\K^2,$ however each time we shift and add one to the entries of $\sigma$, every entry \emph{except $n$} alternates in parity.  To show that $\K^j(\sigma)\neq\sigma$ for $0< j<2n$ when $n$ is odd, we examine the position of $1$ in $\K^{j}(\sigma).$  Suppose $1$ appears in the $i$-th position of $\K^j(\sigma).$  By Lemma \ref{ithentry}, $\K^j(\sigma)_i=\sigma_{i-\frac{j}{2}}+\frac{j}{2}\pmod{n}.$  Thus, if $\K^j(\sigma)_i=1$, it follows that $\sigma_{i-\frac{j}{2}}=1-\frac{j}{2}\pmod{n}.$  Let $j=2k$, then $\sigma_{i-k}=1-k=n+1-k\pmod{n}.$  When $k$ is even, $n+1-k$ is even and appears in the $n+1-\frac{k}{2}$ position by the definition of $\sigma$.  Setting the indices equal we have $i-k=n+1-\frac{k}{2}\pmod{n}$, which implies $i=1+\frac{k}{2}\pmod{n}.$  Thus, when $k$ is even, the first time $i=1$ is when $k=2n.$  By a similar argument, if $k$ is odd, $\sigma_{i-k}=n+1-k$ is odd and appears at position $\frac{n-k}{2}+1$ in $\sigma$. Again, setting indices equal and solving for $i$ we have $i=\frac{n+k}{2}+1.$  In this case the first time $i=1$ is when $k=n.$  Thus, if $n$ is odd, the first time $1$ returns to the first position (i.e.\ $i=1$), and therefore $\K^{j}(\sigma)$ might equal $\sigma,$ is when $k=n$, or equivalently $j=2n.$
\end{proof}

\subsection{Kreweras complement homomesies}
\label{sec:KrewHom}
Empirical investigations using the FindStat database suggested three potential homomesies for the Kreweras complement and its inverse. In this subsection, we prove those homomesies, as well as a fourth that was not included in the FindStat database at the time of our investigation.

We begin with the relevant definitions and lemmas that will be useful in proving Theorem~\ref{Thm:Kreweras}.

\begin{definition}
\label{def:exceedance}
  Given a permutation $\sigma\in S_n$, an index $i$ is said to be an \textbf{exceedance} of $\sigma$ if $\sigma_i>i$, a \textbf{deficiency} of $\sigma$ if $\sigma_i < i$, and a \textbf{weak deficiency} or \textbf{anti-exceedance} if $\sigma_i\le i.$  The number of exceedances $\#\{i:\sigma_i> i\}$ is denoted $\exc(\sigma)$, while the number of points fixed by $\sigma$ is denoted $\fp$.
\end{definition}

\begin{lem}\label{exc_sinv} Given a permutation $\sigma\in S_n$,   $$\exc(\sigma^{-1})=n-\exc(\sigma)-\fp.$$
\end{lem}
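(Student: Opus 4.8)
The plan is to reduce the claim to a count of the strict deficiencies of $\sigma$. First I would partition the indices $[n]$ according to the three mutually exclusive possibilities for each $i$: either $\sigma_i > i$ (an exceedance), $\sigma_i = i$ (a fixed point), or $\sigma_i < i$ (a \emph{strict} deficiency). Every index falls into exactly one of these classes, so
\[
n = \exc(\sigma) + \fp + \#\{i : \sigma_i < i\},
\]
which rearranges to give $\#\{i : \sigma_i < i\} = n - \exc(\sigma) - \fp$. This isolates the quantity I will match against $\exc(\sinv)$.

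The key step is to show that $\exc(\sinv)$ counts precisely the strict deficiencies of $\sigma$. For this I would use the bijection of $[n]$ induced by $\sigma$. By definition, $j$ is an exceedance of $\sinv$ exactly when $\sinv_j > j$. Writing $i = \sinv_j$, equivalently $\sigma_i = j$, the inequality $\sinv_j > j$ becomes $i > \sigma_i$, that is $\sigma_i < i$. Since $j \mapsto \sinv_j = i$ runs bijectively over $[n]$ as $j$ does, the exceedances of $\sinv$ are in bijection with the strict deficiencies of $\sigma$, so that
\[
\exc(\sinv) = \#\{i : \sigma_i < i\}.
\]

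Combining the two displays yields $\exc(\sinv) = n - \exc(\sigma) - \fp$, as desired. I do not expect any genuine obstacle here; the only point requiring care is keeping the strict/weak distinction straight, since the exceedances of $\sinv$ correspond to the \emph{strict} deficiencies of $\sigma$ (not the weak deficiencies, which would also absorb the fixed points), with the fixed points forming exactly the indices counted by neither $\exc(\sigma)$ nor $\exc(\sinv)$.
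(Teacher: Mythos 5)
Your proof is correct and is essentially the paper's argument viewed in complementary form: the paper matches the weak deficiencies of $\sigma^{-1}$ with the exceedances and fixed points of $\sigma$ and then passes to the complement, while you match the exceedances of $\sigma^{-1}$ directly with the strict deficiencies of $\sigma$ and then count the trichotomy; both rest on the same correspondence $\sigma^{-1}_j = i \Leftrightarrow \sigma_i = j$. No gaps.
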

\begin{proof}
  Examining the weak deficiencies of $\sigma^{-1}$ we note that  $\sigma^{-1}_i=m\le i$ if and only if $\sigma_m=i\ge m$.  Thus the number of weak deficiencies of $\sigma^{-1}$ is the sum of the number of exceedances and fixed points of $\sigma$.  As the exceedances of $\sigma^{-1}$ are the complement of the weak deficiencies of $\sigma^{-1}$, the result follows.
\end{proof}

\begin{lem}\label{exc_K}
  Given $\sigma\in S_n$, the following formula gives the number of exceedances in the image under $\K$ and $\K^{-1}.$ $$\exc(\K(\sigma))=n-\exc(\sigma)-1=\exc(\K^{-1}(\sigma)).$$
\end{lem}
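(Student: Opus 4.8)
The plan is to compute $\exc(\K(\sigma))$ directly from the one-line description of $\K(\sigma)=c\circ\sinv$, and then convert the answer into the desired form using Lemma~\ref{exc_sinv}. Writing $c$ as the map sending $k\mapsto k+1$ for $k<n$ and $n\mapsto 1$, the $i$-th entry of $\K(\sigma)$ is $\K(\sigma)_i=c(\sinv_i)$, which equals $\sinv_i+1$ when $\sinv_i<n$ and equals $1$ when $\sinv_i=n$. The key observation will be that adding $1$ cyclically turns the weak exceedances of $\sinv$ into the exceedances of $\K(\sigma)$, losing exactly one index in the wrap-around.

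Concretely, I would split the indices into two cases. If $\sinv_i<n$, then $i$ is an exceedance of $\K(\sigma)$ precisely when $\sinv_i+1>i$, i.e. $\sinv_i\ge i$, i.e. $i$ is a weak exceedance of $\sinv$. If $\sinv_i=n$ (the unique such index, namely $i=\sigma_n$), then $\K(\sigma)_i=1\le i$, so $i$ is never an exceedance of $\K(\sigma)$, even though it is always a weak exceedance of $\sinv$ since $n\ge i$. Hence the number of exceedances of $\K(\sigma)$ equals the number of weak exceedances of $\sinv$ minus one, that is $\exc(\K(\sigma))=\bigl(\exc(\sinv)+\fp\bigr)-1$, using that $\sinv$ and $\sigma$ share the same fixed points. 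Substituting $\exc(\sinv)=n-\exc(\sigma)-\fp$ from Lemma~\ref{exc_sinv} then cancels the fixed-point term and yields $\exc(\K(\sigma))=n-\exc(\sigma)-1$.

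For $\K^{-1}(\sigma)=\sinv\circ c$, the same strategy applies after the reindexing $j=c(i)$: writing $\K^{-1}(\sigma)_i=\sinv_{c(i)}$, the index $i$ is an exceedance exactly when $\sinv_j>c^{-1}(j)$, and summing over $j$ (a bijection of $[n]$) reduces to counting $j$ with $\sinv_j\ge j$ for $j>1$ while excluding $j=1$, where $c^{-1}(1)=n$ forces failure. This again removes exactly one weak exceedance of $\sinv$, giving $\exc(\K^{-1}(\sigma))=\exc(\sinv)+\fp-1=n-\exc(\sigma)-1$, matching the $\K$ computation. The only delicate point — and the step I expect to require the most care — is the bookkeeping at the cyclic boundary ($\sinv_i=n$ for $\K$, and $j=1$ for $\K^{-1}$): one must verify that the single lost index is genuinely a weak exceedance of $\sinv$, so that the count drops by exactly $1$ and never $0$, which is precisely where the ``$-1$'' in the formula originates.
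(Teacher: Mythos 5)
Your proof is correct, and it rests on the same two pillars as the paper's: the observation that composing with the long cycle $c$ shifts entries cyclically by one, losing exactly one exceedance at the wrap-around, followed by an application of Lemma~\ref{exc_sinv}. The differences are organizational but worth noting. For $\K$, the paper treats fixed points of $\sinv$ and exceedances of $\sinv$ as separate classes, and then further splits on whether $n$ is a fixed point of $\sinv$, arriving at $\exc(\K(\sigma))=\exc(\sinv)+(\fp-1)$ in one case and $(\exc(\sinv)-1)+\fp$ in the other; your single count --- the exceedances of $\K(\sigma)$ are exactly the weak exceedances of $\sinv$ other than the unique index $i=\sigma_n$ where $\sinv_i=n$, which is always a weak exceedance --- merges those cases into one and is cleaner for it. For $\K^{-1}$, you redo the boundary analysis directly after reindexing by $j=c(i)$, which is valid; the paper instead avoids any new computation by setting $\tau=\K^{-1}(\sigma)$, applying the already-proven formula to get $\exc(\sigma)=\exc(\K(\tau))=n-\exc(\tau)-1$, and solving for $\exc(\tau)$. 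Your route makes the two halves symmetric and self-contained, at the cost of repeating the boundary bookkeeping; the paper's substitution is shorter and exploits the fact that the identity being proved is itself symmetric between a map and its inverse, in the same spirit as Lemma~\ref{lem:inverse}.
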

\begin{proof}
  For all $i<n,$ if $i$ is a fixed point of $\sigma^{-1}$, then $(c\circ\sigma^{-1})_i=i+1$ and $i$ is an exceedance of $\K(\sigma)=c\circ\sigma^{-1}.$ If $n$ is a fixed point, $(c\circ\sinv)_n=1$ and $n$ is not an exceedance of $\K(\sigma).$ Similarly, if $i$ is an exceedance of $\sigma^{-1}$ with $\sinv_i\neq n$, then $(c\circ\sigma^{-1})_i=\sigma^{-1}_i+1$ is an exceedance.  If $\sigma^{-1}_i=n,$ then  $(c\circ\sinv)_i=1$ and $i$ is not an exceedance of $\K(\sigma).$
  Thus, if $n$ is a fixed point of $\sigma^{-1}$, $$\exc(\K(\sigma))=\exc(c\circ\sinv)=\exc(\sinv)+(\fp-1),$$ while if $n$ is not a fixed point $$\exc(\K(\sigma))=(\exc(\sinv)-1)+\fp.$$ Therefore by Lemma \ref{exc_sinv}, $\exc(\K(\sigma))=n-\exc(\sigma)-1.$

  To prove the result for $\K^{-1}$, let $\tau=\K^{-1}(\sigma)$. Then $\K(\tau) = \sigma$ and we know that $\exc(\sigma) = \exc(\K(\tau)) = n-\exc(\tau)-1 =n-1-\exc(\K^{-1}(\sigma))$. Solving for $\exc(\K^{-1}(\sigma))$, we find $\exc(\K^{-1}(\sigma)) = n-\exc(\sigma)-1.$
\end{proof}

We are now ready to prove the homomesy results stated in Theorem \ref{Thm:Kreweras}.

\begin{prop}[Statistics 155 and 702]\label{KHom exc}
  For the Kreweras complement and its inverse acting on $S_n$, the number of exceedances is $\frac{n-1}{2}$-mesic while the number of weak deficiencies is $\frac{n+1}{2}$-mesic.  Furthermore, the number of exceedances and weak deficiencies is constant over orbits of odd size.
\end{prop}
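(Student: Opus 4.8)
The plan is to reduce everything to the reflection identity of Lemma~\ref{exc_K}, which states that $\exc(\K(\sigma)) = n-1-\exc(\sigma)$, and then to exploit the fact that $\K$ acts as a cyclic permutation on each of its orbits. First I would fix an orbit $\mathcal{O}$ of $\K$ and observe that, because $\K$ restricts to a bijection of $\mathcal{O}$ onto itself, summing the pointwise identity $\exc(\tau) + \exc(\K(\tau)) = n-1$ over all $\tau \in \mathcal{O}$ gives
\[
\sum_{\tau \in \mathcal{O}} \exc(\tau) + \sum_{\tau \in \mathcal{O}} \exc(\K(\tau)) = (n-1)\,|\mathcal{O}|.
\]
Since $\tau \mapsto \K(\tau)$ permutes $\mathcal{O}$, the two sums on the left are equal, so $2\sum_{\tau \in \mathcal{O}} \exc(\tau) = (n-1)|\mathcal{O}|$ and the orbit average is $\frac{n-1}{2}$. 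This handles every orbit at once, independent of the parity of $|\mathcal{O}|$ or of $n$, and establishes that the number of exceedances is $\frac{n-1}{2}$-mesic for $\K$.

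For the weak deficiencies I would use that each index is either an exceedance or a weak deficiency, so their number is $n - \exc(\sigma)$. By Lemma~\ref{lem:sum_diff_homomesies} (taking the constant statistic $n$, trivially $n$-mesic, together with $-\exc$) the difference $n-\exc$ is homomesic with average $n-\frac{n-1}{2} = \frac{n+1}{2}$. Finally, Lemma~\ref{exc_K} already records $\exc(\K^{-1}(\sigma)) = n-1-\exc(\sigma)$, so the identical averaging argument transfers verbatim to $\K^{-1}$; alternatively one simply invokes Lemma~\ref{lem:inverse}, since $\K$ and $\K^{-1}$ share the same orbits and hence the same orbit averages.

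I expect no serious obstacle here, as all the genuine work sits in Lemma~\ref{exc_K}. The only point deserving care is the claim that $\sum_{\tau\in\mathcal{O}}\exc(\K(\tau)) = \sum_{\tau\in\mathcal{O}}\exc(\tau)$, which relies on $\K$ mapping $\mathcal{O}$ bijectively onto itself. As a sanity check one can instead track the exceedance count directly along the orbit: writing $a_j = \exc(\K^j(\sigma))$, the recurrence $a_{j+1} = n-1-a_j$ forces the values to alternate between $x = \exc(\sigma)$ and $n-1-x$. An even-sized orbit then contains each of these two values equally often, while an odd-sized orbit (possible only for odd $n$) can close up consistently only if $x = n-1-x = \frac{n-1}{2}$, making $\exc$ constant on the orbit. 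Both viewpoints yield the orbit average $\frac{n-1}{2}$, confirming the stated homomesies.
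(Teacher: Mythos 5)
Your argument is correct, and its central averaging step is genuinely different from the paper's. Both proofs rest entirely on Lemma~\ref{exc_K}, but the paper turns the reflection identity $\exc(\K(\sigma)) = n-\exc(\sigma)-1$ into homomesy via a case analysis on the parity of the orbit size $l$: for odd $l$ it deduces that $\exc$ is constant, equal to $\frac{n-1}{2}$, on the orbit, and for even $l$ it pairs $\K^{2j}(\sigma)$ with $\K^{2j+1}(\sigma)$ so that each pair contributes $n-1$ to the orbit sum. You instead sum the identity $\exc(\tau)+\exc(\K(\tau)) = n-1$ over the whole orbit and use the fact that $\K$ restricts to a bijection of the orbit onto itself to identify the two resulting sums, giving $2\sum_{\tau\in\mathcal{O}}\exc(\tau)=(n-1)\,|\mathcal{O}|$ with no parity distinction at all. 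This is the cleaner and more general observation that any statistic $f$ satisfying $f + f\circ\mathcal{X} \equiv c$ is automatically $\frac{c}{2}$-mesic, uniformly over all orbits. What the paper's case analysis buys --- and what your closing sanity check independently recovers --- is the stronger fact that $\exc$ is \emph{constant} on odd-sized orbits, which the paper exploits immediately afterwards in Corollary~\ref{Korb odd divisor even n} to rule out odd-sized orbits when $n$ is even. Your remaining steps (weak deficiencies as $n-\exc$ via Lemma~\ref{lem:sum_diff_homomesies}, and the transfer to $\K^{-1}$ via Lemma~\ref{lem:inverse}) match the paper exactly.
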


\begin{proof}  We start by observing that, as a  consequence of Lemma \ref{exc_K},$$\exc(\K^{-2}(\sigma))=\exc(\K^2(\sigma))=n-\exc(\K(\sigma))-1=n-(n-\exc(\sigma)-1)-1=\exc(\sigma).$$  More generally,
  $$\exc(\K^{\pm m}(\sigma))= \left\{
    \begin{array}{lr}
      \exc(\sigma)     & \mbox{when $m$ is even} \\
      n-\exc(\sigma)-1 & \mbox{when $m$ is odd}
    \end{array}\right .
  $$
  Let $\ell$ be the size of the orbit of $\sigma$ under the action of $\K$, i.e.\ $\K^{\ell}(\sigma)=\sigma.$ Thus $\exc(\K^{\ell}(\sigma))=\exc(\sigma).$  If $\ell$ is odd, it follows that $$\exc(\K^{\ell}(\sigma))=n-\exc(\sigma)-1=\exc(\sigma)$$ and solving for $\exc(\sigma)$ we find $$\exc(\sigma)=\frac{n-1}{2}.$$
  Thus, when $\ell$ is odd, the number of exceedances is constant over the orbit and equal to $\frac{n-1}{2}$.

  When $\ell$ is even, the average number of exceedances over the orbit is
  \begin{eqnarray*}
    \frac{1}{\ell}\sum_{i=0}^{\ell-1} \exc(\K^i(\sigma))&=&\frac{1}{\ell}\sum_{j=0}^{\frac{\ell}{2}-1}\exc(\K^{2j}(\sigma))+\exc(\K^{2j+1}(\sigma))\\
    &=&\frac{1}{\ell}\sum_{j=0}^{\frac{\ell}{2}-1}\exc(\sigma)+(n-\exc(\sigma)-1)\\
    &=&\frac{1}{\ell}\sum_{j=0}^{\frac{\ell}{2}-1}n-1=\frac{n-1}{2}.
  \end{eqnarray*}
  Thus, the number of exceedances is $\frac{n-1}{2}$-mesic under the action of the Kreweras complement. The result for $\K^{-1}$ follows from Lemma \ref{lem:inverse}.

  By definition, the number of weak deficiencies is equal to $n-\exc(\sigma)$ for all $\sigma\in S_n.$   Thus, since $\exc$ is $\frac{n-1}{2}$-mesic under the action of the Kreweras complement and its inverse, the number of weak deficiencies is $n-\frac{n-1}{2}=\frac{n+1}{2}$-mesic.
\end{proof}

Our homomesy result for the number of exceedances explains the observed lack of odd sized orbits under the action of the Kreweras complement on $S_n$ when $n$ is even.

\begin{cor}\label{Korb odd divisor even n}
  If $n$ is even, there are no orbits of odd size under the Kreweras complement or its inverse acting on $S_n$.
\end{cor}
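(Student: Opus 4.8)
The plan is to argue by contradiction, leveraging the parity observation buried in the proof of Proposition~\ref{KHom exc}. Suppose, for the sake of contradiction, that some $\sigma \in S_n$ generates an orbit of odd size $l$ under the Kreweras complement, so that $\K^l(\sigma) = \sigma$ with $l$ odd. The key computational fact, established in the course of proving Proposition~\ref{KHom exc} (and ultimately resting on Lemma~\ref{exc_K}), is that the number of exceedances behaves with a clean parity dependence under iteration of $\K$: namely
\[
\exc(\K^{m}(\sigma)) = n - \exc(\sigma) - 1 \qquad \text{whenever } m \text{ is odd.}
\]
I would invoke this directly rather than re-derive it.

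Applying this with $m = l$, which is odd by assumption, and combining with $\K^l(\sigma) = \sigma$ gives the chain of equalities $\exc(\sigma) = \exc(\K^l(\sigma)) = n - \exc(\sigma) - 1$. Solving for $\exc(\sigma)$ then forces $\exc(\sigma) = \frac{n-1}{2}$. This is exactly the constant orbit-average value already identified in Proposition~\ref{KHom exc}, which is reassuring, but here I want to read it as an \emph{integrality constraint} rather than as an average.

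The crux of the argument is the elementary but decisive observation that $\exc(\sigma)$ is by definition a nonnegative integer, being the cardinality $\#\{i : \sigma_i > i\}$. When $n$ is even, however, $\frac{n-1}{2}$ is not an integer, so the equation $\exc(\sigma) = \frac{n-1}{2}$ has no solution. This contradiction shows that no permutation in $S_n$ can lie in an odd-sized orbit when $n$ is even, proving the corollary; the statement for $\K^{-1}$ follows since $\K$ and $\K^{-1}$ have identical orbits (Lemma~\ref{lem:inverse}). I do not anticipate any genuine obstacle here, as the entire content is the integrality clash between the integer-valued statistic $\exc$ and the half-integer $\frac{n-1}{2}$; the only point requiring care is making explicit that the odd-$m$ exceedance formula, already available from the preceding results, is precisely what converts the hypothesis ``orbit size is odd'' into the forbidden equation.
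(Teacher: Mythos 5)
Your proposal is correct and follows essentially the same route as the paper's own proof: both extract from Proposition~\ref{KHom exc} (via Lemma~\ref{exc_K}) that every element of an odd-sized orbit must satisfy $\exc(\sigma)=\frac{n-1}{2}$, and then derive a contradiction from the integrality of $\exc(\sigma)$ when $n$ is even. Your write-up merely makes explicit the odd-power formula and the appeal to Lemma~\ref{lem:inverse} for $\K^{-1}$, which the paper leaves implicit.
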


\begin{proof}
  Consider orbits of odd size under the Kreweras complement and its inverse.  As shown in the proof of Proposition \ref{KHom exc}, the number of exceedances for each element of such an orbit is the constant value $\frac{n-1}{2}$.  Since the number of exceedances must be an integer, we arrive at a contradiction when $n$ is even.
\end{proof}

\begin{definition}
  Given a permutation $\sigma\in S_n$, define the lower middle element to be $\sigma_{\frac{n}{2}}$ when $n$ is even, and $\sigma_{\frac{n+1}{2}}$ when $n$ is odd.
\end{definition}

\begin{prop}[Statistic 740]\label{Khom lastentry}
  The last entry of a permutation, and when $n$ is even, the lower middle element, are $\frac{n+1}{2}$-mesic under the Kreweras complement and its inverse.
\end{prop}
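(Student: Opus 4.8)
The plan is to prove both homomesies at once by working with an arbitrary entry position $i$ and isolating the precise arithmetic condition on $i$ that forces the orbit average to equal $\frac{n+1}{2}$; this condition will single out $i=n$ and, when $n$ is even, $i=\frac{n}{2}$. Since $\K$ has order $2n$ (Theorem~\ref{thm:K_order}) and every orbit size $\ell$ divides $2n$, the list $\K^0(\sigma),\K^1(\sigma),\ldots,\K^{2n-1}(\sigma)$ runs through each element of the orbit of $\sigma$ exactly $\frac{2n}{\ell}$ times. Hence the orbit average of the $i$-th entry equals $\frac{1}{2n}\sum_{j=0}^{2n-1}\K^j(\sigma)_i$, and it suffices to show this full-period sum equals $n(n+1)$ whenever $i=n$ or ($n$ even and) $i=\frac{n}{2}$.

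To evaluate the period sum I would split it by the parity of $j$ and apply Lemma~\ref{ithentry}. Write $[x]$ for the representative of $x$ modulo $n$ lying in $\{1,\dots,n\}$, with $0$ represented by $n$ (exactly the convention of Lemma~\ref{ithentry}). Reindexing the even powers $j=2m$ by $k=i-m$ turns $\sum_{m=0}^{n-1}\K^{2m}(\sigma)_i$ into $\sum_{k=1}^{n}[\sigma_k-k+i]$, and the same substitution on the odd powers $j=2m+1$ gives $\sum_{k=1}^{n}[\sigma^{-1}_k-k+i+1]$. Using $\K(\sigma)_k=\sigma^{-1}_k+1 \pmod n$ rewrites the odd part as $\sum_{k=1}^n[\K(\sigma)_k-k+i]$, so both halves have the common shape $D_i(\tau):=\sum_{k=1}^n[\tau_k-k+i]$, and the period sum is exactly $D_i(\sigma)+D_i(\K(\sigma))$.

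The heart of the argument, and the step I expect to be the main obstacle, is evaluating $D_i(\sigma)+D_i(\K(\sigma))$. Reindexing $D_i(\K(\sigma))$ by $v=\sigma^{-1}_k$ (so $k=\sigma_v$) and again using $\K(\sigma)_k=\sigma^{-1}_k+1$ converts it to $\sum_v[-(\sigma_v-v)+1+i]$, which pairs term-by-term with $D_i(\sigma)=\sum_v[\sigma_v-v+i]$. Setting $x=\sigma_v-v$ and $C=1+2i$, each pair contributes $[x+i]+[C-(x+i)]$, i.e.\ a quantity of the form $[u]+[C-u]$. A short residue check shows $[u]+[C-u]$ is identically $n+1$ precisely when $C\equiv 1\pmod n$: writing $u\equiv a$ one splits into $a\equiv 0$, $a\equiv 1$, and $2\le a\le n-1$ and verifies each gives $n+1$ exactly under $C\equiv 1$, while for other $C$ the bracketed representatives wrap around and the pairwise sum fails to be constant. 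The congruence $C=1+2i\equiv 1\pmod n$ is equivalent to $2i\equiv 0\pmod n$, which holds for $i=n$ for every $n$ and, when $n$ is even, additionally for $i=\frac{n}{2}$.

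For these admissible positions all $n$ pairs sum to $n+1$, so $D_i(\sigma)+D_i(\K(\sigma))=n(n+1)$, the period sum is $n(n+1)$, and the orbit average is $\frac{n+1}{2}$; the corresponding statement for $\K^{-1}$ then follows immediately from Lemma~\ref{lem:inverse}. The same computation makes transparent why $2i\equiv 0\pmod n$ is the only regime in which the pairwise cancellation occurs, which is exactly the phenomenon recorded in Corollary~\ref{KC: no entry but}, namely that the last entry and (for even $n$) the middle entry are the only $i$-th entry homomesies for the Kreweras complement.
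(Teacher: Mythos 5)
Your proof is correct, and its skeleton matches the paper's: reduce the orbit average to the average over the full period $j=0,\dots,2n-1$ (valid since every orbit size divides $2n$), split by parity of $j$ via Lemma~\ref{ithentry}, and pair each even-power term with an odd-power term so that every pair sums to $n+1$; in fact, unwinding your substitution $v=\sigma^{-1}_k$ at $i=n$ recovers exactly the paper's pairing of $[\sigma_{n-k}+k]_n$ with $[\sigma^{-1}_m+(n-m+1)]_n$ where $m=\sigma_{n-k}$. Where you genuinely diverge is in running the computation for an arbitrary position $i$: by writing both halves in the common form $D_i(\tau)=\sum_{k}[\tau_k-k+i]$ and reducing each pair to $[u]+[C-u]$ with $C=1+2i$, you isolate the single arithmetic condition $2i\equiv 0 \pmod n$ under which the pairing produces the constant $n+1$. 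This buys a unified argument -- the paper repeats the subscript bookkeeping twice, once for the last entry and once for the middle entry -- and it makes conceptually visible why exactly these two positions occur. One caveat: your closing remark that the same computation makes Corollary~\ref{KC: no entry but} transparent is too quick as stated. For $2i\not\equiv 0\pmod n$ the individual pairs $[u]+[C-u]$ fail to be constant, but that alone does not show some orbit average differs from $\frac{n+1}{2}$, since the deviations could cancel across the $n$ pairs; the paper settles the negative direction by exhibiting the concrete orbit $\{12\cdots n,\ 23\cdots n1\}$, which remains the cleaner route for that statement.
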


\begin{proof}
  We start by showing that the average of the set  $\{ \K^j(\sigma)_i \ | \ j = 0, 1, \ldots , 2n-1\}$ is $\frac{n + 1}{2}$ when $i=n$ and when $i=\frac{n}{2}$ for even $n$.

  By Lemma \ref{ithentry}, running over even values of $j$, the last entry of $\K^j(\sigma)$, i.e.\ $\K^j(\sigma)_n,$ takes the form $\sigma_n, \sigma_{n-1} + 1, \dots, \sigma_{1} + (n - 1) $; and running over odd values of $j$, $\K^j(\sigma)_n$ takes the forms $\sigma^{-1}_n + 1, \sigma^{-1}_{n-1} + 2 , \dots , \sigma^{-1}_1$.  In each case, the entries are calculated modulo $n$ with $n$ used for the $0$-th equivalence class.

  To find the sum of $\K^j(\sigma)_n$ for $0\leq j\leq 2n-1$, we show that these last terms can be partitioned into pairs which sum to $n+1.$
  Suppose $\sigma_{n-k}=m.$  Breaking up the sum as follows $$\sum_{j=0}^{2n-1}\K^j(\sigma)_n=\sum_{k=0}^{n-1}[\sigma_{n-k}+k]_n+[\sigma^{-1}_m+(n-m+1)]_n,$$
  we observe that
  \begin{itemize}
    \item if $\sigma_{n-k}+k=m+k>n$, then $\sigma^{-1}_m+(n-m+1)=n-k+(n-m+1)\leq n$.  Since both terms are between $1$ and $2n$, it follows $[\sigma_{n-k}+k]_n+[\sigma^{-1}_m+(n-m+1)]_n=(m+k-n)+(n-k+(n-m+1))=n+1.$
    \item Similarly, if $\sigma_{n-k}+k\leq n$, then $\sigma^{-1}_m+(n-m+1)> n$, and $[\sigma_{n-k}+k]_n+[\sigma^{-1}_m+(n-m+1)]_n=(m+k)+(n-k+(n-m+1)-n)=n+1.$
  \end{itemize} Thus, the average of the last entries is
  $$\frac{1}{2n}\sum_{j=0}^{2n-1}\K^j(\sigma)_n=\frac{1}{2n}\sum_{k=0}^{n-1}(n+1)=\frac{n+1}{2}.$$

  An analogous argument works to find the sum of the lower middle elements, $\{\K^j(\sigma)_{\frac{n}{2}}| 0\leq j\leq 2n-1\},$ when $n$ is even.  By Lemma \ref{ithentry}, running over the even values of $j,$ $\K^j(\sigma)_{\frac{n}{2}}$ takes the form $\sigma_{[\frac{n}{2}]_n}, \sigma_{[\frac{n}{2}-1]_n} + 1, \sigma_{[\frac{n}{2}-2]_n} + 2, \ldots, \sigma_{[\frac{n}{2}-(n-1)]_n} + (n-1).$ Running over the odd values of $j,$ $\K^j(\sigma)_{\frac{n}{2}}$ takes the form  $\sigma^{-1}_{[\frac{n}{2}]_n} + 1, \sigma^{-1}_{[\frac{n}{2}-1]_n} + 2, \ldots, \sigma^{-1}_{[\frac{n}{2}-(n-1)]_n} + n$. As before, values are calculated modulo $n$ with $n$ used for the $0$-th equivalence class.

  Suppose $\sigma_{[\frac{n}{2}-k]_n}=m$, and break up the sum  $$\sum_{j=0}^{2n-1}\K^j(\sigma)_{\frac{n}{2}}=\sum_{k=0}^{n-1}[\sigma_{[\frac{n}{2}-k]_n}+k]_n+[\sigma^{-1}_m+\frac{n}{2}-m+1]_n,$$
  observing that
  \begin{itemize}
    \item if $\sigma_{[\frac{n}{2}-k]_n}+k=m+k>n,$ then $\sigma^{-1}_m+(\frac{n}{2}-m+1)=\frac{n}{2}-k+(\frac{n}{2}-m+1)\leq 0$, and since both terms are between $-n$ and $n$, $[\sigma_{[\frac{n}{2}-k]_n}+k]_n+[\sigma^{-1}_m+(\frac{n}{2}-m+1)]_n=(m+k-n)+(n-k-m+1+n)=n+1.$
    \item If $\sigma_{[\frac{n}{2}-k]_n}+k\leq n,$ then  $n \geq \sigma^{-1}_m+(\frac{n}{2}-m+1)\geq 1$, and $[\sigma_{[\frac{n}{2}-k]_n}+k]_n+[\sigma^{-1}_m+(\frac{n}{2}-m+1)]_n=(m+k)+(n-k-m+1)=n+1.$

  \end{itemize} Thus, the average of the $\frac{n}{2}$-th entries is
  $$\frac{1}{2n}\sum_{j=0}^{2n-1}\K^j(\sigma)_{\frac{n}{2}}=\frac{1}{2n}\sum_{k=0}^{n-1}(n+1)=\frac{n+1}{2}.$$

  Next, we consider an orbit with size $l$ where $0 < l \leq 2n$. As $l$ must be a divisor of $2n$, there is a positive integer $k$ such that $2n = kl$. Therefore

  $$\displaystyle \sum_{j = 0}^{2n - 1} \K^{j}(\sigma)_i = k \sum_{j = 0}^{l - 1} \K^{j}(\sigma)_i.$$

  So, the average over the orbit when $i=n,$ and when $i=\frac{n}{2}$ for even $n$, is

  $$\frac{\sum_{j = 0}^{l - 1} \K^{j}(\sigma)_i}{l} = \frac{k \sum_{j = 0}^{l - 1} \K^{j}(\sigma)_i}{k l} = \frac{\sum_{j = 0}^{2n - 1} \K^{j}(\sigma)_i}{2n} = \frac{n + 1}{2}.$$  The same results for $\K^{-1}$ follow by Lemma \ref{lem:inverse}.
\end{proof}

When we ran the experiment, the only entries of a permutation that were statistics in FindStat were Statistic 740, the last entry, and Statistic 54, the first entry. Our homomesy result for the lower middle element was found analytically.  Since then, Statistic 1806 - the upper middle entry of a permutation ($\sigma_{\lceil\frac{n+1}{2}\rceil}$), and Statistic 1807 - the lower middle entry ($\sigma_{\lfloor\frac{n+1}{2}\rfloor}$), have been added to the FindStat database.
As noted in the following corollary, the homomesies from Proposition \ref{Khom lastentry} are the only $i$-th entry homomesies possible for the Kreweras complement.

\begin{cor}\label{KC: no entry but}
  No entry except the last entry, and the $\frac{n}{2}$-th entry when $n$ is even, is homomesic with respect to Kreweras complement.
\end{cor}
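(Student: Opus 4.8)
The plan is to invoke Remark~\ref{global_avg}: since a homomesic statistic must take its global average as the common orbit-average, and the global average of the $i$-th entry over all of $S_n$ is $\frac{1}{n}\sum_{k=1}^{n}k=\frac{n+1}{2}$ (every value $1,\dots,n$ occupies position $i$ equally often across $S_n$), it suffices to exhibit, for each position $i$ other than $n$ and $\frac{n}{2}$, a single orbit whose $i$-th-entry average differs from $\frac{n+1}{2}$. That alone rules out homomesy.

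The key observation is that one orbit handles every forbidden $i$ simultaneously. I would use the size-$2$ orbit generated by the identity, namely $\{\,12\cdots n,\ 23\cdots n1\,\}=\{e,c\}$, which is the orbit identified in the proposition on orbits of size $2$ (using $\K(e)=c\circ e^{-1}=c$ and $\K(c)=c\circ c^{-1}=e$). Reading off the $i$-th entry on this orbit: on $e$ it equals $i$, while on $c$ it equals $i+1$ for $1\le i\le n-1$ and equals $1$ for $i=n$.

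The computation is then immediate. For $i=n$ the two entries are $n$ and $1$, averaging to $\frac{n+1}{2}$, consistent with the last entry being homomesic. For $1\le i\le n-1$ the orbit average is $\frac{i+(i+1)}{2}=\frac{2i+1}{2}$, which equals the global average $\frac{n+1}{2}$ precisely when $2i+1=n+1$, that is, $i=\frac{n}{2}$. Hence for every $i$ with $1\le i\le n-1$ and $i\neq\frac{n}{2}$, this single orbit already witnesses non-homomesy, and the only positions not eliminated are $i=n$ and (when $n$ is even) $i=\frac{n}{2}$, which are exactly the entries shown to be $\frac{n+1}{2}$-mesic in Proposition~\ref{Khom lastentry}.

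The only point requiring care is choosing the counterexample efficiently rather than fighting a genuine obstacle. The tempting route—summing $\K^{j}(\sigma)_i$ over a full orbit of size $2n$ via Lemma~\ref{ithentry} and trying to reproduce the pairing argument of Proposition~\ref{Khom lastentry}—is awkward, since that pairing collapses to $n+1$ only for $i=n$ and $i=\frac{n}{2}$ and becomes opaque for other positions. Recognizing that the single size-$2$ orbit $\{e,c\}$ already separates the homomesic from the non-homomesic entries is what makes the argument short. The remaining bookkeeping is that $i=\frac{n}{2}$ is an integer index exactly when $n$ is even, so for odd $n$ every $i<n$ is eliminated and the last entry is the sole homomesic entry, while for even $n$ both $i=n$ and $i=\frac{n}{2}$ survive—matching the statement precisely.
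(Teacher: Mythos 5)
Your proof is correct and follows essentially the same route as the paper: both compute the global average $\frac{n+1}{2}$ of the $i$-th entry and then use the single size-$2$ orbit $\{12\cdots n,\ 23\cdots n1\}$ to observe that the $i$-th entries sum to $2i+1$, which equals $n+1$ only for $i=\frac{n}{2}$ (and trivially for $i=n$). The paper's proof is exactly this argument, citing Theorem~\ref{thm:Orbit_generators_K} for the orbit and Proposition~\ref{Khom lastentry} for the positive direction, just as you do.
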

\begin{proof}
  Consider the set of $i$-th entries of each permutation in $S_n$. As each number between $1$ and $n$ is equally likely to appear in the $i$-th position, the global average of the $i$-th entry is $\frac{1+2+\ldots+n}{n}=\frac{n+1}{2}$.

  By Theorem~\ref{thm:Orbit_generators_K}, for all $n$, $\{1234\ldots n, 234\ldots n1\}$ is an orbit of Kreweras complement. In this orbit, the entries in a given position have to sum to $n+1$ in order for the orbit-average to equal the global average. This is clearly true for the last entry. The sum of the $i$-th entry over these two permutations is $i+(i+1)=2i+1$ for all $1\leq i\leq n-1$. As $2i+1=n+1$ implies $i=\frac{n}{2}$, this orbit shows that only the $n$-th and $\frac{n}{2}$-th entries can be homomesic.
\end{proof}

\bibliographystyle{plain}
\bibliography{master.bib}

\begin{thebibliography}{10}

\bibitem{Armstrong2009}
Drew Armstrong.
\newblock The sorting order on a {C}oxeter group.
\newblock {\em J. Combin. Theory Ser. A}, 116(8):1285--1305, 2009.

\bibitem{BabsonSteingrimsson}
Eric Babson and Einar Steingr\'{\i}msson.
\newblock Generalized permutation patterns and a classification of the
  {M}ahonian statistics.
\newblock {\em S\'{e}m. Lothar. Combin.}, 44:Art. B44b, 18 pages, 2000.

\bibitem{fingerprint}
Sara~C. Billey and Bridget~E. Tenner.
\newblock Fingerprint databases for theorems.
\newblock {\em Notices Amer. Math. Soc.}, 60(8):1034--1039, 2013.

\bibitem{ChatterjeeDiaconis}
Sourav Chatterjee and Persi Diaconis.
\newblock A central limit theorem for a new statistic on permutations.
\newblock {\em Indian J. Pure Appl. Math.}, 48(4):561--573, 2017.

\bibitem{ClarkeSteingrimssonZeng}
Robert Clarke, Einar Steingr\'{\i}msson, and Jiang Zeng.
\newblock New {E}uler-{M}ahonian statistics on permutations and words.
\newblock {\em Adv. Appl. Math.}, 18:237–270, 1997.

\bibitem{EinsteinFGJMPR16}
David~M. Einstein, Miriam Farber, Emily Gunawan, Michael Joseph, Matthew
  Macauley, James Propp, and Simon Rubinstein{-}Salzedo.
\newblock Noncrossing partitions, toggles, and homomesies.
\newblock {\em Electron. J. Comb.}, 23(3):Paper 52, 26 pages, 2016.

\bibitem{Even_Zohar_2016}
Chaim Even-Zohar.
\newblock The writhe of permutations and random framed knots.
\newblock {\em Random Structures $\&$ Algorithms}, 51(1):121--142, 2016.

\bibitem{Foata}
Dominique Foata.
\newblock On the {N}etto inversion number of a sequence.
\newblock {\em Proc. Amer. Math. Soc.}, 19:236--240, 1968.

\bibitem{FoataSchutzenberger1978}
Dominique Foata and Marcel-Paul Sch\"{u}tzenberger.
\newblock Major index and inversion number of permutations.
\newblock {\em Math. Nachr.}, 83:143--159, 1978.

\bibitem{gobet2016noncrossing}
Thomas Gobet and Nathan Williams.
\newblock Noncrossing partitions and {B}ruhat order.
\newblock {\em European Journal of Combinatorics}, 53:8--34, 2016.

\bibitem{Knuth_AOCP3}
Donald~E. Knuth.
\newblock {\em The art of computer programming. {V}olume 3: Sorting and
  searching}.
\newblock Addison-Wesley Publishing Company, 1973.

\bibitem{kreweras1972partitions}
Germain Kreweras.
\newblock Sur les partitions non crois{\'e}es d'un cycle.
\newblock {\em Discrete mathematics}, 1(4):333--350, 1972.

\bibitem{LaCroixRoby}
Michael La{\,}Croix and Tom Roby.
\newblock Foatic actions of the symmetric group and fixed-point homomesy.
\newblock {\em Preprint}, pages 1--17, 2020.
\newblock \href{https://arxiv.org/abs/2008.03292}{ArXiv:2008.03292}.

\bibitem{LascouxSchutzenberger}
Alain Lascoux and Marcel-Paul Sch\"{u}tzenberger.
\newblock A new statisictics on words.
\newblock {\em Ann. Discrete Math.}, 6:251--255, 1980.

\bibitem{Luschny}
Peter Luschny.
\newblock Permutation trees.
\newblock \url{http://oeis.org/wiki/User:Peter_Luschny/PermutationTrees}.

\bibitem{OEIS}
{OEIS Foundation Inc.}
\newblock The on-line encyclopedia of integer sequences.
\newblock \url{http://oeis.org}.

\bibitem{PR2015}
James Propp and Tom Roby.
\newblock Homomesy in products of two chains.
\newblock {\em Electron. J. Combin.}, 22(3):Paper 3.4, 29 pages, 2015.

\bibitem{Roby2016}
Tom Roby.
\newblock Dynamical algebraic combinatorics and the homomesy phenomenon.
\newblock In {\em Recent Trends in Combinatorics}, pages 619--652, Cham, 2016.

\bibitem{FindStat}
Martin Rubey, Christian Stump, et~al.
\newblock {FindStat} - {T}he combinatorial statistics database.
\newblock \url{http://www.FindStat.org}.
\newblock Accessed: April 27, 2022.

\bibitem{https://doi.org/10.48550/arxiv.1106.1995}
Joshua Sack and Henning Úlfarsson.
\newblock Refined inversion statistics on permutations.
\newblock {\em Electron. J. Combin.}, 19(1), 2012.

\bibitem{SMC}
{SageMath Inc.}
\newblock {\em CoCalc Collaborative Computation Online}, 2022.
\newblock {\tt https://cocalc.com/}.

\bibitem{Sheridan-Rossi}
Elizabeth Sheridan-Rossi.
\newblock Homomesy for foatic actions on the symmetric group.
\newblock {\em University of Connecticut}, pages 1--106, 2020.
\newblock Ph.D. Thesis.

\bibitem{sage}
William\thinspace{}A. Stein et~al.
\newblock {\em {S}age {M}athematics {S}oftware ({V}ersion 9.4)}.
\newblock The Sage Development Team, 2022.
\newblock \url{http://www.sagemath.org}.

\bibitem{Striker2017}
Jessica Striker.
\newblock Dynamical algebraic combinatorics: {P}romotion, rowmotion, and
  resonance.
\newblock {\em Notices Amer. Math. Soc.}, 64(6):543--549, 2017.

\bibitem{SW2012}
Jessica Striker and Nathan Williams.
\newblock Promotion and rowmotion.
\newblock {\em European J. Combin.}, 33(8):1919--1942, 2012.

\bibitem{vajnovszki}
Vincent Vajnovszki.
\newblock Lehmer code transforms and {M}ahonian statistics on permutations.
\newblock {\em Discrete Math.}, 313(5):581--589, 2013.

\end{thebibliography}

\end{document}